\numberwithin{equation}{section}
\definecolor{bananamania}{rgb}{0.98, 0.91, 0.71}
\newenvironment{myproof}[2]{\paragraph{\textit{Proof of {#1} }{#2}.}}{\hfill$\square$}
\theoremstyle{plain}
\newtheorem{theorem}{Theorem}[section]
\newtheorem{lemma}[theorem]{Lemma}
\newtheorem{corollary}[theorem]{Corollary}
\newtheorem{proposition}[theorem]{Proposition}
\newenvironment{customtheorem}[1]
  {\innercustomthm}
  {\endinnercustomthm}
\theoremstyle{definition}
\newtheorem{definition}[theorem]{Definition}
\newtheorem{remark}[theorem]{Remark}
\newcommand*\bulletsmall{\mathpalette\bulletsmall@{.5}}
\newcommand*\bulletsmall@[2]{\mathbin{\vcenter{\hbox{\scalebox{#2}{$\m@th#1\bullet$}}}}}
\def\CC{\mathbb{C}}
\def\DDD{\mathbb{D}}
\def\QQ{\mathbb{Q}}
\def\RR{\mathbb{R}}
\def\ZZ{\mathbb{Z}}
\def\PP{\mathbb{P}}
\def\a{\alpha}
\def\b{\beta}
\def\s{\sigma}
\def\G{\Gamma}
\def\w{\omega}
\def\ra{\rightarrow}
\def\tt{\theta}
\def\O{\Omega}
\def\gg{\mathfrak{g}}
\def\bb{\mathfrak{b}}
\def\ol{\overline}
\def\bl{\bullet}
\def\ll{\lambda}
\def\ecc{q^{-1}_1,\ldots,q^{-1}_k}
\DeclareMathOperator{\conv}{Conv}
\DeclareMathOperator{\fof}{Frac}
\DeclareMathOperator{\spec}{Spec}
\DeclareMathOperator{\sym}{Sym}
\DeclareMathOperator{\rep}{Rep}
\DeclareMathOperator{\ec}{Eff}
\DeclareMathOperator{\pt}{pt}
\DeclareMathOperator{\lie}{Lie}
\DeclareMathOperator{\pr}{pr} 
\DeclareMathOperator{\ev}{ev}
\DeclareMathOperator{\id}{id}
\DeclareMathOperator{\im}{Im}
\DeclareMathOperator{\pd}{PD}
\DeclareMathOperator{\jac}{Jac}
\DeclareMathOperator{\home}{Hom}
\DeclareMathOperator{\res}{Res}
\DeclareMathOperator{\ind}{Ind}
\DeclareMathOperator{\cu}{cu}
\def\rhod{\rho^{\vee}}
\def\ad{\alpha^{\vee}}
\def\Q{Q^{\vee}}
\def\od{\omega^{\vee}}
\def\Bd{B^{\vee}}
\def\Gd{G^{\vee}}
\def\Td{T^{\vee}}
\def\Bmd{B_-^{\vee}}
\def\Bed{B_{e^T}^{\vee}}
\newcommand{\dd}[1]{#1^{\vee}}
\def\Ud{U^{\vee}}
\def\Udd{U^{\vee}(w_Pw_0)}
\def\Pd{P^{\vee}}
\def\Pbd{\ol{P}^{\vee}_-}
\def\pbd{\ol{\mathfrak{p}}^{\vee}_-}
\def\OO{\mathcal{O}}
\def\KK{\mathcal{K}}
\def\BB{\mathcal{B}}
\def\PP{\mathbb{P}}
\def\YY{\mathcal{Y}_P}
\def\YYs{\mathcal{Y}_P^*}
\def\YYY{\mathcal{Y}}
\def\FF{\mathcal{F}}
\def\LL{\mathcal{L}}
\def\ag{\mathcal{G}r}
\def\agl{\mathcal{G}r^{
\lambda}}
\def\agll{\mathcal{G}r^{\leqslant 
\lambda}}
\def\agp{\mathcal{G}r_L^{\leqslant 
w_P w_0(\lambda)}}
\def\HT{H^T_{\bl}}
\def\R{H_T^{\bl}(\pt)}
\def\Hag{H^T_{\bl}(\ag)}
\def\Hmag{H^T_{-\bl}(\ag)}
\def\gm{\mathbb{G}_m}
\def\ga{\mathbb{G}_a}
\def\wl{wt_{\lambda}}
\def\wp{w_Pw_0}
\def\wpd{\dot{w}_P\dot{w}_0}
\def\GB{G^{\vee}/B^{\vee}_-}
\def\tGB{\times_{G^{\vee}/B^{\vee}_-}}
\def\per{\mathcal{P}_{G(\mathcal{O})}(\mathcal{G}r)}
\def\co{\mathbb{X}_{\bl}(T)}
\def\cop{\mathbb{X}^+_{\bl}(T)}
\def\oc{\mathbb{X}^{\bl}(T)}
\def\vp{\varphi}
\def\Jl{\mathcal{J}_{\lambda_0}}
\def\JJ{\mathcal{J}}
\def\MV{S^-_{\mu}}
\def\QHP{QH_T^{\bl}(G/P)}
\def\QHPi{QH_T^{\bl}(G/P)[\ecc]}
\def\sm{\setminus}
\def\MM{\overline{\mathcal{M}}}
\def\PH{{}^p\mathcal{H}^0}
\def\kkk{\Bbbk}
\newcommand{\bk}[1]{\left[#1\right]}
\def\al{\ad}
\newcommand{\fib}[1]{\mathcal{E}^G_{#1}}
\newcommand{\fibl}[1]{\mathcal{E}^L_{#1}}
\newcommand{\ffib}[1]{\mathcal{E}^G_{#1}(G/P)}
\newcommand{\ffibl}[1]{\mathcal{E}^L_{#1}(G/P)}
\def\TTT{\mathcal{T}}
\def\eti{\eta_1}
\def\etii{\eta_2}
\def\etiii{\eta_3}
\def\etiv{\eta_4}
\def\etv{\eta_5}
\def\etvi{\eta_6}
\def\etvii{\eta_7}
\def\etviii{\eta_8}
\def\etix{\eta_9}
\def\vpi{\varphi_1}
\def\vpii{\varphi_2}
\def\vpiii{\varphi_3}
\def\vpiv{\varphi_4}
\begin{document}
\title[On D. Peterson's presentation of quantum cohomology of $G/P$]{On D. Peterson's presentation of quantum cohomology of $G/P$}

\author{Chi Hong Chow}

\begin{abstract} 
We prove in full generality that the $T$-equivariant quantum cohomology of any flag variety $G/P$ is isomorphic to the coordinate ring of a stratum of the Peterson scheme associated to the Langlands dual group scheme $G^{\vee}$. This result was discovered by Dale Peterson but remains unpublished. Our isomorphism is constructed using Yun-Zhu's isomorphism and Peterson-Lam-Shimozono's homomorphism.
\end{abstract}


\maketitle
\section{introduction}\label{1}

\subsection{Main results}\label{1a}
Let $G$ be a simple simply-connected complex algebraic group and $P$ a parabolic subgroup of $G$. We prove the following results expressing the $T$-equivariant quantum cohomology $\QHP$ and its localization in terms of the Langlands dual group scheme $\Gd$. Our coefficient ring is $\ZZ[\ell_G^{-1}]$ where $\ell_G$ is equal to 1 for type ADE, 2 for type BCF and 3 for type G.
\begin{customtheorem}{A}\label{1athm} There exists an isomorphism of $\R[q_1,\ldots,q_k]$-algebras
\[ \Phi:\OO(\YY)\xrightarrow{\sim}\QHP\]
which induces an injective homomorphism
\[ \Phi^*:\OO(\YYs)\ra  \QHPi.\]
The latter is also an isomorphism if we set all equivariant parameters to zero.
\end{customtheorem}

The schemes $\YY$ and $\YY^*$ are defined as follows. Define first $e^T:=e+f\in \dd{\bb}\otimes \R$ where 
\[ e:= \sum_{i=1}^r|\al_i|^2 e_{\ad_i}\quad\text{ and }\quad f:\ad_i\mapsto |\ad_i|^2\a_i,\]
and
\[ \YYY:=\left\{(g\Bmd,h)\in\GB\times\spec\R\left|~g^{-1}\cdot e^T(h)\in \dd{\bb}_-\oplus\bigoplus_{i=1}^r\dd{\gg}_{\ad_i} \right.\right\}.\]
Here $e_{\ad_i}$ is a generator of $\dd{\gg}_{\ad_i}$ and $|\ad_i|^2=1$ or $\ell_G$ depending on whether $\ad_i$ is short or long. Then
\[ \YY:=\YYY\tGB\Ud_{\Pd_-}\wpd\Bmd/\Bmd\]
and 
\[ \YYs:=\YY\tGB\Ud\Bmd/\Bmd.\]
The regular functions 
\begin{equation}\nonumber
\ol{q}_i: (u\wpd\Bmd,h)\mapsto  |\al_i|^{-2}\langle e^*_{-w_0\ad_i},(u\wpd)^{-1}\cdot e^T(h)\rangle
\end{equation}
make $\YY$ and $\YYs$ schemes over $\spec\R[q_1,\ldots,q_k]$. The homomorphisms $\Phi$ and $\Phi^*$ stated in Theorem \ref{1athm} are required to send these regular functions to the quantum parameters $q_1,\ldots,q_k$.

In general, $\OO(\YYs)$ is not isomorphic to $\QHPi$. A counterexample arises already when $G=\mathbf{SL}_{2}$. See Section \ref{1ab}. To obtain a presentation of $\QHPi$, we consider another scheme. Define $\Bed$ to be the centralizer group scheme of $e^T$ in $\Bd\times\spec\R$. This group scheme was introduced by Yun and Zhu \cite{YZ} who proved that $\OO(\Bed)$ is isomorphic to the $T$-equivariant homology $\Hmag$ of the affine Grassmannian $\ag$ of $G$. 
\begin{customtheorem}{B}\label{1athmb} There exists an isomorphism of $\R[q_1,\ldots, q_k]$-algebras
\[ \Phi_{loc}:\OO(\Bed\times_{\Gd} \Ud_-(\wpd)^{-1}\Bmd)\xrightarrow{\sim}\QHPi\]
under which the quantum parameters $q_i$ correspond to the regular functions
\[ \ol{q}_{loc,i}: (u_1(\wpd)^{-1}tu_2,h)\mapsto \ad_i(t).\]
\end{customtheorem}


Theorem \ref{1athmb} has an immediate application to mirror symmetry. Let us take the coefficient ring to be $\CC$. Motivated by earlier work \cite{BCFKS, Givental, JK, Peter}, Rietsch \cite{Rietsch} constructed a Landau-Ginzburg model $(X_P^{\vee},W,\pi,p)$ and conjectured that it is the $T$-equivariant mirror of $G/P$. She gave evidence by proving the following mirror symmetry prediction.
\begin{customtheorem}{C}\label{1athmc} (\cite[Corollary 4.2]{Rietsch}) There exists an isomorphism of $\R[q_1,\ldots,q_k]$-algebras
\[ \jac(X_P^{\vee},W,\pi,p)\simeq \QHPi. \]
Here, $\jac(X_P^{\vee},W,\pi,p)$ is the Jacobi ring which, in the non-equivariant case, is the coordinate ring of the fiberwise critical locus of the superpotential $W$. \hfill $\square$
\end{customtheorem}

However, her proof relies essentially on a version of the Peterson variety presentation which is equivalent to our Theorem \ref{1athmb}, and which, to the best of our knowledge, has remained conjectural since Peterson's announcement (see Section \ref{1h}). Therefore, our work completes the last step of the proof of Theorem \ref{1athmc}.

We will not review the Rietsch mirror in this paper. For more discussion, see our sequel paper \cite{me3} where we enhance the isomorphism from Theorem \ref{1athmc} to an isomorphism of $D$-modules.


\subsection{History}\label{1h} 
In 1997, Peterson \cite{Peter} gave a lecture course at MIT where he announced the following four results about the quantum cohomology of flag varieties:
\begin{enumerate}
\item \textbf{Quantum Chevalley formula}, an explicit formula expressing the quantum product of a Schubert class and a divisor class in terms of other Schubert classes.

\item \textbf{Comparison formula}, an explicit formula expressing the Schubert structure constants of $QH^{\bl}(G/P)$ in terms of those of $QH^{\bl}(G/B)$.

\item \textbf{``Quantum equals affine'' and its generalization to the parabolic case}, which states that the affine Schubert calculus determines the quantum Schubert calculus for $G/P$ completely, and vice versa for $P=B$, through an explicit ring homomorphism from the Pontryagin homology $H_{-\bl}(\ag)$ of the affine Grassmannian $\ag$ of $G$ to $QH^{\bl}(G/P)[\ecc]$.

\item \textbf{Peterson variety presentation}, which is the non-equivariant version of our Theorem \ref{1athm}.
\end{enumerate}

However, he did not publish any of these results. Some years later, complete proofs of the first three results were given by other people: (1) was proved by Fulton and Woodward \cite{FW} and generalized later by Mihalcea \cite{Mihalcea} to the equivariant settings; (2) was proved by Woodward \cite{Woodward}; and (3) was proved by Lam and Shimozono \cite{LS}. The proof in \cite{LS} is combinatorial. Recently, the author of the present paper \cite{me} has given a new proof of (3) which is geometric. Both proofs work for the equivariant settings.

It is (4) which has not been proved in full generality so our work provides the final piece of the puzzle. The case $P=B$ is complete. The quantum cohomology ring $QH^{\bl}(G/B)$ has another presentation formulated in terms of the Toda lattice. It was established by Ciocan-Fontanine \cite{CF}, Givental and Kim \cite{GK} for type A, and Kim \cite{Kim} for the general type. Although it is different from Peterson's presentation, they are equivalent after a change of variables due to Kostant \cite{Kostant}.

Beyond the Borel case, again using known presentations of quantum cohomology \cite{add1, add2, add3, KH1, KH2, ST, Witten}, Rietsch \cite{Pa} proved (4) for Grassmannians and later for all type A flag varieties \cite{Pab}; and Cheong \cite{Pc} for Lagrangian and orthogonal Grassmannians. For minuscule flag varieties, Lam and Templier \cite{LT} proved the $D$-module mirror conjecture and used it to establish the equivariant version of (4) by taking the semi-classical limit and applying a result of Rietsch \cite{Rietsch}. 



\subsection{Example: the rank-one case}\label{1ab} 
As a warm up, let us prove by hand Theorem \ref{1athm} and Theorem \ref{1athmb} for the case $G=\mathbf{SL}_{2}$ and $P=B$. We have $G/P\simeq\PP^1_{\CC}$ and $\R\simeq \ZZ[h]$. The Langlands dual group scheme $\Gd$ is isomorphic to $\mathbf{PGL}_{2}$. Take $\Td\subset\Gd$ to be the diagonal torus and $\Bd$ (resp. $\Bmd$) to be the subgroups of upper (resp. lower) triangular matrices. Then $\GB\simeq\PP^1_{\ZZ}$. Denote by $\Ud$ (resp. $\Ud_-$) the unipotent radical of $\Bd$ (resp. $\Bmd$). Let $\a$ (resp. $\ad$)  be the unique positive root (resp. coroot) determined by $\Bd$. Define
\[ e_{\ad}:=\left(\begin{matrix}
0&1\\0&0
\end{matrix}\right) \in \dd{\gg}_{\ad}\quad \text{ and }\quad e_{-\ad}:=\left(\begin{matrix}
0&0\\1&0
\end{matrix}\right) \in \dd{\gg}_{-\ad}.\]
We have 
\[ e^T(h)= h\a +e_{\ad}.\]

The $T$-equivariant quantum cohomology $QH_T^{\bl}(\PP^1_{\CC})$ is generated by the equivariant parameter $h$, the quantum parameter $q$ and the unique Schubert class $\s$ which is not equal to 1, subject to
\[ \s^2=2h\s +q.\]
(The presence of the factor ``2'' in the above expression is due to the fact that we are using the maximal torus of $\mathbf{SL}_{2}$ instead of $\mathbf{PGL}_{2}$.) In other words,
\[ QH_T^{\bl}(\PP^1_{\CC})\simeq \ZZ[h,q,\s]/\langle \s^2-2h\s -q\rangle \simeq \ZZ[h,\s]. \]

Observe that the condition for the definition of $\YYY$ is always satisfied, and hence 
\[ \YYY=\GB\times\spec\R\simeq \PP^1_{\ZZ}\times \mathbb{A}^1_{\ZZ}.\]
The projective line $\PP^1_{\ZZ}$ is covered by two open subschemes 
\[ \Ud(e):=\Ud\Bmd/\Bmd \simeq \Ud\quad\text{ and }\quad \Ud(w_0):=\Ud_-\dot{w}_0\Bmd/\Bmd\simeq \Ud_-\]
where $w_0\in W\simeq S_2$ is the longest element. Then we have
\[ \YY\simeq \Ud(w_0)\times\mathbb{A}^1_{\ZZ}\quad\text{ and }\quad\YYs\simeq \left(\Ud(w_0)\cap\Ud(e)\right)\times\mathbb{A}^1_{\ZZ},\]
and hence 
\[  \OO(\YY)\simeq\ZZ[h,y_-]\quad\text{ and }\quad\OO(\YYs)\simeq\OO(\YY)[y_-^{-1}]\simeq\ZZ[h,y_-^{\pm 1}]\]
where $y_-:=\chi_-(u_-)$ is the composition of the $\Ud(w_0)$-coordinate $u_-$ of $\YY$ and the unique isomorphism $\chi_-:\Ud_-\xrightarrow{\sim}\ga$ satisfying $\lie(\chi_-)(e_{-\ad})=1$. The homomorphism
\[ 
\begin{array}{ccccc}
\Phi& :&\OO(\YY) &\ra & QH_T^{\bl}(\PP^1_{\CC})\\ [.5em]
& & y_- & \mapsto & -\s
\end{array}\]
is obviously an isomorphism of $\R$-algebras.

We also need to show that $\Phi$ is $\ZZ[q]$-linear. Let $e_{\ad}^*\in (\dd{\gg})^*$ be the unique element satisfying $\langle e_{\ad}^*,e_{\ad}\rangle = 1$ and $e_{\ad}^*|_{\dd{\bb}_-}=0$. Define
\[ \dot{w}_0:= \left(\begin{matrix}
0&1\\-1&0
\end{matrix}\right)\] 
considered as a $\ZZ$-point of $N_{\Gd}(\Td)$. It represents $w_0$ in the Weyl group. We have 
\begin{align*}
\ol{q} &= \langle e_{\ad}^*, (u_-\dot{w}_0)^{-1}\cdot (h\a+e_{\ad})\rangle\\
& = \langle e_{\ad}^*, \dot{w}_0^{-1}\cdot [-(y_-^2+2hy_-) e_{-\ad} + (y_-+h)\a+e_{\ad}]\rangle\\
& = y_-^2+2hy_-.
\end{align*}
It follows that $\Phi(\ol{q}) = \s^2-2h\s =q$ as desired.

Next we look at $\Phi^*$. Define 
\[ \Phi^*:=\Phi[y_-^{-1}]:\OO(\YYs)\xrightarrow{\sim} QH_T^{\bl}(\PP^1_{\CC})[\s^{-1}].\]
Observe that $\s^{-1}=q^{-1}(\s-2h)$, and hence 
\[ QH_T^{\bl}(\PP^1_{\CC})[\s^{-1}]\subseteq QH_T^{\bl}(\PP^1_{\CC})[q^{-1}].\]
But this inclusion is proper unless we set $h=0$. In fact, if $\kkk$ is an integral domain such that $2\ne 0$, then $\kkk[h,\s^{\pm1}]$ and $\kkk[h,\s,(\s^2-2h\s)^{-1}]$ are not isomorphic as $\kkk$-algebras. Indeed, after inverting 2, they are isomorphic to $\kkk_1[X,Y^{\pm 1}]$ and $\kkk_1[X^{\pm 1},Y^{\pm 1}]$ respectively, where $\kkk_1:=\kkk[2^{-1}]$. But 
\[\kkk_1[X,Y^{\pm 1}]^{\times}/\kkk_1^{\times}\simeq\ZZ\quad\text{ while }\quad\kkk_1[X^{\pm 1},Y^{\pm 1}]^{\times}/\kkk_1^{\times}\simeq\ZZ\oplus\ZZ. \]

We now look at $\Phi_{loc}$. Let $t$ and $u_+$ be the $\Td$- and $\Ud$-coordinates of $\Bd\simeq\Td\times\Ud$ respectively. Then 
\[ (tu_+)\cdot e^T(h) = h\a +\ad(t)(1-2h\chi_+(u_+))e_{\ad}\]
where $\chi_+:\Ud\xrightarrow{\sim} \ga$ is the unique isomorphism satisfying $\lie(\chi_+)(e_{\ad})=1$. Put $z:=\ad(t)$ and $y_+:=\chi_+(u_+)$. It follows that 
\[ \OO(\Bed)\simeq \ZZ[h,y_+,z^{\pm 1}]/\langle z(1-2hy_+)-1\rangle\simeq \ZZ[h,y_+,(1-2hy_+)^{-1}].\]
Observe that 
\[ t\chi_+^{-1}(y) = \chi_-^{-1}(\ad(t)^{-1}y^{-1})\dot{w}_0^{-1}(t^{-1}\a(-y^{-1}))\chi_-^{-1}(y^{-1})\]
for any $y\in \gm\subset\ga$ and $t\in \Td$. It follows that 
\[ \OO(\Bed\times_{\Gd}\Ud_-\dot{w}_0^{-1}\Bmd)\simeq\ZZ[h,y_+^{\pm 1}, (1-2hy_+)^{-1}] \]
and $\ol{q}_{loc}=z^{-1}y_+^{-2}=y_+^{-2}-2hy_+^{-1}$. It is straightforward to show that the homomorphism
\[ 
\begin{array}{ccccc}
\Phi_{loc}& :&\OO(\Bed\times_{\Gd}\Ud_-\dot{w}_0^{-1}\Bmd) &\ra & QH_T^{\bl}(\PP^1_{\CC})[q^{-1}]\\ [.5em]
& & y_+ & \mapsto & \s^{-1}=q^{-1}(\s-2h)
\end{array}\]
is an isomorphism of $\R$-algebras. Finally, 
\[ \Phi_{loc}( \ol{q}_{loc}) = \Phi_{loc}(y_+^{-2}-2hy_+^{-1}) = \s^2-2h\s =q. \]
Therefore, $\Phi_{loc}$ is $\ZZ[q]$-linear.

\begin{remark}
The isomorphisms $\Phi$ and $\Phi_{loc}$ are in fact intertwined by the homomorphism $\OO(\YYY_P)\ra \OO(\Bed\times_{\Gd}\Ud_-\dot{w}_0^{-1}\Bmd)$ induced by the morphism $\Bd\times\spec\R\ra \GB\times\spec\R$ defined by $(b,h)\mapsto (b^{-1}\Bmd,h)$.
\end{remark}
\subsection{Organization} In Section \ref{2}, we establish notation and recall facts from equivariant quantum cohomology, affine Grassmannian, geometric Satake equivalence and scheme of Borel subgroups over $\spec\ZZ$. In Section \ref{3}, we recall Peterson-Lam-Shimozono's theorem and compute the map with input different from affine Schubert classes, based on our recent new proof of the theorem. In Section \ref{4}, we recall Yun-Zhu's theorem and a result of Baumann, Kamnitzer and Knuston which describes the isomorphism in terms of Mirkovi\'c-Vilonen cycles. In Section \ref{5}, we introduce the Peterson schemes and verify several properties. In Section \ref{6}, we prove our main results. In Appendix \ref{appA}, we prove a number of lemmas which are used in the preceding sections but not proved immediately after stated. In Appendix \ref{app}, we settle sign issues which arise when we identify specific regular functions on the Peterson schemes with the quantum parameters. 


\section{Preliminaries}\label{2}
\subsection{Notation}\label{2a}
Let $G$ be a complex reductive group. Fix a maximal torus $T$ and a Borel subgroup $B$ containing $T$. Let $B_-$ be the unique Borel subgroup such that $B\cap B_-=T$. Denote by $R$ the set of roots and by $R^+$ the set of positive roots determined by $B$. The simple roots are denoted by $\a_1,\ldots,\a_r$. For any $\a\in R$, denote by $\ad$ the corresponding coroot. Let $W$ be the Weyl group and $w_0$ the longest element of $W$. Let $\oc$ (resp. $\co$) be the lattice of the characters (resp. cocharacters) of $T$. Let $\cop$ be the subset of $\co$ consisting of elements which are dominant. Let $\Q$ be the coroot lattice. In general, $\Q\subseteq\co$, and the equality holds precisely when $G$ is simply-connected. Define
\[ 2\rho:=\sum_{\a\in R^+}\a\in\oc\quad\text{ and }\quad 2\rhod:=\sum_{\a\in R^+}\ad\in\co.\]

Fix a parabolic subgroup $P$ of $G$ containing $B$. Let $L$ be its Levi subgroup. Denote by $R_P\subseteq R$ (resp. $R^+_P\subseteq R^+$) the set of (resp. positive) roots of $L$. WLOG, we assume 
\[R_P\cap\{\a_1,\ldots,\a_r\}=\{\a_{k+1},\ldots,\a_r\}\]
for some integer $0\leqslant k\leqslant r$. Let $W_P\subseteq W$ be the Weyl group of $L$ and $w_P\in W_P$ the longest element of $W_P$. Define $\Q_P$ to be the sublattice of $\Q$ generated by $\ad$ with $\a\in R_P$.
\subsection{Equivariant quantum cohomology of flag variety}\label{2bc}
Assume $G$ is simply-connected. Let $P$ be the standard parabolic subgroup of $G$ fixed in Section \ref{2a}. Denote by $W^P$ the set of minimal length coset representatives in $W/W_P$. For each $v\in W^P$, define 
\begin{align*}
 \s_v &:=\pd\bk{\ol{B_-\dot{v}P/P}} \in H_T^{2\ell(v)}(G/P)\\
  \s^v & :=\pd\bk{\ol{B\dot{v}P/P}}\in H_T^{\dim_{\RR}(G/P)-2\ell(v)}(G/P).
\end{align*}
The families $\{\s_v\}_{v\in W^P}$ and $\{\s^v\}_{v\in W^P}$ are dual bases of $H_T^{\bl}(G/P)$ with respect to $\int_{G/P}-\cup -$.

Let $\ec\subseteq \pi_2(G/P)$ be the semi-group of effective curve classes in $G/P$ and $\ZZ[\ec]$ the associated group ring. Since $\ec$ is identified with $\sum_{i=1}^k\ZZ_{\geqslant 0}\cdot [\ad_i]\subseteq \Q/\Q_P$ under the canonical isomorphism $\pi_2(G/P)\simeq \Q/\Q_P$, the ring $\ZZ[\ec]$ is isomorphic to the polynomial ring $\ZZ[q_1,\ldots,q_k]$ where $q_i:=q^{[\ad_i]}$. Define the $T$-equivariant quantum cohomology of $G/P$
\[ QH_T^{\bl}(G/P):= H_T^{\bl}(G/P)\otimes\ZZ[\ec].\] 
We grade $QH_T^{\bl}(G/P)$ by declaring each $q_i$ to have degree $2\sum_{\a\in R^+\setminus R^+_P}\a(\ad_i)$. The $T$-equivariant quantum cup product $\star$ is a deformation of the $T$-equivariant cup product, defined by
\[\s_u\star \s_v := \sum_{w\in W^P}\sum_{\beta\in\ec} q^{\b} \left(\int_{\MM_{0,3}(G/P,\beta)} \ev_1^*\s_u\cup \ev_2^*\s_v\cup  \ev_3^*\s^w \right) \s_w,\]  
where
\begin{enumerate}[(i)]
\item $\MM_{0,3}(G/P,\beta)$ is the moduli of genus-zero stable maps to $G/P$ of degree $\beta$ which have three marked points in the domain curve and 
\[\ev_1,\ev_2,\ev_3:\MM_{0,3}(G/P,\beta)\ra G/P\]
are the evaluation morphisms at these marked points; and

\item the integral $\int_{\MM_{0,3}(G/P,\beta)}$ is the $T$-equivariant integral.
\end{enumerate}

\noindent It is well-known (e.g. \cite{BM, FP}) that 
\begin{enumerate}
\item $\MM_{0,3}(G/P,\beta)$ is a smooth projective Deligne-Mumford stack of expected dimension. In particular, the above integral is well-defined.

\item $(QH_T^{\bl}(G/P),\star)$ is a graded commutative $\R$-algebra. 
\end{enumerate}
\subsection{Affine Grassmannian}\label{2b}
Let $\ag:=\ag_G:=G(\KK)/G(\OO)$ be the affine Grassmannian of $G$, where $\KK:=\CC((z))$ and $\OO:=\CC[[z]]$. Every $\ll\in\co$ is naturally an element of $G(\KK)$ and so gives rise to a point $t^{\ll}$ of $\ag$. The set $\{t^{\ll}\}_{\ll\in\co}$ is the set of $T$-fixed points of $\ag$. Suppose $\ll$ is dominant. Define $\agl:=G(\OO)\cdot t^{\ll}$ which is an affine bundle on $G\cdot t^{\ll}$. Its closure $\ol{\agl}$ is called a spherical affine Schubert variety. It is known that $\ol{\agl}=\bigcup_{\cop\ni\mu\leqslant\ll}\ag^{\mu}$ where $\mu\leqslant\ll$ if and only if $\ll-\mu$ is a sum of positive coroots. Because of this equality, we will also denote the closure by $\agll$.

The Cartan decomposition tells us $\ag=\bigcup_{\ll\in\cop}\agl$. We can further decompose each $\agl$ into smaller strata. Define $\BB:=\ev^{-1}_{z=0}(B_-)$ where $\ev_{z=0}:G(\OO)\ra G$ is the evaluation map at $z=0$. For any $\mu\in\co$ not necessarily dominant, the orbit $\BB\cdot t^{\mu}$ is isomorphic to an affine space and called an affine Schubert cell. Its closure $\ol{\BB\cdot t^{\mu}}$ is called an affine Schubert variety. We have $\agl=\bigcup_{\mu\in W\cdot \ll}\BB\cdot t^{\mu}$.

Let $K$ be a maximal compact subgroup of $G$ such that $T\cap K$ is a maximal torus of $K$. It is known that $\ag$ is homeomorphic, as $T\cap K$-spaces, to the polynomial based loop group $\O K$. The group structure on $\O K$ thus induces an $\R$-algebra structure on $H_{\bl}^{T\cap K}(\O K)\simeq \Hag$. This product is called the Pontryagin product and denoted by $\bulletsmall$. For any $\mu_1,\mu_2\in \co$, we have
\begin{equation}\nonumber \label{2beq1}
[t^{\mu_1}]\bulletsmall [t^{\mu_2}] = [t^{\mu_1+\mu_2}].
\end{equation} 
\subsection{Geometric Satake equivalence}\label{2c}
We give a brief review on the geometric Satake equivalence proved by Mirkovi\'c and Vilonen \cite{MV}. See also the work \cite{BD, Ginzburg, Lu}. Let $\per$ be the category of $G(\OO)$-equivariant perverse sheaves on $\ag$ with finite dimensional support. The coefficient ring here is $\ZZ$. It is an abelian category equipped with a tensor structure given by the convolution product and a fiber functor given by the hypercohomology $H^{\bl}(-)$. There exists a functorial isomorphism
\begin{equation}\label{4aweight}
H^{\bl}(\FF)\simeq \bigoplus_{\mu\in\co} H^{2\rho(\mu)}_{S_{\mu}^-}(\FF)
\end{equation}
for any object $\FF$, where 
\[  \MV:= \left\{ y\in\ag\left|~\lim_{s\to\infty}2\rhod(s)\cdot y= t^{\mu} \right.\right\}.\]
Using the information about $\per$, Mirkovi\'c and Vilonen constructed a group scheme $\widetilde{G}$ over $\spec\ZZ$ which contains the dual split torus $\Td :=\spec\ZZ[\co]$ as a maximal torus and satisfies
\begin{equation}
\per\simeq \rep(\widetilde{G})
\end{equation}
as tensor categories, where $\rep(\widetilde{G})$ is the category of representations of $\widetilde{G}$ which are finitely generated over $\ZZ$. Moreover, $H^{\bl}(-)$ (resp. $H_{S^-_{\mu}}^{2\rho(\mu)}(-)$) corresponds to the forgetful functor (resp. $\mu$-weight functor). They showed that $\widetilde{G}$ is split reductive and the root datum associated to $(\widetilde{G},\Td)$ is dual to that associated to $(G,T)$, and hence $\widetilde{G}\simeq \Gd$, the Langlands dual group scheme of $G$. 

We will consider a particular class of objects of $\per$ called costandard sheaves. For any $\ll\in\cop$, define
\begin{equation}\label{costandardsheafdef}
\JJ_{\ll}:=\PH((j_{\ll})_*\ZZ[2\rho(\ll)])
\end{equation}
where $\PH$ is the zeroth perverse cohomology functor and $j_{\ll}:\agl\hookrightarrow \agll$ is the inclusion map. 
\begin{lemma}\label{2clemma2} (\cite[Proposition 3.10]{MV}) For any $\lambda\in\cop$ and $\mu\in\co$, there exists an isomorphism between $H^{2\rho(\mu)}_{\MV}(\JJ_{\ll})$ and the free abelian group generated by the irreducible components of $\MV\cap \agl$. \hfill$\square$
\end{lemma}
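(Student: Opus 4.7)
The plan is to compute $H^{2\rho(\mu)}_{\MV}(\JJ_{\ll})$ by combining the definition of the costandard sheaf $\JJ_{\ll}$ with the Mirkovi\'c--Vilonen dimension estimate for the intersection of semi-infinite cells with spherical Schubert cells inside $\agll$.

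First, I would record the geometric input: for any $\nu\in\cop$ with $\nu\leqslant\ll$, the intersection $\MV\cap\ag^{\nu}$ has complex dimension at most $\rho(\nu+\mu)$, with equality exactly when $\nu=\ll$, in which case $\MV\cap\agl$ is of pure dimension $\rho(\ll+\mu)$ and its irreducible components are by definition the Mirkovi\'c--Vilonen cycles in $\agl$ of coweight $\mu$. This is Theorem~3.2 of \cite{MV} and is the geometric heart of the statement; I would take it as input.

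Second, I would unpack the definition $\JJ_{\ll}=\PH((j_{\ll})_*\ZZ[2\rho(\ll)])$. Since $j_{\ll}$ is the open immersion of the smooth stratum $\agl$ into its closure $\agll$, one has $(j_{\ll})^*\JJ_{\ll}=\ZZ_{\agl}[2\rho(\ll)]$, while on any deeper stratum $\ag^{\nu}$ with $\nu<\ll$ the stalks of $\JJ_{\ll}$ are subject to the standard perverse support bound, concentrated in cohomological degrees at most $-\dim_{\CC}\ag^{\nu}$.

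Third, I would run the spectral sequence associated to the stratification $\MV\cap\agll=\bigsqcup_{\nu\leqslant\ll}(\MV\cap\ag^{\nu})$, computing $H^{\bullet}_{\MV}(\JJ_{\ll})$ from the compactly supported cohomologies of the restrictions of $\JJ_{\ll}$ to the pieces. Combining the perverse support bound on $\JJ_{\ll}$ with the dimension estimate on $\MV\cap\ag^{\nu}$, every contribution in total degree $2\rho(\mu)$ coming from a stratum with $\nu<\ll$ vanishes for degree reasons, and only the top stratum $\nu=\ll$ survives. On $\agl$ itself $\JJ_{\ll}$ restricts to $\ZZ_{\agl}[2\rho(\ll)]$, so its contribution identifies with the top Borel--Moore homology $H^{\mathrm{BM}}_{2\rho(\ll+\mu)}(\MV\cap\agl,\ZZ)$, which, since $\MV\cap\agl$ is of pure complex dimension $\rho(\ll+\mu)$, is the free abelian group generated by its irreducible components.

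The main obstacle is the dimension estimate on $\MV\cap\ag^{\nu}$ itself, which is the crux of Mirkovi\'c--Vilonen's analysis and relies on the interplay between the Iwasawa and Cartan decompositions of $G(\KK)$ together with the action of the opposite unipotent loop group on $\ag$; once that input is granted, the cohomological computation above reduces essentially to bookkeeping of cohomological degrees against real dimensions of strata, with the perverse truncation defining $\JJ_{\ll}$ providing exactly the sharp bound needed to eliminate contributions from the boundary strata.
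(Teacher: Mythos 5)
The paper cites \cite[Proposition 3.10]{MV} for this lemma and gives no proof, so I will assess your argument on its own terms. Your overall strategy is the right one and is essentially what Mirkovi\'c and Vilonen do: stratify $\MV\cap\agll$ by the $\ag^{\nu}$, combine the perverse support bounds on $\JJ_{\ll}$ with the dimension estimate on the strata, argue that in the relevant degree only the open stratum contributes, and identify that contribution with the top Borel--Moore homology of $\MV\cap\agl$.

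There is however a systematic sign error that would derail the computation if carried out with the paper's conventions. The paper defines $\MV:=S^-_{\mu}=\bigl\{y:\lim_{s\to\infty}2\rhod(s)\cdot y=t^{\mu}\bigr\}$, i.e.\ the \emph{repelling} set for $2\rhod$, which is $N^-(\KK)\cdot t^{\mu}$; by the paper's Lemma~\ref{4bdim} (quoting \cite[Theorem 3.2]{MV}), the irreducible components of $\MV\cap\agl$ all have dimension $\rho(\ll-\mu)$, not $\rho(\ll+\mu)$. Your formulas $\rho(\nu+\mu)$ and $\rho(\ll+\mu)$ are the ones for the \emph{attracting} orbit $S_{\mu}=N(\KK)\cdot t^{\mu}$. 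With the paper's $\MV$, your final identification should land in $H^{\mathrm{BM}}_{2\rho(\ll-\mu)}(\MV\cap\agl;\ZZ)$, which is indeed the top Borel--Moore homology since $\MV\cap\agl$ has pure complex dimension $\rho(\ll-\mu)$. The discrepancy is not cosmetic: at the lowest weight $\mu=w_0(\ll)$ the unique MV cycle is all of $\agll$, of dimension $2\rho(\ll)=\rho(\ll-w_0(\ll))$, whereas $\rho(\ll+w_0(\ll))=0$, so the attracting-orbit formula gives a flatly wrong dimension in the paper's normalization. Two smaller points: the equality $\dim(\MV\cap\ag^{\nu})=\rho(\nu-\mu)$ holds whenever the intersection is nonempty, not ``exactly when $\nu=\ll$'' --- what you actually want is that $\rho(\nu-\mu)<\rho(\ll-\mu)$ for $\nu<\ll$; and the ``degree bookkeeping'' that kills the deeper strata is stated too loosely, since the $\,{}^p\!D^{\leqslant 0}$ support bound alone only places those contributions at the boundary degree, and one also needs to use the $\,{}^p\!D^{\geqslant 0}$ (equivalently, the $j_*$/costandard) side or a purity/hyperbolic-localization input to get the vanishing cleanly.
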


\subsection{Scheme of Borel subgroups}\label{2d} The main reference for this subsection is \cite{BC}. Let $(\Gd,\Td)$ be as in Section \ref{2c}. By \cite[Theorem 4.1.7 \& Corollary 5.2.7]{BC}, there exist unique Borel subgroups $\Bd$ and $\Bd_-$ of $\Gd$ such that $\Td\subset \Bd$, $\Td\subset \Bd_-$,
\[ \mathfrak{b}^{\vee}:=\lie(\Bd) = \lie(\Td)\oplus \bigoplus_{\a\in R^+}\gg^{\vee}_{\ad}\quad\text{ and }\quad \mathfrak{b}^{\vee}_-:=\lie(\Bd_-) = \lie(\Td)\oplus \bigoplus_{\a\in -R^+}\gg^{\vee}_{\ad}.\]
Let $\Ud$ (resp. $\Ud_-$) denote the unipotent radical of $\Bd$ (resp. $\Bmd$). By \cite[Corollary 5.2.8]{BC}, the \'etale quotient sheaf $\GB$, i.e. the \'etale sheafification of the presheaf $R\mapsto \Gd(R)/\Bmd(R)$, is represented by a smooth projective scheme over $\spec\ZZ$. 

Let $\ll\in\cop$ and $\JJ_{\ll}$ be the costandard sheaf defined in \eqref{costandardsheafdef}. Define 
\[S(\ll):=H^{\bl}(\JJ_{\ll})\] which is a $\Gd$-module by the geometric Satake equivalence (Section \ref{2c}). Its $\Td$-weights are $\leqslant\ll$. For any $\mu\in\co$, define $S(\ll)_{\mu}:=H^{2\rho(\mu)}_{\MV}(\JJ_{\ll})$, the $\mu$-weight space of $S(\ll)$. 

\begin{definition}\label{2ddef}
Let $\ll\in\cop$.
\begin{enumerate}
\item For any $w\in W$, define $v_w\in S(\ll)_{w(\ll)}\simeq\ZZ$ to be the generator corresponding to the unique irreducible component of $S^-_{w(\ll)}\cap \agl$ via the isomorphism in Lemma \ref{2clemma2}. 

\item Define $v_{\ll}^*\in S(\ll)^*$ to be the unique element vanishing on $S(\ll)_{\mu} $ for any $\mu\ne\ll$ and sending $v_e\in S(\ll)_{\ll}$ defined in (1) to 1.
\end{enumerate}
\end{definition}

Define $M_{\ll}$ to be the rank-one $\Bmd$-module satisfying
\[ \res^{\Bmd}_{\Ud_-}(M_{\ll})\text{ is trivial}\quad\text{and}\quad\res^{\Bmd}_{\Td}(M_{\ll})\text{ has weight }\ll. \]
Define 
\begin{equation}\label{2dlinebundle}
\LL(\ll):=\Gd\times_{\Bmd}M_{\ll}
\end{equation}
to be the descent, along the quotient morphism $\Gd\ra\GB$, of the $\Bmd$-equivariant line bundle $\OO_{\Gd}\otimes M_{\ll}$. It is a line bundle on $\GB$. The element $v_{\ll}^*$ from Definition \ref{2ddef} can be regarded as a morphism $S(\ll)\ra M_{\ll}$ of $\Bmd$-modules. Hence it gives rise to a morphism
\begin{equation}\label{2deq1}
\OO_{\GB}\otimes S(\ll)\simeq \Gd\times_{\Bmd} S(\ll)\ra \LL(\ll)
\end{equation}
of $\OO_{\GB}$-modules, where the trivialization of $\Gd\times_{\Bmd} S(\ll)$ is induced by $(g,v)\mapsto g^{-1}\cdot v$.
\begin{lemma}\label{2dlemma1}
For any $\ll\in\cop$, the linear map
\begin{equation}\label{2deq15}
 S(\ll)\simeq H^0(\GB;\Gd\times_{\Bmd} S(\ll))\ra H^0(\GB;\LL(\ll))
\end{equation}
induced by \eqref{2deq1} is bijective.
\end{lemma}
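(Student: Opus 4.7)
The plan is to identify the map \eqref{2deq15} with the canonical Frobenius reciprocity isomorphism between two standard realizations of the costandard $\Gd$-module of highest weight $\ll$. First, I would verify that $v_{\ll}^* \colon S(\ll) \to M_{\ll}$ is a morphism of $\Bmd$-modules. The $\Td$-equivariance is immediate from weight considerations, since $v_{\ll}^*$ vanishes off $S(\ll)_{\ll}$ and $M_{\ll}$ has weight $\ll$. For $\Ud_-$-equivariance, the key observation is that $\Ud_-$ acts on $S(\ll)$ by strictly lowering $\Td$-weights, so for any $u \in \Ud_-$ and $v \in S(\ll)$ the difference $u \cdot v - v$ is supported in weights strictly below the weight of $v$; consequently $v_{\ll}^*(u \cdot v) = v_{\ll}^*(v)$, which matches the trivial $\Ud_-$-action on $M_{\ll}$.

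Next, I would recognize \eqref{2deq15} as a Frobenius reciprocity morphism. The section of $\LL(\ll)$ attached by \eqref{2deq1} to $v \in S(\ll)$ is the function $g \mapsto v_{\ll}^*(g^{-1} v)$, viewed as an element of $\ind_{\Bmd}^{\Gd}(M_{\ll}) = H^0(\GB; \LL(\ll))$. This is exactly the image of $v_{\ll}^*$ under the adjunction
\[ \home_{\Bmd}(S(\ll), M_{\ll}) \xrightarrow{\sim} \home_{\Gd}(S(\ll), \ind_{\Bmd}^{\Gd}(M_{\ll})), \]
so in particular \eqref{2deq15} is automatically $\Gd$-equivariant.

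To finish, I would identify both $S(\ll)$ and $H^0(\GB; \LL(\ll))$ with the costandard (dual Weyl) module $\nabla(\ll)$, free over $\ZZ$ with Weyl character of highest weight $\ll$. For the target this is the integral Borel--Weil theorem, combining Kempf vanishing for the split reductive group scheme $\Gd$ over $\spec \ZZ$ with the definition $\nabla(\ll) = \ind_{\Bmd}^{\Gd}(M_{\ll})$. For the source this is a consequence of the integral geometric Satake equivalence (the costandard sheaf $\JJ_{\ll}$ corresponds to the costandard module); the freeness and the $\Td$-character of $S(\ll)$ can be read off directly from Lemma \ref{2clemma2} and \eqref{4aweight}. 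Under these identifications \eqref{2deq15} becomes a $\Gd$-equivariant endomorphism of $\nabla(\ll)$, and by Frobenius reciprocity
\[ \home_{\Gd}(\nabla(\ll), \nabla(\ll)) \simeq \home_{\Bmd}(\nabla(\ll), M_{\ll}) \simeq \ZZ, \]
because a $\Bmd$-equivariant map into $M_{\ll}$ is forced to vanish outside the one-dimensional $\ll$-weight space. The scalar corresponding to \eqref{2deq15} is $v_{\ll}^*(v_e) = 1$ by Definition \ref{2ddef}(2), so the map is the identity of $\nabla(\ll)$, in particular bijective.

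The principal obstacle is the integral Satake identification $S(\ll) \simeq \nabla(\ll)$ as $\Gd$-modules used in the last step. If one prefers to avoid invoking it in that form, one can prove bijectivity weight space by weight space, matching the MV basis of each $S(\ll)_{\mu}$ with a $\ZZ$-basis of $H^0(\GB; \LL(\ll))_{\mu}$ (for instance via a Bott--Samelson resolution of $\GB$), and verifying the matching via localization at the $\Td$-fixed points of $\GB$.
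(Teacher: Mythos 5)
Your proof is correct and follows the same strategy as the paper: both identify $H^0(\GB;\LL(\ll))$ with $\ind_{\Bmd}^{\Gd}(M_{\ll})$, invoke Frobenius reciprocity to reduce matters to the restriction of $v_{\ll}^*$ to the highest weight space, and conclude from $v_{\ll}^*(v_e)=1$ that the map is an isomorphism. The extra details you supply (verifying $\Bmd$-equivariance of $v_{\ll}^*$, spelling out the identification with $\nabla(\ll)$) are things the paper leaves implicit with a reference to \cite{Jantzen}, and your fallback weight-by-weight argument is unnecessary given the universal-property route you already have.
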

\begin{proof}
The following arguments can be found in \cite[Section 13]{MV}. Notice that 
\[H^0(\GB;\LL(\ll))\simeq \ind_{\Bmd}^{\Gd}(M_{\ll})\] 
as $\Gd$-modules. The latter $\Gd$-module is an object of $\rep^{\leqslant\ll}(\Gd)$, the full subcategory of $\rep(\Gd)$ consisting of objects whose $\Td$-weights are all $\leqslant\ll$, and its $\ll$-weight space is isomorphic to $\ZZ$. Moreover, for any $V\in\rep^{\leqslant\ll}(\Gd)$, the Frobenius reciprocity gives
\begin{equation}\label{2deq2}
\home_{\rep(\Gd)}\left(V,\ind_{\Bmd}^{\Gd}(M_{\ll})\right)\simeq \home_{\rep(\Bmd)}\left(\res_{\Bmd}^{\Gd}(V),M_{\ll}\right)\simeq \home_{\ZZ}(V_{\ll},\ZZ).
\end{equation}
See e.g. \cite{Jantzen} for proofs of the above claims. Since $S(\ll)$ satisfies the same universal property, we have 
\[S(\ll)\simeq \ind_{\Bmd}^{\Gd}(M_{\ll})\]
as $\Gd$-modules. Now take $V=S(\ll)$. The proof is complete if we can show that the element of the RHS of \eqref{2deq2} corresponding to our map \eqref{2deq15} is an isomorphism. This follows from the assumption that $v_{\ll}^*$ sends a generator of $S(\ll)_{\ll}\simeq\ZZ$ to $1$. (Recall \eqref{2deq2} is defined by evaluating the sections of $\LL(\ll)$ at $e\Bmd$.)
\end{proof}

Consider the Weyl group of $(\Gd,\Td)$ which is nothing but $W$ viewed as a constant group scheme over $\spec\ZZ$. For any $w\in W$, choose a representative $\dot{w}\in N_{\Gd}(\Td)(\ZZ)$ of $w$. (In later sections, we will consider specific choices for some $w$. But for now, the choice is not important.) Define $\Ud(w):=\dot{w}\Ud\Bmd/\Bmd$. It is an open Schubert cell of $\GB$ with respect to the Borel subgroup $\Bd_w:=\dot{w}\Bd\dot{w}^{-1}$. See \cite[Theorem 4.1.7]{BC}.  

For any $v\in S(\ll)$, denote by $s_{\ll,v}\in H^0(\GB;\LL(\ll))$ its image under \eqref{2deq15}. An important fact about $s_{\ll,v}$ is that the pull-back of $s_{\ll,v}$ via the quotient morphism $\Gd\ra\GB$ is equal to the regular function $g\mapsto \langle v_{\ll}^*,g^{-1}\cdot v\rangle$ (after identifying $\OO(\Gd)\otimes M_{\ll}$ with $\OO(\Gd)$).

\begin{lemma}\label{2dlemma3}
For any $\ll\in\cop$ and $w\in W$, the section $s_{\ll,v_w}$ is non-vanishing on $\Ud(w)$. If $\ll$ is regular dominant, $s_{\ll,v_w}$ is non-vanishing precisely on $\Ud(w)$. ($v_w$ is defined in Definition \ref{2ddef}.)
\end{lemma}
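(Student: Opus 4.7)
The plan is to analyze the regular function $f_w\colon\Gd\to\ZZ$ given by $f_w(g):=\langle v_\lambda^*, g^{-1}\cdot v_w\rangle$, which is the pull-back of $s_{\lambda,v_w}$ along the quotient morphism $\Gd\to\GB$. For the non-vanishing on $\Ud(w)$, I would restrict $f_w$ to the subvariety $\dot{w}\Ud\subset\Gd$, which parametrizes $\Ud(w)$. Writing $g=\dot{w}u$ with $u\in\Ud$ gives
\[ f_w(\dot{w}u) = \langle v_\lambda^*, u^{-1}(\dot{w}^{-1}\cdot v_w)\rangle. \]
Since $v_w$ generates the rank-one weight space $S(\lambda)_{w(\lambda)}$, the action of $\dot{w}^{-1}$ sends $v_w$ into $S(\lambda)_\lambda=\ZZ\cdot v_e$, and any isomorphism of rank-one $\ZZ$-modules sends generator to generator up to a unit, forcing $\dot{w}^{-1}\cdot v_w=\pm v_e$. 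Now $\lambda$ is the highest weight of $S(\lambda)$ with respect to $\Bd$ (by the geometric Satake setup for dominant $\lambda\in\cop$), so $v_e$ is annihilated by $\lie(\Ud)$ and hence $u^{-1}\cdot v_e=v_e$. Combined with $\langle v_\lambda^*, v_e\rangle=1$, this yields $f_w\equiv\pm 1$ on $\dot{w}\Ud$, proving the non-vanishing.

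For the refined statement when $\lambda$ is regular dominant, I would argue via divisor classes. The zero scheme $Z:=Z(s_{\lambda,v_w})$ is an effective Cartier divisor on the smooth projective scheme $\GB$, representing the class $\lambda$ in $\mathrm{Pic}(\GB)$, and by the first part it is supported in $\GB\setminus\Ud(w)$. The latter decomposes as a union of exactly $r$ irreducible codimension-one Schubert subvarieties $D_i^w:=\dot{w}\cdot D_i$ for $i=1,\ldots,r$, so purity of Cartier divisors yields $Z=\sum_{i=1}^r n_i D_i^w$ with $n_i\in\ZZ_{\geq 0}$. Pairing with the $1$-dimensional Schubert cycle $C_j^w:=\dot{w}\cdot C_j$ gives $n_j=\int_{C_j^w}c_1(\LL(\lambda))=\langle\alpha_j,\lambda\rangle$, and regular dominance of $\lambda$ is precisely the condition $\langle\alpha_j,\lambda\rangle>0$ for every simple coroot $\alpha_j$. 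Therefore each $n_j$ is strictly positive and the support of $Z$ equals $\bigcup_i D_i^w=\GB\setminus\Ud(w)$, as required.

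The main sensitive point is the intersection computation $\int_{C_j}c_1(\LL(\lambda))=\langle\alpha_j,\lambda\rangle$; this is a standard Borel--Weil-type calculation that can be reduced to the rank-one case worked out explicitly in Section \ref{1ab} by restricting to the minimal $\mathbf{SL}_2$-subgroup associated to the simple coroot $\alpha_j$.
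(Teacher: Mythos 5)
Your proposal is correct but takes a genuinely different route from the paper's, most notably for the second claim. The paper begins by reducing to geometric points over an algebraically closed field and observes that $\{s_{\ll,v_w}\ne 0\}$ is $\Bd_w$-invariant (because $v_w$ is a $\Td$-weight vector); by the Bruhat decomposition relative to $\Bd_w$, both assertions then reduce to evaluating $s_{\ll,v_w}$ at the $\Td$-fixed points $\dot{v}\Bmd$, where the value is, up to a unit, $\langle v_{\ll}^*,\dot{v}^{-1}\cdot v_w\rangle$ with $\dot{v}^{-1}\cdot v_w$ a generator of $S(\ll)_{v^{-1}w(\ll)}$. This is non-zero exactly when $v^{-1}w(\ll)=\ll$, which for regular dominant $\ll$ forces $v=w$, giving both claims at once. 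Your first step is an equivalent direct computation on the whole cell $\dot{w}\Ud$ rather than at the single fixed point. Your second step, however, replaces the check at the remaining fixed points by an intersection-theoretic argument: identify $Z$ as an effective divisor supported on the Schubert boundary, expand $Z=\sum_i n_iD_i^w$, and show $n_j=\langle\a_j,\ll\rangle>0$ by pairing with the dual Schubert curves via the Borel--Weil degree formula. That trades the paper's entirely representation-theoretic argument for Schubert calculus and a Chern class computation; the paper's proof is shorter and needs nothing beyond the material already set up, while yours makes the zero divisor $Z$ and its multiplicities explicit. One small omission to flag: you should, as the paper does, reduce to geometric points over an algebraically closed field at the outset, since that is where the Bruhat stratification, the irreducibility of the boundary components, and the intersection formula $C_j^w\cdot D_i^w=\delta_{ij}$ are cleanly available.
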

\begin{proof}
It suffices to verify the assertions for the geometric points so we assume that the base scheme is $\spec \kkk$ for some algebraically closed field $\kkk$. In this case, we have the Bruhat decomposition for $\GB$. Since $ \{s_{\ll,v_w}\ne 0\}$ is $\Bd_w$-invariant, we only need to show $s_{\ll,v_w}(\dot{w}\Bmd)\ne 0$, and if $\ll$ is regular dominant, then $s_{\ll,v_w}(\dot{v}\Bmd)=0$ for any $v\in W\sm\{w\}$. This follows from the equality
\[s_{\ll,v_w}(\dot{v}\Bmd)=\langle v_{\ll}^*,\dot{v}^{-1}\cdot v_w \rangle\]
(up to a non-zero multiple) and the fact that $\dot{v}^{-1}\cdot v_w $ is a generator of $S(\ll)_{v^{-1}w(\ll)}$. 
\end{proof}

\begin{lemma}\label{2dlemma2}
There exists $\ll_0\in\ZZ_{>0}\cdot 2\rhod$ such that $\LL(\ll_0)$ is very ample.
\end{lemma}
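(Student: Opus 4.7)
The strategy is to prove that $\LL(2\rhod)$ is relatively ample over $\spec\ZZ$ and then to invoke the general fact that a sufficiently large tensor power of a relatively ample line bundle is relatively very ample. From the construction \eqref{2dlinebundle} together with the isomorphism $M_{\ll}\otimes M_{\mu}\simeq M_{\ll+\mu}$ of rank-one $\Bmd$-modules (both sides are trivial on $\Ud_-$ and have $\Td$-weight $\ll+\mu$), one has $\LL(\ll)\otimes\LL(\mu)\simeq\LL(\ll+\mu)$, and in particular $\LL(2\rhod)^{\otimes n}\simeq\LL(2n\rhod)$. Hence every tensor power of $\LL(2\rhod)$ is again of the form $\LL(\ll_0)$ with $\ll_0\in\ZZ_{>0}\cdot 2\rhod$, which is precisely the class of weights the lemma allows.

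\emph{Step 1 (fiberwise ampleness).} First I would check that $\LL(2\rhod)$ restricts to an ample line bundle on every geometric fiber of the proper morphism $\GB\to\spec\ZZ$. Such a fiber is the classical flag variety of the split reductive group $\Gd_k$ over an algebraically closed field $k$, and the restriction of $\LL(2\rhod)$ there is the equivariant line bundle attached to $2\rhod=\sum_{\a\in R^+}\ad$, which is regular dominant. In every characteristic, the line bundle on a flag variety attached to a regular dominant weight is ample; this is a standard consequence of Kempf vanishing together with the Borel--Weil description of $\ind_{\Bmd}^{\Gd}(M_{2\rhod})$.

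\emph{Step 2 (from fibers to a very ample power).} Given fiberwise ampleness, the fiberwise criterion for relative ampleness (EGA IV, 9.6.4) applied to the proper morphism $\GB\to\spec\ZZ$ yields that $\LL(2\rhod)$ is relatively ample over $\spec\ZZ$. The standard characterization of relative ampleness (EGA II, 4.6.13) then furnishes an integer $n\geqslant 1$ for which $\LL(2\rhod)^{\otimes n}\simeq\LL(2n\rhod)$ is relatively very ample; setting $\ll_0:=2n\rhod$ completes the proof.

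The main obstacle is essentially organizational: we must verify ampleness on every single fiber, including the generic fiber over $\spec\QQ$ and all closed fibers over $\spec\mathbb{F}_p$. Because regular dominance of $2\rhod$ is a characteristic-free property, no bad primes intervene and the fiberwise argument is uniform. An alternative, more hands-on route would use Lemma \ref{2dlemma3} to show that the sections $s_{\ll_0,v_w}$, $w\in W$, globally generate $\LL(\ll_0)$ for $\ll_0$ a sufficiently large multiple of $2\rhod$, and then verify fiberwise that the induced morphism $\GB\to\PP(S(\ll_0)^{*})$ is an immersion; the EGA route above avoids that extra verification.
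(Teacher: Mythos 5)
Your proof is correct, but it takes a genuinely different route from the paper's. Where you establish ampleness of $\LL(2\rhod)$ by verifying it on every geometric fiber (regular dominance of $2\rhod$ forces ampleness on the classical flag variety in any characteristic) and then invoking the fiberwise criterion for relative ampleness from EGA, the paper instead identifies $\LL(2\rhod)$ with the anticanonical bundle: it proves $\det(\TTT_{\GB})\simeq\LL(2\rhod)$ by a descent computation with the normal exact sequence $0\to\TTT_{\Gd/\GB}\to\TTT_{\Gd}\to f^*\TTT_{\GB}\to 0$, and then cites Conrad's theorem that $\det(\TTT_{\GB})$ is ample for a flag scheme over an arbitrary base. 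After that, both arguments finish identically by taking a sufficiently high tensor power, using $\LL(\ll)\otimes\LL(\mu)\simeq\LL(\ll+\mu)$ to stay within the family $\ZZ_{>0}\cdot 2\rhod$. The trade-off is that the paper's route is shorter because Conrad's result does the hard work in one stroke, whereas your route is more self-contained: it relies only on classical ampleness on flag varieties over a field plus general EGA machinery and does not require recognizing $\LL(2\rhod)$ as the anticanonical bundle. One small remark on Step~1: Kempf vanishing is not actually needed to see that a regular dominant weight gives an ample line bundle; the Bruhat-cell argument (which the paper effectively encodes in Lemma~\ref{2dlemma3}) already shows that $\LL(\ll)$ separates points for $\ll$ regular dominant, but citing the standard representation-theoretic fact as you do is entirely adequate.
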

\begin{proof}
Put $X:=\GB$ and let $f:\Gd\ra X$ be the quotient morphism. By \cite[Remark 2.3.7 \& Theorem 5.2.11]{BC}, the determinant $\det(\TTT_X)$ of the tangent bundle $\TTT_X:=\TTT_{X/\spec\ZZ}$ is ample. Hence it suffices to show $\det(\TTT_X)\simeq \LL(2\rhod)$. By descent, it amounts to showing
\[ f^*\det(\TTT_X)\simeq \OO_{\Gd}\otimes M_{2\rhod}\]
as $\Bmd$-equivariant sheaves.

Since $f$ is smooth, we have a short exact sequence
\begin{equation}\label{2deeq0}
0\ra \TTT_{\Gd/X}\ra \TTT_{\Gd}\ra f^*\TTT_X\ra 0.
\end{equation}
Let us determine the second arrow. Recall it is induced by the canonical morphism
\begin{equation}\label{2deeq1}
\Gd\times_X\Gd\ra \Gd\times\Gd.
\end{equation}
Observe that $(g_1,g_2)\mapsto (g_1,g^{-1}_1g_2)$ gives rise to isomorphisms
\[ \Gd\times_X\Gd\simeq \Gd\times\Bmd\quad\text{ and }\quad \Gd\times\Gd\simeq  \Gd\times\Gd \]
under which \eqref{2deeq1} is identified with $\id_{\Gd}\times\iota$ where $\iota:\Bmd\hookrightarrow \Gd$ is the inclusion, and the diagonal morphisms become $g\mapsto (g,e)$. It follows that the second arrow in \eqref{2deeq0} is the monomorphism $\OO_{\Gd}\otimes\dd{\bb}_-\ra \OO_{\Gd}\otimes\dd{\gg}$ induced by the inclusion $\dd{\bb}_-\hookrightarrow\dd{\gg}$, and hence
\[  f^*\det(\TTT_X)\simeq \det(f^*\TTT_X)\simeq \det(\OO_{\Gd}\otimes \dd{\gg}/\dd{\bb}_-)\simeq \OO_{\Gd}\otimes \det( \dd{\gg}/\dd{\bb}_-).\]

The result will follow if we can show that $\det( \dd{\gg}/\dd{\bb}_-)\simeq M_{2\rhod}$ as $\Bmd$-modules. It is clear that they are isomorphic as $\Td$-modules. By definition, $M_{2\rhod}$ is a trivial $\Ud_-$-module. The same holds for $\det( \dd{\gg}/\dd{\bb}_-)$ as well because every graded piece associated to the $2\rho$-filtration on $\dd{\gg}/\dd{\bb}_-$ is a trivial $\Ud_-$-module. The proof is complete.
\end{proof}

\begin{proposition} \label{2dcor}
Let $\ll_0$ be the element from Lemma \ref{2dlemma2}. For any $w\in W$, the ring $\OO(\Ud(w))$ is generated by regular functions $s_{\ll_0,v}/s_{\ll_0,v_w}$ with $v\in S(\ll_0)$. 
\end{proposition}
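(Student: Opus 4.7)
My plan is to exploit the very ampleness of $\LL(\ll_0)$ from Lemma \ref{2dlemma2} to realize $\Ud(w)$ as the intersection of $\GB$, embedded into a projective space, with a standard affine chart. Once this is done, the generators of $\OO(\Ud(w))$ arise tautologically from the standard generators of the coordinate ring of the affine chart, pulled back to $\GB$ under the embedding.

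In detail, Lemma \ref{2dlemma1} identifies $S(\ll_0)\xrightarrow{\sim} H^0(\GB;\LL(\ll_0))$ via $v\mapsto s_{\ll_0,v}$. Very ampleness of $\LL(\ll_0)$ together with this identification produces a closed immersion
\[ i:\GB \hookrightarrow \PP:=\PP_{\ZZ}(S(\ll_0)^*) \]
over $\spec\ZZ$ with $i^*\OO_{\PP}(1)\simeq \LL(\ll_0)$, such that each $v\in S(\ll_0)$, viewed as a linear section of $\OO_{\PP}(1)$, pulls back to $s_{\ll_0,v}$. In particular, the non-vanishing locus $\{s_{\ll_0,v_w}\ne 0\}\subseteq\GB$ coincides with $i^{-1}(U_{v_w})$, where $U_{v_w}\subset\PP$ is the standard affine chart complementary to the hyperplane cut out by $v_w$. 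Its coordinate ring $\OO(U_{v_w})$ is generated over $\ZZ$ by the ratios $v/v_w$ as $v$ ranges over $S(\ll_0)$.

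Since $\ll_0\in\ZZ_{>0}\cdot 2\rhod$ is regular dominant, Lemma \ref{2dlemma3} yields the equality $\Ud(w)=\{s_{\ll_0,v_w}\ne 0\}$ as open subschemes of $\GB$: the non-vanishing locus of a section of a line bundle has a canonical open subscheme structure, and open subschemes are determined by their underlying open sets, so the geometric-point version of Lemma \ref{2dlemma3} suffices. Composing with $i$, we obtain a closed immersion $\Ud(w)\hookrightarrow U_{v_w}$, which induces a surjection $\OO(U_{v_w})\twoheadrightarrow \OO(\Ud(w))$ sending $v/v_w$ to $s_{\ll_0,v}/s_{\ll_0,v_w}$. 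Hence the functions $s_{\ll_0,v}/s_{\ll_0,v_w}$ with $v\in S(\ll_0)$ generate $\OO(\Ud(w))$.

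The only subtle point is that the projective-embedding machinery must be carried out over $\spec\ZZ$ rather than over a field, but this is precisely what very ampleness means, so it is built into the hypothesis supplied by Lemma \ref{2dlemma2}. I do not foresee any other serious obstacle.
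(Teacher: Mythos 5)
Your proof is correct and is exactly the intended unwinding of the paper's terse justification: very ampleness (Lemma \ref{2dlemma2}) together with the identification $S(\ll_0)\simeq H^0(\GB;\LL(\ll_0))$ (Lemma \ref{2dlemma1}) gives a closed immersion of $\GB$ into projective space, Lemma \ref{2dlemma3} identifies $\Ud(w)$ as the preimage of the standard affine chart $\{v_w\ne 0\}$, and the surjection of coordinate rings from a closed immersion yields the stated generators. This matches the paper's one-line citation of those same three lemmas.
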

\begin{proof}
This follows from Lemma \ref{2dlemma1}, Lemma \ref{2dlemma3} and Lemma \ref{2dlemma2}.
\end{proof}
\section{Peterson-Lam-Shimozono's theorem}\label{3}
\subsection{Statement}\label{3a}
Assume $G$ is simple and simply-connected. Denote by $W_{af}$ the affine Weyl group and by $W_{af}^-$ the set of minimal length coset representatives in the quotient $W_{af}/W$. Elements of $W_{af}$ are denoted by $wt_{\ll}$ where $w\in W$ and $t_{\ll}$ ($\ll\in\Q$) are translations $x\mapsto x+\ll$. Notice that the map $\wl\mapsto w(\ll)$ defines a bijection $W_{af}^-\simeq \Q$. For any $\wl\in W_{af}^-$, define the affine Schubert class
\[ \xi_{\wl} := \bk{\ol{\BB\cdot t^{w(\ll)}}}\in H^T_{2\ell(\wl)}(\ag). \] 
It is known that $\{\xi_{\wl}\}_{\wl\in W_{af}^-}$ forms an $\R$-basis of $\Hag$. Define $(W^P)_{af}$ to be the set of $\wl\in W_{af}$ such that
\[\left\{
\begin{array}{rcl}
\a\in R_P^+\cap w^{-1}R^+&\Longrightarrow& \a(\ll)=0\\ [.5em]
\a\in R_P^+\cap (-w^{-1}R^+) &\Longrightarrow& \a(\ll)=-1
\end{array}
\right. .\]
For any $w\in W$, denote by $\widetilde{w}$ the unique element of $W^P$ satisfying $wW_P=\widetilde{w}W_P$.
\begin{theorem}(\cite{me, LS, Peter})\label{3aPLS}
The $\R$-linear map
\[\Phi_{PLS}: H_{-\bl}^T(\ag)\ra QH^{\bl}_T(G/P)[\ecc]\]
defined by
\[\Phi_{PLS}(\xi_{\wl}):=\left\{
\begin{array}{cc}
q^{[\ll]}\s_{\widetilde{w}}& \wl\in (W^P)_{af}\\ [.5em]
0& \text{otherwise}
\end{array}
\right.
\]
is a graded homomorphism of $\R$-algebras. \hfill$\square$
\end{theorem}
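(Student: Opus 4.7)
The plan is to verify the three properties separately. First, $\R$-linearity is immediate from the definition of $\Phi_{PLS}$ on the basis $\{\xi_{wt_\lambda}\}_{wt_\lambda \in \aWm}$. Next, the graded property reduces to checking the length identity
\[
\ell(wt_\lambda) \;=\; \ell(\widetilde w) + \sum_{\alpha \in R^+ \setminus R^+_P}\alpha(\lambda)
\]
valid exactly on $(W^P)_{af}$, which follows from the classical affine length formula together with the two sign conditions defining $(W^P)_{af}$. The substance lies in showing multiplicativity.

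For multiplicativity I would follow a geometric strategy rather than the combinatorial Chevalley-based argument of \cite{LS}. The idea is to construct, for each $\beta \in \ec$, a moduli stack $\mathcal{Z}_\beta$ that simultaneously sees affine Schubert varieties in $\ag$ and stable maps into $G/P$: a natural candidate is the moduli of based quasi-maps $\mathbb{P}^1 \to G/P$ of degree $\beta$ sending $\infty$ to $eP$, equivalently a component of the moduli of $P$-bundles on $\mathbb{P}^1$ trivialized at $\infty$. This stack carries a left $G(\KK)$-action localized at $0 \in \mathbb{P}^1$, so $\BB$-orbits induce an affine Schubert stratification on it, and it admits an evaluation morphism $\ev : \mathcal{Z}_\beta \to G/P$ whose pushforward computes three-pointed genus-zero Gromov--Witten invariants of class $\beta$. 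The assignment $\xi_{wt_\lambda} \mapsto q^{[\lambda]} \sigma_{\widetilde w}$ should then arise as $\ev_*$ applied to the fundamental class of $\overline{\BB \cdot t^{w(\lambda)}}$, pushed through the orbit inclusion into $\mathcal{Z}_{[\lambda]}$.

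Granting this realization, multiplicativity follows from the compatibility of the convolution product on $\ag$ (which computes $\bulletsmall$) with the gluing morphism
\[
\MM_{0,2}(G/P,\beta_1) \times_{G/P} \MM_{0,2}(G/P,\beta_2) \longrightarrow \MM_{0,3}(G/P,\beta_1 + \beta_2)
\]
used in the definition of the quantum product, once both sides are rewritten as pushforwards from products $\mathcal{Z}_{\beta_1} \times \mathcal{Z}_{\beta_2}$ and convolved. Inverting the quantum parameters $q_1, \ldots, q_k$ is precisely what allows the effective-cone restriction on $\beta$ to be dropped and the statement phrased uniformly over all $\lambda \in \Q$.

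The hardest step will be the vanishing statement for $wt_\lambda \notin (W^P)_{af}$. One must show that the image of the corresponding $\BB$-orbit closure under $\ev$ has positive codimension in $G/P$, so that its fundamental class dies after pushforward. This requires a delicate analysis of how the $P$-reduction on the $\mathbb{P}^1$-bundle degenerates across an affine Bruhat cell, combined with a transversality argument at the Schubert boundary to rule out excess-intersection contributions. The remaining verification, on the surviving basis indexed by $(W^P)_{af}$, then reduces to matching Bruhat-type decompositions on the two sides, which is essentially bookkeeping once the geometric construction is in place.
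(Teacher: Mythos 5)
Your approach is genuinely different from the paper's, but it has gaps that prevent it from being a complete proof.

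The paper's proof (outsourced to \cite{me} and recalled in Section \ref{3b}) realizes $\Phi_{PLS}$ as the Savelyev-Seidel homomorphism $\Phi_{SS}$: for each resolution $f:\G\ra\ag$ of an affine Schubert variety, one forms the $G/P$-bundle $\ffib{f}$ over $\PP^1\times\G$ via Beauville-Laszlo and counts genus-zero stable maps in $\ffib{f}$ that project to $\PP^1\times\pt$ with degree one, evaluated over $\infty$. You instead propose a single moduli space $\mathcal Z_\beta$ of based quasi-maps $\PP^1\to G/P$ carrying \emph{both} an affine Schubert stratification and an evaluation map computing Gromov--Witten numbers. These are genuinely distinct strategies.

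Three gaps concern me.  First, the identification of $\mathcal Z_\beta$ with "a component of the moduli of $P$-bundles on $\PP^1$ trivialized at $\infty$" is not correct: a quasi-map to $G/P$ is a $G$-bundle together with a generically-defined $P$-reduction with prescribed defect, not a bare $P$-bundle, and the transition from one picture to the other is exactly where the degree bookkeeping and the affine Bruhat stratification have to be made compatible. Your assertion that $\mathcal Z_\beta$ carries a $G(\KK)$-action inducing a $\BB$-orbit stratification is plausible but unsubstantiated, and it is precisely the point on which the whole construction turns. Second, even granting the stratification, quasi-map spaces do not directly compute the three-point genus-zero GW invariants used to define $\star$: one must either pass through a quasi-map/stable-map comparison theorem or work directly with $\overline{\mathcal M}_{0,3}(G/P,\beta)$. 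In the paper's route this is handled by working throughout with stable maps into $\ffib{f}$ and invoking Proposition~\ref{3bDM} for smoothness of expected dimension, crucially because $\G$ is a \emph{resolution} (e.g.\ Bott--Samelson); with a singular Schubert cycle stratifying $\mathcal Z_\beta$, virtual fundamental class technology would have to be brought in and reconciled with intersection theory on the singular stratum. Your proposal is silent on this. Third, you acknowledge that the vanishing for $wt_\ll\notin(W^P)_{af}$ is the hardest step, but what you write (positive codimension of the image of $\ev$ plus "transversality at the Schubert boundary") is a hope, not an argument; in the Savelyev-Seidel picture the corresponding statement is the heart of \cite[Theorem 4.9]{me} and rests on a careful degree/dimension count using Lemma~\ref{3bdimlemma} and the geometry of $T$-invariant sections. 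Without a concrete replacement, the multiplicativity and vanishing claims in your sketch remain unverified. The gradedness computation you outline via the affine length formula is correct and elementary, as you say, and is not where the difficulty lies.
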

\subsection{As Savelyev-Seidel's homomorphism}\label{3b}
We briefly recall our recent new proof of Theorem \ref{3aPLS} \cite{me} which interprets $\Phi_{PLS}$ as Savelyev-Seidel's homomorphism \cite{Savelyev1, Savelyev2, Savelyev3, Seidel}. This will be crucial for the construction of the isomorphisms stated in Theorem \ref{1athm} and Theorem \ref{1athmb}.

By Beauville-Laszlo's theorem \cite{BL}, the affine Grassmannian $\ag$ of $G$ parametrizes isomorphism classes of $G$-torsors over $\PP^1$ with trivializations over $\PP^1\sm 0$. For any morphism $f:\G\ra \ag$, denote by $\fib{f}$ the associated object which is a $G$-torsor over $\PP^1\times \G$ with a trivialization over $(\PP^1\sm 0)\times \G$. For any $G$-variety $X$, define $\fib{f}(X):=\fib{f}\times_G X$. We will mainly consider the case $X=G/P$. Define $D_{f,\infty}:=\pi_f^{-1}(\infty\times\G)\subset \ffib{f}$ where $\pi_f:\ffib{f}\ra \PP^1\times\G$ is the projection. Let $\iota_{f,\infty}:D_{f,\infty}\hookrightarrow \ffib{f}$ be the inclusion. The above trivialization induces an isomorphism $D_{f,\infty}\simeq \G\times G/P$. Let $\rho\in (\Q/\Q_P)^*$. By descent, there exists a line bundle $\LL_{\rho}$ over $\ffib{f}$ whose restriction to each fiber of $\pi_f$ is isomorphic to the line bundle $G\times_P\CC_{-\rho}$ over $G/P$. 

Now assume $\G$ is a smooth projective variety. Then $\ffib{f}$ is a smooth projective variety by \cite[Lemma 3.2]{me}.
\begin{definition}\label{3bdef} Let $\eta\in \Q/\Q_P$.
\begin{enumerate}
\item Define 
\[ \MM(f,\eta):=\bigcup_{\b}\MM_{0,1}(\ffib{f},\b)\times_{(\ev_1,\iota_{f,\infty})}D_{f,\infty}\]
where $\b$ runs over all curve classes in $\ffib{f}$ such that $(\pi_{f})_*\b=[\PP^1\times\pt]$ and 
\[\langle c_1(\LL_{\rho}),\b\rangle=\rho(\eta)\]
for any $\rho\in (\Q/\Q_P)^*$.

\item Define
\[ \ev_{f,\eta}: \MM(f,\eta)\ra G/P\]
to be the composition of the morphisms
\[ \MM(f,\eta)\xrightarrow{\ev_1} D_{f,\infty}\simeq \G\times G/P\ra G/P\]
where the last arrow is the projection.

\item Define 
\[ \Pi_{f,\eta}:\MM(f,\eta)\ra \G\]
to be the morphism sending every stable map $u$ to the point in $\G$ over which $u$ lies, i.e. $\im(\pi_f\circ u)=\PP^1\times\Pi_{f,\eta}([u])$.
\end{enumerate}
\end{definition}

\begin{lemma}\label{3bdimlemma} (\cite[Lemma 3.7]{me}) The virtual dimension of $\MM(f,\eta)$ is equal to $\dim\G+\dim G/P+\sum_{\a\in R^+\sm R^+_P}\a(\eta)$. \hfill $\square$
\end{lemma}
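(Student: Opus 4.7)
The plan is to apply the standard virtual dimension formula for Kontsevich's moduli space and then reduce by the codimension of the incidence condition at $\infty$. For a smooth projective variety $X$ and a curve class $\b\in H_2(X)$, the virtual dimension of $\MM_{0,1}(X,\b)$ is $\dim X + \int_\b c_1(TX) - 2$. Since $D_{f,\infty}$ is a divisor in $\ffib{f}$ (being the preimage of the divisor $\infty\times \G\subset\PP^1\times\G$ under the smooth projection $\pi_f$), the transverse fiber product appearing in Definition~\ref{3bdef}(1) drops the virtual dimension by exactly one. Therefore the target formula reduces to computing
\[ \dim\ffib{f}+\int_\b c_1(T\ffib{f})-3 \]
for any $\b$ satisfying the two numerical constraints, and checking it equals $\dim\G+\dim G/P+\sum_{\a\in R^+\sm R^+_P}\a(\eta)$.

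Since $\ffib{f}\to \PP^1\times\G$ is a locally trivial $G/P$-bundle, we have $\dim\ffib{f}=\dim\G+\dim G/P+1$, which already accounts for the two ``$\dim$'' terms and a $+1$. Next I would split the tangent bundle via the relative tangent sequence
\[ 0\to \TTT_{\ffib{f}/\PP^1\times\G}\to \TTT_{\ffib{f}}\to \pi_f^*\TTT_{\PP^1\times\G}\to 0, \]
so that the Chern number splits into a ``horizontal'' and a ``vertical'' piece.

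For the horizontal piece, the projection formula combined with $(\pi_f)_*\b=[\PP^1\times\pt]$ gives
\[ \int_\b \pi_f^*c_1(\TTT_{\PP^1\times\G})=\int_{\PP^1\times\pt}c_1(T\PP^1)+c_1(T\G)=2, \]
because $c_1(T\G)$ restricts trivially to $\PP^1\times\pt$. For the vertical piece, the relative tangent bundle of the associated $G/P$-bundle is $\fib{f}\times_G T(G/P)$, whose determinant is the line bundle associated to the $P$-character $\det(\gg/\mathfrak{p})$, which is $-\sum_{\a\in R^+\sm R^+_P}\a$ since $\gg/\mathfrak{p}$ has $P$-weights $\{-\a\}_{\a\in R^+\sm R^+_P}$. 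Matching this with the sign convention in the definition of $\LL_\rho$ identifies the vertical determinant with $\LL_{\rho}$ for $\rho=\sum_{\a\in R^+\sm R^+_P}\a\in(\Q/\Q_P)^*$, so by the second constraint in Definition~\ref{3bdef}(1),
\[ \int_\b c_1(\TTT_{\ffib{f}/\PP^1\times\G})=\rho(\eta)=\sum_{\a\in R^+\sm R^+_P}\a(\eta). \]
Summing the two contributions yields $\int_\b c_1(T\ffib{f})=2+\sum_{\a\in R^+\sm R^+_P}\a(\eta)$; plugging this back into the displayed dimension formula cancels the $+1$ from $\dim\ffib{f}$ with $2-3$ and produces the claimed answer.

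The main conceptual obstacle is getting the sign convention right in the vertical computation — specifically, matching the weights of $\gg/\mathfrak{p}$ under the chosen Borel with the sign in the definition of $\LL_\rho$ — since a sign error here would flip $\sum_{\a}\a(\eta)$ to $-\sum_{\a}\a(\eta)$. Once this identification is fixed, the rest is a routine application of the virtual dimension formula and projection formula.
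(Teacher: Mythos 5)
The paper itself offers no local proof: the lemma is quoted verbatim from \cite[Lemma 3.7]{me} and closed with $\square$, so there is nothing in this source to compare you against. Evaluated on its own terms, your argument is correct and is the natural way to establish the formula.

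Your chain of reductions is sound. The expected dimension of $\MM_{0,1}(\ffib{f},\b)$ is $\dim\ffib{f}+\int_\b c_1(T\ffib{f})-2$; since $D_{f,\infty}=\pi_f^{-1}(\infty\times\G)$ is a smooth divisor in $\ffib{f}$, the fiber product along $\ev_1$ cuts exactly one more dimension; and $\dim\ffib{f}=\dim\G+\dim G/P+1$ because $\ffib{f}\to\PP^1\times\G$ is a $G/P$-bundle. The split of $\int_\b c_1(T\ffib{f})$ into horizontal and vertical pieces via the relative tangent sequence is the right move. The horizontal term $\int_\b\pi_f^*c_1(T(\PP^1\times\G))=\int_{(\pi_f)_*\b}(c_1(T\PP^1)+c_1(T\G))=2$ uses only $(\pi_f)_*\b=[\PP^1\times\pt]$, and the vertical term is controlled by the second constraint in Definition~\ref{3bdef}(1). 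Assembling, $(\dim\G+\dim G/P+1)+(2+\sum_{\a\in R^+\sm R^+_P}\a(\eta))-3$ gives precisely the stated answer.

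Two small points worth making explicit, since they are what your own remark about ``getting the sign right'' really hinges on. First, for $\rho_0:=\sum_{\a\in R^+\sm R^+_P}\a$ to define an element of $(\Q/\Q_P)^*$ at all (so that $\LL_{\rho_0}$ is defined), one should note $\rho_0$ is $W_P$-invariant, hence $\langle\rho_0,\ad\rangle=0$ for all simple $\a\in R_P$; this is immediate because $W_P$ permutes $R^+\sm R^+_P$. Second, the identification $\det\bigl(\TTT_{\ffib{f}/\PP^1\times\G}\bigr)\simeq\LL_{\rho_0}$ is only canonical once one fixes the construction of $\LL_\rho$, since ``restricts on each fiber to $G\times_P\CC_{-\rho}$'' determines $\LL_\rho$ a priori only up to twist by a pullback from $\PP^1\times\G$, and a twist by $\pi_f^*\OO_{\PP^1}(1)$ does change $\langle c_1(\LL_\rho),\b\rangle$ on classes with $(\pi_f)_*\b=[\PP^1\times\pt]$. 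The resolution is that $\LL_\rho$ is the associated line bundle $\fib{f}\times_P\CC_{-\rho}$ for the natural $P$-torsor $\fib{f}\to\ffib{f}$, and the vertical tangent bundle is $\fib{f}\times_P(\gg/\mathfrak{p})$; since $\gg/\mathfrak{p}$ has $T$-weights $\{-\a\}_{\a\in R^+\sm R^+_P}$, its determinant is the $P$-character $-\rho_0$, so the identification holds on the nose, with the sign agreeing with the convention in the definition of $\LL_\rho$. With these two remarks inserted, your argument is complete.
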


Let $H$ be a subgroup of $G$. Suppose $\G$ has an $H$-action such that $f$ is $H$-equivariant. By \cite[Lemma 3.5]{me}, $\ffib{f}$ has a natural $H$-action such that $\pi_f$ is $H$-equivariant, $D_{f,\infty}$ is $H$-invariant and the above isomorphism $D_{f,\infty}\simeq \G\times G/P$ is $H$-equivariant. It follows that each stack $\MM(f,\eta)$ has a natural $H$-action such that both $\ev_{f,\eta}$ and $\Pi_{f,\eta}$ are $H$-equivariant.

\begin{definition} Define Savelyev-Seidel's homomorphism
\[ \Phi_{SS}: \Hmag\ra \QHPi\]
by
\[ \Phi_{SS}(\xi_{\wl}):=\sum_{v\in W^P}\sum_{\eta\in\Q/\Q_P}q^{\eta}\left( \int_{[\MM(f_{\ag,\wl},\eta)]^{vir}}\ev_{f_{\ag,\wl},\eta}^*\s^v\right)\s_v\]
where $f_{\ag,\wl}:\G_{\wl}\ra\ag$ is a $T$-equivariant morphism which factors through a resolution $\G_{\wl}\ra\ol{\BB\cdot t^{w(\lambda)}}$, e.g. Bott-Samelson resolution.
\end{definition}

Theorem \ref{3aPLS} follows from
\begin{theorem}\label{3bthm} (\cite[Proposition 3.12 \& Theorem 4.9]{me}) 
\begin{enumerate}
\item $\Phi_{SS}$ is a graded homomorphism of $\R$-algebras.
\item $\Phi_{SS}=\Phi_{PLS}$.
\end{enumerate}
\hfill $\square$
\end{theorem}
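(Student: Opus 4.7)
My approach is to prove the two assertions essentially independently, using different techniques for each.

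For the multiplicativity in (1), the key idea is to relate the Pontryagin product on $\Hag$ to a convolution operation on $G$-bundles over $\PP^1$. Given morphisms $f_1\colon\G_1\to\ag$ and $f_2\colon\G_2\to\ag$, there is a natural morphism $f_1*f_2\colon\G_1\times\G_2\to\ag$ corresponding to concatenation of based loops. I would show that the associated bundle $\fib{f_1*f_2}$ over $\PP^1\times(\G_1\times\G_2)$ is obtained by gluing $\fib{f_1}$ and $\fib{f_2}$ along a nodal degeneration of $\PP^1$, using the trivialization over $\PP^1\setminus 0$ to identify the fibers at the node. Applying the degeneration/splitting axiom for Gromov--Witten invariants to a one-parameter family connecting the smooth $\PP^1$ with this nodal union, one identifies the Savelyev--Seidel count on $\G_1\times\G_2$ with the quantum product of the counts on $\G_1$ and $\G_2$, which is precisely the compatibility needed. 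Gradedness is compatible because the virtual dimension formula in Lemma \ref{3bdimlemma} aligns the degree of $\ev_{f,\eta}^*\s^v$ with the degree of $q^{\eta}\s_v$, once one uses that $\dim\G_{\wl}=\ell(\wl)$ for a Bott--Samelson resolution.

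For (2), I would reduce the comparison $\Phi_{SS}=\Phi_{PLS}$ to checking agreement on the basis $\{\xi_{\wl}\}_{\wl\in\aWm}$ and then analyze $\MM(f_{\ag,\wl},\eta)$ directly. Via Beauville--Laszlo, stable maps whose projection to $\PP^1$ has degree one correspond to sections of $\ffib{f_{\ag,\wl}}\to\PP^1\times\G_{\wl}$ with a prescribed value at $\infty$, and the trivialization over $\PP^1\setminus 0$ converts such sections into certain maps into the affine Schubert variety $\ol{\BB\cdot t^{w(\ll)}}$. A case analysis yields: when $\wl\in(W^P)_{af}$, the combinatorics of the boundary condition forces $\eta=[\ll]$, the relevant moduli is cut out transversely, and a direct computation recovers $q^{[\ll]}\s_{\widetilde{w}}$; when $\wl\notin(W^P)_{af}$, either the dimension count fails or a $T$-equivariant localization argument (using that the $T$-fixed sections must be constant and land in a $P$-reduction which does not exist in the required form) forces all contributions to vanish.

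The principal obstacle is the degeneration-gluing argument underlying (1): one must verify that the perfect obstruction theories on the various moduli split compatibly as the domain $\PP^1$ degenerates to a nodal curve, so that the virtual class of $\MM(f_1*f_2,\eta)$ decomposes into pieces indexed by decompositions $\eta=\eta_1+\eta_2$ and by intermediate dual Schubert classes $\s_v$, $\s^v$ glued at the node. A secondary subtlety in (2) is ruling out spurious contributions from stable maps with bubble components or sections of higher vertical degree; these should be controlled by combining $T$-equivariant rigidity with the convex geometry of the affine Schubert varieties and the MV-cycle perspective developed in Section \ref{4}.
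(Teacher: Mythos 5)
The paper does not prove Theorem \ref{3bthm}; it is imported verbatim from the author's earlier paper \cite{me}, cited there as Proposition 3.12 and Theorem 4.9. Your proposal is therefore a blind reconstruction, and my comments concern its internal viability together with the hints the present paper gives about the cited proof.

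Your argument for (1) rests on a construction that, as written, does not exist. You posit a ``natural morphism $f_1*f_2\colon\G_1\times\G_2\to\ag$ corresponding to concatenation of based loops,'' but there is no such morphism in the algebraic category. The Pontryagin product on $\Hag$ is realized algebraically through the convolution diagram $\ag\widetilde{\times}\ag:=G(\KK)\times_{G(\OO)}\ag\to\ag$, a twisted rather than direct product; to obtain a morphism from $\G_1\times\G_2$ into $\ag$ that algebraizes loop concatenation you must lift $f_1$ through $G(\KK)$, which exists only under extra hypotheses (for instance when $\G_1$ carries a Bott--Samelson structure), and the degeneration and gluing of virtual classes then has to be carried out over this twisted product. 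That is a real technical burden your sketch elides. Moreover, the paper's own hints indicate a different route: the title of \cite{me} and the remark after Lemma \ref{3bcharacterize} (which attributes ``the standard localization argument'' to the proof of Proposition 3.12 there) suggest that multiplicativity is established by comparing the action of Chevalley-type divisor classes on both sides via $T$-equivariant localization, not by a domain-degeneration argument in the style of Seidel. Your outline for (2) is closer in spirit to the geometric inputs the paper actually develops in Lemmas \ref{3bnoempty}--\ref{3bone} (dimension counts forcing $\eta=[\ll]$, $G$-equivariance and hence transversality of $\ev_{r_{\ll},\eta}$, identification of $T$-fixed sections via $L$-reductions), but your case analysis is stated too loosely to check; in particular the assertion that when $\wl\notin(W^P)_{af}$ ``the dimension count fails or a localization argument forces contributions to vanish'' is exactly the content that needs to be proved, not asserted.
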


A key step of the proof is to show that $\MM(f_{\ag,\wl},\eta)$ is smooth and of expected dimension for a choice of $f_{\ag,\wl}$. For any $\a\in R$ and $k\in\ZZ$, define the affine root group $U_{\a,k}:=\exp(z^k\gg_{\a})\subset\LL G$. 
\begin{proposition}\label{3bDM} (\cite[Proposition 4.5]{me}) Let $f:\G\ra\ag$ be a morphism. Suppose, for any $\a\in R$ and $k>0$, $\G$ has a $U_{\a,k}$-action such that $f$ is $U_{\a,k}$-equivariant. Then for any $\eta\in \Q/\Q_P$ the stack $\MM(f,\eta)$ is smooth and of expected dimension. \hfill$\square$
\end{proposition}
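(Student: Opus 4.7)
The plan is to establish both smoothness and expected dimension of $\MM(f,\eta)$ by exhibiting an unobstructed deformation theory at every point.

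At a stable map $u:C\to\ffib{f}$ with marked point $p\in C$ satisfying $u(p)\in D_{f,\infty}$, the obstruction for the fiber product $\MM_{0,1}(\ffib{f},\beta)\times_{(\ev_1,\iota_{f,\infty})}D_{f,\infty}$ fits naturally into a sequence involving $H^1(C,u^*T\ffib{f})$ and the normal bundle $N_{D_{f,\infty}/\ffib{f}}$ at $u(p)$. I would show the total obstruction vanishes by combining two ingredients: (i) $H^1(C,u^*T\ffib{f})=0$, which I argue via global generation below; and (ii) the surjectivity of evaluation onto the transverse direction at $D_{f,\infty}$, which follows from the fact that the ``section'' component of $u$---forced to exist by the degree constraint $(\pi_f)_*\beta=[\PP^1\times\pt]$, which imposes the intersection number $\beta\cdot D_{f,\infty}=1$---meets $D_{f,\infty}$ transversally at $p$, supplying an infinitesimal deformation of the domain that covers the transverse direction.

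The crux is then to prove $H^1(C,u^*T\ffib{f})=0$ by globally generating $T\ffib{f}$. By the $U_{\a,k}$-equivariance hypothesis and functoriality of $\fib{\bl}$, each action of $U_{\a,k}$ on $\G$ lifts to an action on $\fib{f}$ and hence on $\ffib{f}$ covering the action on $\G$; differentiating yields global vector fields $\xi_{\a,k}\in H^0(\ffib{f},T\ffib{f})$ for all $\a\in R$ and $k>0$. These combine with the ``fiberwise'' vector fields coming from the $G$-action on the $G/P$-factor of the trivialization $\ffib{f}|_{(\PP^1\sm 0)\times\G}\simeq(\PP^1\sm 0)\times\G\times G/P$, extended to global sections through the adjoint-bundle structure of $\fib{f}$. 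The resulting family should surject onto $T\ffib{f}$ pointwise, so that pulling back along $u$ gives a surjection $\OO_C^{\oplus N}\twoheadrightarrow u^*T\ffib{f}$; since $C$ is a tree of rational curves with $H^1(C,\OO_C)=0$, this forces the vanishing. The expected-dimension count then follows from Riemann--Roch together with Lemma \ref{3bdimlemma}.

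The main obstacle will be verifying the pointwise spanning. Away from $\pi_f^{-1}(\{0\}\times\G)$, where the bundle $\fib{f}$ is trivial via the given trivialization, the fiberwise $G/P$-vector fields alone span $T\ffib{f}$; the delicate case is above $0\in\PP^1$, where the bundle structure is nontrivial and one must carry out a local expansion of the $U_{\a,k}$-actions in the formal loop coordinate at $z=0$. The key input is that the family $\{X_\a z^k\}_{\a\in R,k>0}$ exhausts the ``positive loop'' part of the loop algebra $\gg[[z]]$ acting infinitesimally on $\ag$, which, combined with the constant-in-$z$ directions from $\gg$ supplied by the fiberwise $G/P$-vector fields, should cover all tangent directions to $\ffib{f}$ at each geometric point.
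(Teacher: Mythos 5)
The paper gives no proof of this proposition: the terminal box marks it as quoted verbatim from \cite[Proposition~4.5]{me}, so there is no internal argument against which to compare you. I therefore assess the proposal on its own terms.

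Your scaffolding is reasonable, but the step you call ``the crux''—pointwise global generation of $T\ffib{f}$—is not the right mechanism and in fact fails. Take $G=\mathbf{SL}_2$, $P=B$, and $\G$ a resolution of $\ag^{\leqslant\ad}$, which satisfies the $U_{\a,k}$-hypothesis. Over $\gamma\in\G$ with $f(\gamma)=t^{\ad}$, the surface $\ffib{f}|_{\PP^1\times\{\gamma\}}$ is the Hirzebruch surface $\mathbb{F}_2$, which has a section $\sigma_0$ with $N_{\sigma_0/\mathbb{F}_2}\simeq\OO(-2)$; inside the threefold $\ffib{f}$, the normal bundle fits in $0\to\OO(-2)\to N_{\sigma_0/\ffib{f}}\to\OO\to 0$, and in the versal case $N_{\sigma_0/\ffib{f}}\simeq\OO(-1)^{\oplus 2}$. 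A globally generated bundle pulls back to one with no negative Birkhoff--Grothendieck summands, so $T\ffib{f}$ cannot be globally generated—even though $H^1(\sigma_0,\sigma_0^*T\ffib{f})=0$ still holds, since $H^1(\PP^1,\OO(-1))=0$. Thus the pointwise-spanning claim you need is false, precisely at the step you yourself flag as delicate. Two subsidiary issues feed into the same failure: the family $\{\gg_{\a}z^k\}_{\a\in R,\,k>0}$ does not exhaust the positive loop algebra (it omits the Cartan part $\hh\otimes z\CC[[z]]$), and your ``fiberwise'' $\gg$-vector fields exist globally only for the non-negative Birkhoff--Grothendieck summands of the adjoint bundle on each $\PP^1$-fiber, a proper subspace of $\gg$ whenever the $G$-bundle is nontrivial. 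What the $U_{\a,k}$-hypothesis must actually deliver is that the horizontal deformations of $\gamma$ surject, via the Kodaira--Spencer connecting map in the long exact sequence of $0\to T_{\mathrm{vert}}\to T\ffib{f}\to\pi_f^*T(\PP^1\times\G)\to 0$, onto all of $H^1(C,u^*T_{\mathrm{vert}})$; this is strictly weaker than global generation of $T\ffib{f}$ and is the claim that needs proving. Your proposal conflates the two, so as written it has a genuine gap. The remaining pieces—section structure forcing transversality to $D_{f,\infty}$, $H^1(C,\OO_C)=0$ on a genus-$0$ tree, the Riemann--Roch count via Lemma~\ref{3bdimlemma}—are fine.
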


The condition in Proposition \ref{3bDM} is fulfilled by taking e.g. Bott-Samelson resolutions or functorial resolutions \cite[Proposition 3.9.1 \& Theorem 3.26]{equivresolution}.

\subsection{Input different from affine Schubert classes} 
Let $\lambda\in\Q$ be dominant. Take a $G$-equivariant morphism $r_{\lambda}:\G_{\ll}\ra \ag$ which factors through a resolution $\G_{\ll}\ra\agll$ and satisfies the condition in Proposition \ref{3bDM}. It follows that, by Proposition \ref{3bDM}, the stack $\MM(r_{\ll},\eta)$ is smooth and of expected dimension for any $\eta\in\Q/\Q_P$. Moreover, since $r_{\ll}$ is $G$-equivariant, $\ev_{r_{\ll},\eta}$ is $G$-equivariant and so transverse to any smooth subvariety of $G/P$.
\begin{lemma}\label{3bcharacterize}
For any $x\in H_{\bl}^T(\G_{\ll})$, we have
\begin{equation}\label{3beq1}
\Phi_{SS}((r_{\ll})_*x)=\sum_{v\in W^P}\sum_{\eta\in \Q/\Q_P} q^{\eta}\left( x\bullet (\Pi_{r_{\ll},\eta})_*\ev_{r_{\ll},\eta}^*\s^v\right) \s_v
\end{equation}
where $\bullet$ is the intersection product on $\G_{\ll}$.
\end{lemma}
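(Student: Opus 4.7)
\textit{Proof proposal.} The strategy is to express both sides of \eqref{3beq1} as integrals over the smooth moduli stack $\MM(r_\ll,\eta)$ and match them via the projection formula. Because $r_\ll$ satisfies the $U_{\a,k}$-equivariance hypothesis of Proposition \ref{3bDM}, $\MM(r_\ll,\eta)$ is smooth of expected dimension, so $[\MM(r_\ll,\eta)]^{vir}=[\MM(r_\ll,\eta)]$. Let $\widetilde{x}\in H^\bullet_T(\G_\ll)$ denote the $T$-equivariant Poincar\'e dual of $x$, available because $\G_\ll$ is smooth projective. The projection formula for the proper $T$-equivariant morphism $\Pi_{r_\ll,\eta}$ rewrites the right-hand side of \eqref{3beq1} as
\[\sum_{v,\eta}q^\eta\left(\int_{\MM(r_\ll,\eta)}\Pi_{r_\ll,\eta}^*\widetilde{x}\cup\ev_{r_\ll,\eta}^*\sigma^v\right)\sigma_v.\]

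To bring the left-hand side into the same form, I would reduce by $\R$-linearity to $x=[Z]^T$ for a $T$-invariant irreducible subvariety $Z\subset\G_\ll$, and take a $T$-equivariant resolution of singularities $\nu:\widetilde{Z}\to Z$. Setting $r_Z:=r_\ll\circ\iota_Z\circ\nu:\widetilde{Z}\to\ag$, where $\iota_Z:Z\hookrightarrow\G_\ll$ is the inclusion, we have $(r_Z)_*[\widetilde{Z}]^T=(r_\ll)_*x$. Using the invariance of the Savelyev-Seidel construction under change of auxiliary resolution, namely that $\sum_{v,\eta}q^\eta(\int_{[\MM(f,\eta)]^{vir}}\ev_{f,\eta}^*\sigma^v)\sigma_v$ depends only on $f_*[\G]^T\in\Hag$, one can take $r_Z$ in place of the resolutions entering the definition of $\Phi_{SS}$ on the affine Schubert classes occurring in $(r_\ll)_*x$, obtaining
\[\Phi_{SS}((r_\ll)_*x)=\sum_{v,\eta}q^\eta\left(\int_{[\MM(r_Z,\eta)]^{vir}}\ev_{r_Z,\eta}^*\sigma^v\right)\sigma_v.\]
Since the bundle $\fib\cdot$ is defined by pullback of $G$-torsors, functoriality of the GW moduli under base change supplies a cartesian square $\MM(r_Z,\eta)\simeq\MM(r_\ll,\eta)\times_{\G_\ll}\widetilde{Z}$ compatibly intertwining $\ev_{r_Z,\eta}$ with the pullback of $\ev_{r_\ll,\eta}$; combining this with base change for virtual classes and the projection formula yields the equality with the expression from the first paragraph.

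The main obstacle is the invariance property invoked above, together with the compatibility of virtual classes in the cartesian diagram. To prove invariance I would construct classes $\gamma_{v,\eta}$ on $\ag$ (built stratum by stratum on finite-dimensional affine Schubert subvarieties, since $\ag$ is an ind-scheme) such that $(\Pi_{f,\eta})_*\ev_{f,\eta}^*\sigma^v=f^*\gamma_{v,\eta}$ for every $T$-equivariant $f$; this follows from base change applied to a universal moduli stack $\MM_\ag(\eta)$ over $\ag$, after which invariance is immediate from the projection formula. For the virtual-class compatibility, $r_Z$ generally fails the $U_{\a,k}$-equivariance hypothesis of Proposition \ref{3bDM}, so $\MM(r_Z,\eta)$ need not be smooth of expected dimension; I would invoke functoriality of virtual fundamental classes along the regular embedding $\widetilde{Z}\hookrightarrow\G_\ll$ in the cartesian diagram to conclude $[\MM(r_Z,\eta)]^{vir}=(\iota_Z\circ\nu)^![\MM(r_\ll,\eta)]$, reducing the required integral identity to a standard Gysin-pullback computation.
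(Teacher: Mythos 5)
The paper's proof is a one-liner: it appeals to the ``standard localization argument'' and cites the proof of \cite[Proposition 3.12]{me}. The idea there is that after tensoring with $\fof\R$ one may restrict all equivariant classes to $T$-fixed points, and the fixed-point contributions to the Gromov--Witten integrals $\int_{[\MM(f,\eta)]^{vir}}\ev_{f,\eta}^*\s^v$ depend only on the $T$-fixed points of $\ag$ that $f$ hits and the local geometry there, not on the particular auxiliary resolution $f$. Writing $(r_{\ll})_*x$ in the basis $\{\xi_{\wl}\}$ and comparing fixed-point contributions on both sides gives \eqref{3beq1} at once, without ever needing to compare moduli stacks directly.

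Your proposal takes a genuinely different route: it tries to establish the identity geometrically by building a cartesian square of moduli stacks and matching virtual fundamental classes. The first paragraph (Poincar\'e duality on $\G_{\ll}$ plus the projection formula for $\Pi_{r_{\ll},\eta}$) is fine and is part of any proof. The subsequent steps, however, contain two gaps. First, the map you call ``the regular embedding $\widetilde{Z}\hookrightarrow\G_{\ll}$'' is the composite $\iota_Z\circ\nu$ of a closed immersion with a resolution of singularities; it is neither a monomorphism nor lci in general, so the Gysin map $(\iota_Z\circ\nu)^!$ you invoke is not defined, and the virtual-class functoriality you want to use (which does hold for lci or flat base change) does not apply. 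You would have to replace this with an argument that never forms $\MM(r_Z,\eta)$. Second, the ``invariance'' statement---that $\sum_{v,\eta}q^{\eta}(\int_{[\MM(f,\eta)]^{vir}}\ev_{f,\eta}^*\s^v)\s_v$ depends only on $f_*[\G]^T$---is precisely the content that needs proof, and the sketch via a universal moduli stack over the ind-scheme $\ag$ is not carried out; establishing that base-change statement without localization would require constructing a perfect obstruction theory relative to $\ag$ and proving compatibility of virtual classes for non-flat base change, which is nontrivial. The localization argument sidesteps both issues: since the Euler-class contributions at $T$-fixed points are determined by the ambient geometry of $\ag$ near those fixed points, invariance under change of resolution is automatic and no cartesian diagram of moduli is needed.
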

\begin{proof}
This follows from the standard localization argument. See e.g. the proof of \cite[Proposition 3.12]{me}.
\end{proof}


\begin{lemma}\label{3bnoempty}
The stack $\MM(r_{\ll},\eta)$ is non-empty if and only if $\eta\in [w_0(\ll)]+\ec$.
\end{lemma}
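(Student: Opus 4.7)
The plan is to analyze $\MM(r_\ll,\eta)$ fiberwise over $\G_\ll$ via the morphism $\Pi_{r_\ll,\eta}$, reducing the non-emptiness question to a computation of classes of sections of $G/P$-bundles on $\PP^1$.

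First I would fix $g\in\G_\ll$ and observe that $\Pi_{r_\ll,\eta}^{-1}(g)$ parametrizes stable maps into $\ffib{r_\ll}|_{\PP^1\times\{g\}}$ with a marked point in $D_{r_\ll,\infty}\cap(\PP^1\times\{g\})$ and with ``vertical'' degree $\eta$. The restriction $\ffib{r_\ll}|_{\PP^1\times\{g\}}$ is the $G/P$-bundle associated to the $G$-torsor $\mathcal{P}_g:=\fib{r_\ll}|_{\PP^1\times\{g\}}$ on $\PP^1$, trivialized away from $0\in\PP^1$. By Birkhoff/Grothendieck, $\mathcal{P}_g$ has Harder-Narasimhan type $\mu_g\in\cop$; since $r_\ll(g)\in\agll$ we have $\mu_g\leqslant\ll$, and every such $\mu$ is realized as $g$ ranges over $\G_\ll$ (with $\mu_g=\ll$ over the open stratum above $\agl$).

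The heart of the argument is the following claim: for a $G$-torsor $\mathcal{P}$ on $\PP^1$ of type $\mu\in\cop$, trivialized away from $0$, the set of classes $\eta\in\Q/\Q_P$ realized by a stable map $\PP^1\to\mathcal{P}/P$ (respecting the trivialization at $\infty$) is exactly $[w_0(\mu)]+\ec$. For existence, I would produce an explicit section of class $[w_0(\mu)]$: realizing $\mathcal{P}$ as induced from a $T$-reduction of type $\mu$, take the constant $T$-equivariant map at the antidominant fixed point $\dot w_0 P/P\in G/P$; pairing with $\chi\in(\Q/\Q_P)^*$ and computing the degree of the pullback line bundle identifies the class as $[w_0(\mu)]$. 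Bubbling an effective stable map of class $\beta\in\ec$ inside a fiber then produces stable maps of every class in $[w_0(\mu)]+\ec$. For the upper bound, Harder-Narasimhan semistability of $\mathcal{P}$ forces, for every dominant $\chi\in(\Q/\Q_P)^*$, the inequality $\langle\chi,[\sigma]-[w_0(\mu)]\rangle\geqslant 0$ on any section $\sigma$; since $\LL_\rho$ restricts positively to fibers for ample $\rho$, the same estimate survives the introduction of fiber bubbles.

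Taking the union over all $\mu\in\cop$ with $\mu\leqslant\ll$ completes the proof:
\[
\bigcup_{\mu\leqslant\ll,\;\mu\in\cop}\bigl([w_0(\mu)]+\ec\bigr)=[w_0(\ll)]+\ec.
\]
The inclusion $\supseteq$ is immediate (take $\mu=\ll$). For $\subseteq$, write $\ll-\mu=\sum_{i=1}^r n_i\ad_i$ with $n_i\geqslant 0$; then $[w_0(\mu)]-[w_0(\ll)]=\sum_{i=1}^r n_i[-w_0\ad_i]$, and because $-w_0$ permutes the simple coroots while $\ad_j\equiv 0\pmod{\Q_P}$ for $j>k$, each summand lies in $\ec$. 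The main obstacle I anticipate is the upper-bound half of the key claim, where one must control the degree of an arbitrary stable map (not just a smooth section); the clean route is to combine Harder-Narasimhan semistability of $\mathcal{P}$ with fiberwise positivity of $\LL_\rho$ for $\rho$ in the ample cone of $G/P$.
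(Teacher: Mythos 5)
Your proof is correct in outline but takes a genuinely different route from the paper. The paper argues by torus localization: $\MM(r_\ll,\eta)$ is a proper $T$-stack, so it is non-empty iff it has a $T$-fixed point; a $T$-fixed stable map lies over some $T$-fixed point of $\G_\ll$, hence over a $T$-fixed point $t^{\mu}\in\agll$, and decomposes as a $T$-invariant section of $\ffib{t^\mu}(G/P)$ (indexed by $(G/P)^T\simeq W^P$, of degree $[v^{-1}(\mu)]$ by \cite[Lemma~4.2]{me}) together with $T$-fixed fiber bubbles. The combinatorial identification with $[w_0(\ll)]+\ec$ is then ``clear.'' You instead work fiber-by-fiber over arbitrary $g\in\G_\ll$, identify the Harder--Narasimhan (Birkhoff--Grothendieck) type $\mu_g\leqslant\ll$, and bound section degrees using semistability of the canonical reduction. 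This has the advantage of not invoking equivariance, but the upper-bound step -- that a $P$-reduction of a $G$-torsor of HN type $\mu$ on $\PP^1$ has degree $\geqslant [w_0(\mu)]$ in the $\ec$-ordering, and that fiber bubbles only increase the degree -- is exactly where the work lies, and you have flagged it but not carried it out. Notice that the $T$-fixed-point argument makes this bound transparent with no extra input: any section degenerates, under a generic cocharacter of $T$, to a $T$-invariant section plus effective bubbles, so its degree lies in $[v^{-1}(\mu)]+\ec$ for some $v$, hence in $[w_0(\mu)]+\ec$. Your reduction of $\bigcup_{\mu\leqslant\ll}([w_0(\mu)]+\ec)$ to $[w_0(\ll)]+\ec$ via $-w_0$ permuting simple coroots is correct and agrees with what the paper leaves implicit.
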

\begin{proof}
It suffices to look at the $T$-invariant sections of $\ffib{t^{\mu}}$ with $t^{\mu}\in \agll$. By \cite[Lemma 4.2]{me}, these sections are in bijective correspondence with the $T$-fixed points of $G/P$, and for each $T$-fixed point $\dot{v}P\in G/P$, the degree of the corresponding section is equal to $[v^{-1}(\mu)]$. The rest is clear.
\end{proof}

\begin{proposition}\label{3bzero}
For any dominant $\ll\in\Q$, $\Phi_{PLS}$ vanishes on $H^T_{>2\rho(\ll-\wp(\ll))}(\agll)$.
\end{proposition}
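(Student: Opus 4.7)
The plan is to combine Theorem~\ref{3bthm}(2), which identifies $\Phi_{PLS}$ with the Savelyev--Seidel homomorphism $\Phi_{SS}$, with Lemma~\ref{3bcharacterize}, reducing the vanishing statement to a dimension count on the smooth resolution $\G_\ll$.

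Since $H^T_*(\agll)$ is spanned over $\R$ by the affine Schubert classes $\xi_{wt_\mu}$ whose underlying Schubert variety $\ol{\BB\cdot t^{w(\mu)}}$ is contained in $\agll$, it suffices by $\R$-linearity of $\Phi_{PLS}$ to show $\Phi_{SS}(\xi_{wt_\mu})=0$ for every such class with $\ell(wt_\mu)>\rho(\ll-\wp(\ll))$. For each such class, I would lift it to $\G_\ll$ by taking the class $[\widetilde{Z}]\in H^T_{2\ell(wt_\mu)}(\G_\ll)$ of the strict transform of $\ol{\BB\cdot t^{w(\mu)}}$ under $r_\ll$; since $r_\ll$ restricts to a birational morphism from $\widetilde{Z}$ onto its image, $(r_\ll)_*[\widetilde{Z}]=\xi_{wt_\mu}$.

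Applying Lemma~\ref{3bcharacterize} to $x=[\widetilde{Z}]$ and using Lemma~\ref{3bnoempty} to restrict the sum to $\eta\in[w_0(\ll)]+\ec$, I obtain
\[\Phi_{SS}(\xi_{wt_\mu})=\sum_{v\in W^P}\sum_{\eta\in[w_0(\ll)]+\ec}q^\eta\bigl([\widetilde{Z}]\bullet(\Pi_{r_\ll,\eta})_*\ev_{r_\ll,\eta}^*\s^v\bigr)\s_v.\]
The key step is a dimension analysis. By Lemma~\ref{3bdimlemma}, the class $(\Pi_{r_\ll,\eta})_*\ev_{r_\ll,\eta}^*\s^v$ lies in $H^T_{2\dim\G_\ll+2\ell(v)+2\sum_{\a\in R^+\sm R^+_P}\a(\eta)}(\G_\ll)$, and must vanish on the smooth variety $\G_\ll$ whenever $\ell(v)+\sum_{\a\in R^+\sm R^+_P}\a(\eta)>0$. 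Taking its intersection with $[\widetilde{Z}]$ and pushing forward to a point yields an element of $\R$ that survives only when $\ell(wt_\mu)+\ell(v)+\sum_{\a\in R^+\sm R^+_P}\a(\eta)\leq 0$.

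Writing $\eta=[w_0(\ll)]+\delta$ with $\delta\in\ec$, both $\ell(v)$ and $\sum_{\a\in R^+\sm R^+_P}\a(\delta)$ are non-negative, so the quantity $-\ell(v)-\sum_{\a\in R^+\sm R^+_P}\a(\eta)$ is maximized at $v=e$, $\delta=0$, with value $-\sum_{\a\in R^+\sm R^+_P}\a(w_0(\ll))$. A direct computation using $\wp=w_Pw_0$, $w_0(2\rho)=-2\rho$, and the observation that $w_P$ fixes $\rho-\rho_P$ (where $2\rho_P:=\sum_{\a\in R^+_P}\a$) yields the identity $-\sum_{\a\in R^+\sm R^+_P}\a(w_0(\ll))=\rho(\ll-\wp(\ll))$. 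Consequently every coefficient in the expansion of $\Phi_{SS}(\xi_{wt_\mu})$ vanishes as soon as $\ell(wt_\mu)>\rho(\ll-\wp(\ll))$, which proves the proposition. The step I expect to require the most care is the lifting of $\xi_{wt_\mu}$ to $\G_\ll$: one must ensure that $(r_\ll)_*[\widetilde{Z}]=\xi_{wt_\mu}$ holds cleanly (without spurious contributions from the exceptional locus), which can alternatively be achieved by choosing a Bott--Samelson resolution whose explicit Schubert sub-towers map onto the affine Schubert subvarieties of $\agll$.
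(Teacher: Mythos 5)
Your argument is correct and follows the paper's own strategy essentially verbatim: reduce by $\R$-linearity to affine Schubert classes, lift to $\G_\ll$, apply Lemma~\ref{3bcharacterize} together with Lemmas~\ref{3bdimlemma} and~\ref{3bnoempty}, and conclude by a degree count using the identity $-\sum_{\a\in R^+\setminus R^+_P}\a(w_0(\ll))=\rho(\ll-\wp(\ll))$. The only cosmetic differences are that you phrase the vanishing as ``pushforward to a point lands in positive homological degree of $\R$'' rather than ``total degree exceeds $\dim_{\RR}\G_\ll$'' (equivalent), and you exhibit a concrete lift of $\xi_{wt_\mu}$ via strict transform where the paper merely asserts one exists; your caveat at the end about potential issues with the strict transform when the Schubert variety lies in the non-isomorphism locus, and the suggested Bott--Samelson remedy, is a reasonable safeguard.
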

\begin{proof}
Let $y\in H^T_{>2\rho(\ll-\wp(\ll))}(\agll)$. We may assume $y$ is an affine Schubert class $
\bk{~\ol{\BB\cdot t^{\mu}}~}$ for some $\mu\in W\cdot\ll$. (If not, replace $\ll$ by another $\ll'$ with $\ll'<\ll$.) Then $y= (r_{\ll})_*x$ for some $x$ so that we can apply Lemma \ref{3bcharacterize}. By Lemma \ref{3bdimlemma}, for any $v\in W^P$ and $\eta\in \Q/\Q_P$, the homology degree of $(\Pi_{r_{\ll},\eta})_*\ev_{r_{\ll},\eta}^*\s^v$ is greater than or equal to $4\rho(\ll)+2\sum_{\a\in R^+\sm R^+_P}\a(\eta)$ which is greater than or equal to $4\rho(\ll)+2\sum_{\a\in R^+\sm R^+_P}\a(w_0(\ll))$ if $\MM(r_{\ll},\eta)\ne\emptyset$, by Lemma \ref{3bnoempty}. The last expression is equal to $4\rho(\ll)-2\rho(\ll-\wp(\ll))$. It follows that
\[ \deg\left((\Pi_{r_{\ll},\eta})_*\ev_{r_{\ll},\eta}^*\s^v\right) + \deg x> 4\rho(\ll) = \dim_{\RR} \G_{\ll} ,\]
and hence the intersection product in \eqref{3beq1} is zero. The result now follows from Theorem \ref{3bthm}.
\end{proof}

\begin{lemma}\label{3blevi}
For any dominant $\ll\in\Q$, the restriction of $\Pi_{r_{\ll},[w_0(\ll)]}$ to $\ev_{r_{\ll},[w_0(\ll)]}^{-1}(\dot{e}P)$ is an isomorphism onto $r_{\ll}^{-1}(\agp)$ where $L$ is the Levi subgroup of $P$ and
\[\agp:=\ol{L(\OO)\cdot t^{\wp(\ll)}} \subseteq \agll.\] 
\end{lemma}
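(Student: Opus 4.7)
The plan is to construct an explicit inverse $\sigma : r_\ll^{-1}(\agp) \to \ev_{r_\ll,[w_0(\ll)]}^{-1}(\dot{e}P)$ to the restriction of $\Pi_{r_\ll,[w_0(\ll)]}$, using the natural $L$-reduction of $G$-bundles arising from points of $\agp$.

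I would first show that every stable map $u$ in $\ev_{r_\ll,[w_0(\ll)]}^{-1}(\dot{e}P)$ is an honest section, with no bubble components. Indeed, a bubble of class $\b \in \ec \sm \{0\}$ would leave a main section component of class $\eta' = [w_0(\ll)] - \b$; applying Lemma \ref{3bnoempty} to this section forces $\eta' \in [w_0(\ll)] + \ec$, whence $-\b \in \ec$, contradicting $\b \in \ec \sm \{0\}$.

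To construct $\sigma$, I would use Beauville-Laszlo to translate a point $y \in r_\ll^{-1}(\agp)$ into an $L$-bundle $\FF_L$ on $\PP^1$ equipped with a trivialization over $\PP^1 \sm 0$. Pushing forward along $L \hookrightarrow G$ recovers $\fib{r_\ll}|_{\{y\}\times\PP^1}$ with its given trivialization, while pushing forward only along $L \hookrightarrow P$ provides a canonical $P$-reduction of $\fib{r_\ll}|_{\{y\}\times\PP^1}$, equivalently a section $\sigma(y)$ of $\ffib{r_\ll}|_{\{y\}\times\PP^1}$. In the trivialization, $\sigma(y)$ is identically $eP$ on $\PP^1 \sm 0$, so it evaluates to $\dot{e}P$ at $\infty$. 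Its degree in $\pi_2(G/P) \simeq \Q/\Q_P$ equals the image of $\deg(\FF_L) \in \pi_1(L) = \Q/\Q_P$, which is $[\wp(\ll)] = [w_0(\ll)]$ since $W_P$ acts trivially on $\Q/\Q_P$. Functoriality upgrades this pointwise construction to a morphism of schemes.

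It remains to check $\Pi \circ \sigma = \id$ (immediate from the construction) and $\sigma \circ \Pi = \id$. For the latter, given a section $u$ over $y$ with $u(\infty) = \dot{e}P$, I would form the Levi quotient $\FF'_L$ of the $P$-reduction $\FF_P$ associated to $u$ and compare with $\FF_L$ via the trivializations over $\PP^1 \sm 0$, which match by the common evaluation at $\infty$. This identifies $\FF'_L \cong \FF_L$ as trivialized $L$-bundles, and then $\FF_P \cong \FF_L \times_L P$---equivalent to $u = \sigma(y)$---is forced by the minimality of the degree $[w_0(\ll)]$, since nontrivial twists by the unipotent radical of $P$ would strictly increase the section degree via roots in $R^+ \sm R^+_P$. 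The main obstacle is this rigidity argument: it requires a careful degree bookkeeping using the root-group filtration of the unipotent radical, and can likely be reduced to the $T$-fixed case via the $L$-equivariance of everything in sight.
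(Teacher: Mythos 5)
Your first three steps are sound and broadly parallel the paper's construction: you use the $L$-reduction of $\fib{r_\ll}$ over $\agp$ (via Beauville--Laszlo applied to $L$) to build a family of sections landing in $\ev_{r_\ll,[w_0(\ll)]}^{-1}(\dot{e}P)$, and your no-bubbles argument from Lemma \ref{3bnoempty} is correct. The gap is in the final rigidity step, where you claim that $\sigma\circ\Pi=\id$ because ``nontrivial twists by the unipotent radical of $P$ would strictly increase the section degree via roots in $R^+\sm R^+_P$.'' This claim is false: the degree of a section in $\pi_2(G/P)\simeq \Q/\Q_P$ is computed by pairing with $c_1(\LL_\rho)$ for $\rho\in(\Q/\Q_P)^*$, and each such $\rho$ extends to a character of $P$ that is \emph{trivial on the unipotent radical} $U_P$. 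Consequently, if $u$ corresponds to a $P$-reduction $\FF_P$ with Levi quotient $\FF_L=\FF_P/U_P$, then
\[
u^*\LL_\rho \simeq \FF_P\times_P\CC_{-\rho}\simeq \FF_L\times_L\CC_{-\rho},
\]
which depends only on $\FF_L$. Thus two $P$-reductions with the same Levi quotient always produce sections of the same degree, and the degree minimality cannot force $\FF_P\simeq\FF_L\times_L P$. The roots in $R^+\sm R^+_P$ are \emph{not} characters of $P$, so they do not define line bundles on $G/P$ and do not enter the degree computation at all.

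The paper avoids this issue by a fundamentally different argument for uniqueness. Having constructed (essentially your $\sigma$) a closed immersion
\[
r_\ll^{-1}(\agp)\hookrightarrow \ev_{r_\ll,[w_0(\ll)]}^{-1}(\dot{e}P),
\]
the paper observes that both sides have the same dimension (this uses the virtual-dimension formula of Lemma \ref{3bdimlemma} and the smoothness of expected dimension from Proposition \ref{3bDM}), hence the image is a union of connected components of the smooth target. It then rules out any other component by $T$-localization: every component contains a $T$-fixed point, which by the classification of $T$-invariant sections is a section of $\ffib{t^\mu}$ for some $\mu\in\Q\cap\conv(W\cdot\ll)$; the constraints $\ev(\cdot)=\dot{e}P$ and degree $[w_0(\ll)]$ force $[\mu]=[w_0(\ll)]$, hence $\mu\in\Q\cap\conv(W_P\cdot w_0(\ll))$, i.e.\ $t^\mu\in\agp$ and the fixed point already lies in the immersed component. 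To repair your proposal, you should replace the unipotent-twist rigidity step with some global argument of this kind---either the dimension count and $T$-fixed-point analysis as in the paper, or a genuine classification of $P$-reductions with fixed Levi quotient, degree, and value at $\infty$, which is not a purely degree-theoretic statement.
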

\begin{proof}
Observe that the structure group of the $G$-torsor $\fib{\agp}$ is reduced to $L$. In fact, the associated $L$-torsor is nothing but $\fibl{\agp}$. Hence
\[ \left. \ffib{\agll}\right|_{\PP^1\times \agp}\simeq \ffibl{\agp}.\]
The $L$-equivariant morphism $\{\dot{e}P\}\hookrightarrow G/P$ induces a morphism 
\begin{equation}\label{3beq2}
\PP^1\times \agp\simeq \fibl{\agp}(\{\dot{e}P\})\hookrightarrow \ffibl{\agp}.
\end{equation}
In other words, every point in $\agp$ gives rise to a section of $\ffib{\agll}$ lying over the same point. It is not difficult to see that the degrees of these sections are equal to $[w_0(\ll)]$. Thus, \eqref{3beq2} defines a closed immersion 
\begin{equation}\label{3beeq1}
r_{\ll}^{-1}(\agp)\hookrightarrow \ev^{-1}_{r_{\ll},[w_0(\ll)]}(\dot{e}P).
\end{equation}
By a dimension argument and the smoothness of $\ev^{-1}_{r_{\ll},[w_0(\ll)]}(\dot{e}P)$, the image of \eqref{3beeq1} is a connected component of $\ev^{-1}_{r_{\ll},[w_0(\ll)]}(\dot{e}P)$. 

It remains to show that $\ev^{-1}_{r_{\ll},[w_0(\ll)]}(\dot{e}P)$ has no other connected component. Every other component has a $T$-fixed point, and by Lemma \ref{3bnoempty} every $T$-fixed point is represented by a $T$-invariant section of $\ffib{t^{\mu}}$ for some $\mu\in \Q\cap\conv(W\cdot \ll)$. Since the marked point on this section (over $\infty$) is required to hit $\dot{e}P\in G/P$, its degree is equal to $[\mu]$. It follows that $[\mu]=[w_0(\ll)]$, and hence $\mu\in\Q\cap\conv(W_P\cdot w_0(\ll))$ or equivalently $t^{\mu}\in \agp$. Thus, this section, viewed as a point of the component of $\ev^{-1}_{r_{\ll},[w_0(\ll)]}(\dot{e}P)$ we are looking at, belongs to the component we have already determined, a contradiction.
\end{proof}

\begin{proposition}\label{3bone}
Let $\ll\in\Q$ be dominant. For any $x\in H^T_{2\rho(\ll-\wp(\ll))}(\G_{\ll})$, we have 
\[ \Phi_{PLS}((r_{\ll})_*x)=q^{[w_0(\ll)]} x\bullet \left[ r_{\ll}^{-1}\left(\agp\right)\right] .\]
\end{proposition}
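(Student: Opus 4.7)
The plan is to expand $\Phi_{PLS}((r_{\ll})_*x)=\Phi_{SS}((r_{\ll})_*x)$ using Lemma \ref{3bcharacterize}, isolate a single nonvanishing summand in the resulting double sum by a dimension count that refines the one in the proof of Proposition \ref{3bzero}, and then compute that summand geometrically via Lemma \ref{3blevi}.

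Applying Lemma \ref{3bcharacterize} gives
\[ \Phi_{SS}((r_{\ll})_*x)=\sum_{v\in W^P}\sum_{\eta\in\Q/\Q_P} q^{\eta}\left(x\bullet (\Pi_{r_{\ll},\eta})_*\ev_{r_{\ll},\eta}^*\s^v\right)\s_v, \]
and by Lemma \ref{3bnoempty} the sum is supported on $\eta\in[w_0(\ll)]+\ec$. For such $\eta$, the real homology degree of $(\Pi_{r_{\ll},\eta})_*\ev_{r_{\ll},\eta}^*\s^v$ on $\G_{\ll}$ (which has complex dimension $2\rho(\ll)$) equals $4\rho(\ll)+2\sum_{\a\in R^+\sm R^+_P}\a(\eta)+2\ell(v)$. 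Using the identity $2\sum_{\a\in R^+\sm R^+_P}\a(w_0(\ll))=-2\rho(\ll-\wp(\ll))$ already exploited in the proof of Proposition \ref{3bzero}, together with the strict positivity $\sum_{\a\in R^+\sm R^+_P}\a(\ad_i)>0$ for $i\leqslant k$, this degree is $\geqslant 4\rho(\ll)-2\rho(\ll-\wp(\ll))$, with equality iff $\eta=[w_0(\ll)]$ and $v=e$. Since $\deg x=2\rho(\ll-\wp(\ll))$ and $\dim_{\RR}\G_{\ll}=4\rho(\ll)$, any strict inequality pushes the combined degree above $\dim_{\RR}\G_{\ll}$ and the intersection pairing with $x$ vanishes. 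Hence the sum collapses to the single term $q^{[w_0(\ll)]}\bigl(x\bullet (\Pi_{r_{\ll},[w_0(\ll)]})_*\ev_{r_{\ll},[w_0(\ll)]}^*\s^e\bigr)\s_e$.

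To evaluate this surviving summand, note that $\s^e$ is the Poincar\'e dual of the $T$-fixed point $\dot{e}P\in G/P$, and $\ev_{r_{\ll},[w_0(\ll)]}$ is $G$-equivariant, hence equivariantly transverse to $\dot{e}P$. By Proposition \ref{3bDM}, $\MM(r_{\ll},[w_0(\ll)])$ is smooth of expected dimension, so the virtual fundamental class coincides with the ordinary one, and $\ev_{r_{\ll},[w_0(\ll)]}^*\s^e$ is represented in equivariant homology by the fundamental class $\bigl[\ev_{r_{\ll},[w_0(\ll)]}^{-1}(\dot{e}P)\bigr]$. By Lemma \ref{3blevi}, $\Pi_{r_{\ll},[w_0(\ll)]}$ restricts to an isomorphism from this preimage onto $r_{\ll}^{-1}(\agp)$, so the pushforward is $[r_{\ll}^{-1}(\agp)]$. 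Combining with $\s_e=1$ and replacing $\Phi_{SS}$ by $\Phi_{PLS}$ through Theorem \ref{3bthm}(2) yields the announced identity.

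The most delicate step will be the equivariant transversality and Poincar\'e-duality identification $\ev_{r_{\ll},[w_0(\ll)]}^*\s^e=\bigl[\ev_{r_{\ll},[w_0(\ll)]}^{-1}(\dot{e}P)\bigr]$ on the smooth Deligne--Mumford stack $\MM(r_{\ll},[w_0(\ll)])$, together with the comparison of the virtual and ordinary fundamental classes, both justified by Proposition \ref{3bDM}. The degree bookkeeping is a direct refinement of the estimate in the proof of Proposition \ref{3bzero}; the genuinely new geometric ingredient is Lemma \ref{3blevi}, which is what converts a cohomological evaluation on $\MM$ into an explicit intersection computation on $\G_{\ll}$.
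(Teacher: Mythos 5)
Your proposal is correct and follows the same route as the paper: expand via Lemma \ref{3bcharacterize}, isolate the $v=e$, $\eta=[w_0(\ll)]$ term by the degree count already used in Proposition \ref{3bzero}, and identify that term with $q^{[w_0(\ll)]}x\bullet[r_{\ll}^{-1}(\agp)]$ via Lemma \ref{3blevi}. You have merely spelled out the degree bookkeeping and the Poincar\'e-duality step in more detail than the paper's one-sentence proof, but the argument is the same.
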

\begin{proof}
By the degree argument in the proof of Proposition \ref{3bzero}, the summation \eqref{3beq1} contains only one possibly non-zero summand, namely the term corresponding to $v=e$ and $\eta=[w_0(\ll)]$. The assertion then follows from Theorem \ref{3bthm}, Lemma \ref{3bcharacterize} and Lemma \ref{3blevi}.
\end{proof}
\section{Yun-Zhu's theorem}\label{4}
\subsection{Statement}\label{4a}
Recall the notation introduced in Section \ref{2c}. Assume $G$ is simple.

Consider the $T$-equivariant hypercohomology functor $H^{\bl}_T(-)$ on $\per$. By \cite[Lemma 2.2 \& Lemma 2.4]{YZ}, $H^{\bl}_T(-)\simeq H^{\bl}(-)\otimes H_T^{\bl}(\pt)$ as tensor functors. Define a natural transformation $\s_{can}$ from the functor $H^{\bl}_T(-)\otimes_{\R} \Hmag$ to itself
\[ \s_{can}(v\otimes h):= \sum_i (h^i\cup v)\otimes (h_i\bulletsmall h)\] 
for any $v\in H^{\bl}_T(\FF)$ with $\FF\in \per$ and $h\in \Hmag$, where $\{h_i\}$ and $\{h^i\}$ are dual bases of $\Hmag$ and $H_T^{\bl}(\ag)$ respectively. By \cite[Lemma 3.1]{YZ}, $\s_{can}$ is a tensor automorphism, and hence it induces, via the Tannakian formalism, a homomorphism of $\R$-algebras
\[ \widetilde{\Phi}_{YZ}:\OO(\Gd\times \spec\R )\ra \Hmag\]
satisfying the property that for any $\FF\in \per$, $v\in H^{\bl}_T(\FF)$ and $v^*\in H^{\bl}_T(\FF)^*$, 
\begin{equation}\label{4aeq}
\widetilde{\Phi}_{YZ}(\vp(\FF,v,v^*)) = \sum_i\langle v^*, h^i\cup v\rangle h_i
\end{equation}
where $\vp(\FF,v,v^*)\in\OO(\Gd\times \spec H_T^{\bl}(\pt))$ is defined by
\[ \vp(\FF,v,v^*)(g,h):=\langle v^*, g\cdot v\rangle.\]


$\widetilde{\Phi}_{YZ}$ is graded if we grade $\OO(\Gd)$ using the conjugate action of $2\rho:\gm\ra \Td$. Moreover, by showing that $\s_{can}$ preserves the weight filtration induced by the $T$-equivariant analogue of the decomposition \eqref{4aweight}, $\widetilde{\Phi}_{YZ}$ factors through $\OO(\Bd\times\spec\R)$.

Consider an element $c\in H^2_T(\ag;\QQ)$ which is the unique primitive $T$-equivariant lift of the positive generator of $H^2(\ag;\ZZ)$. By the facts that $c$ is primitive and $\s_{can}$ commutes with the cup product $c\cup -$, $\widetilde{\Phi}_{YZ}$ further factors through the coordinate ring $\OO(\Bed)$ of the centralizer group scheme in $\Bd\times\spec\R$ of an element $e^T\in \dd{\gg}\otimes\R\otimes\QQ$ which is induced by the natural transformation
\[c\cup -:H^{\bl}_T(-)\ra H^{\bl+2}_T(-)\]
via the Tannakian formalism. By \cite[Proposition 5.6 \& Proposition 5.7]{YZ},
\begin{equation}\label{4aelement} 
e^T=e+f
\end{equation}
where 
\[ e:= \sum_{i=1}^r|\al_i|^2 e_{\ad_i}\quad\text{ and }\quad f:=2\frac{\langle-,-\rangle_{Kil}}{\langle\ad_0,\ad_0\rangle_{Kil}}.\]
Here,
\begin{enumerate}
\item $|\cdot|$ is a $W$-invariant norm on $\co_\RR$, normalized such that $|\al_i|^2=1$ if $\al_i$ is short;

\item $e_{\ad_i}$ is a generator of $\dd{\gg}_{\ad_i}\simeq\ZZ$; and

\item $\langle-,-\rangle_{Kil}$ is the Killing form regarded as an element of $\oc\otimes \oc\subset\dd{\gg}\otimes\R$.
\end{enumerate}

Observe that $f$, regarded as an element of $\home_{\QQ}(\co_\QQ,\oc_\QQ)$, sends each $\ad_i$ to $|\ad_i|^2 \a_i$. Suppose $G$ is simply-connected. Then $\co=\Q$. Since $\Q$ is generated by the simple coroots over $\ZZ$, we conclude that $f$ is defined over $\ZZ$.

\begin{theorem}(\cite[Theorem 6.1]{YZ}\label{4aYZ})
Assume $G$ is simple and simply-connected. The homomorphism 
\[ \Phi_{YZ}:\OO(\Bed)\ra \Hmag\]
induced by $\widetilde{\Phi}_{YZ}$ is bijective after inverting $\ell_G\in\ZZ$, the square of the ratio of the lengths of the long and short coroots of $G$. \hfill$\square$
\end{theorem}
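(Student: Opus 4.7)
The plan is to compare source and target as graded free $\R$-modules with matching Hilbert series, then reduce to a non-equivariant specialization where the statement becomes a classical identification of the coordinate ring of a principal centralizer with the homology of the affine Grassmannian. First, I would establish module structure on both sides. The target $\Hmag$ is a free graded $\R$-module with basis $\{\xi_{\wl}\}_{\wl\in W_{af}^-}$ coming from the affine Schubert stratification recalled in Section \ref{3a}, and its Poincar\'e series is determined by the lengths $\ell(\wl)$. For the source, after inverting $\ell_G$ every $|\ad_i|^2$ becomes a unit, so the component $e=\sum|\ad_i|^2 e_{\ad_i}$ of $e^T$ is a principal nilpotent in $\dd{\gg}\otimes\R[\ell_G^{-1}]$; this forces $\Bed\to\spec\R[\ell_G^{-1}]$ to be a smooth commutative group scheme of relative dimension $r$, and $\OO(\Bed)$ is a graded free $\R[\ell_G^{-1}]$-module.

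Second, I would specialize to the augmentation $\R\to\ZZ[\ell_G^{-1}]$ sending all equivariant parameters to zero. The fiber of $\Bed$ at $0$ is the centralizer $Z_{\Bd}(e)$ of the principal nilpotent, and the specialized homomorphism
\[ \ol{\Phi}_{YZ}:\OO(Z_{\Bd}(e))\ra H_{\bl}(\ag;\ZZ[\ell_G^{-1}]) \]
should coincide with the classical isomorphism of Bezrukavnikov--Finkelberg--Mirkovi\'c / Ginzburg obtained from the geometric Satake equivalence. To see this one unpacks the Tannakian definition of $\widetilde{\Phi}_{YZ}$: setting $h=0$ kills the second summand in $H_T^{\bl}(-)\simeq H^{\bl}(-)\otimes \R$, and $\s_{can}$ reduces to the convolution action of $H_{\bl}(\ag)$ on the fiber functor $H^{\bl}(-)$, so the induced map on spectra is exactly the map in the classical theorem. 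The factorization through the centralizer of $e$ corresponds to the vanishing identity $(c\cup-)\circ\s_{can}=\s_{can}\circ(c\cup-)$ combined with formula \eqref{4aelement}.

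Third, I would conclude by a graded lifting argument. Fix a cohomological degree $n$; both $\OO(\Bed)_n$ and $H_{-n}^T(\ag)$ are finitely generated free modules over $\R^{\leq n}=\bigoplus_{k\leq n}H^{2k}_T(\pt)$, and their ranks coincide by the Hilbert series comparison computed via Kostant's theorem on the principal centralizer. Since $\ol{\Phi}_{YZ}$ is an isomorphism in each fixed degree and the equivariant parameters in $\R$ are of strictly positive degree, a graded Nakayama argument applied one degree at a time promotes the specialization isomorphism to an isomorphism of $\R[\ell_G^{-1}]$-modules, proving the theorem.

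The hard part will be matching the specialized map $\ol{\Phi}_{YZ}$ with the classical Ginzburg isomorphism: this requires tracing how $c\cup-$ on the Satake side corresponds to the infinitesimal action of $e$ on the group-scheme side, and verifying that the Hilbert series agree \emph{integrally} after inverting $\ell_G$ rather than only rationally. Controlling integrality in the non-simply-laced types, where $e$ has unequal coefficients on the simple root spaces, is exactly what forces the inversion of $\ell_G$ and is the delicate technical point of the argument.
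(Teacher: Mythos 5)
The paper does not give a proof of this statement; it is stated with the citation to \cite[Theorem 6.1]{YZ} and closed with a $\square$, so it is used purely as an external input. There is therefore no in-text argument to compare your sketch against, and I can only assess the sketch on its own terms.

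On those terms, your outline has a genuine gap at its hinge. You propose to specialize at $h=0$, identify the resulting map with ``the classical isomorphism of Bezrukavnikov--Finkelberg--Mirkovi\'c / Ginzburg,'' and then promote the isomorphism to $\R$-coefficients by graded Nakayama. But the Bezrukavnikov--Finkelberg--Mirkovi\'c and Ginzburg results are established over $\CC$, whereas the theorem you are proving asserts bijectivity over $\ZZ[\ell_G^{-1}]$. The non-equivariant integral statement --- that $\OO(Z_{\Bd}(e))\otimes\ZZ[\ell_G^{-1}]\ra H_{\bullet}(\ag;\ZZ[\ell_G^{-1}])$ is an isomorphism --- is not a classical fact you can invoke; it is precisely the $h=0$ fiber of the theorem at hand, and proving it is exactly where the integrality difficulty you flag at the end actually lives. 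As structured, your step two defers the entire integral content to references that only cover the field case, and no substitute argument is offered, so the reduction is circular. Promoting a $\CC$-isomorphism to a $\ZZ[\ell_G^{-1}]$-isomorphism needs genuine input (torsion-freeness on both sides plus an integral rank comparison, or a direct basis-to-basis match such as the MV-cycle to affine-Schubert comparison in Section \ref{4b}), and that input is the substance of the theorem.

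A secondary concern: the graded Nakayama step requires $\OO(\Bed)$ to be graded free over $\R$ with degree-wise finite rank. Smoothness of the group scheme $\Bed\ra\spec\R$ after inverting $\ell_G$ does not by itself give freeness of the coordinate ring as a graded $\R$-module; that requires a Kostant-section style argument, which over $\ZZ[\ell_G^{-1}]$ (rather than over a field) is nontrivial and cannot simply be assumed.
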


\begin{remark} In \cite{BFM}, Bezrukavnikov, Finkelberg and Mirkovi\'c constructed the same isomorphism for the case where the coefficient ring is $\CC$. Their method does not rely on the geometric Satake equivalence. 
\end{remark}

\subsection{Description in terms of Mirkovi\'c-Vilonen cycles}\label{4b}
We describe Yun-Zhu's isomorphism $\Phi_{YZ}$ from Theorem \ref{4aYZ} in terms of Mirkovi\'c-Vilonen cycles (MV cycles) in $\ag$. When the coefficient ring is $\CC$, this was done in \cite[Section 10.2]{Acta}. The integral case is similar. For the convenience of the reader, we provide the details.

Let $\mu\in \co$. Recall
\[  \MV:= \left\{ y\in\ag\left|~\lim_{s\to\infty}2\rhod(s)\cdot y= t^{\mu}\right. \right\}.\]
It is a subset of $\ag$ whose dimension and codimension are both infinite. 
\begin{lemma}(\cite[Proposition 3.1]{MV}) \label{4bclosure}
$\ol{\MV}=\bigcup_{\nu\geqslant\mu}S_{\nu}^-$. \hfill $\square$
\end{lemma}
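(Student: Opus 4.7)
The plan is to identify the semi-infinite cell $\MV$ with the orbit $U_-(\KK)\cdot t^\mu$ of the unipotent radical $U_-\subset G$, and then to read off the closure from the Bialynicki-Birula flow attached to $2\rhod$. Stability of $\MV$ under $U_-(\KK)$ is immediate: every root subgroup appearing in $U_-(\KK)$ corresponds to a negative root $\a$, so $2\rhod(s)\,u\,2\rhod(s)^{-1}\to e$ as $s\to\infty$ for any $u\in U_-(\KK)$, and $u\cdot y$ and $y$ share the same $2\rhod$-limit. For transitivity of this orbit action on $\MV$ I would intersect with a spherical slice $\agll$ containing $t^\mu$: inside the projective variety $\agll$, the intersection $\MV\cap\agll$ is the Bialynicki-Birula attracting cell of the isolated $T$-fixed point $t^\mu$, and a tangent-space count at $t^\mu$ shows it is freely swept out by the affine root subgroups lying in $U_-(\KK)$.

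Once the identification $\MV = U_-(\KK)\cdot t^\mu$ is in hand, $\ol{\MV}$ is automatically $U_-(\KK)$-stable, hence a union of semi-infinite cells, and the content of the lemma reduces to pinning down which $S^-_\nu$ appear. For the inclusion $\bigcup_{\nu\geqslant\mu}S^-_\nu\subseteq\ol{\MV}$ I would induct on the height of $\nu-\mu$ as a non-negative integer combination of simple coroots. The base case $\nu=\mu+\ad_i$ reduces, via the $SL_2$-subgroup attached to $\a_i$ together with left translation by $t^\mu$, to a computation inside a $\PP^1\subset\ag$ whose two $T$-fixed points are $t^\mu$ and $t^{\mu+\ad_i}$ and in which a one-parameter subgroup of $U_-(\KK)$ acts as an affine line accumulating at $t^{\mu+\ad_i}$; the general case follows by iterating this move.

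The reverse inclusion---that $t^\nu\in\ol{\MV}$ forces $\nu\geqslant\mu$---is where I expect the main obstacle to lie. My plan is to pass to a single spherical slice $\agll$ containing both $t^\mu$ and the specialization (possible because $T$-stable closures in $\ag$ have finite support on the fixed locus) and to carry out a weight/convexity argument on this projective variety: the Bialynicki-Birula attracting cell of $t^\mu$ in $\agll$ has closure contained in the union of attracting cells indexed by $T$-fixed points accessible from $t^\mu$ by chains of $T$-equivariant rational curves, and the fact that every such curve arises from an $SL_2$-Schubert line forces $\nu-\mu$ to lie in $\sum_i\ZZ_{\geqslant 0}\ad_i$. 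Equivalently, one can test against $T$-equivariant sections of a sufficiently positive line bundle on $\agll$ and use semi-continuity of $T$-weights under specialization to recover the same conclusion. In practice, the most efficient route is to import the direct coordinate calculation in the proof of Proposition 3.1 of \cite{MV}.
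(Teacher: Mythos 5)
The paper itself offers no proof of this lemma: it is stated with a citation to \cite[Proposition~3.1]{MV} and a closing $\square$, so there is no internal argument to compare against. Your sketch therefore stands on its own, and it correctly reproduces the two standard ingredients: the identification $S^-_\mu = U_-(\KK)\cdot t^\mu$ (so that $\ol{S^-_\mu}$ is automatically a union of semi-infinite cells) and the rank-one degeneration $\lim_{c\to\infty}\exp(cz^{-1}e_{-\ad_i})\cdot t^{\mu} = t^{\mu+\ad_i}$ inside the affine $SL_2$, which, together with $U_-(\KK)$-stability and induction on height, gives the inclusion $\bigcup_{\nu\geqslant\mu}S^-_\nu\subseteq\ol{S^-_\mu}$ cleanly.

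The genuine gap is in the reverse inclusion, and you partly acknowledge it yourself. The assertion that, for the \emph{singular} projective variety $\agll$, the closure of the Bia\l{}ynicki-Birula attracting cell of $t^{\mu}$ is contained in the union of cells indexed by fixed points reachable via chains of $T$-equivariant rational curves is not something you can take for granted: the BB filtration and its closure order for $\gm$-actions on singular varieties require hypotheses (normality, a filtrable action, or a GKM-type hypothesis) that you have not verified here, and even granting it you would still need an independent argument that every $T$-curve in $\agll$ with $t^{\mu}$ as its attracting endpoint joins it to some $t^{\mu+m\ad}$ with $m>0$, rather than to a fixed point shifted by an arbitrary affine coroot. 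The "test against $T$-equivariant sections of a positive line bundle" variant is the idea that actually works (it reduces to a weight-semicontinuity statement under specialization in a projective embedding), but as written it is a plan rather than a proof, and you concede as much by ending with "import the direct coordinate calculation in the proof of Proposition 3.1 of \cite{MV}". In the context of this paper there is nothing wrong with simply citing \cite{MV} as the author does; but as a self-contained proof your reverse inclusion is incomplete. A smaller issue of the same flavour: the transitivity of $U_-(\KK)$ on $S^-_\mu$, which you invoke to know that $\ol{S^-_\mu}$ is a union of cells, is asserted via a "tangent-space count at $t^\mu$", but $U_-(\KK)$ is an ind-group acting on the possibly singular $\agll$, so this too needs either a direct Iwasawa-type computation or a citation.
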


\begin{lemma}(\cite[Theorem 3.2]{MV})\label{4bdim}
Let $\ll\in\cop$ and $\mu\in\co$. Every irreducible component of $\MV\cap \agl$ has dimension $\rho(\ll-\mu)$.  \hfill $\square$
\end{lemma}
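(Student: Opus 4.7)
The plan is to follow the classical Mirković–Vilonen strategy of sandwiching $\dim(\MV \cap \agl)$ between matching upper and lower bounds, both coming from the $\gm$-action on $\ag$ induced by the cocharacter $2\rhod:\gm\to\Td$. Under this action, $\MV$ is by definition the attracting locus of the $T$-fixed point $t^\mu$, so it is natural to run Białynicki–Birula on a finite-dimensional truncation.

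For the upper bound $\dim(\MV \cap \agl)\le \rho(\ll-\mu)$, I would work on the projective variety $\agll=\overline{\agl}$ and apply the Białynicki–Birula decomposition for the $\gm$-action via $2\rhod$. At each $T$-fixed point $t^\nu\in\agll$, the attracting stratum has dimension equal to the number (with multiplicity) of negative weights of $2\rhod$ on $T_{t^\nu}\agll$. Using the explicit parametrization of $\ag$ via $G(\KK)/G(\OO)$ and writing $T_{t^\nu}\agll$ in terms of affine root spaces, the weight computation at $t^\mu$ yields exactly $\rho(\ll-\mu)$ negative weights, giving the upper bound.

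For the lower bound, I would introduce the opposite semi-infinite cell $S^+_\nu:=U(\KK)\cdot t^\nu$. A standard Iwasawa/Bruhat-type computation gives $\dim(S^+_\nu\cap\agl)=\rho(\ll)+\rho(\nu)$ whenever this intersection is non-empty. Since $\agl$ is irreducible of dimension $2\rho(\ll)$, and every point of $\MV\cap\agl$ lies in some $S^+_\nu\cap\agl$, a complementary dimension count forces every irreducible component of $\MV\cap\agl$ to have dimension at least $\rho(\ll-\mu)$. Combining with the upper bound yields the stated equality on every component.

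The main obstacle is the ind-scheme nature of $\ag$: $\MV$ itself has neither finite dimension nor finite codimension, so every argument must be localized to the finite-dimensional closed subvariety $\agll$, and one must verify that the tangent-space weight count at the $T$-fixed points genuinely bounds $\MV\cap\agl$ globally rather than only in a neighborhood. The most conceptual way around this is to invoke Braden's hyperbolic localization theorem applied to the costandard perverse sheaf $\JJ_\ll$: since $\JJ_\ll$ is perverse and hyperbolic localization preserves perversity, the resulting complex at $t^\mu$ is concentrated in a single perverse degree, which both enforces equidimensionality of $\MV\cap\agl$ and pins the dimension down to $\rho(\ll-\mu)$ in one stroke — this is essentially the viewpoint already implicit in Lemma~\ref{2clemma2}.
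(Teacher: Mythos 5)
This lemma is not proved in the paper: it is stated verbatim as a citation to \cite[Theorem 3.2]{MV}, and the $\square$ is placed immediately after the statement to indicate that no proof is given here. So the comparison is between your reconstruction and the original argument in \cite{MV}, not against anything in the present paper.

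Your proposed route is broadly in the spirit of \cite{MV} (semi-infinite cells, a Bia{\l}ynicki--Birula-type decomposition along $2\rhod$, hyperbolic localization), but there are two substantive problems. First, the upper-bound step as written is not sound: $\agll$ is in general singular, so one cannot read off the BB stratum dimension at a fixed point from a tangent-space weight count, and moreover $t^\mu$ need not lie in $\agl$ at all (only in $\agll$), so the stratum of $\agl$ you want to bound is not literally the attracting cell of $t^\mu$ inside $\agl$. The actual estimate in \cite{MV} is obtained from the finite-dimensionality and explicit bound $\dim\bigl(\ol{S^+_\nu}\cap\ol{S^-_\mu}\bigr)\leqslant\rho(\mu-\nu)$, proved by a direct Iwasawa/root-group parametrization, and then propagated to $S^-_\mu\cap\agl$; no smoothness of $\agll$ is invoked. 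Second, and more seriously, the closing paragraph is circular. Braden's theorem does \emph{not} assert that hyperbolic localization preserves perversity; it only gives the contraction/expansion duality isomorphism. The fact that the hyperbolic localization of objects of $\per$ along $2\rhod$ is concentrated in a single cohomological degree (equivalently, the exactness of the weight functors, and in particular the content of Lemma~\ref{2clemma2}) is a theorem of \cite{MV} whose proof \emph{uses} the dimension estimate you are trying to establish. Invoking Lemma~\ref{2clemma2} or "hyperbolic localization preserves perversity" to derive the dimension formula therefore runs the logic backwards.

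Your lower-bound paragraph is also not complete: stating that every point of $\MV\cap\agl$ lies in some $S^+_\nu\cap\agl$ and quoting $\dim(S^+_\nu\cap\agl)=\rho(\ll)+\rho(\nu)$ does not by itself force a lower bound on the dimension of components of $\MV\cap\agl$; one needs an argument that every component of $S^-_\mu\cap\agl$ meets a suitable $S^+_\nu\cap\agl$ along an open dense subset, together with the matching upper bound on $S^-_\mu\cap S^+_\nu$, to run the complementary-dimension count. (And note that the stated formula $\dim(S^+_\nu\cap\agl)=\rho(\ll)+\rho(\nu)$ is equivalent, via the $w_0$-twist, to the lemma being proved, so quoting it as "standard" is again borderline circular.) In short: the ingredients you list are indeed the ones used by Mirkovi\'c--Vilonen, but the proof order must be the one in \cite{MV} --- an unconditional, hands-on dimension estimate on intersections of opposite semi-infinite cells first, hyperbolic localization and perversity second --- not the reverse.
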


\begin{definition}\label{4bdef}
The closure of any irreducible component of $\MV\cap \agl$ is called an MV cycle of type $\ll$ and weight $\mu$. 
\end{definition}

The following lemma is well-known. We provide the proof for the convenience of the reader.
\begin{lemma}\label{4bequal}
The following two sets are equal:
\begin{enumerate}
\item The set of MV cycles of type $\ll$ and weight $\mu$.

\item The set of irreducible components of $\ol{\MV}\cap\agll$ which have dimension $\rho(\ll-\mu)$. 
\end{enumerate}
\end{lemma}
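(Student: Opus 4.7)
The plan is to stratify $\ol{\MV}\cap\agll$ using Lemma \ref{4bclosure} together with the Cartan decomposition of $\agll$ recalled in Section \ref{2b}: it splits as a finite disjoint union of locally closed pieces $S^-_{\nu}\cap\mathcal{G}r^{\kappa}$ indexed by pairs $(\nu,\kappa)\in\co\times\cop$ with $\nu\geqslant\mu$ and $\kappa\leqslant\ll$. By Lemma \ref{4bdim}, each non-empty stratum is equidimensional of dimension $\rho(\kappa-\nu)$.

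The central observation is that the constraints $\nu\geqslant\mu$ and $\kappa\leqslant\ll$ force
\[ (\ll-\mu)-(\kappa-\nu)=(\ll-\kappa)+(\nu-\mu)\]
to be a non-negative integer combination of positive coroots. Since $\rho$ evaluates strictly positively on every positive coroot, we obtain $\rho(\kappa-\nu)\leqslant\rho(\ll-\mu)$, with equality if and only if $(\nu,\kappa)=(\mu,\ll)$. Consequently, the unique stratum of maximal dimension $\rho(\ll-\mu)$ is $\MV\cap\agl$ itself.

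Both inclusions of the lemma then follow from general topology of finite stratifications by pure-dimensional locally closed subsets. For the inclusion $(1)\subseteq(2)$, an MV cycle $\ol{C}$ with $C$ an irreducible component of $\MV\cap\agl$ lies in $\ol{\MV}\cap\agll$ and has dimension $\rho(\ll-\mu)$; any irreducible closed subset strictly containing $\ol{C}$ would necessarily meet a different stratum through an open dense subset and so have dimension strictly greater than $\rho(\ll-\mu)$, which is ruled out by the uniform upper bound above, so $\ol{C}$ is already a component. For $(2)\subseteq(1)$, if $Z$ is a component of $\ol{\MV}\cap\agll$ of dimension $\rho(\ll-\mu)$, its generic point must lie on the unique stratum of matching dimension, namely $\MV\cap\agl$, and $Z$ is therefore the closure of an irreducible component of $\MV\cap\agl$. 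I expect no real obstacle beyond recording the strict positivity of $\rho$ on positive coroots, which is immediate from expanding each positive coroot as a non-negative integer combination of the simple coroots.
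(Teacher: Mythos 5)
Your proof is correct and follows essentially the same route as the paper: you combine Lemma \ref{4bclosure} and the Cartan decomposition to cover $\ol{\MV}\cap\agll$ by strata $S^-_{\nu}\cap\ag^{\kappa}$, apply the dimension formula of Lemma \ref{4bdim}, and deduce both inclusions from the uniform dimension bound together with the observation that the stratum $\MV\cap\agl$ is the unique one achieving the bound. The paper's proof is terser (it leaves the strict-inequality point implicit in the phrase ``by the same reasoning'') but the underlying argument is the same; the only minor wrinkle in your write-up is the phrase about a larger closed set ``meeting a different stratum'' --- the cleaner way to state it is simply that a strictly larger irreducible closed subset would have dimension exceeding $\rho(\ll-\mu)$, contradicting the uniform upper bound.
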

\begin{proof}
Denote by $A$ and $B$ the sets in (1) and (2) respectively. By $\agll=\bigcup_{\cop\ni \nu \leqslant\ll}\ag^{\nu}$, Lemma \ref{4bclosure} and Lemma \ref{4bdim}, the dimension of every irreducible component of $\ol{\MV}\cap\agll$ is less than or equal to $\rho(\ll-\mu)$. This shows $A\subseteq B$. Conversely, let $Z\in B$. Put $Z':=Z\cap \MV\cap\agl$. By the same reasoning, $Z'$ is a non-empty open dense subset of $Z$. It follows that $Z'$ is irreducible, and hence it is contained in an irreducible component $Z''$ of $\MV\cap\agl$. By Lemma \ref{4bdim} again, we have $Z=\ol{Z''}$, and hence $Z\in A$. 
\end{proof}

Let $\ll\in\cop$. Consider the costandard sheaf $\JJ_{\ll}$ defined in \eqref{costandardsheafdef}. Put $S(\ll):=H^{\bl}(\JJ_{\ll})$. Let $v_{\ll}^*\in S(\ll)^*$ be the element from Definition \ref{2ddef}. For any $v\in S(\ll)$, define $\tt(\JJ_{\ll},v,v_{\ll}^*)\in\OO(\Bed)$ by
\[ \tt(\JJ_{\ll},v,v_{\ll}^*)(b,h):=\langle v_{\ll}^*,b\cdot v\rangle. \]
\begin{proposition}\label{4bmv2}
(C.f. \cite[Lemma 10.5]{Acta}) Let $\mu\in\co$ and $v_Z\in S(\ll)_{\mu}$ be the element corresponding to an MV cycle $Z$ via the isomorphism from Lemma \ref{2clemma2}. We have
\[ \Phi_{YZ}(\tt(\JJ_{\ll},v_Z,v_{\ll}^*))=[Z]\in H^T_{2\rho(\ll-\mu)}(\agll;\ZZ). \]
\end{proposition}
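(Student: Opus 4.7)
\bigskip

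\noindent\textbf{Proof proposal.} The plan is to apply the defining identity \eqref{4aeq} and then recognize the resulting homology class of $\ag$ as $[Z]$ by pairing against all equivariant cohomology classes. By \eqref{4aeq},
\[
\widetilde{\Phi}_{YZ}\bigl(\vp(\JJ_{\ll},v_Z,v_{\ll}^*)\bigr)=\sum_i \langle v_{\ll}^{*},\, h^i\cup v_Z\rangle\, h_i.
\]
Since the function $\tt(\JJ_{\ll},v_Z,v_{\ll}^*)$ is just the restriction of $\vp(\JJ_{\ll},v_Z,v_{\ll}^*)$ to $\Bed$, the left-hand side equals $\Phi_{YZ}(\tt(\JJ_{\ll},v_Z,v_{\ll}^*))$. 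Hence it is enough to identify the right-hand side with $[Z]\in H^T_{2\rho(\ll-\mu)}(\agll;\ZZ)$ by verifying, for every $h\in H^{\bl}_T(\ag)$, the geometric pairing formula
\begin{equation}\label{proposalpairing}
\langle v_{\ll}^{*},\, h\cup v_Z\rangle \;=\; \int_{[Z]} h.
\end{equation}
Granting \eqref{proposalpairing}, the standard dual-basis identity $\sum_i (\int_{[Z]} h^i)\, h_i=[Z]$ in $H^T_{\bullet}(\agll)$ finishes the proof.

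To prove \eqref{proposalpairing}, I would first lift the decomposition \eqref{4aweight} to its $T$-equivariant analogue, so that $v_Z$ is viewed as a class in the local cohomology $H^{2\rho(\mu)}_{T,\MV}(\JJ_{\ll})$, canonically normalized via Lemma \ref{2clemma2} so as to correspond to the irreducible component $Z\cap \agl$ of $\MV\cap\agl$. Second, I would use that the covector $v_{\ll}^{*}$ is concentrated in the top weight piece: it vanishes on $H^{2\rho(\nu)}_{T,S^-_{\nu}}(\JJ_{\ll})$ for $\nu\ne\ll$ and is the canonical trivialization of $H^{2\rho(\ll)}_{T,S^-_{\ll}}(\JJ_{\ll})\simeq \R$ associated to the isolated fixed point $t^{\ll}$. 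Pairing with $v_{\ll}^*$ is therefore equal to restricting $h\cup v_Z$ to $S^-_{\ll}\cap\agl=\{t^{\ll}\}$ followed by trivialization, which is precisely the equivariant integration pairing over the MV component dual to $v_Z$.

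Third, to convert the sheaf-theoretic expression into an honest integral over the cycle $Z$, I would use the costandard realization $\JJ_{\ll}=\PH((j_{\ll})_{*}\ZZ[2\rho(\ll)])$ and the compatibility of the MV weight filtration with the Cousin/stratification spectral sequence on $\agll$: the class of an MV cycle $Z\subset\ol{\MV}\cap\agll$ pushes forward from local cohomology along $\MV\cap\agll\hookrightarrow\agll$ to the fundamental class $[Z]\in H^T_{2\rho(\ll-\mu)}(\agll)$, and this pushforward intertwines the pairing with $v_{\ll}^*$ on the source with equivariant integration over $[Z]$ on the target. This step is essentially the $\ZZ$-coefficient translation of the argument of \cite[Lemma 10.5]{Acta}; Lemma \ref{4bequal} is used to ensure that the MV cycles of type $\ll$ are exactly the top-dimensional components of $\ol{\MV}\cap\agll$, so there is no ambiguity in the pushforward.

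The main obstacle is the third step: establishing \eqref{proposalpairing} rigorously over $\ZZ$. One must carefully track the identification of $v_{\ll}^*$ with a geometric residue at $t^{\ll}$, and the compatibility between the $T$-equivariant weight decomposition of $H^{\bl}_T(\JJ_{\ll})$, the stratification of $\agll$ by $G(\OO)$-orbits (and semi-infinite orbits $\MV$), and the intersection/integration pairing on $\agll$. Once this compatibility is made precise, the computation reduces to a statement about a single MV cycle and its canonical class, which is exactly the content of Lemma \ref{2clemma2}.
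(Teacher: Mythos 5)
Your reduction is sound up to the point where you isolate the pairing formula \eqref{proposalpairing}, and you correctly identify it as the crux of the argument. But that is exactly where your proposal stops being a proof: the ``third step'' that you yourself flag as the main obstacle is left as a sketch, and the key technical tool that makes it work is missing.

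The paper's proof does supply this tool. It constructs a specific morphism $f_{\ll}:\JJ_{\ll}\ra\DDD_{\agll}[-2\rho(\ll)]$ in the $T$-equivariant derived category, dual to the canonical generator $1\in H_T^{-2\rho(\ll)}(\DDD(\JJ_{\ll}))\simeq\ZZ$ (obtained from perversity and excision). One then forms a commutative square whose horizontal arrows are induced by the counit $\cu:\iota_!\iota^!\ra\id$ for the inclusion $\iota:\ol{S_{\mu}^-}\cap\agll\hookrightarrow\agll$, and whose vertical arrows are $H_T^{2\rho(\mu)}(\iota^!(f_\ll))$ and $H_T^{2\rho(\mu)}(f_\ll)$. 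Two separate verifications replace your assertion that ``the pushforward intertwines the pairing with $v_{\ll}^*$ with equivariant integration'': first, $H_T^{2\rho(\mu)}(\iota^!(f_\ll))$ sends $v_Z$ to $[Z]$ because $j_{\ll}^*(f_\ll)\circ(j_\ll^*(h_\ll))^{-1}$ is the identity, where $h_\ll:\JJ_\ll\ra(j_\ll)_*\ZZ[2\rho(\ll)]$ is the adjunction morphism defining Lemma \ref{2clemma2} and the comparison uses the canonical orientation on $\agl$; second, $H_T^{\bl}(f_\ll)(v)=\sum_i\langle p_\ll, h^i\cup v\rangle h_i$ with $p_\ll$ the adjoint of $f_\ll$, and a degree argument plus the previous step identifies $p_\ll|_{H^\bl(\JJ_\ll)}$ with $v_\ll^*$. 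This is what turns your heuristic ``conversion of the sheaf-theoretic expression into an integral over $Z$'' into an actual proof. Without a construction playing the role of $f_\ll$, your step three is not established over $\ZZ$ (nor, in fact, over any coefficient ring), and the argument is incomplete.

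One further small point: your final dual-basis identity $\sum_i(\int_{[Z]}h^i)\,h_i=[Z]$ is fine, but in the paper's formulation no such identity needs to be invoked separately; it is absorbed into the explicit formula for $H_T^{\bl}(f_\ll)$ and the claim $p_\ll^0=v_\ll^*$. This is part of why the $f_\ll$-based route is cleaner: it simultaneously packages the duality pairing, the integration over $Z$, and the normalization of $v_\ll^*$, rather than treating them as three separate compatibilities to verify.
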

\begin{proof}
For a space $X$, denote by $\DDD_X$ its dualizing complex. Let $\DDD$ denote the Verdier dual functor. By the perversity condition on $\DDD(\JJ_{\ll})$ and excision, we have
\begin{equation}\label{4bmv2eqq1}
H_T^{-2\rho(\ll)}(\DDD(\JJ_{\ll}))\simeq H_{T}^{-2\rho(\ll)}(\agl;\DDD(\JJ_{\ll})|_{\agl}) \simeq H_T^0(\agl;\ZZ)\simeq\ZZ.
\end{equation}
By duality, the element of the LHS of \eqref{4bmv2eqq1} corresponding to $1\in\ZZ$ induces a morphism $f_{\ll}:\JJ_{\ll}\ra\DDD_{\agll}[-2\rho(\ll)]$ in the $T$-equivariant derived category.

Let $\iota:\ol{S_{\mu}^-}\cap\agll\hookrightarrow \agll$ denote the inclusion. The morphism $f_{\ll}$ and the counit $\cu:\iota_!\iota^!\ra\id$ give rise to the commutative diagram
\begin{equation}\label{4beqnew}\nonumber
\begin{tikzpicture}
\tikzmath{\x1 = 7; \x2 = 2;}
\node (A) at (0,0) {$H^{2\rho(\mu)}_{T,\ol{S^-_{\mu}}}(\JJ_{\ll})$} ;
\node (B) at (\x1,0) {$H_T^{2\rho(\mu)}(\JJ_{\ll})$} ;
\node (C) at (0,-\x2) {$H^T_{2\rho(\ll-\mu)}(\ol{S^-_{\mu}}\cap\agll)$} ;
\node (D) at (\x1,-\x2) {$H^T_{2\rho(\ll-\mu)}(\agll)$} ;

\node (E) at (-0.32*\x1,0) {$H^{2\rho(\mu)}_{S^-_{\mu}}(\JJ_{\ll})\simeq$} ;

\node (F) at (-0.57*\x1,-\x2) {$H_{2\rho(\ll-\mu)}(\ol{S^-_{\mu}}\cap\agll)\simeq$} ;

\path[->, font=\tiny] (A) edge node[above]{$H_T^{2\rho(\mu)}(\cu(\JJ_{\ll}))$} (B);
\path[->, font=\tiny] (A) edge node[left]{$H_T^{2\rho(\mu)}(\iota^!(f_{\ll}))$} (C);
\path[->, font=\tiny] (B) edge node[right]{$H_T^{2\rho(\mu)}(f_{\ll})$} (D);
\path[->, font=\tiny] (C) edge node[above]{$H_T^{2\rho(\mu-\ll)}(\cu(\DDD_{\agll}))$} (D);
\end{tikzpicture}.
\end{equation}
We claim that $H_T^{2\rho(\mu)}(\iota^!(f_{\ll}))$ sends $v_Z$ to $[Z]$. Let $j_{\ll}:\agl\hookrightarrow \agll$ be the inclusion. There exists a canonical morphism $\JJ_{\ll}\ra (j_{\ll})_*\ZZ[2\rho(\ll)]$ which we shall denote by $h_{\ll}$, and the isomorphism from Lemma \ref{2clemma2} is given by $H_{S^-_{\mu}}^{2\rho(\mu)}(h_{\ll})$. See the proof of \cite[Proposition 11.1]{Satakeluminy}. By our choice of $f_{\ll}$, $j_{\ll}^*(f_{\ll})\circ (j_{\ll}^*(h_{\ll}))^{-1}$ is equal to the identity, after we identify $\DDD_{\agl}[-2\rho(\ll)]$ with $\ZZ[2\rho(\ll)]$ using the canonical orientation on $\agl$. By excision, we get our claim.

The proof is complete if we can show 
\[ H_T^{2\rho(\mu)}(f_{\ll})\circ H_T^{2\rho(\mu)}(\cu(\JJ_{\ll}))(v_Z) = \Phi_{YZ}(\tt(\JJ_{\ll},v_Z,v_{\ll}^*)).\]
It is not difficult to see that for any $v\in H_T^{\bl}(\JJ_{\ll})$,
\[ H_T^{\bl}(f_{\ll})(v)=\sum_i\langle p_{\ll}, h^i\cup v\rangle h_i\]
where $p_{\ll}:H_T^{\bl}(\JJ_{\ll})\ra H_T^{\bl-2\rho(\ll)}(\pt)$ is induced by $f_{\ll}$ via adjunction. On the other hand, 
\[ \Phi_{YZ}(\tt(\JJ_{\ll},v_Z,v_{\ll}^*)) =\sum_i\langle v^*_{\ll}, h^i\cup v\rangle h_i \]
by \eqref{4aeq}. Hence it suffices to show $p_{\ll}^0:=p_{\ll}|_{H^{\bl}(\JJ_{\ll})}=v_{\ll}^*$ (we have $H_T^{\bl}(\JJ_{\ll})\simeq H^{\bl}(\JJ_{\ll})\otimes H_T^{\bl}(\pt)$ by \cite[Lemma 2.2]{YZ}). We have to show (1) $p_{\ll}^0|_{H_{S^-_{\nu}}^{2\rho(\nu)}(\JJ_{\ll})}\equiv 0$ for any $\nu\ne\ll$, and (2) $p_{\ll}^0$ sends $v_e$ to 1. (1) follows from a degree argument. (2) follows from the last paragraph (replace $Z$ by $\{t^{\lambda}\}$).
\end{proof}

\begin{corollary}\label{4bcor}
If $v$ corresponds to the unique MV cycle $\agll$ of type $\ll$ and weight $w_0(\ll)$, then
\[ \Phi_{YZ}(\tt(\JJ_{\ll},v,v_{\ll}^*))=\bk{\agll}.\]
\hfill$\square$
\end{corollary}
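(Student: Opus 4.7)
The corollary is essentially an application of Proposition~\ref{4bmv2} with $\mu=w_0(\ll)$ and $Z=\agll$, so the plan is to check two small facts: that $\agll$ really is an MV cycle of type $\ll$ and weight $w_0(\ll)$, and that it is the only one. Both are dimension-count arguments and neither should present a real obstacle; the substantive work has already been done in Proposition~\ref{4bmv2}.

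First I would establish uniqueness. Since $S(\ll)\simeq\ind_{\Bmd}^{\Gd}(M_{\ll})$ (this appeared in the proof of Lemma~\ref{2dlemma1}) has highest weight $\ll$, its extremal weight space $S(\ll)_{w_0(\ll)}$ is free of rank one over $\ZZ$. By Lemma~\ref{2clemma2}, this rank equals the number of irreducible components of $S^-_{w_0(\ll)}\cap\agl$, so there is exactly one such component, and hence (by Definition~\ref{4bdef}) a unique MV cycle of type $\ll$ and weight $w_0(\ll)$.

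Next I would identify this MV cycle with $\agll$. The point $t^{w_0(\ll)}$ is fixed by the $\gm$-action through $2\rhod$, so it lies in $S^-_{w_0(\ll)}$; and since $w_0(\ll)\in W\cdot\ll$, it lies in $\agl=G(\OO)\cdot t^{\ll}$. Thus $S^-_{w_0(\ll)}\cap\agl$ is non-empty, and by Lemma~\ref{4bdim} its unique irreducible component has dimension $\rho(\ll-w_0(\ll))=2\rho(\ll)$. Because $\agl$ is irreducible of dimension $2\rho(\ll)$, this component must be a non-empty open subset of $\agl$, whose closure in $\agll$ is $\overline{\agl}=\agll$. Therefore $\agll$ is the unique MV cycle of type $\ll$ and weight $w_0(\ll)$.

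Finally, applying Proposition~\ref{4bmv2} with $Z=\agll$ and $\mu=w_0(\ll)$ yields $\Phi_{YZ}(\tt(\JJ_{\ll},v,v_{\ll}^*))=[\agll]$, as required.
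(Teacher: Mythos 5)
Your proof is correct and takes the same route as the paper: the corollary is an immediate instance of Proposition~\ref{4bmv2}, and you have carefully filled in the (unstated in the paper) verification that $\agll$ is the unique MV cycle of type $\ll$ and weight $w_0(\ll)$, via the rank-one extremal weight space and the dimension count $\rho(\ll-w_0(\ll))=2\rho(\ll)=\dim\agl$.
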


\begin{corollary}\label{4bdivisor}
Let $\ll,\mu\in\co$ with $\ll$ dominant. Suppose $\ol{\BB\cdot t^{\mu}}$ is a divisor of $\agll$. Then there exists $v\in S(\ll)$ such that $\Phi_{YZ}(\tt(\JJ_{\ll},v,v_{\ll}^*))=\left[ ~\ol{\BB\cdot t^{\mu}}~\right]$.
\end{corollary}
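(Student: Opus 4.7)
The plan is to identify $\ol{\BB \cdot t^\mu}$ as an MV cycle of type $\ll$ for a suitable weight, and then invoke Proposition \ref{4bmv2}. First, since $2\rhod(s) \in T(\OO) \subset \BB$, the orbit $\BB \cdot t^\mu$ is invariant under the $\gm$-action on $\ag$ induced by $2\rhod$. Because $\agll$ is projective, the limit $t^{\mu^*} := \lim_{s \to \infty} 2\rhod(s) \cdot y$ exists in $\agll$ and, by irreducibility of the orbit together with the discreteness of the $T$-fixed locus, is independent of $y \in \BB \cdot t^\mu$. Consequently $\BB \cdot t^\mu \subseteq S_{\mu^*}^-$ and $\ol{\BB \cdot t^\mu} \subseteq \ol{S_{\mu^*}^-} \cap \agll$.

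Next, I decompose $S_{\mu^*}^- \cap \agl$ as the disjoint union of those $\BB$-orbits $\BB \cdot t^{\mu'}$ (with $\mu' \in W \cdot \ll$) whose $2\rhod$-attraction is $t^{\mu^*}$. By Lemma \ref{4bdim} every irreducible component of this intersection has complex dimension $\rho(\ll - \mu^*)$; since each of these $\BB$-orbits is itself an irreducible subvariety and they are pairwise disjoint, each orbit must be a full irreducible component of dimension $\rho(\ll - \mu^*)$. The divisor hypothesis $\dim_\CC \ol{\BB \cdot t^\mu} = 2\rho(\ll) - 1$ then forces $\rho(\ll + \mu^*) = 1$, so that $\ol{\BB \cdot t^\mu}$ is a maximal-dimensional irreducible component of $\ol{S_{\mu^*}^-} \cap \agll$. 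By Lemma \ref{4bequal}, such a component is an MV cycle of type $\ll$ and weight $\mu^*$.

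Finally, let $v \in S(\ll)_{\mu^*}$ be the generator corresponding to the MV cycle $Z = \ol{\BB \cdot t^\mu}$ under the bijection of Lemma \ref{2clemma2}. Proposition \ref{4bmv2} then yields
\[
\Phi_{YZ}\bigl(\tt(\JJ_\ll, v, v_\ll^*)\bigr) = [Z] = \bigl[\,\ol{\BB \cdot t^\mu}\,\bigr],
\]
so this $v$ is the required element.

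The main obstacle is the dimension equality $\dim_\CC \BB \cdot t^\mu = \rho(\ll - \mu^*)$, which I invoked above to force $\rho(\ll + \mu^*) = 1$ and to conclude that $\ol{\BB \cdot t^\mu}$ is itself an MV cycle rather than a proper subvariety of one. This should follow from the Bialynicki-Birula cell structure on the smooth $G(\OO)$-orbit $\agl$, in which the $\BB$-orbits are precisely the $2\rhod$-attracting cells and therefore inherit the common dimension $\rho(\ll - \mu^*)$ provided by Lemma \ref{4bdim}.
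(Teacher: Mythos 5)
Your overall strategy — recognize $\ol{\BB\cdot t^\mu}$ as an MV cycle and then apply Proposition~\ref{4bmv2} — is the same as the paper's, which cites Lemma~\ref{4badded} for the MV-cycle identification. However, the first step of your argument contains a genuine error. You claim that because $\BB\cdot t^\mu$ is an irreducible $\gm$-invariant subset with one $T$-fixed point, the limit $\lim_{s\to\infty}2\rhod(s)\cdot y$ is independent of $y\in\BB\cdot t^\mu$, and hence $\BB\cdot t^\mu\subseteq S^-_{\mu^*}$ for a single $\mu^*$. This is false: irreducibility of a $\gm$-invariant locally closed subset does not force the Bialynicki-Birula limit to be constant on it, because points can flow to fixed points in the boundary $\ol{\BB\cdot t^\mu}\sm\BB\cdot t^\mu$. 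Concretely, $\lie\BB = \bb_-\oplus z\,\gg[[z]]$ contains $T$-weights of both signs, so a given Iwahori orbit contains directions both attracted to and repelled from $t^\mu$ under $2\rhod$, and therefore meets several $S^-_\nu$. The paper's proof of Lemma~\ref{4badded} explicitly acknowledges this: it observes that $\BB\cdot t^{s_{\a_i}(w_0(\ll))}$ is merely \emph{covered} by finitely many $S^-_\nu\cap\agl$, and then isolates the unique $\nu$ containing an open dense part of the orbit. Only that weaker statement is true, and it is the one that gives $\ol{\BB\cdot t^\mu}\subseteq\ol{S^-_{\mu^*}}$.

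This error propagates: your Step~3 asserts that $S^-_{\mu^*}\cap\agl$ is a \emph{disjoint union of full} $\BB$-orbits with the same $2\rhod$-attraction, which fails for the same reason. The "main obstacle" you flag at the end — justifying $\dim_\CC\BB\cdot t^\mu=\rho(\ll-\mu^*)$ via the BB cell structure — is based on the incorrect premise that the $\BB$-orbits are the BB cells for the $2\rhod$-action. They are not; the $\BB$-orbits are BB cells for a \emph{generic regular cocharacter of $T\times\gm$} involving the loop rotation, while the $S^-_\nu$ are BB cells for $2\rhod$ alone, and these are genuinely different stratifications. The correct route is the paper's: first note that the divisor hypothesis forces $\mu=s_{\a_i}(w_0(\ll))$ with $\a_i(w_0(\ll))\ne 0$, then show by the dimension and closure arguments of Lemma~\ref{4badded} that the open dense part of $\BB\cdot t^\mu$ lies in $S^-_{w_0(\ll)+\ad_i}$, so that $\ol{\BB\cdot t^\mu}$ is the unique MV cycle of type $\ll$ and weight $w_0(\ll)+\ad_i$. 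Your final step (apply Proposition~\ref{4bmv2}) is fine once that identification is secured.
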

\begin{proof}
Notice that $\mu=s_{\a_i}(w_0(\ll))$ for some $1\leqslant i\leqslant r$ such that $\a_i(w_0(\ll))\ne 0$. The result follows from Proposition \ref{4bmv2} and Lemma \ref{4badded}.
\end{proof}
\section{The Peterson schemes}\label{5}
Assume $G$ is simple and simply-connected. We are going to define $\YY$ and $\YYs$ in Section \ref{5a} and verify several properties in Section \ref{5ab}. Unless otherwise specified, the coefficient ring $\kkk$ is $\ZZ[\ell_G^{-1}]$ where $\ell_G$ is equal to 1 for type ADE, 2 for type BCF and 3 for type G. Put $S:=\R[q_1,\ldots,q_k]$. The spectrum of $S$ will serve as the base scheme of $\YY$ and $\YY^*$.

In this section and the next, we will complete the following commutative diagram
\begin{equation}\label{1adiag}\nonumber
\begin{tikzpicture}
\tikzmath{\x1 = 2.5; \x2 = 4.3; \x3=4.3; \x4=2.5; \x5=6;}
\node (A) at (0,0) {$\OO(\mathcal{Y}^*)$} ;
\node (B) at (0,-\x1) {$\OO(\Bed)$} ;
\node (C) at (-\x2,0) {$\OO(\mathcal{Y}^*\tGB\Udd)$} ;
\node (D) at (-\x2,-\x1) {$\OO(\Bed)[g_P^{-1}]$} ;
\node (E) at (-\x2-\x3,0) {$\OO(\YYs)$};
\node (F) at (-\x2-\x3,-\x1) {$\OO(\Bed)[g_P^{-1}]/\langle J_P\rangle$};
\node (G) at (0,-\x1-\x4) {$\Hmag$};
\node (H) at (-\x2-\x3,-\x1-\x4) {$\QHPi$};
\node (I) at (-\x2-\x3-\x5,-\x1-\x4) {$\QHP$};
\node (J) at (-\x2-\x3-\x5,0) {$\OO(\YY)$};
\node (K) at (-\x2-\x3-0.5*\x5,-0.5*\x1) {$\OO(\YY)[\ecc]$};

\node (L) at (-\x2-\x3-0.83*\x5,-0.29*\x1) {{\tiny $\etii$}};

\node (M) at (-\x2-\x3-0.3*\x5,-0.18*\x1) {{\small  $\vpi$}};

\node (N) at (-\x2-\x3-0.3*\x5,-0.82*\x1) {{\small  $\vpiv$}};

\path[->, font=\tiny]
(A) edge node[left]{$\etiv$} (B);

\path[->, font=\tiny]
(A) edge node[above]{$\etv$} (C);

\path[->, font=\tiny]
(B) edge node[above]{$\etvi$} (D);

\path[->]
(B) edge node[left]{$\Phi_{YZ}$} (G);

\path[->, font=\tiny]
(C) edge node[right]{$\etvii$} (D);

\path[->, font=\tiny]
(C) edge node[above]{$\etviii$} (E);

\path[->, font=\tiny]
(D) edge node[above]{$\etix$} (F);

\path[->, font=\small]
(E) edge node[right]{$\vpii$} (F);

\path[->, font=\tiny]
(E) edge node[left]{} (K);

\path[->, font=\small]
(F) edge node[right]{$\vpiii$} (H);

\path[->]
(G) edge node[above]{$\Phi_{PLS}$} (H);

\path[->, font=\tiny]
(I) edge node[above]{$\etiii$} (H);

\path[->, font=\tiny]
(J) edge node[above]{$\eti$} (E);

\path[->]
(J) edge node[left]{$\Phi$} (I);

\path[->, font=\tiny]
(J) edge node[below]{} (K);

\path[->, font=\tiny]
(K) edge node[below]{} (F);
\end{tikzpicture}
\end{equation}
\noindent The labellings of all ring homomorphisms below will be compatible with this diagram.
\subsection{Definition}\label{5a}
Let $e^T$ be the element from \eqref{4aelement}. Define
\[ \widetilde{\YYY}:=\left\{(g,h)\in\Gd\times\spec\R\left|~g^{-1}\cdot e^T(h)\in \dd{\bb}_-\oplus\bigoplus_{i=1}^r\dd{\gg}_{\ad_i} \right.\right\}.\]
One checks easily that $\widetilde{\YYY}$ descends to a closed subscheme $\YYY$ of $\GB\times\spec\R$. Define
\[ \YYY^*:=\YYY\tGB \Ud\Bmd/\Bmd.\]
Let $P$ be the parabolic subgroup of $G$ fixed in Section \ref{2a}. Denote by $\Ud_{\Pd_-}\subseteq \Ud_-$ the product, with respect to any order, of the $\ad$-root groups $\Ud_{\ad}\simeq\ga$ (see \cite[Theorem 4.1.4]{BC}) where $\a$ runs over $-(R^+\sm R^+_P)$. By \cite[Proposition 5.1.16]{BC}, it is a subgroup independent of the order. We have 
\[\Ud_{\Pd_-}\simeq \Ud_{\Pd_-}\wpd\Bmd/\Bmd=\Ud_-\wpd\Bmd/\Bmd.\]
 Define
\[ \YY:=\YYY\tGB\Ud_{\Pd_-}\wpd\Bmd/\Bmd\]
and 
\[ \YY^*:=\YY\tGB\Ud\Bmd/\Bmd\simeq\YYY^*\tGB\Ud_{\Pd_-}\wpd\Bmd/\Bmd.\]
Notice that $\YYY^*$, $\YY$ and $\YY^*$ are affine schemes over $\spec\R$.

For any $\a\in R$, define a $\kkk$-linear map
\[ e^*_{\ad}:\gg^{\vee}\ra \gg^{\vee}_{\ad}\xrightarrow{\sim} \kkk\]
where the first arrow is the canonical projection and the second arrow is a $\kkk$-linear isomorphism. We require that $\langle e^*_{\ad_i}, e_{\ad_i}\rangle = 1$ for any $1\leqslant i\leqslant r$ where $e_{\ad_i}$ is the generator of $\gg^{\vee}_{\ad_i}$ fixed when we introduced $e^T$ in Section \ref{4a}.

\begin{definition}\label{5aqdef}
For any $1\leqslant i\leqslant r$, define $\ol{q}_i\in\OO(\YY)$ to be the pull-back, via the natural morphism 
\[\YY\hookrightarrow \Ud_{\Pd_-}\wpd\Bmd/\Bmd\times\spec\R,\]
of the regular function 
\[ (u\wpd\Bmd,h)\mapsto  |\al_i|^{-2}\langle e^*_{-w_0\ad_i},(u\wpd)^{-1}\cdot e^T(h)\rangle,\]
where the representatives $\dot{w}_P$ and $\dot{w}_0$ are defined in Definition \ref{appdef}.
\end{definition}

The map $q_i\mapsto \ol{q}_i$, $1\leqslant i\leqslant k$, makes $\OO(\YY)$ an algebra over $S:=\R[q_1,\ldots,q_k]$, or equivalently, $\YY$ a scheme over $\spec S$. The other $\ol{q}_i$'s are not so useful because of the following
\begin{lemma}\label{5aq=1}
For any $k+1\leqslant i\leqslant r$, we have $\ol{q}_i=1$.
\end{lemma}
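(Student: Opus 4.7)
The plan is a direct computation from the definition of $\ol{q}_i$. Writing $g = u\wpd$ with $u \in \Ud_{\Pd_-}$ and setting $Y := \adjoint(u^{-1})e^T(h) - e^T(h)$, we have
\[(u\wpd)^{-1}\cdot e^T(h) \;=\; \wpd^{-1}\cdot\bigl(e^T(h) + Y\bigr).\]
Since $e^*_{-w_0\ad_i}$ extracts the $e_{-w_0\ad_i}$-coefficient, proving $\ol{q}_i = 1$ for $i>k$ amounts to showing that the $\dd{\gg}_{-w_0\ad_i}$-component of the right-hand side equals $|\alpha_i|^2\,e_{-w_0\ad_i}$.

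First I would verify $Y \in \dd{\bb}_-$. Expanding $\adjoint(u^{-1}) = \exp\sadjoint(-X)$ with $X \in \nn$ and applying it to $e^T(h) = f(h) + \sum_j|\alpha_j|^2 e_{\ad_j}$, each resulting term is an iterated bracket of root vectors $X_\beta$ (with $\beta = -\ad$, $\alpha \in R^+\sm R_P^+$) against $e^T(h)$. A root-space bookkeeping shows $[X_\beta, e_{\ad_j}]$ lies in $\dd{\mathfrak{t}}$ exactly when $\beta = -\ad_j$ (which forces $j\leq k$), and otherwise in $\dd{\gg}_{\beta + \ad_j}$ where $\beta+\ad_j$ is a \emph{negative} root: its non-Levi simple-root coefficients (those of $\beta$, shifted by $+1$ in position $j$ if $j\leq k$) stay non-positive. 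Iterated brackets preserve these two alternatives, so $Y$ has only Cartan and negative root-space components.

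Next I would show that the $\dd{\gg}_{-w_0\ad_i}$-component of $\wpd^{-1}\cdot Y$ vanishes. Such a component comes from the $\dd{\gg}_\gamma$-component of $Y$ with $\gamma = w_Pw_0(-w_0\ad_i) = -w_P\ad_i$. For $i>k$, $\alpha_i$ is a simple root of $L$, and the longest element $w_P \in W_P$ sends it to $-\alpha_{\sigma(i)}$ where $\sigma$ is the Dynkin involution of $L$; thus $\gamma = \ad_{\sigma(i)}$ is a \emph{positive} simple coroot. Since $Y \in \dd{\bb}_-$ has no positive root-space components, this contribution vanishes, and the $\dd{\gg}_{-w_0\ad_i}$-coefficient of $(u\wpd)^{-1}\cdot e^T(h)$ equals that of $\wpd^{-1}\cdot e^T(h)$.

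Finally, among the summands of $\wpd^{-1}\cdot e^T(h) = \wpd^{-1}f(h) + \sum_j|\alpha_j|^2\wpd^{-1}e_{\ad_j}$, only the $j = \sigma(i)$ term contributes to $\dd{\gg}_{-w_0\ad_i}$, and $|\alpha_{\sigma(i)}| = |\alpha_i|$ since $w_P$ preserves lengths. Writing $\wpd^{-1}e_{\ad_{\sigma(i)}} = \eta\cdot e_{-w_0\ad_i}$ with $\eta\in\{\pm1\}$ yields $\ol{q}_i = \eta$. The main obstacle is then verifying $\eta = +1$ under the specific normalizations of $\dot{w}_P$ and $\dot{w}_0$ fixed in Definition \ref{appdef}; this sign verification is exactly the sort of bookkeeping relegated to Appendix \ref{app}.
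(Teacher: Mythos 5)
Your proof is correct and follows essentially the same route as the paper: observe that $u^{-1}\cdot e^T(h)-e^T(h)$ lies in $\dd{\bb}_-$ (since $u\in\Ud_{\Pd_-}\subseteq\Ud_-$), note that the relevant root space $\dd{\gg}_{-w_P\ad_i}=\dd{\gg}_{\ad_{\sigma(i)}}$ is positive so only the $e$-part of $e^T(h)$ contributes, and defer the sign $\eta=+1$ to Proposition~\ref{appprop1}. The paper condenses your iterated-bracket bookkeeping into the one-line observation $u^{-1}\cdot e^T(h)\in e+\dd{\bb}_-$, but the argument is the same.
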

\begin{proof}
For such $i$, $-w_P\ad_i$ is equal to a simple coroot $\ad_j$ which has the same length as $\ad_i$. Since $\Ud_{\Pd_-}\subseteq\Ud_-$, we have $u^{-1}\cdot e^T(h)\in e+\dd{\bb}_-$ for any $u\in \Ud_{\Pd_-}$, and hence
\[  \ol{q}_i(u\wpd\Bmd,h)= \pm|\al_i|^{-2}\langle e^*_{\ad_j},u^{-1}\cdot e^T(h)\rangle = \pm|\al_i|^{-2}|\al_j|^{2}=\pm 1.\]
The sign is $+ 1$ by Proposition \ref{appprop1}.
\end{proof}

\subsection{Properties}\label{5ab}
Let 
\[ \eti:\OO(\YY)\ra \OO(\YYs)\]
be the map induced by the immersion $\YYs\hookrightarrow \YY$ and
\[\etii:\OO(\YY)\ra \OO(\YY)[\ecc]\]
the localization map.
\begin{lemma}\label{5alocal}
We have $\etii=\vpi\circ\eti$ for some homomorphism of $\R$-algebras
\[ \vpi:\OO(\YYs)\ra \OO(\YY)[\ecc].\]
\end{lemma}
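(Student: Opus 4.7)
The plan is to exhibit $\vpi$ as the restriction-of-regular-functions map associated to the inclusion of open subschemes $\YY^{q\ne 0} \hookrightarrow \YYs$ of $\YY$, where $\YY^{q\ne 0} := \spec \OO(\YY)[\ecc]$ denotes the principal open locus of $\YY$ on which $\ol{q}_1, \ldots, \ol{q}_k$ are invertible. Once the scheme-theoretic inclusion is established, $\vpi$ is forced to be the restriction map $\OO(\YYs) \to \OO(\YY^{q\ne 0}) = \OO(\YY)[\ecc]$, and the identity $\etii = \vpi \circ \eti$ becomes the tautological commutativity: both sides are restrictions from $\OO(\YY)$ to $\OO(\YY^{q\ne 0})$. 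Since both $\YY^{q\ne 0}$ and $\YYs$ are open in $\YY$, verifying the inclusion reduces to its set-theoretic counterpart on geometric points.

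The geometric claim I need is: for any field $\kkk$ and any $(u\wpd\Bmd, h) \in \YY(\kkk)$ with $\ol{q}_i(u\wpd\Bmd, h) \ne 0$ for all $1 \leq i \leq k$, the coset $u\wpd\Bmd$ lies in $\Ud\Bmd/\Bmd$. Combining the hypothesis with Lemma \ref{5aq=1} (which yields $\ol{q}_i = 1$ for $i > k$) and the defining Peterson condition of $\YYY$, the element $X := (u\wpd)^{-1}\cdot e^T(h)$ decomposes as $X = X_- + X_h + \sum_{i=1}^{r} c_i\, e_{\ad_i}$ with $X_- \in \dd{\bb}_-$, $X_h \in \dd{\hh}$, and every $c_i \ne 0$; in particular $X$ is a regular element of $\dd{\gg}$ lying in Kostant's slice to the principal nilpotent orbit.

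To deduce $u\wpd\Bmd \in \Ud\Bmd/\Bmd$, I would argue by contradiction using the Bruhat decomposition $u\wpd = u_1 \dot{v} b$ ($u_1 \in \Ud$, $v \in W$, $b \in \Bmd$). Assuming $v \ne e$, choose a simple root $\a_j$ with $v(\a_j) \in -R^+$ (equivalently, $v(\ad_j)$ is a negative root of $\Gd$). Setting $Y := u_1^{-1}\cdot e^T(h) \in \dd{\bb}$ and writing $X = b^{-1}\dot{v}^{-1}\cdot Y$, the $\ad_j$-weight piece of $\dot{v}^{-1}\cdot Y$ must come from the weight-$v(\ad_j)$ component of $Y$, which vanishes since $Y \in \dd{\bb}$ has no negative-root components. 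I would then show that the subsequent action of $\adjoint(b^{-1})$ cannot regenerate the missing simple-coroot coefficient from other (higher) positive-root pieces of $\dot{v}^{-1}\cdot Y$, forcing $c_j = 0$ and contradicting the hypothesis.

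The main obstacle is this last step of excluding positive-coroot leakage through $\adjoint(b^{-1})$. My plan is to interpret the $\ad_j$-coefficient of $X$ as a matrix coefficient in the fundamental representation $S(\omega_j^{\vee})$ introduced in Section \ref{2d}, whose vanishing is governed by the Bruhat cell of $u\wpd\Bmd$ via Proposition \ref{2dcor}. By first reducing to the case where $v = s_j$ is a simple reflection (which indexes a minimal boundary stratum of $\YY \setminus \YYs$ in the Richardson decomposition of $\Ud_{\Pd_-}\wpd\Bmd/\Bmd$), the required computation should become tractable.
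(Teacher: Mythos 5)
Your framing is the same as the paper's: reduce to the geometric claim that on the boundary $\YY\setminus\YYs$ (equivalently, on geometric points not in the big cell) the product $\ol{q}_1\cdots\ol{q}_k$ vanishes, via the fact (Lemma \ref{2dlemma3}) that $\YYs$ and the locus $\{\ol{q}_1\cdots\ol{q}_k\neq 0\}$ are both principal open subschemes of $\YY$. The gap is in the key computation, and you already flag it yourself: with your choice of Bruhat factorization $u\wpd = u_1\dot{v}b$ ($u_1\in\Ud$, $b\in\Bmd$), the adjoint action of $b^{-1}\in\Bmd$ strictly lowers $\Td$-weights, so the $\ad_j$-component of $X=b^{-1}\cdot(\dot{v}^{-1}\cdot Y)$ can pick up contributions from higher positive weight spaces of $\dot{v}^{-1}\cdot Y$, and the vanishing of the $\ad_j$-piece of $\dot{v}^{-1}\cdot Y$ alone does not give $c_j=0$. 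The route you then sketch to close this hole is not sound as stated: ``reducing to $v=s_j$'' is not a reduction (each geometric point of $\YY\setminus\YYs$ has its own $v$, which you cannot change), and the matrix-coefficient interpretation via $S(\omega_j^{\vee})$ is left entirely unexplained, so the argument does not close.

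There are two clean ways to finish. The paper sidesteps the issue at the outset by choosing the opposite factorization $u\wpd = b\dot{w}u_1$ with $b\in\Bd$ and $u_1\in\Ud_-$: then $(u\wpd)^{-1}\cdot e^T(h)=u_1^{-1}\dot{w}^{-1}b^{-1}\cdot e^T(h)$, the Peterson condition is invariant under $\adjoint(\Ud_-)$ with unchanged simple-coroot components (so the $u_1^{-1}$ can be dropped), and $b^{-1}\cdot e^T(h)\in\dd{\bb}$ since $b\in\Bd$ and $e^T(h)\in\dd{\bb}$; the $\dot{w}^{-1}$-conjugation then sends some simple-coroot functional to a negative-coroot functional, which kills the relevant coefficient. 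Alternatively, your factorization can be rescued with one extra observation you missed: since $X$ itself satisfies the Peterson condition and $b\in\Bmd$, the element $Z:=\dot{v}^{-1}\cdot Y = b\cdot X$ also lies in $\dd{\bb}_-\oplus\bigoplus_i\dd{\gg}_{\ad_i}$ and has \emph{the same} simple-coroot components as $X$ (the $\adjoint(\Bmd)$-action only pushes weight down, so it cannot alter the $\dd{\gg}_{\ad_j}$-component of such an element); hence $\langle e^*_{\ad_j},X\rangle=\langle e^*_{\ad_j},Z\rangle=\pm\langle e^*_{v\ad_j},Y\rangle=0$ directly, with no ``leakage'' to exclude. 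Either fix works; as written, however, the proposal does not contain a complete proof of the key step.
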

\begin{proof}
By Lemma \ref{2dlemma3}, $\OO(\YYs)$ is a localization of $\OO(\YY)$ by a regular function. Hence it suffices to show that $\ol{q}_1\cdots\ol{q}_k$ vanishes on $\YY\sm\YY^*$. We may assume $\kkk$ is an algebraically closed field (of characteristic not dividing $\ell_G$). Let $(y,h)$ be a closed point of $\YY\sm\YY^*$. Write $y=u\wpd\Bmd$ with $u\in\Ud_{\Pd_-}$. By the Bruhat decomposition, we can write $u\wpd=b\dot{w}u_1$ for some $b\in\Bd$, $w\in W$ and $u_1\in\Ud_-$. Since $(y,h)\not\in\YY^*$, we have $w\ne e$, and hence there exists $1\leqslant i\leqslant r$ such that $-ww_0\a_i\in -R^+$. It follows that 
\begin{align*}
\ol{q}_i(y,h)&=|\al_i|^{-2}\langle e^*_{-w_0\ad_i},(u\wpd)^{-1}\cdot e^T(h)\rangle\\
&= |\al_i|^{-2}\langle e^*_{-w_0\ad_i},(u\wpd u_1^{-1} )^{-1}\cdot e^T(h)\rangle\qquad \because (y,h)\in\YYY\\
&= \pm |\al_i|^{-2}\langle e^*_{-ww_0\ad_i},b^{-1}\cdot e^T(h)\rangle\\
&=0.
\end{align*}
By Lemma \ref{5aq=1}, we must have $1\leqslant i\leqslant k$, and hence $\ol{q}_1\cdots\ol{q}_k(y,h)=0$ as desired.
\end{proof}

Introduce a $\gm$-action on $\GB$ defined by restricting the canonical $\Gd$-action to the cocharacter $2\rho:\gm\ra \Td$. We also let $\gm$ act on $\spec S$ by taking the usual grading on $\R$ and declaring each $q_i$ to have degree $2\sum_{\a\in R^+\sm R^+_P}\a(\ad_i)$.
\begin{lemma}\label{5aeq}
The $\gm$-action on $\GB\times \spec\R$ preserves and hence induces a $\gm$-action on the schemes $\YYY$, $\YYY^*$, $\YY$ and $\YY^*$. The structure morphism $\YY\ra\spec S$ is $\gm$-equivariant.
\end{lemma}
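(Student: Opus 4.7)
The plan is to reduce all invariance claims to a single scaling identity for $e^T$, and then verify the equivariance of $\YY\to\spec S$ by computing the $\gm$-weight of each $\ol q_i$ directly from Definition~\ref{5aqdef}.

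The core computation is the identity
\[
 e^T(s\cdot h) \;=\; s^{-2}\,\adjoint(2\rho(s))\cdot e^T(h),
\]
valid when $\spec\R$ carries the $\gm$-action coming from the usual cohomological grading on $\R$. This is immediate from the explicit description $e^T = e + f$ in Section~\ref{4a}: the vector $e = \sum_i|\al_i|^2 e_{\ad_i}$ has $\adjoint\circ 2\rho$-weight $2$ because $\langle 2\rho,\ad_i\rangle = 2$, whereas $f\in\oc\otimes\oc\subset\dd{\gg}\otimes\R$ is $\Td$-fixed in its $\dd{\gg}$-factor and of cohomological degree $2$ in its $\R$-factor. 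Both sides then expand to $e + s^{-2}f(h)$.

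Invariance of $\widetilde{\YYY}$ is now immediate: after commuting the two occurrences of $\adjoint(2\rho(s)^{\pm 1})$ one gets
\[
 (2\rho(s)\, g)^{-1}\cdot e^T(s\cdot h) \;=\; s^{-2}\,g^{-1}\cdot e^T(h),
\]
and $\dd{\bb}_-\oplus\bigoplus_i\dd{\gg}_{\ad_i}$ is a $\kkk$-submodule closed under multiplication by the unit $s^{-2}$. Descending to the right-$\Bmd$-quotient gives invariance of $\YYY$, and invariance of $\YYY^*$, $\YY$ and $\YYs$ follows because the locally closed subsets $\Ud\Bmd/\Bmd$ and $\Ud_{\Pd_-}\wpd\Bmd/\Bmd$ of $\GB$ are unions of $\Td$-orbits, hence stable under $2\rho(\gm)$.

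For the structure morphism, I would write $2\rho(s)\, u\wpd = (2\rho(s) u 2\rho(s)^{-1})\,\wpd\, t_s$ with $t_s := \wpd^{-1}\, 2\rho(s)\,\wpd = ((w_0 w_P)\cdot 2\rho)(s)\in\Td$, so the $\gm$-action on $\YY$ reads $(u\wpd\Bmd,h)\mapsto((2\rho(s) u 2\rho(s)^{-1})\wpd\Bmd,\,s\cdot h)$. Substituting this into Definition~\ref{5aqdef} together with the identity above, $\ol q_i(s\cdot(y,h))$ picks up a factor $s^{-2}$ from the transformation of $e^T$ and a further factor $s^{-2\rho(w_P\ad_i)}$ from the action of $\adjoint(t_s)$ on $\dd{\gg}_{-w_0\ad_i}$ (by $W$-invariance of the pairing), giving total weight $-(2 + 2\rho(w_P\ad_i))$. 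The identity $2\rho(w_P\ad_i) = 2 - 4\rho_P(\ad_i)$, which follows from $w_P(2\rho_P) = -2\rho_P$ and $w_P(2\rho^P) = 2\rho^P$ with $2\rho^P := \sum_{\alpha\in R^+\setminus R^+_P}\alpha$, simplifies this exponent to $-2\sum_{\alpha\in R^+\setminus R^+_P}\alpha(\ad_i)$, matching the negative of the prescribed degree of $q_i$. I expect this final Weyl-group computation to be the only mildly technical point; the geometric content lies entirely in the scaling identity for $e^T$, and the rest is formal.
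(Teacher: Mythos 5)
The paper leaves this lemma as ``straightforward and left to the reader,'' so there is no argument to compare against; what you have written is a correct and complete working-out of the details, and it is almost certainly the intended one. The central scaling identity $e^T(s\cdot h) = s^{-2}\,\adjoint(2\rho(s))\cdot e^T(h)$ is exactly what makes everything work: $e$ is homogeneous of $\adjoint\circ 2\rho$-weight $2$ since $\langle 2\rho,\ad_i\rangle=2$, and $f$ is $\Td$-fixed in its Lie-algebra factor but of cohomological degree $2$, so the two contributions scale in opposite ways and $s^{-2}\adjoint(2\rho(s))$ returns $e^T$ to itself at the rescaled point. The descent to $\GB$ and the stability of $\Ud\Bmd/\Bmd$ and $\Ud_{\Pd_-}\wpd\Bmd/\Bmd$ under $\Td$ (hence under $2\rho(\gm)$) are exactly as you say. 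For the structure morphism, your decomposition $2\rho(s)u\wpd=(2\rho(s)u2\rho(s)^{-1})\wpd\, t_s$ with $t_s=((w_0w_P)\cdot 2\rho)(s)$, followed by the weight computation $\langle -w_0\ad_i,(w_0w_P)\cdot 2\rho\rangle = -2\rho(w_P\ad_i)$ and the simplification $2+2\rho(w_P\ad_i)=2\sum_{\alpha\in R^+\setminus R^+_P}\alpha(\ad_i)$ (via $w_P(2\rho_P)=-2\rho_P$, $w_P(2\rho^P)=2\rho^P$, $2\rho(\ad_i)=2$), all check out and reproduce precisely the grading the paper assigns to $q_i$. The only thing I would add for completeness is a brief remark fixing the sign convention relating the grading on $S$ to the $\gm$-action on $\spec S$, since both signs appear in the literature; with the convention consistent with your scaling identity (where a degree-$d$ element picks up $s^{-d}$ upon evaluation at $s\cdot h$), your exponent $-2\sum_{\alpha\in R^+\setminus R^+_P}\alpha(\ad_i)$ is exactly what equivariance of $\YY\to\spec S$ requires.
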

\begin{proof}
This is straightforward and left to the reader.
\end{proof}

This $\gm$-action is crucial for verifying the following properties of $\YY$.
\begin{lemma}\label{5astratum}
Suppose $\kkk$ is an algebraically closed field of characteristic not dividing $\ell_G$. Then
\[ |\YYY|_{\kkk} =\bigcup_Q |\YYY_Q|_{\kkk}\]
where $|\cdot|_{\kkk}$ denotes the set of $\kkk$-points and $Q$ runs over all standard parabolic subgroups of $G$.
\end{lemma}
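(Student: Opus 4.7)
The plan is to combine the $\Bd_-$-Bruhat decomposition of $\GB$ with a direct root-space computation and a Weyl-group identification. The inclusion $\bigcup_Q|\YYY_Q|_\kkk\subseteq|\YYY|_\kkk$ is immediate from $\YYY_Q\subseteq\YYY$. For the reverse, I first identify, for each standard parabolic $Q$ of $G$, the subset $\Ud_{\dd{Q}_-}\dot{w}_Q\dot{w}_0\Bmd/\Bmd$ with the Bruhat cell $\Bd_-\dot{w}_Q\dot{w}_0\Bmd/\Bmd$: both are orbits through the $T$-fixed point $\dot{w}_Q\dot{w}_0\Bmd$, the former sits inside the latter, and a root-space count in the $\Bd_-$-stabilizer of that point yields dimension $|R^+\setminus R^+_Q|$ for each, forcing equality.

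For the main step, I take $(g\Bmd,h)\in|\YYY|_\kkk$ and let $w\in W$ be the unique element with $g\Bmd\in\Bd_-\dot{w}\Bmd/\Bmd$, so that $g=tu\dot{w}$ for some $t\in\Td(\kkk)$ and $u\in\Ud_-(\kkk)$. The key algebraic input is that $\Ud_-$ fixes $e$ modulo $\dd{\bb}_-$: this follows from $[\nd,e_{\ad_i}]\subseteq\dd{\bb}_-$ (because $\ad_i-\ad\in -R^+\cup\{0\}$ for every $\ad\in R^+$) together with the $\Bmd$-stability of $\dd{\bb}_-$. Combined with $f(h)\in\dd{\hh}\subseteq\dd{\bb}_-$, this yields
\[
g^{-1}\cdot e^T(h)\;=\;\dot{w}^{-1}\!\left(\sum_{i=1}^{r}|\a_i|^2\,\ad_i(t^{-1})\,e_{\ad_i}\;+\;Z\right)
\]
for some $Z\in\dd{\bb}_-$. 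Suppose some $i$ satisfies $w^{-1}\ad_i\in R^+\setminus\{\ad_1,\ldots,\ad_r\}$, i.e. $w^{-1}\ad_i$ is a positive but non-simple coroot. Then the $\dd{\gg}_{w^{-1}\ad_i}$-component of $g^{-1}\cdot e^T(h)$ picks up the nonzero term $|\a_i|^2\ad_i(t^{-1})\dot{w}^{-1}e_{\ad_i}$ from the first summand, with no contribution from $\dot{w}^{-1}Z\in\dd{\hh}\oplus\bigoplus_{\ad\in -R^+}\dd{\gg}_{w^{-1}\ad}$ since $\ad_i\notin -R^+$. This contradicts $(g\Bmd,h)\in\YYY$, so $w^{-1}\ad_i\in -R^+\cup\{\ad_1,\ldots,\ad_r\}$ for every $i$.

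A Weyl-group identification finishes the argument: the set $\{w\in W:w^{-1}\ad_i\in -R^+\cup\{\ad_1,\ldots,\ad_r\}\text{ for all }i\}$ equals $\{w_Q w_0:Q\text{ standard parabolic of }G\}$. The forward direction is a routine check, using $w_Q\ad_i=-\ad_{\pi_Q(i)}$ for $i\in J_Q$ (with $\pi_Q$ a permutation of $J_Q$) and the Dynkin involution $w_0\ad_i=-\ad_{\sigma(i)}$. For the reverse, given such $w$ I set $J:=\{i:w^{-1}\ad_i\text{ is simple positive}\}$, let $Q$ be the standard parabolic with $J_Q=J$, and verify $w=w_Q w_0$ by showing $w_0 w^{-1}\in W_Q$ is the longest element---e.g.\ by computing $\ell(w_0 w^{-1})$ and matching its descent set with $R^+_Q$. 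Combined with the cell identification from the first paragraph, this gives $g\Bmd\in\Ud_{\dd{Q}_-}\dot{w}_Q\dot{w}_0\Bmd/\Bmd$, hence $(g\Bmd,h)\in|\YYY_Q|_\kkk$.

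\textbf{Main obstacle.} The subtlest step is the reverse direction of the Weyl-group identification, since the simple-root action alone does not determine a Weyl element; a length or descent-set computation (or an induction on $|J|$) is needed to pin down $w_0 w^{-1}=w_Q$. The root-space calculation itself is routine once the bracket identity $[\nd,e_{\ad_i}]\subseteq\dd{\bb}_-$ is observed, and the cell identification is a straightforward dimension count.
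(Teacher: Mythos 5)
Your proposal is correct in substance, and it takes a genuinely different route from the paper for the reduction step. The paper exploits the $\gm$-action on $\GB\times\spec\R$ via $2\rho$ (recorded in Lemma \ref{5aeq}): since $\YYY$ is closed and $\gm$-invariant, every point flows to a $\gm$-fixed point $(\dot{w}\Bmd,0)$ lying in $\YYY$, and because the Bruhat cell of $g\Bmd$ is the attracting set of its limit point, it suffices to classify the fixed points of $\YYY$. Setting $h=0$ kills $f(h)$ and makes the membership condition $\dot{w}^{-1}\cdot e\in\dd{\bb}_-\oplus\bigoplus_i\dd{\gg}_{\ad_i}$ elementary. You instead work directly at a general $\kkk$-point, write $g=tu\dot{w}$ via the $\Bd_-$-Bruhat decomposition, and use the bracket observation $[\nd,e_{\ad_i}]\subseteq\dd{\bb}_-$ together with $\Td$-invariance of $\dd{\hh}$ to reach the same conclusion on $w$. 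Your route avoids invoking the $\gm$-equivariance of $\YYY$ at the cost of a slightly heavier computation; the two are about equally long. Both approaches then need the same Weyl-group identification, which you correctly flag as the subtle point. Here the paper's argument is tighter than your sketch: it sets $w':=w_0w^{-1}w_Q$ and proves $w'\ad_i\in R^{+\vee}$ for every $i$ (splitting $i\in I_Q$ versus $i\notin I_Q$, the latter by a positivity/contradiction argument on the expansion $w_Q\ad_i=\sum c_j\ad_j$), which forces $w'=e$ outright. Your proposed ``compute $\ell(w_0w^{-1})$ and match its descent set with $R^+_Q$'' is not quite enough as stated, since a Weyl element is not determined by its length together with its descent set, and it is not a priori clear that $w_0w^{-1}$ even lies in $W_Q$; to make it precise you would want to show the full inversion set of $w_0w^{-1}$ equals $R^{+\vee}_Q$, or simply adopt the paper's $w'$ computation, which establishes exactly that. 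The cell identification $\Ud_{\dd{Q}_-}\dot{w}_Q\dot{w}_0\Bmd/\Bmd=\Bd_-\dot{w}_Q\dot{w}_0\Bmd/\Bmd$ in your first paragraph is fine and implicitly used in the paper as well.
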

\begin{proof}
By flowing every point of $\YYY$ to a $\gm$-fixed point and the Bruhat decomposition, it suffices to show
\[ (\dot{w}\Bmd,0)\in \YYY\implies w=w_Qw_0\text{ for some }Q.\]
By the definition of $\YYY$, $(\dot{w}\Bmd,0)\in \YYY$ implies that for any $1\leqslant i\leqslant r$, $w^{-1}\ad_i$ is either negative or equal to a simple coroot. Denote by $I_Q$ the set of $i\in I:= \{1,\ldots, r\}$ for which the second case holds, and by $Q$ the associated standard parabolic subgroup. Define $w':=w_0w^{-1}w_Q$. We have to show $w'=e$. It suffices to show that $w'\ad_i$ is positive for any $i\in I$.

\bigskip
\underline{\textit{Case 1: $i\in I_Q$.}} We have $w_Q\ad_i=-\ad_j$ for some $j\in I_Q$, and hence $w^{-1}w_Q\ad_i$ is equal to minus a simple coroot, and hence $w'\ad_i$ is positive.

\bigskip
\underline{\textit{Case 2: $i\in I\setminus I_Q$.}} Suppose to the contrary that $w'\ad_i$ is negative. Write $w_Q\ad_i=\sum_{j\in I}c_j\ad_j$. Then $c_j\geqslant 0$ for each $j$ and is non-zero for some $j_0\in I\setminus I_Q$. Since $w'\ad_i$ is negative, the sum $\sum_{j\in I}c_j w_0w^{-1}\ad_j=w'\ad_i$ is a negative coroot. By the definition of $I_Q$, $w_0w^{-1}\ad_j$ is positive (resp. minus a simple coroot) for $j\in I\setminus I_Q$ (resp. $j\in I_Q$). It follows that $w_0w^{-1}\ad_{j_0}$ lies in the $\ZZ$-span of $w_0w^{-1}\ad_j$ with $j\in I_Q$, or equivalently, $\ad_{j_0}$ lies in the $\ZZ$-span of $\ad_j$ for these $j$, a contradiction.
\end{proof}

\begin{lemma}\label{5avf}
Suppose $\kkk$ is an algebraically closed field of characteristic not dividing $\ell_G$. The fiber $(\YY)_x$ of $\YY$ over $x:=(0,0)\in\spec S$ is isomorphic to the zero locus of a vector field on $\Gd/\Pbd$, where
$\Pbd$ is the parabolic subgroup of $\Gd$ containing $\Bmd$ such that
\[ \pbd:=\lie(\Pbd)=\dd{\bb}_-\oplus\bigoplus_{\a\in -w_0R^+_P}\dd{\gg}_{\ad}.\]
Set-theoretically, it is equal to $\{(\wpd\Bmd,0)\}$.
\end{lemma}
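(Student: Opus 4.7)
The plan is to identify $(\YY)_x$ with the scheme-theoretic zero locus of the vector field $\widehat{e}$ on $\Gd/\Pbd$ induced by the infinitesimal $\Gd$-action of $e \in \dd{\gg}$, proceeding in two steps: first establish the set-theoretic support at $(\wpd\Bmd, 0)$, then upgrade to a scheme-theoretic isomorphism.

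For the set-theoretic part, I use the $\gm$-action of Lemma \ref{5aeq}. Since the structure morphism $\YY \to \spec S$ is $\gm$-equivariant and $x = (0,0)$ is $\gm$-fixed, the fiber $(\YY)_x$ is $\gm$-stable. Via the isomorphism $\Ud_{\Pd_-} \xrightarrow{\sim} \Ud_{\Pd_-}\wpd\Bmd/\Bmd$, $u \mapsto u\wpd\Bmd$, the induced $\gm$-action is conjugation by $2\rho$, which acts with strictly negative weight $2\rho(\ad)$ on each root vector $e_{\ad}$ for $\a \in -(R^+ \sm R^+_P)$. Hence the flow as $s \to \infty$ contracts the whole chart to the identity of $\Ud_{\Pd_-}$, so every $\kkk$-point of $(\YY)_x$ flows into a $\gm$-fixed point inside $(\YY)_x$. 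By Lemma \ref{5astratum} and the constraint that the Weyl representative be $\wpd$, the only candidate is $(\wpd\Bmd, 0)$; a direct computation based on $\wp^{-1} = w_0 w_P$ (using that $w_P$ permutes the simple coroots in $R_P^+$ and that $w_0$ sends the remaining simple coroots to negative ones) shows $\wpd^{-1} \cdot e \in \dd{\bb}_- \oplus \bigoplus_{j>k}\dd{\gg}_{-w_0\ad_{\sigma_P(j)}}$, from which $\ol{q}_i(\wpd\Bmd, 0) = 0$ for $i \leq k$, so this point indeed lies in $(\YY)_x$.

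For the scheme-theoretic part, let $Z := \{g\Pbd \mid \adjoint(g^{-1})e \in \pbd\}$ denote the scheme-theoretic zero locus of $\widehat{e}$. The projection $(g\Bmd, 0) \mapsto g\Pbd$ gives a morphism $\theta: (\YY)_x \to \Gd/\Pbd$ that factors through $Z$: combining the $\YYY$-condition with $\ol{q}_i = 0$ for $i \leq k$ forces $(u\wpd)^{-1}e \in \dd{\bb}_- \oplus \bigoplus_{j>k}\dd{\gg}_{-w_0\ad_j} \subseteq \pbd$. I then work in the open affine neighborhood $U := \Ud_{\Pd_-}\wpd\Pbd/\Pbd \simeq \Ud_{\Pd_-}$ of $\wpd\Pbd$ in $\Gd/\Pbd$; applying the same $\gm$-attractor argument to $Z$ itself shows $Z$ is set-theoretically supported at $\wpd\Pbd$, hence contained in $U$ as a closed subscheme. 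Both $(\YY)_x$ and $Z \cap U$ are then cut out of $\Ud_{\Pd_-}$ by the vanishing of explicit root-space coordinates of $(u\wpd)^{-1}e$, and the two defining ideals may be compared directly in $\OO(\Ud_{\Pd_-})$.

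The main obstacle will be this last identification of defining ideals. A priori $(\YY)_x$ is cut out by the stronger condition that $(u\wpd)^{-1}e$ have no positive root content outside the simples $\{-w_0\ad_j : j > k\}$, whereas $Z$ only demands vanishing outside $-w_0 R^+_P$. The key observation is that for $u \in \Ud_{\Pd_-}$ the positive root part of $\adjoint(u^{-1})e$ remains in $\sum_i \dd{\gg}_{\ad_i}$ (the brackets $[X,e_{\ad_i}]$ for $X \in \nn_{-P}$ land in $\dd{\hh}$ or in negative root spaces), so after applying $\adjoint(\wpd^{-1})$ the positive root components of $(u\wpd)^{-1}e$ can only appear at $\wp^{-1}\ad_i$ (producing simples in $-w_0 R^+_P$) or at $-\wp^{-1}$ of certain negative roots in the image of $\sadjoint(X)^k e$; a case analysis of this image via the Chevalley bracket structure shows the latter all lie outside $-w_0 R^+_P$, so the extra equations defining $(\YY)_x$ inside $Z$ are vacuous, and $\theta$ is an isomorphism.
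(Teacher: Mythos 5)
Your overall strategy matches the paper's: realize the fiber as the scheme-theoretic zero locus of the vector field coming from $e$ on $\Gd/\Pbd$, use the $\gm$-action to locate the support, and compare the defining ideals inside the affine chart $\Ud_{\Pd_-}\simeq\Ud_{\Pd_-}\wpd\Pbd/\Pbd$. The ideal comparison in your last paragraph is essentially the paper's observation that the extra defining polynomials of $(\YY)_x$ (those at $\a\in -w_0R^+_P$ minus the simples) are identically zero on the chart, and your bracket analysis justifying this is correct.

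The genuine gap is in the set-theoretic support argument, and it appears in both places you invoke it. On the affine chart $\Ud_{\Pd_-}$, every $\kkk$-point whatsoever flows to the origin as $s\to\infty$, so "every point of $(\YY)_x$ flows to a fixed point inside $(\YY)_x$" is vacuous and cannot exclude the possibility that $(\YY)_x$ is positive-dimensional (a $\gm$-stable linear subspace would satisfy everything you stated). What actually pins down the support is that the zero locus $Z$ is a \emph{closed} subscheme of the \emph{proper} scheme $\Gd/\Pbd$, so each $\gm$-orbit in $Z$ has both of its limits ($s\to 0$ and $s\to\infty$) landing in $Z$; once one knows that $\wpd\Pbd$ is the only $\gm$-fixed point of $\Gd/\Pbd$ lying in $Z$, both limits of any point must be $\wpd\Pbd$, forcing the point itself to be $\wpd\Pbd$. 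You do not establish the crucial fixed-point statement: your appeal to Lemma \ref{5astratum} and "the constraint that the Weyl representative be $\wpd$" constrains the $\gm$-fixed points of $\YYY$ inside a fixed open cell of $\GB$, but says nothing about the $\gm$-fixed points $\dot{w}\Pbd$ of $\Gd/\Pbd$ outside that cell, which is precisely where the paper does real work (choosing $b$ in the dominant chamber with $\a_i(b)=0$ exactly on the $\Pbd$-simples, and showing $s_{e^T(0)}(\dot{w}\Pbd)\ne 0$ for $w\notin w_0 W_{\Pbd}$). Without this computation, "$Z$ is set-theoretically supported at $\wpd\Pbd$, hence contained in $U$" is unjustified, and the claim that the support equals $\{(\wpd\Bmd,0)\}$ is not proved.
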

\begin{proof}
We have $\TTT_{\Gd/\Pbd}\simeq \Gd\times_{\Pbd}(\dd{\gg}/\pbd)$ so the morphism $g\mapsto g^{-1}\cdot e^T(0)\bmod\pbd$ defines a section $s_{e^T(0)}\in H^0(\Gd/\Pbd;\TTT_{\Gd/\Pbd})$. Observe that $\Ud_{\Pd_-}\simeq \Ud_{\Pd_-}\wpd\Pbd/\Pbd$ is an open subscheme of $\Gd/\Pbd$. We first show that $(\YY)_x$ is isomorphic to the zero locus of the restriction of $s_{e^T(0)}$ to this subscheme. The latter scheme is by definition the closed subscheme of $\Ud_{\Pd_-}$ cut by the polynomials $\langle e^*_{\ad}, (u\wpd)^{-1}\cdot e^T(0)\rangle$ for $\a\in -w_0(R^+\sm R^+_P)$ which are part of the defining polynomials of $(\YY)_x$. The other defining polynomials are those corresponding to $\a\in -w_0R^+_P\sm\{-w_0\a_{k+1},\ldots,-w_0\a_r\}$. It is not hard to see that they are all equal to zero and so redundant. This justifies our claim.

To complete the proof, it suffices to show that the set of closed points of the zero locus of $s_{e^T(0)}$ is equal to $\{\wpd\Pbd\}$. Observe that the former set is $\gm$-invariant. Hence it suffices to show that $\wpd\Pbd$ is the only $\gm$-fixed point of $\Gd/\Pbd$ at which $s_{e^T(0)}$ vanishes. Clearly, $s_{e^T(0)}$ vanishes at $\wpd\Pbd$. Pick a point $b$ in the dominant chamber such that $\a_i(b)=0$ if $\a_i\in -w_0R^+_P$ and $\a_i(b)>0$ otherwise. Let $w\in W$. If $w\not\in \wp W_{\Pbd}=w_0W_{\Pbd}$, then $w(b)\ne w_0(b)$, and hence $w(b)$ is away from the anti-dominant chamber. It follows that there exists $1\leqslant i\leqslant r$ such that $\a_i(w(b))>0$. This implies $w^{-1}\a_i\in -w_0(R^+\sm R^+_P)$, and hence $s_{e^T(0)}(\dot{w}\Pbd)\ne 0$. 
\end{proof}

\begin{lemma}\label{5aflat}
$\YY$ is flat over $\spec S$.
\end{lemma}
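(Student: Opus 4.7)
My plan is to apply Hironaka's miracle flatness theorem: if $f\colon X \to Y$ is a morphism of Noetherian schemes with $Y$ regular and $X$ Cohen--Macaulay, and every non-empty fiber of $f$ has the expected dimension $\dim X - \dim Y$, then $f$ is flat. Here $\spec S$ is regular (it is affine space over $\kkk$), so the task reduces to checking that $\YY \to \spec S$ has $0$-dimensional fibers with $\dim \YY = r+k$, and that $\YY$ is Cohen--Macaulay.

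For the fiber-dimension claim, I would combine Lemma \ref{5avf} and Lemma \ref{5aeq}. Lemma \ref{5avf} gives that the fiber over $x_0 = (0,0)$ is $0$-dimensional. By $\gm$-equivariance and upper semi-continuity, the locus $Z \subseteq \spec S$ where the fiber dimension is $\geq 1$ is closed and $\gm$-invariant; since $\gm$ acts on $\spec S$ with strictly positive weights, every non-empty $\gm$-invariant closed subset of $\spec S$ contains $x_0$ in its closure, so $Z = \emptyset$. Using the $\gm$-grading on $\OO(\YY)$ and lifting homogeneous generators of the Artinian quotient $\OO(\YY)/S_{>0}\OO(\YY) = \OO((\YY)_{x_0})$ via graded Nakayama, $\OO(\YY)$ is a finitely generated $S$-module, so $f$ is a finite morphism and thus $\dim \YY \le r+k$. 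Dominance of $f$ (hence $\dim \YY = r+k$) can be checked on the open subscheme $\YYs \subseteq \YY$ by producing, for generic $(h, q_1, \ldots, q_k)$ with all $q_i$ non-zero, a preimage in $\YYs$.

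The main obstacle is Cohen--Macaulayness of $\YY$. The fiber $(\YY)_{x_0}$ is Cohen--Macaulay by Lemma \ref{5avf}: it is the zero-scheme of a section of the tangent bundle on an open subset of the smooth variety $\Gd/\Pbd$, which has the expected dimension $0$ and is therefore a local complete intersection. By openness of the Cohen--Macaulay locus, there exists an open neighborhood $U \subseteq \YY$ of $(\YY)_{x_0}$ on which $\YY$ is Cohen--Macaulay. The $\gm$-action on $\YY$ contracts every point into $(\YY)_{x_0}$ as $t \to 0$, so every point of $\YY$ lies in some $\gm$-translate $t^{-1}U$ of $U$; each such translate is again Cohen--Macaulay, so all of $\YY$ is Cohen--Macaulay. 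Miracle flatness then yields the result, with the key subtlety being the propagation of the local complete intersection property from the special fiber to the entire $\YY$ via the contracting $\gm$-flow.
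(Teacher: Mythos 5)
Your overall strategy (miracle flatness, i.e., EGA~IV~6.1.5, together with the zero-dimensionality of fibers coming from Lemma~\ref{5avf} and the contracting $\gm$-action from Lemma~\ref{5aeq}) is exactly the paper's, and those parts of your argument are sound. The gap is in the Cohen--Macaulayness step.

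You argue that the \emph{fiber} $(\YY)_{x_0}$ is a local complete intersection of dimension $0$ (correct), and then invoke openness of the Cohen--Macaulay locus to get an open neighborhood $U\subseteq\YY$ on which $\YY$ is Cohen--Macaulay. This inference is invalid: openness of the CM locus concerns the local rings $\OO_{\YY,y}$, and the fact that the quotient ring $\OO_{(\YY)_{x_0},y}$ is CM does not imply that $\OO_{\YY,y}$ is CM. These are genuinely different rings, and CM-ness of a fiber does not propagate upward to the total space without additional input (indeed, going the other way --- deducing CM of the fiber from CM and flatness of the total space --- is standard, but your direction is not). So your argument never actually establishes that any point of $\YY$ has a Cohen--Macaulay local ring, and the subsequent $\gm$-translate covering argument, while correct in form, has nothing to cover.

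What the paper does instead, and what your proof is missing, is the observation that $\YY$ itself (not just its special fiber) is cut out inside $\Ud_{\Pd_-}\times\spec S\simeq\mathbb{A}^N_S$ by exactly $N$ polynomial equations (one for each $\a\in -w_0(R^+\sm R^+_P)\sm\{\a_1,\ldots,\a_r\}$, plus the $k$ equations $|\ad_i|^{-2}\langle e^*_{-w_0\ad_i},(u\wpd)^{-1}\cdot e^T(h)\rangle-q_i$). Krull's height theorem then gives $\dim\OO_{\YY,y}\geqslant \dim\mathbb{A}^N_S - N = \dim S$ at every closed point $y$. Combining this lower bound with the upper bound $\dim\OO_{\YY,y}\leqslant\dim_y(\YY)_x+\dim\OO_{\spec S,x}$ and the zero-dimensionality of the fibers forces equality $\dim\OO_{\YY,y}=\dim S$; this equality means the $N$ defining equations form a regular sequence in the regular local ring $\OO_{\mathbb{A}^N_S,y}$, so $\OO_{\YY,y}$ is a complete intersection, hence Cohen--Macaulay, and the dimension condition for miracle flatness holds simultaneously. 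This is the piece your proof needs; it replaces the ``propagate CM from the fiber via openness'' step, which does not work.
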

\begin{proof}
It suffices to verify the flatness at every closed point of $\YY$. Let $y\in \YY$ be a closed point lying over a point $x\in\spec S$. By a generalization of the Nullstellensatz, $x$ is a closed point. By \cite[Proposition 6.1.5]{EGA2}, $\OO_{\YY,y}$ is a flat $\OO_{\spec S,x}$-module if the following conditions are satisfied:
\begin{enumerate}
\item $\OO_{\spec S,x}$ is regular;

\item $\OO_{\YY,y}$ is Cohen-Macaulay; and

\item $\dim\OO_{\YY,y}=\dim_y(\YY)_x + \dim\OO_{\spec S,x}$ (in general LHS $\leqslant$ RHS).
\end{enumerate}
(1) is obvious. Observe that $\YY$ is a closed subscheme of $\Ud_{\Pd_-}\times\spec S\simeq \mathbb{A}^{N}_S$ ($N:=\dim\Ud_{\Pd_-}$) defined by $N$ polynomials, namely $\langle e^*_{\ad},(u\wpd)^{-1}\cdot e^T(h)\rangle$ for $\a\in -w_0(R^+\sm R^+_P)\sm\{\a_1,\ldots,\a_r\}$ and $|\ad_i|^{-2}\langle e^*_{-w_0\ad_i},(u\wpd)^{-1}\cdot e^T(h)\rangle -q_i$ for $1\leqslant i\leqslant k$. It follows that 
\[\dim\OO_{\YY,y}\geqslant \dim S=\dim\OO_{\spec S,x}.\] To complete the proof, it suffices to show that $\YY$ has zero-dimensional fibers, for we will have $\dim\OO_{\YY,y}=\dim\OO_{\spec S,x}$, yielding (2) and (3) simultaneously. 

We may now assume $\kkk$ is an algebraically closed field of characteristic not dividing $\ell_G$. Consider $y_0:=(\wpd \Bmd,0)\in \YY$ which lies over $x_0:=(0,0)\in\spec S$. By Lemma \ref{5avf}, $(\YY)_{x_0}$ is set-theoretically equal to $\{y_0\}$, and hence, by \cite[Th\'eor\`eme 13.1.3]{EGA3}, the fiber dimension at every point near $y_0$ is equal to zero. The result then follows from Lemma \ref{5aeq} and the observation that the $\gm$-action takes every closed point of $\YY$ to $y_0$ at the limit.
\end{proof}

\begin{lemma}\label{5aproper}
Suppose $\kkk=\CC$. $\YY$ is proper over $\spec S$.
\end{lemma}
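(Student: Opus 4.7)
My plan is to reduce properness to finiteness of $\pi\colon \YY\to\spec S$, then deduce finiteness from the graded Nakayama lemma applied to the $\gm$-action of Lemma \ref{5aeq}.

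First, the structure morphism $\pi$ is affine because $\YY$ sits as a closed subscheme of $\Ud_{\Pd_-}\times\spec S$ (the $q_i$'s being realized as pullbacks of the $\mathbb{A}^k$-coordinates, via the defining polynomials of $\YY$ exhibited in the proof of Lemma \ref{5aflat}). Since a proper affine morphism is finite, it suffices to prove that $\OO(\YY)$ is a finitely generated $S$-module.

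Next, I would upgrade the $\gm$-action of Lemma \ref{5aeq} to an $\mathbb{A}^1$-action contracting everything to $y_0:=(\wpd\Bmd,(0,0))$. On $\spec S=\spec\R[q_1,\dots,q_k]$ the $\gm$-action is positively graded ($\R$ carries the standard non-negative grading from $H_T^{\bl}(\pt)$ and each $q_i$ has strictly positive degree), hence extends to an $\mathbb{A}^1$-action contracting to $(0,0)$. On $\Ud_{\Pd_-}\simeq \mathbb{A}^N$ the conjugation action by $2\rho$ has weight $-2\rho(\ad)>0$ on the coordinate dual to each $\ad\in -(R^+\sm R^+_P)$, so likewise extends to an $\mathbb{A}^1$-action contracting to the identity. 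Since $\YY$ is closed and $\gm$-invariant in $\Ud_{\Pd_-}\times\spec S$, it inherits the $\mathbb{A}^1$-action, which shows that $\OO(\YY)$ is a non-negatively graded $S$-module with $\OO(\YY)_0=\CC$.

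Finally, by Lemma \ref{5avf} the special fiber $(\YY)_{(0,0)}$ is set-theoretically the single point $y_0$, so as a zero-dimensional scheme of finite type over $\CC$ its coordinate ring
\[ \OO(\YY)\,/\,S_{>0}\cdot \OO(\YY) \ \cong\ \OO((\YY)_{(0,0)}) \]
is finite-dimensional over $\CC$. The graded Nakayama lemma applied to the non-negatively graded ring $S$ (irrelevant ideal $S_{>0}$) and the non-negatively graded $S$-module $\OO(\YY)$ then yields that $\OO(\YY)$ is a finitely generated $S$-module, so $\pi$ is finite and in particular proper.

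The main subtlety, and the reason Lemma \ref{5avf} alone does not suffice, is the upgrade of the bare $\gm$-action to an $\mathbb{A}^1$-action — equivalently the verification of non-negativity of the grading on $\OO(\YY)$ — which needs both the root-weight computation for $2\rho$-conjugation on $\Ud_{\Pd_-}$ and the passage of $\mathbb{A}^1$-extendability from the ambient space to the closed subscheme $\YY$. Given this, graded Nakayama combined with Lemma \ref{5avf} delivers finiteness cleanly.
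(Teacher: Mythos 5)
Your proof is correct and takes a genuinely different route from the paper's. The paper proves properness by exhibiting a variant $\YY'$ (the graph of $\YY\to\spec\kkk[q_1,\ldots,q_k]$) as a locally closed subscheme of the projective $\GB\times\spec S$ and then showing by contradiction that it is actually closed: one flows a hypothetical boundary point to a $\gm$-fixed point $(\dot w_Q\dot w_0\Bmd,0,0)$ with $P\subsetneq Q$, and uses the matrix elements $h_{\ll}$ together with a cone computation on coweights (Lemma~\ref{5aproperfact2}, an inverse-Cartan-matrix positivity statement) to derive a contradiction. Your argument instead observes that $\pi$ is affine, so properness reduces to finiteness of $\OO(\YY)$ over $S$, which you extract from the $\gm$-action via graded Nakayama and the set-theoretic point computation of Lemma~\ref{5avf}. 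This avoids both the auxiliary closedness lemma (Lemma~\ref{5aproperfact1}) and the Cartan-matrix lemma, and is structurally cleaner; moreover it directly delivers finiteness, whereas the paper later (in Proposition~\ref{5arank}) has to combine properness with quasi-finiteness and invoke EGA to get finiteness back.

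Two remarks worth keeping in mind. First, the sign bookkeeping is delicate: with the literal action $s\cdot(u\wpd\Bmd)=2\rho(s)u\wpd\Bmd$, the coordinate dual to $e_{\ad}$ ($\ad$ a negative coroot) satisfies $x_{\ad}(s\cdot u)=s^{2\rho(\ad)}x_{\ad}(u)$ with $2\rho(\ad)<0$, so the \emph{coaction} weight is negative, not positive; to make the weights on $\Ud_{\Pd_-}$ and on $\spec S$ simultaneously non-negative one must use the inverse action (equivalently, read Lemma~\ref{5aeq} as asserting equivariance for the action through $-2\rho$). You can sanity-check this against the rank-one example, where $\ol q=y_-^2+2hy_-$ has degree $4$ and $h$ has degree $2$, forcing $y_-$ to have degree $+2=-2\rho(-\ad)$. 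Once the sign is fixed, your weight computation, the closedness and $\gm$-invariance of $\YY$ in the ambient graded affine space, and the bounded-below hypothesis for graded Nakayama all go through. Second, it is worth stating explicitly that $(\YY)_{(0,0)}$ being a finite-type $\CC$-scheme with a single closed point forces $\OO((\YY)_{(0,0)})$ to be Artinian and hence finite over $\CC$ (this uses $\kkk=\CC$ and Nullstellensatz, matching the hypothesis of the lemma). With these clarifications, the argument is complete.
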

\begin{proof}
Define $\YY'\subset \YY\times \spec\kkk[q_1,\ldots,q_k] $ to be the graph of the morphism
\begin{equation}\nonumber
\begin{array}{ccc}
\YY & \ra & \spec\kkk[q_1,\ldots,q_k] \\ [.5em]
(u\wpd\Bmd, h) &\mapsto & \left(\ol{q}_1(u\wpd\Bmd,h),\ldots,\ol{q}_k(u\wpd\Bmd,h) \right)
\end{array}
\end{equation}
where $\ol{q}_1,\ldots,\ol{q}_k$ are defined in Definition \ref{5aqdef}. To prove that $\YY$ is proper over $\spec S$, it suffices to show that the natural morphism $\YY'\ra \GB\times\spec S$ is a closed immersion. Notice that $\YY'$ is a closed subscheme of $\wpd\Ud\Bmd/\Bmd\times\spec S$ which is a divisor complement of $\GB\times\spec S$ by Lemma \ref{2dlemma3}. Hence, by Lemma \ref{5aproperfact1}, it suffices to show that $\YY'$ is a closed subset of $\GB\times\spec S$.

Suppose to the contrary that $\YY'$ is not closed in $\GB\times\spec S$. Let $\ol{\YYY}'_P$ be the (set-theoretic) closure of $\YY'$. Then $\ol{\YYY}'_P\setminus \YY'\ne \emptyset$. Since $\YY'$ is locally closed, $\ol{\YYY}'_P\setminus \YY'$ is closed. By Lemma \ref{5aeq}, the given $\gm$-action on $\GB\times\spec S$ preserves $\YY'$ and so $\ol{\YYY}'_P\setminus \YY'$. Consider a $\gm$-fixed point of $\ol{\YYY}'_P\sm\YY'$ which, by Lemma \ref{5astratum}, must be of the form $(\dot{w}_Q\dot{w}_0\Bmd,0,0)$ for some standard parabolic subgroup $Q$ of $G$. Observe that $\ol{\YYY}'_P\sm \YY'\subseteq \left(\ol{\Ud_{\Pd_-}\wpd\Bmd/\Bmd}\sm \Ud_{\Pd_-}\wpd\Bmd/\Bmd\right)\times \spec S$, and hence $P\subsetneq Q$. WLOG, we assume that the simple roots of $Q$ are $\a_{\ell+1},\ldots,\a_r$ for some $\ell< k$. 

Let $(u\wpd\Bmd,h)\in \YY\tGB\dot{w}_Q\dot{w}_0\Ud\Bmd/\Bmd$ with $u\in\Ud_{\Pd_-}$. There are unique $u_1\in\Ud$, $u_2\in\Ud_-$ and $t\in \Td$ such that 
\begin{equation}\label{5apropereq1}
u\wpd=\dot{w}_Q\dot{w}_0u_1tu_2.
\end{equation}
For any $1\leqslant i\leqslant r$, we have
\begin{align}\label{5aeeqa}
& \ol{q}_i(u\wpd\Bmd,h)\nonumber\\
=~&|\al_i|^{-2}\langle e^*_{-w_0\ad_i},(u\wpd)^{-1}\cdot e^T(h)\rangle \nonumber\\ 
=~&|\al_i|^{-2}\langle e^*_{-w_0\ad_i},(u\wpd u_2^{-1})^{-1}\cdot e^T(h)\rangle\qquad\qquad\because (u\wpd\Bmd,h)\in \YY \nonumber\\ 
=~&|\al_i|^{-2}\ad_i(w_0(t))\langle e^*_{-w_0\ad_i},(\dot{w}_Q\dot{w}_0u_1)^{-1}\cdot e^T(h)\rangle \nonumber\\
=~&\ad_i(w_0(t))f_i(u_1,h) 
\end{align}
where 
\[f_i:(u_1,h)\mapsto |\al_i|^{-2}\langle e^*_{-w_0\ad_i},(\dot{w}_Q\dot{w}_0u_1)^{-1}\cdot e^T(h)\rangle\] is a regular function on $\Ud\times\spec\R$. Notice that $f_i(e,0)\ne 0$ if $\ell+1\leqslant i\leqslant r$.

Let $\ll\in\Q$ be dominant. Recall the elements $v_{\wp}\in S(\ll)_{\wp(\ll)}$ and $v_{\ll}^*\in S(\ll)^*$ defined in Definition \ref{2ddef}. Define $h_{\ll}\in\OO(\Ud)$ by
\[ h_{\ll}(u_1):=\langle v_{\ll}^*, (\dot{w}_Q\dot{w}_0u_1)^{-1}\cdot v_{\wp}\rangle.\] 
Suppose \eqref{5apropereq1} continues to hold. Put $u_3:=(\wpd)^{-1}u(\wpd)$. We have $u_3\in\Ud$ and 
\begin{equation}\label{5aeeqb}
h_{\ll}(u_1)=\langle v_{\ll}^*, tu_2u_3^{-1}(\wpd)^{-1}\cdot v_{\wp}\rangle= \langle v_{\ll}^*, tu_2u_3^{-1}\cdot v_e\rangle = \ll(t)
\end{equation}
where the second equality follows from Proposition \ref{appprop2}. Let $\{\od_1,\ldots,\od_r\}$ be the dual basis of $\{\a_1,\ldots,\a_r\}$. By Lemma \ref{5aproperfact2}, we can find $\ll_i\in\Q\cap\RR_{>0}\cdot\od_i$ for each $1\leqslant i\leqslant \ell+1$ such that 
\begin{equation}\label{5aeeqc}
\ll_{\ell+1}=\sum_{i=1}^{\ell}b_i\ll_i+\sum_{j=\ell+1}^rc_j\ad_j 
\end{equation}
for some non-negative integers $b_1,\ldots, b_{\ell}$ and $c_{\ell+1},\ldots,c_r$. By \eqref{5aeeqa}, \eqref{5aeeqb} and \eqref{5aeeqc}, we have
\begin{align*}
h_{-w_0(\ll_{\ell+1})}(u_1) =~& (-w_0(\ll_{\ell+1}))(t)\\
 =~&  \prod_{i=1}^{\ell} (-w_0(\ll_i))(t)^{b_i}\times \prod_{j=\ell+1}^r\ad_j(w_0(t^{-1}))^{c_j}\\
=~& \prod_{i=1}^{\ell} h_{-w_0(\ll_i)}(u_1)^{b_i}\times \prod_{j=\ell+1}^r \frac{f_j(u_1,h)^{c_j}}{\ol{q}_j(u\wpd\Bmd,h)^{c_j}} .
\end{align*}
Observe that for any $1\leqslant i\leqslant \ell+1$,
\begin{align*}
h_{-w_0(\ll_i)}(e)\neq 0~  \Longleftrightarrow~&~ w_Qw_P(\ll_i)=\ll_i\\
\Longleftrightarrow~&~ s_{\a_i}\text{ does not appear in a reduced}\\
&~\text{word decomposition of }w_Qw_P
\\
\Longleftrightarrow~&~ 1\leqslant i\leqslant\ell.
\end{align*}

To summarize, we have proved that $h_{-w_0(\ll_{\ell+1})}\times \prod_{j=\ell+1}^r q_j^{c_j}$ and $\prod_{i=1}^{\ell} h_{-w_0(\ll_i)}^{b_i}\times \prod_{j=\ell+1}^r f_j^{c_j}$, considered as regular functions on the open subscheme $\dot{w}_Q\dot{w}_0\Ud\Bmd/\Bmd\times\spec S$ of $\GB\times\spec S$, agree at every closed point of $\YY'\tGB \dot{w}_Q\dot{w}_0\Ud\Bmd/\Bmd$ but do not agree at the limit point $(\dot{w}_Q\dot{w}_0\Bmd,0,0)$, a contradiction. (We have used the well-known fact that any finitely generated algebra over a field is a Jacobson ring.) The proof of Lemma \ref{5aproper} is complete.
\end{proof}

\begin{proposition}\label{5arank}
The dimension of $\OO(\YY)\otimes_S\fof S$ is equal to $|W/W_P|$.
\end{proposition}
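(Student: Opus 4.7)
The plan is to exploit the flatness of $\YY$ over $\spec S$ in order to reduce the computation of the generic fiber dimension to the computation of the length of the special fiber at $x_0 := (0,0) \in \spec S$, and then interpret the latter as a Poincar\'e--Hopf index on $\Gd/\Pbd$.

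First, I would reduce to $\kkk = \CC$. Because $\OO(\YY)$ is flat over $S$ by Lemma \ref{5aflat}, base change commutes with the formation of $\OO(\YY)\otimes_S\fof S$, and hence the dimension over $\fof S$ is unchanged when we pass from $\kkk = \ZZ[\ell_G^{-1}]$ to $\kkk = \CC$. With $\kkk = \CC$, combining Lemma \ref{5aproper} (properness) and Lemma \ref{5aflat} (flatness) with the zero-dimensionality of every fiber (established in the proof of Lemma \ref{5aflat}) shows that the structure morphism $\YY \to \spec S$ is finite and flat. Therefore $\OO(\YY)$ is a locally free $S$-module of constant finite rank, and this rank coincides with $\dim_{\fof S}\OO(\YY)\otimes_S\fof S$ and with the length of any fiber.

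Next, I would compute the length of the fiber over $x_0$. By Lemma \ref{5avf}, $(\YY)_{x_0}$ is isomorphic to the scheme-theoretic zero locus of the section $s_{e^T(0)}\in H^0(\Gd/\Pbd;\TTT_{\Gd/\Pbd})$ restricted to the open subscheme $\Ud_{\Pd_-}\wpd\Pbd/\Pbd$. Since the same lemma shows that the set of closed points of $Z(s_{e^T(0)})\subset\Gd/\Pbd$ equals $\{\wpd\Pbd\}$, which lies inside that open cell, the scheme-theoretic zero locus on $\Gd/\Pbd$ coincides with the one on the open cell. Consequently, $(\YY)_{x_0}\simeq Z(s_{e^T(0)})$ as schemes. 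Being zero-dimensional inside the smooth projective variety $\Gd/\Pbd$ of dimension equal to $\mathrm{rank}(\TTT_{\Gd/\Pbd})$, the section $s_{e^T(0)}$ is Koszul-regular, so
\[ \mathrm{length}\bigl(Z(s_{e^T(0)})\bigr) = \int_{\Gd/\Pbd} c_{\mathrm{top}}(\TTT_{\Gd/\Pbd}) = \chi(\Gd/\Pbd). \]

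Finally, it remains to identify $\chi(\Gd/\Pbd) = |W/W_{\Pbd}|$ with $|W/W_P|$. By the definition of $\pbd$, the Weyl group $W_{\Pbd}$ is generated by the simple reflections $s_{-w_0\a_i}$ for $k+1 \leqslant i \leqslant r$, so $W_{\Pbd} = w_0 W_P w_0^{-1}$ and in particular $|W_{\Pbd}| = |W_P|$, yielding the claim. The main subtlety is the scheme-theoretic step in the previous paragraph: one must ensure that the ideal defining $(\YY)_{x_0}$ inside $\Ud_{\Pd_-}$ matches the full ideal sheaf of $Z(s_{e^T(0)})$ rather than a strictly smaller subscheme, and that the section $s_{e^T(0)}$ is truly regular (i.e.\ not supported on a scheme of higher-than-expected codimension), but both of these follow from the fact, already used in Lemma \ref{5avf}, that $(\YY)_{x_0}$ is cut out by the $\dim(\Gd/\Pbd)$ components of $u^{-1}\cdot e^T(0) \bmod \pbd$ inside the open cell $\Ud_{\Pd_-}$.
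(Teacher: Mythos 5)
Your proof is correct and follows the same route as the paper: reduce to $\kkk=\CC$, use quasi-finiteness plus properness plus flatness to conclude $\OO(\YY)$ is finite locally free over $S$, identify the rank with the length of the fiber over $(0,0)$, and invoke Lemma~\ref{5avf} to identify that length with $\chi(\Gd/\Pbd)=|W/W_{\Pbd}|=|W/W_P|$. The only difference is that you supply the justification for the equality $\dim_{\CC}\OO((\YY)_x)=\chi(\Gd/\Pbd)$ (via Koszul regularity of the zero-dimensional zero scheme and $\int c_{\mathrm{top}}(\TTT)=\chi$), which the paper simply asserts; this is a welcome elaboration and introduces no gap.
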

\begin{proof}
We may assume $\kkk=\CC$. Since $\YY$ is quasi-finite (proof of Lemma \ref{5aflat}) and proper (Lemma \ref{5aproper}) over $\spec S$, it follows that, by \cite[Th\'eor\`eme 8.11.1]{EGA3}, $\OO(\YY)$ is a finitely generated $S$-module and hence a projective $S$-module by the flatness (Lemma \ref{5aflat}). (It is well-known that any finitely presented flat module is projective.) The dimension in question is thus equal to the dimension of the coordinate ring of the fiber of $\YY$ over $x:=(0,0)\in\spec S$. By Lemma \ref{5avf}, $(\YY)_x$ is isomorphic to the zero locus of a vector field on $\Gd/\Pbd$, and hence
\[ \dim_{\CC}\OO((\YY)_x)=\chi( \Gd/\Pbd) = |W/W_{\Pbd}| = |W/W_P|. \]
\end{proof}
\section{Proof of main results}\label{6}
\subsection{Construction of homomorphisms}\label{5b}
The goal of this subsection is to define three homomorphisms of $S$-algebras
\[ \Phi:\OO(\YY)\ra \QHP,\]

\[ \Phi^*:\OO(\YY^*)\ra \QHPi\]
and 
\[\Phi_{loc}:\OO(\Bed\times_{\Gd}\Ud_{-}(\wpd)^{-1}\Bmd)\ra \QHPi .\]

Let us start with a morphism of schemes 
\begin{equation}\label{5beq1}
\Bed\ra \GB\times\spec\R
\end{equation}
defined by $(b,h)\mapsto (b^{-1}\Bmd,h)$. Clearly, it factors through the immersion $\YYY^*\hookrightarrow\GB\times\spec\R$ so we obtain a homomorphism of $\R$-algebras
\[ \etiv:\OO(\YYY^*)\ra \OO(\Bed).\] 

For any dominant $\ll\in\Q$, define $g_{P,\ll}\in\OO(\Bed)$ by
\[g_{P,\ll}(b,h):=\tt(\JJ_{\ll},v_{\wp},v_{\ll}^*)(b,h)=\langle v_{\ll}^*, b\cdot v_{\wp}\rangle.\]
(See Definition \ref{2ddef} for the definitions of $v_{\wp}$ and $v_{\ll}^*$.) Recall in the paragraph before Lemma \ref{2dlemma3} we have associated to any $v\in S(\ll)$ a section $s_{\ll,v}\in H^0(\GB;\LL(\ll))$. Observe that the pull-back of $s_{\ll,v_{\wp}}$ via \eqref{5beq1} is equal to $g_{P,\ll}$. 

Now consider the element $\ll_0\in\ZZ_{>0}\cdot 2\rhod$ from Proposition \ref{2dcor}. Define
\[ g_P:= g_{P,\ll_0}.\]
By Lemma \ref{2dlemma3}, the open subscheme $\Udd=\wpd\Ud\Bmd/\Bmd$ of $\GB$ is the complement of the divisor $\{s_{\ll_0,v_{\wp}}=0\}$. Thus we obtain a homomorphism of $\R$-algebras
\[ \etvii: \OO(\YYY^*\tGB\Udd)\ra \OO(\Bed\tGB\Udd)\simeq\OO(\Bed)[g_P^{-1}].\]

\begin{definition}\label{5bideal}
Define $J_P$ to be the ideal of $\OO(\Bed)$ generated by regular functions 
\[ (b,h)\mapsto \tt(\Jl,v,v_{\ll_0}^*)(b,h) = \langle v_{\ll_0}^*,b\cdot v\rangle \]
with $v\in S(\ll_0)_{<\wp(\ll_0)}:= \bigoplus_{\mu<\wp(\ll_0)} S(\ll_0)_{\mu}$. (Here $\mu< \wp(\ll_0)$ means $\mu\leqslant \wp(\ll_0)$ and $\mu\ne \wp(\ll_0)$.)
\end{definition}

\begin{lemma}\label{5bexist1}
We have
\[ \OO(\Bed\times_{\Gd}\Ud_{-}(\wpd)^{-1}\Bmd)\simeq \OO(\Bed\tGB\Ud_{\Pd_-}\wpd\Bmd/\Bmd) \simeq \OO(\Bed)[g_P^{-1}]/\langle J_P\rangle,\]
and hence $\etvii$ induces a homomorphism of $\R$-algebras
\[ \vpii:\OO(\YY^*)\ra \OO(\Bed)[g_P^{-1}]/\langle J_P\rangle.\]
\end{lemma}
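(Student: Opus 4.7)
The plan is to establish the two displayed ring isomorphisms, from which $\vpii$ will arise as the homomorphism induced by $\etvii$ once the ideal identification is in hand.

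For the first isomorphism, both fiber products describe the same subscheme of $\Bed$. The Bruhat-type identity $(\Ud_-\wpd\Bmd)^{-1} = \Bmd\wpd^{-1}\Ud_- = \Ud_-\wpd^{-1}\Bmd$---valid because $\wpd$ normalizes $\Td$ and $\Td$ normalizes $\Ud_-$---shows that $b\in\Ud_-(\wpd)^{-1}\Bmd$ if and only if $b^{-1}\Bmd\in\Ud_-\wpd\Bmd/\Bmd = \Ud_{\Pd_-}\wpd\Bmd/\Bmd$, and the scheme structures on the two resulting subschemes of $\Bed$ coincide.

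For the second isomorphism, I will factor it as a localization followed by a quotient by an ideal. Since $\ll_0$ is regular dominant, Lemma \ref{2dlemma3} identifies $\Udd = \wpd\Ud\Bmd/\Bmd$ with the complement in $\GB$ of the divisor $\{s_{\ll_0,v_{\wp}} = 0\}$; pulling this divisor back via $(b,h)\mapsto(b^{-1}\Bmd,h)$ yields $\{g_P = 0\}$, whence $\Bed\tGB\Udd \cong \spec\OO(\Bed)[g_P^{-1}]$. Since $\wp^{-1}(\alpha)\in -R^+$ for $\alpha\in R^+\setminus R_P^+$, conjugation by $\wpd^{-1}$ carries $\Ud_{\Pd_-}$ into $\Ud$, so $\Ud_{\Pd_-}\wpd\Bmd/\Bmd$ is a closed subscheme of $\Udd$ of codimension $|R_P^+|$. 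I will show that its defining ideal in $\OO(\Udd)$ is generated by $\{s_{\ll_0,v}/s_{\ll_0,v_{\wp}} : v\in S(\ll_0)_{<\wp(\ll_0)}\}$. Containment follows from a weight-space calculation: writing $b^{-1} = u_-\wpd b_-$ with $u_-\in\Ud_{\Pd_-}$ and $b_-\in\Bmd$, for $v\in S(\ll_0)_{\mu}$ with $\mu<\wp(\ll_0)$ the vector $\wpd^{-1}u_-^{-1}\cdot v$ has weights of the form $\wp^{-1}(\mu')$ with $\mu'\leqslant\mu$, all strictly below $\ll_0$ (equality would force $\mu'=\wp(\ll_0)$, contradicting $\mu'\leqslant\mu<\wp(\ll_0)$); and $b_-^{-1}\in\Bmd$ only lowers weights further, so the $\ll_0$-component of $b\cdot v$ vanishes. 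For the reverse containment, I use the combinatorial identity $\wp(R^{+\vee}\setminus\Gamma)=R_P^{+\vee}$ (where $\Gamma = \{-\wp^{-1}(\ad):\alpha\in R^+\setminus R_P^+\}$), which follows from $w_P$ sending $R_P^{+\vee}$ to $-R_P^{+\vee}$ and preserving $R^+\setminus R_P^+$. Under the $\wpd^{-1}$-conjugated torus action on $\OO(\Udd)\simeq\OO(\Ud)$, the coordinate $x_\gamma$ ($\gamma\in R^{+\vee}$) has weight $\wp(\gamma)$, so for each complementary coordinate $\gamma\in R^{+\vee}\setminus\Gamma$ a suitable $v\in S(\ll_0)_{\wp(\ll_0)-\wp(\gamma)}\subseteq S(\ll_0)_{<\wp(\ll_0)}$ produces a function of matching $\Td$-weight whose leading term at the identity of $\Udd\simeq\Ud$ equals $-\langle v_{\ll_0}^*, e_\gamma\wpd^{-1}v\rangle\cdot x_\gamma$, with coefficient nonzero because $e_\gamma$ maps $S(\ll_0)_{\ll_0-\gamma}$ nontrivially onto $S(\ll_0)_{\ll_0} = \kkk\cdot v_e$ for $\ll_0$ regular dominant. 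Combined with Proposition \ref{2dcor} and a weight-graded reduction, this shows the listed functions cut out $\Ud_{\Pd_-}\wpd\Bmd/\Bmd$ inside $\Udd$ exactly.

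Pulling back via $(b,h)\mapsto b^{-1}\Bmd$, each generator $s_{\ll_0,v}/s_{\ll_0,v_{\wp}}$ becomes $\tt(\Jl,v,v_{\ll_0}^*)/g_P$, so the pulled-back ideal coincides with $\langle J_P\rangle$ in $\OO(\Bed)[g_P^{-1}]$, giving the second isomorphism. Finally, $\vpii$ is characterized by the commutativity $\vpii\circ\etviii = \etix\circ\etvii$, where $\etviii:\OO(\YYY^*\tGB\Udd)\twoheadrightarrow\OO(\YYs)$ is the closed-immersion restriction; the ideal identification ensures $\etvii(\ker\etviii)\subseteq\langle J_P\rangle = \ker\etix$, so $\vpii$ is well-defined. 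The main technical obstacle is the reverse-containment half of the ideal identification, which rests on the combinatorial identity $\wp(R^{+\vee}\setminus\Gamma)=R_P^{+\vee}$ and the nondegeneracy of $e_\gamma$ acting between the relevant weight spaces of $S(\ll_0)$.
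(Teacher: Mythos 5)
Your first isomorphism, the identification $\Bed\tGB\Udd\simeq\spec\OO(\Bed)[g_P^{-1}]$, and the ``containment'' half of the ideal computation are all correct, and your final paragraph deducing $\vpii$ from the ideal identification matches the paper. Where you depart from the paper is the ``reverse containment,'' and here the paper's route is different and, crucially, avoids a gap in yours. The paper first observes (via the $\Td$-equivariant splitting $\Udd\simeq\prod_{\a\in\wp R^+}\Ud_{\ad}$) that the defining ideal of $\Ud_{\Pd_-}\wpd\Bmd/\Bmd$ in $\OO(\Udd)$ is generated by \emph{all} $\Td$-homogeneous elements whose weight lies in the cone $C:=(\sum_{\a\in R_P^+}\ZZ_{\leqslant 0}\ad)\sm\{0\}$. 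Then it quotes Proposition~\ref{2dcor} to write any such element as a $\kkk$-linear combination of monomials in the $s_{\ll_0,v}/s_{\ll_0,v_{\wp}}$, and the purely combinatorial Lemma~\ref{5bexist1fact1} (a statement about sums $\sum_i(\mu_i-\wp(\ll))$ landing in $C$) forces \emph{every} weight vector appearing in such a monomial to satisfy $\mu_i<\wp(\ll_0)$. This gives both inclusions at once with no nondegeneracy input whatsoever.

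Your argument instead tries to certify the reverse containment by producing, for each $\gamma$ with $\wp(\gamma)\in R_P^{+\vee}$, a generator whose linear part at $\wpd\Bmd$ is $c_\gamma x_\gamma$ with $c_\gamma=-\langle v_{\ll_0}^*, e_\gamma\wpd^{-1}v\rangle\ne 0$, and then invoking an unspecified ``weight-graded reduction.'' There are two genuine problems. First, over the actual coefficient ring $\kkk=\ZZ[\ell_G^{-1}]$ (not a field!), nonvanishing of $c_\gamma$ is insufficient: the graded Nakayama step you need requires $c_\gamma$ to be a \emph{unit}, and in the costandard module $S(\ll_0)=\ind_{\Bmd}^{\Gd}(M_{\ll_0})$ the map $e_\gamma:S(\ll_0)_{\ll_0-\gamma}\to S(\ll_0)_{\ll_0}$ is generically multiplication by a nonunit integer (already for $\gamma=\ad_i$ it is $\alpha_i(\ll_0)=2N$ when $\ll_0=N\cdot 2\rho^\vee$, as one sees in the divided-power model of $\nabla$ for $\mathbf{SL}_2$). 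Second, the ``weight-graded reduction'' needs a one-parameter subgroup making the grading on $\OO(\Udd)$ positive; the ambient $\gm$-action of Lemma~\ref{5aeq} via $2\rho$ does \emph{not} do this (the coordinates $x_\a$ for $\a\in R_P^+$ and for $\a\in -(R^+\sm R_P^+)$ have opposite $2\rho$-degrees), so this step is not just routine and a suitable $\nu$ must be constructed. Also, minor: the $\Td$-weight of $x_\gamma$ under the $\wpd^{-1}$-conjugated action is $-\wp(\gamma)$, not $\wp(\gamma)$ (harmless to your weight matching once propagated consistently). In short, the containment half of your argument is fine and essentially equivalent to the $N=1$ case of Lemma~\ref{5bexist1fact1}, but the reverse containment as you sketch it would need $c_\gamma$ to be a unit and a positive grading, neither of which is available; the paper's combinatorial route through Lemma~\ref{5bexist1fact1} is the right fix.
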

\begin{proof}
The first isomorphism is clear. By definition, $\Bed\tGB\Ud_{\Pd_-}\wpd\Bmd/\Bmd$ is a closed subscheme of $\Bed\tGB\Udd$ whose defining ideal is generated by the pull-back of the regular functions on $\Udd$ belonging to the defining ideal of $\Ud_{\Pd_-}\wpd\Bmd/\Bmd$. There exists a $\Td$-equivariant isomorphism
\[\Udd \simeq\prod_{\a\in\wp R^+}\Ud_{\ad}.\]
See the proof of \cite[Theorem 5.1.13]{BC}. Under this isomorphism, $\Ud_{\Pd_-}\wpd\Bmd/\Bmd$ is identified with $\prod_{\a\in\wp R^+\cap (-R^+)}\Ud_{\ad}$. Observe that
\[ \wp R^+\cap (-R^+) = -(R^+\sm R^+_P)\quad\text{ and } \quad\wp R^+\cap R^+= R^+_P.\]
It follows that the defining ideal of $\Ud_{\Pd_-}\wpd\Bmd/\Bmd$ is generated by the homogeneous elements of $\OO(\Udd)$ with respect to the $\Td$-action whose weights belong to $\left(\sum_{\a\in R^+_P}\ZZ_{\leqslant 0}\cdot \ad\right)\sm\{0\}$.

Now, by Proposition \ref{2dcor}, the ring $\OO(\Udd)$ is generated by regular functions of the form $s_{\ll_0,v}/s_{\ll_0,v_{\wp}}$ with $v\in S(\ll_0)$. If $v\in S(\ll_0)_{\mu}$, then the $\Td$-weight of $s_{\ll_0,v}/s_{\ll_0,v_{\wp}}$ is equal to $\mu-\wp(\ll_0)$. It follows that, by Lemma \ref{5bexist1fact1}, the defining ideal of $\Ud_{\Pd_-}\wpd\Bmd/\Bmd$ is generated by $s_{\ll_0,v}/s_{\ll_0,v_{\wp}}$ with $v\in S(\ll_0)_{< \wp(\ll_0)}$. The proof is complete by noticing that the pull-back of $s_{\ll_0,v}/s_{\ll_0,v_{\wp}}$ to $\OO(\Bed)[g_P^{-1}]$ is equal to $g_P^{-1}\cdot\tt(\JJ_{\ll_0},v,v_{\ll_0}^*)$.  
\end{proof}

Recall the ring maps $\Phi_{PLS}$ and $\Phi_{YZ}$ from Theorem \ref{3aPLS} and Theorem \ref{4aYZ} respectively. 
\begin{lemma}\label{5bgp}
For any dominant $\ll\in\Q$, we have $\Phi_{PLS}\circ\Phi_{YZ}(g_{P,\ll})=q^{[w_0(\ll)]}$.
\end{lemma}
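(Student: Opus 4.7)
The plan is to chain Proposition~\ref{4bmv2} with Proposition~\ref{3bone}, reducing the problem to an intersection computation on $\agll$.

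First, by Definition~\ref{2ddef}, $v_{\wp}\in S(\ll)_{\wp(\ll)}$ corresponds to the unique irreducible component of $S^{-}_{\wp(\ll)}\cap\agl$; let $Z$ denote its closure, an MV cycle of type $\ll$ and weight $\wp(\ll)$. Proposition~\ref{4bmv2} then gives
\[\Phi_{YZ}(g_{P,\ll})=\Phi_{YZ}(\tt(\JJ_{\ll},v_{\wp},v_{\ll}^*))=[Z]\in H^T_{2\rho(\ll-\wp(\ll))}(\agll),\]
and crucially this homological degree is exactly the top one treated in Proposition~\ref{3bone}.

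Next, pick a resolution $r_{\ll}:\G_{\ll}\to\agll$ satisfying the hypothesis of Proposition~\ref{3bDM}, and choose a lift $x\in H^T_{2\rho(\ll-\wp(\ll))}(\G_{\ll})$ of $[Z]$ (for instance, the fundamental class of the strict transform of $Z$, well-defined because $r_{\ll}$ is birational onto $\agll$). Proposition~\ref{3bone} then yields
\[\Phi_{PLS}([Z])=q^{[w_0(\ll)]}\,x\bullet[r_{\ll}^{-1}(\agp)],\]
and the projection formula $(r_{\ll})_{*}\bigl(x\bullet[r_{\ll}^{-1}(\agp)]\bigr)=[Z]\cdot[\agp]$ in $H^T_{*}(\agll)$ reduces matters to checking $[Z]\cdot[\agp]=[\pt]$.

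The dimension count works out: using $w_{P}\rho_{L}=-\rho_{L}$, $w_{P}\rho^{L}=\rho^{L}$ and $w_{0}\rho=-\rho$, one finds $\dim Z+\dim\agp=\rho(\ll-\wp(\ll))+2\rho_{L}(\wp(\ll))=2\rho(\ll)=\dim\agll$. Set-theoretically, $Z\cap\agp$ reduces to the single $T$-fixed point $t^{\wp(\ll)}$: any common $T$-fixed point $t^{\nu}$ must satisfy $\nu\geqslant\wp(\ll)$ in the $G$-coroot order (since $Z\subseteq\ol{S^{-}_{\wp(\ll)}}$ by Lemma~\ref{4bclosure}) and $\nu$ is a $W_{L}$-conjugate of some $L$-dominant $\mu\leqslant_{L}\wp(\ll)$ (since $\agp=\ol{L(\OO)\cdot t^{\wp(\ll)}}$), and an elementary calculation with the $G$- and $L$-coroot cones forces $\nu=\wp(\ll)$. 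The main obstacle is then verifying transversality at $t^{\wp(\ll)}$, equivalently pinning down the intersection multiplicity as exactly~$1$; I would approach this via a $T$-weight analysis of $T_{t^{\wp(\ll)}}\agll$, decomposing it into the tangent $T$-weights of $\agp$ (arising from $\lie(L(\OO))$-directions) and those of $Z$ (affine root vectors transverse to the Levi), which should fit into complementary subspaces. Once this is established, $[Z]\cdot[\agp]=[\pt]$ and we conclude $\Phi_{PLS}\circ\Phi_{YZ}(g_{P,\ll})=q^{[w_0(\ll)]}$.
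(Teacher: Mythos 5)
Your proposal follows essentially the same route as the paper's proof: identify $\Phi_{YZ}(g_{P,\ll})=[Z]$ via Proposition~\ref{4bmv2}, reduce to an intersection number by Proposition~\ref{3bone}, establish $Z\cap\agp=\{t^{\wp(\ll)}\}$ set-theoretically using Lemma~\ref{4bclosure}, and appeal to transversality at that point.

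The one substantive difference is in how the final multiplicity-one statement is handled. You flag it as ``the main obstacle'' and describe an intended $T$-weight decomposition of $T_{t^{\wp(\ll)}}\agll$ into $\agp$-directions and $Z$-directions, but you do not actually carry it out --- you conclude with ``which should fit into complementary subspaces. Once this is established\ldots''. The paper, for its part, simply asserts that the two subvarieties are non-singular near $t^{\wp(\ll)}$ and intersect transversely there, offering no further justification either. So your proposal matches the paper's level of detail but is explicitly honest that this step is left unverified; a complete argument would need to pin down $T_{t^{\wp(\ll)}}Z$ and check it is complementary to $T_{t^{\wp(\ll)}}\agp$ inside $T_{t^{\wp(\ll)}}\agll$, not merely that the dimensions add up (which you do check correctly). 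Two smaller notes: your ``projection formula'' line $(r_{\ll})_{*}\bigl(x\bullet[r_{\ll}^{-1}(\agp)]\bigr)=[Z]\cdot[\agp]$ is written a bit loosely since the left side is already a number in $\R$; what one actually wants is that $x\bullet[r_\ll^{-1}(\agp)]$ equals the local intersection multiplicity of $Z$ and $\agp$ at $t^{\wp(\ll)}$, which follows because $r_\ll$ is an isomorphism over the smooth point $t^{\wp(\ll)}$ and $\tilde{Z}\cap r_\ll^{-1}(\agp)\subseteq r_\ll^{-1}(t^{\wp(\ll)})$. Also, the existence of a lift $x$ with $(r_\ll)_*x=[Z]$ via the strict transform is fine since $Z$ meets the smooth locus of $\agll$ in a dense open set.
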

\begin{proof}
Let $Z$ be the unique MV cycle of type $\ll$ and weight $\wp(\ll)$. By Proposition \ref{4bmv2}, 
\[ \Phi_{YZ}(g_{P,\ll})=[Z]\in H^T_{2\rho(\ll-\wp(\ll))}(\agll).\]
Consider the subvarieties $Z$ and $\agp:=\ol{L(\OO)\cdot t^{\wp(\ll)}}$ of $\agll$. Inside an open neighbourhood of $t^{\wp(\ll)}$, they are non-singular and intersect each other transversely at this point. Let $t^{\mu}\in Z$. Then $\mu\geqslant \wp(\ll)$ by Lemma \ref{4bclosure}, and hence $t^{\mu}\not\in \agp$ unless $\mu=\wp(\ll)$. This gives $Z\cap \agp=\{t^{\wp(\ll)}\}$. The result thus follows from Proposition \ref{3bone}.
\end{proof}

\begin{lemma}\label{5bjp}
We have $\Phi_{PLS}\circ\Phi_{YZ}(J_P)=\{0\}.$
\end{lemma}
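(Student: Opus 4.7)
The plan is to reduce the statement to showing that $\Phi_{PLS}$ annihilates certain MV-cycle classes in the homology of $\agll$. By Definition \ref{5bideal}, the ideal $J_P$ is generated by the functions $\tt(\JJ_{\ll_0},v,v_{\ll_0}^*)$ with $v\in S(\ll_0)_{<\wp(\ll_0)}$. Since $\Phi_{PLS}\circ\Phi_{YZ}$ is a ring homomorphism, it suffices to send each such generator to zero; moreover, by Lemma \ref{2clemma2} the weight space $S(\ll_0)_\mu$ is freely generated over $\ZZ$ by the classes $v_Z$ indexed by MV cycles $Z$ of type $\ll_0$ and weight $\mu$, so one may assume $v=v_Z$ for such a cycle with $\mu<\wp(\ll_0)$.

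With this reduction, I would invoke Proposition \ref{4bmv2} to obtain
\[ \Phi_{YZ}(\tt(\JJ_{\ll_0},v_Z,v_{\ll_0}^*)) = [Z] \in H^T_{2\rho(\ll_0-\mu)}(\agll). \]
The task therefore becomes to show $\Phi_{PLS}([Z])=0$, and the natural tool is Proposition \ref{3bzero}, which states that $\Phi_{PLS}$ vanishes on $H^T_{>2\rho(\ll_0-\wp(\ll_0))}(\agll)$; this proposition applies because $\ll_0\in\ZZ_{>0}\cdot 2\rhod$ is a dominant element of $\Q$.

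The only remaining check is the degree inequality $2\rho(\ll_0-\mu)>2\rho(\ll_0-\wp(\ll_0))$, equivalently $\rho(\wp(\ll_0)-\mu)>0$. This is immediate from the partial order on $\co$: $\mu<\wp(\ll_0)$ means $\wp(\ll_0)-\mu$ is a non-zero non-negative combination of positive coroots, and $\rho$ evaluates strictly positively on every positive coroot (since $\rho(\ad_i)=1$ for every simple coroot $\ad_i$). I do not foresee any real obstacle here; the whole argument is essentially an assembly of Proposition \ref{4bmv2} (the MV-cycle description of $\Phi_{YZ}$), Proposition \ref{3bzero} (the vanishing range for $\Phi_{PLS}$), and the elementary observation that generators mapping to zero suffice for an ideal to map to zero.
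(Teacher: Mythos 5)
Your proof is correct and takes essentially the same route as the paper's: both reduce to generators of $J_P$, apply Proposition \ref{4bmv2} to place the image under $\Phi_{YZ}$ in $H^T_{2\rho(\ll_0-\mu)}(\agll)$, verify the degree inequality $2\rho(\ll_0-\mu)>2\rho(\ll_0-\wp(\ll_0))$, and conclude via Proposition \ref{3bzero}. Your extra step of reducing to the MV-cycle basis of the weight space is a harmless refinement; the paper works directly with arbitrary $v$ in the weight space since Proposition \ref{4bmv2} already controls the homological degree by linearity.
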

\begin{proof}
Recall the ideal $J_P$ is generated by $\tt(\JJ_{\ll_0},v,v_{\ll_0}^*)$ with $v\in S(\ll_0)_{\mu}$ where $\mu<\wp(\ll_0)$. By Proposition \ref{4bmv2}, $\Phi_{YZ}(\tt(\JJ_{\ll_0},v,v_{\ll_0}^*))$ lies in $H^T_{2\rho(\ll_0-\mu)}(\ag^{\leqslant\ll_0})$. By the assumption on these $\mu$, we have 
\[2\rho(\ll_0-\mu)>2\rho(\ll_0-\wp(\ll_0)).\]
The result thus follows from Proposition \ref{3bzero}.
\end{proof}

\begin{lemma}\label{5bexist2}
The composition $\Phi_{PLS}\circ\Phi_{YZ}$ induces a homomorphism of $\R$-algebras
\[ \vpiii: \OO(\Bed)[g_P^{-1}]/\langle J_P\rangle \ra\QHPi.\]
\end{lemma}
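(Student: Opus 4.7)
The plan is to invoke the universal properties of localization and quotient, using the two preceding lemmas as the only non-trivial inputs. The composition $\Phi_{PLS}\circ\Phi_{YZ}$ is, by construction, a homomorphism of $\R$-algebras from $\OO(\Bed)$ to $\QHPi$, so it suffices to show that (i) $g_P$ is mapped to a unit in $\QHPi$, and (ii) the ideal $J_P$ is mapped to zero. Once both conditions are verified, the universal property of localization produces a homomorphism $\OO(\Bed)[g_P^{-1}]\to\QHPi$, which then descends to $\OO(\Bed)[g_P^{-1}]/\langle J_P\rangle$ by the universal property of quotient.

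For (i), I would apply Lemma \ref{5bgp} with $\ll=\ll_0$ to obtain
\[
\Phi_{PLS}\circ\Phi_{YZ}(g_P)=\Phi_{PLS}\circ\Phi_{YZ}(g_{P,\ll_0})=q^{[w_0(\ll_0)]}.
\]
Since $\ll_0\in\ZZ_{>0}\cdot 2\rhod$, the class $[w_0(\ll_0)]\in \Q/\Q_P$ can be expanded as an integer linear combination of the basis elements $[\ad_i]$, $1\leqslant i\leqslant k$, so $q^{[w_0(\ll_0)]}$ is a Laurent monomial in $q_1,\ldots,q_k$. This is manifestly invertible in the localization $\QHPi$.

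For (ii), I would directly quote Lemma \ref{5bjp}, which asserts exactly $\Phi_{PLS}\circ\Phi_{YZ}(J_P)=\{0\}$. Combining (i) and (ii) with the two universal properties, there exists a unique homomorphism of $\R$-algebras
\[
\vpiii:\OO(\Bed)[g_P^{-1}]/\langle J_P\rangle \ra \QHPi
\]
such that $\vpiii$ composed with the natural map $\OO(\Bed)\to \OO(\Bed)[g_P^{-1}]/\langle J_P\rangle$ recovers $\Phi_{PLS}\circ\Phi_{YZ}$.

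There is essentially no obstacle at this stage; all the content lies in the two preceding lemmas. The only subtlety worth checking in writing is that one's sign and normalization conventions make $q^{[w_0(\ll_0)]}$ genuinely a Laurent monomial in the designated quantum parameters $q_1,\ldots,q_k$ (rather than involving the redundant parameters $q_{k+1},\ldots,q_r$), which follows from $[\ad_j]=0$ in $\Q/\Q_P$ for $j>k$ together with the expansion of $2\rhod$ in the simple coroot basis.
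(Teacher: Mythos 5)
Your proof is correct and takes essentially the same approach as the paper: the paper's proof is a single line stating that the result follows immediately from Lemma \ref{5bgp} and Lemma \ref{5bjp}, and you have simply spelled out the universal-property argument that makes that "immediately" precise (namely that $g_P\mapsto q^{[w_0(\ll_0)]}$ is a unit of $\QHPi$ and $J_P$ is killed).
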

\begin{proof}
This follows immediately from Lemma \ref{5bgp} and Lemma \ref{5bjp}.
\end{proof}

\begin{lemma}\label{5bland}
The composition $\vpiii\circ\vpii\circ\eti$
\[  \OO(\YY)\xrightarrow{\eti} \OO(\YY^*) \xrightarrow{\vpii} \OO(\Bed)[g_P^{-1}]/\langle J_P\rangle \xrightarrow{\vpiii} \QHPi\]
lands in $\QHP$ where $\eti$ is induced by the immersion $\YYs\hookrightarrow \YY$, and $\vpii$ and $\vpiii$ come from Lemma \ref{5bexist1} and Lemma \ref{5bexist2} respectively.
\end{lemma}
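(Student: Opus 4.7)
The plan is to pick an explicit set of $\kkk$-algebra generators of $\OO(\YY)$ and verify by direct computation that each one lands in $\QHP$ under the composition $\Phi := \vpiii \circ \vpii \circ \eti$; since $\Phi$ is an $\R$-algebra homomorphism, this will force $\Phi(\OO(\YY)) \subseteq \QHP$. By Proposition \ref{2dcor} the ring $\OO(\Udd)$ is generated over $\kkk$ by the regular functions $s_{\ll_0,v}/s_{\ll_0,v_{\wp}}$ for $v \in S(\ll_0)$, and since $\YY$ is a closed subscheme of $\Udd \times \spec\R$ (via $\YY \subseteq \Ud_{\Pd_-}\wpd\Bmd/\Bmd \times \spec\R$), their pullbacks $f_v \in \OO(\YY)$ together with $\R$ generate $\OO(\YY)$ as a $\kkk$-algebra. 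Hence it suffices to prove $\Phi(f_v) \in \QHP$ for each such $v$.

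To compute $\Phi(f_v)$, I would trace through the constructions of $\eti$, $\vpii$ (Lemma \ref{5bexist1}) and $\vpiii$ (Lemma \ref{5bexist2}). Using the explicit description of $s_{\ll_0,v}$ as the pullback of $g \mapsto \langle v_{\ll_0}^*, g^{-1}\cdot v\rangle$ along $\Gd\to\GB$, one verifies that $f_v$ maps inside $\OO(\Bed)[g_P^{-1}]/\langle J_P\rangle$ to the class of $g_P^{-1}\cdot\tt(\JJ_{\ll_0},v,v_{\ll_0}^*)$. Applying $\vpiii$ (which is induced by $\Phi_{PLS}\circ\Phi_{YZ}$) and invoking Lemma \ref{5bgp} together with $w_0(\ll_0) = -\ll_0$ (a consequence of $\ll_0\in\ZZ_{>0}\cdot 2\rhod$ and $w_0\rhod = -\rhod$), I obtain
\[
\Phi(f_v) \;=\; q^{[\ll_0]}\cdot\Phi_{PLS}\bigl(\Phi_{YZ}(\tt(\JJ_{\ll_0},v,v_{\ll_0}^*))\bigr),
\]
so the task reduces to showing that the second factor lies in $q^{[w_0(\ll_0)]}\QHP$.

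This is the key step. By Proposition \ref{4bmv2} and linearity it suffices to handle $v = v_Z$ where $Z$ is an MV cycle of type $\ll_0$, so that $\Phi_{YZ}(\tt(\JJ_{\ll_0},v_Z,v_{\ll_0}^*)) = [Z]$. I would choose a $G$-equivariant resolution $r_{\ll_0}: \G_{\ll_0}\to\ag^{\leqslant\ll_0}$ satisfying the hypothesis of Proposition \ref{3bDM}, so that the proper transform $\tilde{Z}$ satisfies $(r_{\ll_0})_*[\tilde{Z}] = [Z]$. Theorem \ref{3bthm} together with Lemma \ref{3bcharacterize} then give
\[
\Phi_{PLS}([Z]) \;=\; \sum_{u\in W^P}\sum_{\eta\in\Q/\Q_P} q^{\eta}\bigl([\tilde{Z}]\bullet(\Pi_{r_{\ll_0},\eta})_*\ev_{r_{\ll_0},\eta}^*\s^u\bigr)\s_u,
\]
and Lemma \ref{3bnoempty} forces the summation to vanish unless $\eta \in [w_0(\ll_0)] + \ec$. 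Writing $\eta = [w_0(\ll_0)] + \eta'$ with $\eta'\in\ec$ factors out $q^{[w_0(\ll_0)]}$ and leaves a finite $\R$-linear combination of $q^{\eta'}\s_u$ with $\eta'\in\ec$, i.e.\ an element of $\QHP$, as required.

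Combining the preceding steps and using $[\ll_0] + [w_0(\ll_0)] = 0$ in $\Q/\Q_P$, I conclude $\Phi(f_v)\in\QHP$ for every generator, which finishes the argument. The principal obstacle is the third step: one must view $[Z]$ as a pushforward from an appropriate resolution so that the Savelyev--Seidel formula of Lemma \ref{3bcharacterize} applies, and then exploit the non-emptiness criterion of Lemma \ref{3bnoempty} to extract precisely the factor $q^{[w_0(\ll_0)]}$ that cancels the $q^{[\ll_0]}$ coming from $\vpiii(g_P^{-1})$. The vanishing from Proposition \ref{3bzero} ensures that contributions from MV cycles of weight $\mu$ with $\rho(\mu) < \rho(\wp(\ll_0))$ cause no trouble.
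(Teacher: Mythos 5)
Your proposal is correct and follows essentially the same route as the paper: reduce to the generators $s_{\ll_0,v}/s_{\ll_0,v_{\wp}}$ of $\OO(\Udd)$ from Proposition \ref{2dcor}, track them to $g_P^{-1}\cdot\tt(\JJ_{\ll_0},v,v_{\ll_0}^*)$, use Lemma \ref{5bgp} for the $g_P^{-1}$ factor, note that $\Phi_{YZ}(\tt(\JJ_{\ll_0},v,v_{\ll_0}^*))$ is supported on $\ag^{\leqslant\ll_0}$ by Proposition \ref{4bmv2}, and conclude via Lemma \ref{3bnoempty} that only $q^{\eta}$ with $\eta\in[w_0(\ll_0)]+\ec$ can occur. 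The paper's proof is more compact — it simply cites Lemma \ref{3bnoempty} (or the explicit description in Theorem \ref{3aPLS}) without spelling out the passage through a resolution and proper transforms, which your proposal fills in — and your closing remark about Proposition \ref{3bzero} is not actually needed here (it serves Lemma \ref{5bjp}, not this lemma), but this is a harmless redundancy.
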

\begin{proof}
Notice that $\YY$ is a closed subscheme of $ \Ud(\wp)\times\spec\R$. By Proposition \ref{2dcor}, the ring $\OO(\Udd)$ is generated by regular functions $s_{\ll_0,v}/s_{\ll_0,v_{\wp}}$ with $v\in S(\ll_0)$. Hence it suffices to show that all these regular functions are mapped into $\QHP$. Their images in $\OO(\Bed)[g_P^{-1}]$ are equal to $g_P^{-1}\cdot\tt(\JJ_{\ll_0},v,v_{\ll_0}^*)$. By Lemma \ref{5bgp}, $g_P^{-1}$ goes to $q^{-[w_0(\ll_0)]}$. Since $\Phi_{YZ}(\tt(\JJ_{\ll_0},v,v_{\ll_0}^*))\in \HT(\ag^{\leqslant \ll_0})$ by Proposition \ref{4bmv2}, we have, by Lemma \ref{3bnoempty} or the explicit description of $\Phi_{PLS}$ (Theorem \ref{3aPLS}),
\[  \vpiii(g_P^{-1}\cdot\tt(\JJ_{\ll_0},v,v_{\ll_0}^*))\in\sum_{\eta\in [w_0(\ll_0)]+\ec } H_T^{\bl}(G/P)\otimes q^{\eta-[w_0(\ll_0)]} =\QHP.\]
The proof is complete. 
\end{proof}

Let $\etiii: \QHP\ra \QHP[\ecc]$ be the localization map.
\begin{definition}\label{5bphi} $~$
\begin{enumerate}
\item Define 
\[ \Phi:\OO(\YY)\ra \QHP \]
to be the unique map satisfying $\etiii\circ\Phi =\vpiii\circ\vpii\circ\eti$. (See Lemma \ref{5bland}.)
\item Define 
\[ \Phi^*:\OO(\YY^*)\ra \QHPi \]
to be the composition $\vpiii\circ\vpii$.
\item Define
\[\Phi_{loc}:\OO(\Bed\times_{\Gd}\Ud_{-}(\wpd)^{-1}\Bmd)\ra \QHPi\]
to be $\vpiii$. (We apply Lemma \ref{5bexist1} to identify their domains.)
\end{enumerate}
\end{definition}

\begin{remark}
The idea of composing $\Phi_{YZ}$ and $\Phi_{PLS}$ to get the above maps is not entirely new. See \cite[Remark 6.5]{LR} in which the arguments imply that for $P=B$ the composition induces the isomorphism $\Phi_{loc}$. Our contribution is to prove that for arbitrary $P$ the composition induces all the maps from Definition \ref{5bphi} and that they are bijective. The first part seems already non-trivial as it depends on Lemma \ref{5bgp} which computes $\Phi_{PLS}$ where the input is the fundamental class of a specific MV cycle which is not equal to any affine Schubert class unless $P=B$.
\end{remark}

\begin{lemma}\label{5bhomo}
$\Phi$ and $\Phi^*$ are homomorphisms of $S$-algebras.
\end{lemma}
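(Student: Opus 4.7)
The ring-homomorphism statement is automatic from Definition \ref{5bphi}: both $\Phi$ and $\Phi^*$ are compositions of $\R$-algebra homomorphisms $\eti$, $\vpii$, $\vpiii$, hence are themselves $\R$-algebra homomorphisms. The content of the lemma is therefore the $S$-linearity, which reduces to verifying $\Phi(\ol{q}_i)=q_i$ in $\QHP$ for $1\leqslant i\leqslant k$; the analogous statement for $\Phi^*$ then follows from $\Phi^*\circ\eti=\etiii\circ\Phi$ (built into Definition \ref{5bphi}) together with the injectivity of the localization map $\etiii$, since $\eti(\ol{q}_i)=\ol{q}_i$ after identifying $\OO(\YY)$ with the appropriate subring of $\OO(\YYs)$.

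To prove $\Phi(\ol{q}_i)=q_i$, my plan is to trace $\ol{q}_i$ through the composition $\etiii\circ\Phi=\vpiii\circ\vpii\circ\eti$, using the two main isomorphism theorems. First I would compute the image of $\ol{q}_i$ along $\etiv:\OO(\YYs)\to\OO(\Bed)$: for $b\in\Bed$ with $b^{-1}\Bmd\in\Ud_{\Pd_-}\wpd\Bmd/\Bmd$, write $b^{-1}=u\wpd m$ uniquely with $u\in\Ud_{\Pd_-}$ and $m\in\Bmd$; the centralizer identity $b^{-1}\cdot e^T(h)=e^T(h)$ combined with Definition \ref{5aqdef} gives $\ol{q}_i(u\wpd\Bmd,h)=|\al_i|^{-2}\langle e^*_{-w_0\ad_i},\,m\cdot e^T(h)\rangle$. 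The crucial step is then to identify the product $g_P\cdot\ol{q}_i$, modulo $J_P$, as a Yun--Zhu matrix coefficient $\tt(\JJ_{\ll_0},v_i,v^*_{\ll_0})$ for a specific $v_i\in S(\ll_0)$ of the appropriate weight. This identification uses the Tannakian dictionary of Section \ref{4a}, according to which the action of $e^T$ on any $\Gd$-representation is governed by the natural transformation $c\cup -$; the coefficient $|\al_i|^{-2}$ and the sign conventions are controlled by the computations in Appendix \ref{app}.

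Once the identification is in hand, Proposition \ref{4bmv2} yields $\Phi_{YZ}(g_P\cdot\ol{q}_i)=[Z_i]$, the fundamental class of an MV cycle $Z_i$ whose type and weight are chosen so that $Z_i$ meets $\agp$ transversely in a single point; Proposition \ref{3bone}, applied exactly as in the proof of Lemma \ref{5bgp}, then gives $\Phi_{PLS}([Z_i])=q_i\cdot q^{[w_0(\ll_0)]}$. Combining with $\Phi_{PLS}\circ\Phi_{YZ}(g_P)=q^{[w_0(\ll_0)]}$ (Lemma \ref{5bgp}), which is a unit in $\QHPi$, yields $\etiii(\Phi(\ol{q}_i))=q_i$, and injectivity of $\etiii$ finishes the proof. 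The principal obstacle is the matrix-coefficient identification in the second paragraph: bridging the $\dd{\gg}$-based definition of $\ol{q}_i$, expressed via the adjoint action on $\dd{\gg}$, with the $S(\ll_0)$-based definition of Yun--Zhu functions. Making this rigorous demands a careful chase through the Tannakian formalism relating $c\cup -$ to the action of $e^T$ on $S(\ll_0)$, together with the normalization conventions of Appendix \ref{appA} and the sign computations of Appendix \ref{app}.
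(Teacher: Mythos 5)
Your overall strategy---reduce to showing $\Phi(\ol{q}_i)=q_i$, use the centralizer condition on $\Bed$ to compute $\vpii\circ\eti(\ol{q}_i)$ explicitly, then relate the result to Yun--Zhu matrix coefficients so that Lemma~\ref{5bgp} applies---is the paper's. But your ``crucial step'' fails as formulated. You want to write $g_P\cdot\ol{q}_i$, modulo $J_P$, as a single matrix coefficient $\tt(\JJ_{\ll_0},v_i,v^*_{\ll_0})$ for some $v_i\in S(\ll_0)$. Tracing weights, $g_P\cdot\ol{q}_i$ restricts on $\spec\OO(\Bed)[g_P^{-1}]/\langle J_P\rangle$ to the character $(\ll_0+w_0\ad_i)(t)$, and $\ll_0+w_0\ad_i=\ll_0-\ad_j$ where $\ad_j:=-w_0\ad_i$ is a simple coroot. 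So any candidate $v_i$ would have to lie in a weight space $S(\ll_0)_{\mu}$ with $\mu<\wp(\ll_0)$; but by Definition~\ref{5bideal} every such matrix coefficient lies in $J_P$ and, by Lemma~\ref{5bjp}, is killed by $\Phi_{PLS}\circ\Phi_{YZ}$, contradicting the nonvanishing of $g_P\cdot\ol{q}_i$. Indeed, the only $v\in S(\ll_0)$ for which $\tt(\JJ_{\ll_0},v,v^*_{\ll_0})$ collapses to a pure character of $t$ in the quotient ring is $v_{\wp}$ itself (because, writing $u\wpd=b^{-1}u_1t$, one needs $(\wpd)^{-1}u^{-1}\cdot v$ to be fixed by $\Ud$, which by Proposition~\ref{appprop2} forces $v=v_{\wp}$). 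So no such $v_i\neq v_{\wp}$ exists inside $S(\ll_0)$, and the planned appeal to Propositions~\ref{4bmv2} and~\ref{3bone} never gets off the ground.

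The fix is to leave the single costandard module $S(\ll_0)$. The paper computes $\vpii\circ\eti(\ol{q}_i)=\ad_i(w_0(t))$ and $g_{P,\ll}=\ll(t)$, and then expresses $\vpii\circ\eti(\ol{q}_i)$ as the \emph{ratio} $g_{P,\ll_1}/g_{P,\ll_2}$ for any dominant $\ll_1,\ll_2\in\Q$ with $\ll_1-\ll_2=w_0\ad_i$; each $g_{P,\ll}$ is the highest-weight matrix coefficient of a \emph{different} costandard sheaf $\JJ_{\ll}$. Applying Lemma~\ref{5bgp} to numerator and denominator then gives $\Phi(\ol{q}_i)=q^{[w_0(\ll_1-\ll_2)]}=q^{[\ad_i]}$ directly, with no further use of Propositions~\ref{4bmv2} and~\ref{3bone} (those are already packaged into Lemma~\ref{5bgp}). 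Equivalently, your plan is salvaged by replacing $S(\ll_0)$ with $S(\ll_0+w_0\ad_i)$ and taking $v_i=v_{\wp}$ in that module; Lemma~\ref{5bgp} with $\ll=\ll_0+w_0\ad_i$ is then precisely the MV-cycle computation you were aiming for. Your first-paragraph reduction and the treatment of $\Phi^*$ are correct.
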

\begin{proof}
It is clear that they are homomorphisms of $\R$-algebras. To complete the proof , it suffices to show that $\Phi(q_i\cdot 1)=q^{[\ad_i]}$ for any $1\leqslant i\leqslant k$. Recall $q_i$ acts on $\OO(\YY)$ by the regular function 
\[ \ol{q}_i: (u\wpd\Bmd,h)\mapsto |\al_i|^{-2}\langle e^*_{-w_0\ad_i},(u\wpd)^{-1}\cdot e^T(h)\rangle.\]
Denote by $b$ and $u$ the composite morphisms
\[ \spec\OO(\Bed)[g_P^{-1}]/\langle J_P\rangle \hookrightarrow \Bd\times\spec\R \ra \Bd\]
and 
\[ \spec\OO(\Bed)[g_P^{-1}]/\langle J_P\rangle\xrightarrow{\spec(\vpii\circ\eti)} \YY\ra \Ud_{\Pd_-}\wpd\Bmd/\Bmd\simeq \Ud_{\Pd_-} \]
respectively, where the unlabelled arrows are canonical morphisms. There exist morphisms $u_1$ and $t$ from $\spec\OO(\Bed)[g_P^{-1}]/\langle J_P\rangle$ to $\Ud_-$ and $\Td$ respectively such that $u\wpd = b^{-1}u_1 t$. (The twist $b^{-1}$ agrees with the morphism \eqref{5beq1}.) Then
\[ \vpii\circ\eti(\ol{q}_i)= |\ad_i|^{-2} \langle e^*_{-w_0\ad_i}, t^{-1}u_1^{-1}b\cdot e^T(h)\rangle =\ad_i(w_0(t)).\]

On the other hand, for any dominant $\ll\in\Q$, we have 
\[g_{P,\ll}=\tt(\JJ_{\ll},v_{\wp},v_{\ll}^*)=\langle v_{\ll}^*, u_1t(\wpd)^{-1}u^{-1}\cdot v_{\wp}\rangle=\langle v_{\ll}^*, u_1t(\wpd)^{-1}u^{-1}(\wpd)\cdot v_{e}\rangle\]
where the last equality follows from Proposition \ref{appprop2}. Observe that $(\wpd)^{-1} \Ud_{\Pd_-}(\wpd)\subseteq \Ud$. It follows that $(\wpd)^{-1}u^{-1}(\wpd)$ lands in $\Ud$, and hence $g_{P,\ll}=\ll(t)$. 

Now, pick dominant $\ll_1, \ll_2\in\Q$ such that $\ll_1=\ll_2+w_0\ad_i$. It follows that
\[ \vpii\circ\eti(\ol{q}_i)=\ad_i(w_0(t))=\ll_1(t)/\ll_2(t)=g_{P,\ll_1}/g_{P,\ll_2},\]
and hence, by Lemma \ref{5bgp},
\[ \Phi(q_i\cdot 1)=\Phi_{PLS}\circ\Phi_{YZ}(g_{P,\ll_1}/g_{P,\ll_2}) = q^{[w_0(\ll_1-\ll_2)]}=q^{[\ad_i]}.\]
\end{proof}

\begin{lemma}\label{5bhomoloc}
$\Phi_{loc}$ is a homomorphisms of $S$-algebras.
\end{lemma}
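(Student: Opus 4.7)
The plan is to reduce to the ratio-of-$g_{P,\ll}$'s computation already performed in the proof of Lemma \ref{5bhomo}. Under the Lemma \ref{5bexist1} identification, $\Phi_{loc}$ is literally $\vpiii$, and since it is an $\R$-algebra homomorphism by construction (Definition \ref{5bphi}), the task will reduce to verifying $\vpiii(\ol{q}_{loc,i})=q^{[\ad_i]}$ for each $1\leqslant i\leqslant k$.

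The key computation will be a translation between two parametrizations of a point $b\in\Bed\tGB\Ud_{\Pd_-}\wpd\Bmd/\Bmd$. I would write the same $b$ both as $b=u_1t^A(\wpd)^{-1}u^{-1}$ (the form used in the final paragraph of the proof of Lemma \ref{5bhomo}, in which $u\in\Ud_{\Pd_-}$) and as $b=u_1(\wpd)^{-1}t^Bu_2$ (the form making $\ol{q}_{loc,i}(b)=\ad_i(t^B)$ manifest); a single conjugation by $\wpd$ then yields $t^B=\wpd t^A(\wpd)^{-1}=(w_Pw_0)\cdot t^A$. Combined with the identity $g_{P,\ll}=\ll(t^A)$ in the quotient, which was established in the proof of Lemma \ref{5bhomo} via Proposition \ref{appprop2}, this gives
\[
\ol{q}_{loc,i}=\ad_i(t^B)=(w_0w_P\ad_i)(t^A)=g_{P,\ll_1}/g_{P,\ll_2}
\]
in $\OO(\Bed)[g_P^{-1}]/\langle J_P\rangle$, where $\ll_1,\ll_2\in\Q$ are any dominant elements satisfying $\ll_1-\ll_2=w_0w_P\ad_i$ (which exist by taking $\ll_2$ to be a sufficiently large positive multiple of $2\rhod$); the multiplicativity and invertibility of the $g_{P,\ll}$'s needed to make sense of this ratio are exactly those used in Lemma \ref{5bhomo}.

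Applying Lemma \ref{5bgp} will then give $\vpiii(\ol{q}_{loc,i})=q^{[w_0(\ll_1-\ll_2)]}=q^{[w_P\ad_i]}$. Finally, since $w_P\in W_P$ is a product of simple reflections $s_{\a_j}$ with $j>k$, and each such reflection sends $\ad_i$ to $\ad_i-C_{ji}\ad_j\in\ad_i+\Q_P$, the difference $w_P\ad_i-\ad_i$ lies in $\Q_P$; hence $[w_P\ad_i]=[\ad_i]$ in $\Q/\Q_P$, yielding $\Phi_{loc}(\ol{q}_{loc,i})=q^{[\ad_i]}=q_i$ as required. The only non-routine step will be the $\wpd$-conjugation that relates $t^A$ and $t^B$; once that is written down, what remains is a formal character computation on $\Td$ together with the easy observation that $W_P$ acts trivially on $\Q/\Q_P$.
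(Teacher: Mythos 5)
Your proof is correct, and it takes a genuinely different (though closely related) route from the paper's. The paper reduces $\ol{q}_{loc,i}$ to $\vpii\circ\eti(\ol{q}_i)$, which was already shown in Lemma~\ref{5bhomo} to map to $q^{[\ad_i]}$; to match the two, the paper observes that $\vpii\circ\eti(\ol{q}_j)=\ad_j(w_0(t))$ equals~$1$ for $k+1\leqslant j\leqslant r$ by Lemma~\ref{5aq=1}, which forces $\wp(t)=w_0(t)$ and hence $\ol{q}_{loc,i}=\ad_i(\wp(t))=\ad_i(w_0(t))=\vpii\circ\eti(\ol{q}_i)$. You instead skip the identity $\wp(t)=w_0(t)$ entirely, directly expressing $\ol{q}_{loc,i}=(w_0w_P\ad_i)(t)$ as a ratio $g_{P,\ll_1}/g_{P,\ll_2}$ with $\ll_1-\ll_2=w_0w_P\ad_i$, applying Lemma~\ref{5bgp} to land on $q^{[w_P\ad_i]}$, and finishing with the elementary observation that $W_P$ acts trivially on $\Q/\Q_P$. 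Your route is a bit more self-contained: it avoids invoking Lemma~\ref{5aq=1} (whose proof depends on the sign result Proposition~\ref{appprop1}) at the cost of rerunning the ratio-of-$g$'s computation, whereas the paper's route leverages the corresponding computation from Lemma~\ref{5bhomo} but has to justify the auxiliary fact $\wp(t)=w_0(t)$. Both are valid and of comparable length; the key underlying ingredients ($g_{P,\ll}=\ll(t)$, the two parametrizations of $b$, and Lemma~\ref{5bgp}) are the same.
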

\begin{proof}
Let us preserve the notation from the proof of Lemma \ref{5bhomo}. In particular, we have $u\wpd = b^{-1}u_1 t$ which implies
\[ b= u_1(\wpd)^{-1}((\wpd)t(\wpd)^{-1})u^{-1}.\]
It follows that $\ol{q}_{loc,i}=\ad_i(\wp(t))$ for any $i$. (By Lemma \ref{5bexist1}, we can view $\ol{q}_{loc,i}$ as an element of $\OO(\Bed)[g_P^{-1}]/\langle J_P\rangle$.) We have proved $\vpii\circ\eti(\ol{q}_i)=\ad_i(w_0(t))$ for any $1\leqslant i\leqslant r$. It follows that, by Lemma \ref{5aq=1}, $\ad_i(w_0(t))=1$ for any $k+1\leqslant i\leqslant r$, and hence $\wp(t)=w_0(t)$. This gives 
\[ \ol{q}_{loc,i}= \ad_i(\wp(t))=\ad_i(w_0(t))=\vpii\circ\eti(\ol{q}_i)\]
which goes to $q^{[\ad_i]}$ via $\vpiii$.
\end{proof}


\subsection{Proof of Theorem \ref{1athm}}\label{6a}
Recall we have defined $\Phi$ and $\Phi^*$ in Definition \ref{5bphi}. By Lemma \ref{5bhomo}, they are homomorphisms of $S$-algebras. Theorem \ref{1athm} follows immediately from
\begin{proposition}\label{6athma}
$\Phi$ is bijective.
\end{proposition}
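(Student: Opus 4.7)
The plan is to use rank equality plus a graded-Nakayama reduction to the classical limit. Both sides are projective $S$-modules of rank $|W/W_P|$: $\QHP$ is free on the Schubert basis $\{\s_v\}_{v \in W^P}$, and $\OO(\YY)$ is finitely generated and projective by the argument in the proof of Proposition \ref{5arank} (combining the flatness of Lemma \ref{5aflat}, the properness of Lemma \ref{5aproper}, and \cite[Th\'eor\`eme 8.11.1]{EGA3}), with generic rank $|W/W_P|$. Since $S = \R[q_1,\ldots,q_k]$ is an integral domain, any $S$-linear surjection between projective modules of equal finite rank is automatically bijective: the kernel is a projective (hence torsion-free) submodule of rank $0$. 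So it suffices to prove that $\Phi$ is surjective.

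Next, I would reduce surjectivity to the classical limit by graded Nakayama. Both $\OO(\YY)$ and $\QHP$ are positively graded $S$-modules of finite type. On the $\YY$ side, the $\gm$-action of Lemma \ref{5aeq} attracts every closed point to the unique fixed point $(\wpd\Bmd,0)$ (used at the end of the proof of Lemma \ref{5aflat}), so $\OO(\YY)$ is nonnegatively graded with degree-$0$ part $\kkk$; on the $\QHP$ side this is the standard quantum grading. The homomorphism $\Phi$ is graded of degree $0$ (cf.\ the proof of Lemma \ref{5bhomo}). Applying graded Nakayama with respect to the unique graded maximal ideal $\mathfrak{m}=(h,q_1,\ldots,q_k)\subset S$, surjectivity of $\Phi$ is equivalent to surjectivity of the reduction
\[ \overline{\Phi}\colon \OO((\YY)_0)\;\longrightarrow\;H^*(G/P;\kkk). \]
By Proposition \ref{5arank}, $\dim_\kkk \OO((\YY)_0)=|W/W_P|=\dim_\kkk H^*(G/P;\kkk)$, so $\overline{\Phi}$ is surjective iff bijective.

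For the bijectivity of $\overline{\Phi}$, the strategy is to produce preimages of each Schubert class $\s_v$ ($v\in W^P$). Unwinding Definition \ref{5bphi}, $\overline{\Phi}$ is the $\bmod\,\mathfrak{m}$ reduction of $\vpiii\circ\vpii\circ\eti$, and so factors through the classical limit of $\Phi_{PLS}\circ\Phi_{YZ}$. Theorem \ref{3aPLS} specialized to $\ll=0$ gives $\Phi_{PLS}(\xi_v)=\s_v$ for each $v\in W^P$; via Proposition \ref{4bmv2}, each $\xi_v$ is the image under $\Phi_{YZ}$ of a tautological function $\tt(\JJ_\ll,v_Z,v_\ll^*)$ attached to an MV cycle $Z$. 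Using Proposition \ref{2dcor} to express these through the generators $s_{\ll_0,v}/s_{\ll_0,v_{\wp}}$ of $\OO(\Udd)$, and Lemma \ref{5bland} to see the resulting function extends from $\OO(\YYs)$ to $\OO(\YY)$, one obtains elements $f_v\in\OO(\YY)$ whose reductions span $H^*(G/P;\kkk)$.

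The main obstacle is matching these MV-cycle preimages with the Schubert basis on the nose after reducing modulo $\mathfrak{m}$: one must check that the leading contributions coming from $\Phi_{YZ}$ and the Bruhat-cell restriction in $\vpii\circ\eti$ are not obscured by lower MV-cycle corrections. This is exactly where the degree vanishing of Proposition \ref{3bzero} and the Bott--Samelson-style resolutions of Section \ref{3} are needed, to pin down the leading $\s_v$ terms and conclude that $\{\overline{\Phi}(f_v)\}_{v\in W^P}$ is a $\kkk$-basis of $H^*(G/P;\kkk)$.
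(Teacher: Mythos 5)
Your strategy — reduce bijectivity to surjectivity via a rank argument, then reduce surjectivity to the classical limit via graded Nakayama — is genuinely different from the paper's, but it has a gap at the crucial step. The paper's proof goes through Lemma \ref{prooffact1}: it combines injectivity (Lemma \ref{6ainj}, which is essentially your rank argument over $\fof S$) with surjectivity of \emph{two localizations} of $\Phi$, one at $q_1\cdots q_k$ (Lemma \ref{6asurj1}) and one at an element $\eta$ coprime to $q_1\cdots q_k$ (Lemma \ref{6asurj2}); the exact sequence $0\to A\to A[a_1^{-1}]\oplus A[a_2^{-1}]\to A[a_1^{-1},a_2^{-1}]$ over a UFD then forces surjectivity of $\Phi$ itself. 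The entire point of this maneuver is to \emph{avoid} proving surjectivity of $\Phi$ or of its classical limit directly, since that is where the difficulty sits.

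Your graded Nakayama reduction is formally correct, but it leaves you having to prove that $\overline\Phi:\OO((\YY)_0)\to H^*(G/P;\kkk)$ is surjective, and the ingredients you invoke do not deliver this. First, the claim that Theorem \ref{3aPLS} ``specialized to $\ll=0$'' gives $\Phi_{PLS}(\xi_v)=\s_v$ for each $v\in W^P$ is incorrect: the affine Schubert classes $\xi_{\wl}$ are indexed by $\wl\in W_{af}^-\simeq\Q$, so the only one with $\ll=0$ is $\xi_e$, and $\Phi_{PLS}(\xi_e)=1$. To produce $\s_v$ with $v\ne e$ one needs $\wl\in(W^P)_{af}$ with $\ll\ne 0$, and then $\Phi_{PLS}(\xi_{\wl})=q^{[\ll]}\s_{v}$ carries a nontrivial $q$-factor of positive degree — exactly the kind of element that is killed upon reduction modulo $\mathfrak m=(h,q_1,\ldots,q_k)$. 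Second, the potential fallback of generating $H^*(G/P;\kkk)$ from degree $2$ fails: $H^*(G/P;\kkk)$ is in general not generated in degree $2$ (already false for Grassmannians), and the degree-$2$-generation fact exploited in Lemma \ref{6asurj2}, via \cite[Lemma 4.1.3]{Invent}, concerns the \emph{localized equivariant} ring $H_T^{\bl}(G/P)\otimes_{\R}\fof\R$, which is an entirely different statement resting on the localization theorem for the $T$-action. Consequently, your closing appeal to Proposition \ref{3bzero} and Bott–Samelson resolutions to ``pin down the leading $\s_v$ terms'' does not constitute a proof: constructing elements of $\OO(\YY)$ whose classical limits form a $\kkk$-basis of $H^*(G/P;\kkk)$ is essentially the full force of the theorem, and none of the cited results accomplish it.

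A minor further point: the projectivity of $\OO(\YY)$ over $S$ (through Lemma \ref{5aproper} and \cite[Th\'eor\`eme 8.11.1]{EGA3}) is established in the paper only for $\kkk=\CC$. The paper's actual argument in Lemma \ref{6ainj} needs only flatness (Lemma \ref{5aflat}) plus the generic-fiber dimension count, which suffices since a flat module over a domain is torsion-free; you should phrase your rank step the same way to stay within what has been proved over $\ZZ[\ell_G^{-1}]$.
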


\begin{proposition}\label{6athmb}
$\Phi^*$ is injective and is bijective at the non-equivariant limit.
\end{proposition}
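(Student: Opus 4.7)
My plan is to factor $\Phi^{*}$ through the $q$-localization of the established isomorphism $\Phi$, thereby reducing both assertions to properties of the intermediate homomorphism $\vpi:\OO(\YYs)\to\OO(\YY)[\ecc]$ from Lemma \ref{5alocal}. First I would prove the identity $\Phi^{*}=\Phi[\ecc]\circ\vpi$, where $\Phi[\ecc]$ is the localization of $\Phi$ at $q_1\cdots q_k$. This is forced by the commutative diagram at the start of Section \ref{5}: both composites $\Phi^{*}\circ\eti$ and $\Phi[\ecc]\circ\vpi\circ\eti=\Phi[\ecc]\circ\etii$ equal $\etiii\circ\Phi$, and a ring homomorphism out of the localization $\OO(\YYs)$ of $\OO(\YY)$ is determined by its restriction along $\eti$. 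Since Proposition \ref{6athma} gives $\Phi$ bijective, $\Phi[\ecc]$ is bijective too, and the injectivity of $\Phi^{*}$ reduces to that of $\vpi$.

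For the injectivity of $\vpi$, I would observe that $\vpi$ is the restriction map for the open immersion $D(\ol{q}_1\cdots\ol{q}_k)\hookrightarrow\YYs$, hence injective exactly when $\ol{q}_1\cdots\ol{q}_k$ is a non-zero divisor in $\OO(\YYs)$. Proposition \ref{5arank} together with Lemma \ref{5aflat} makes $\OO(\YY)$ a finitely generated flat, hence projective, $S$-module of rank $|W/W_P|$; as $S=\R[q_1,\ldots,q_k]$ is a polynomial ring over the PID $\ZZ[\ell_G^{-1}]$, Quillen--Suslin upgrades this to freeness. The non-zero divisor $q_1\cdots q_k\in S$ therefore remains a non-zero divisor in $\OO(\YY)$ and in its localization $\OO(\YYs)$, yielding the injectivity of $\Phi^{*}$.

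For the non-equivariant claim, the same factorization reduces bijectivity of $\Phi^{*}|_{h=0}$ to bijectivity of $\vpi|_{h=0}:\OO(\YYs)|_{h=0}\to\OO(\YY)|_{h=0}[\ecc]$; as $\vpi|_{h=0}$ is just the $q_1\cdots q_k$-localization of $\OO(\YYs)|_{h=0}$, bijectivity amounts to showing $\ol{q}_1\cdots\ol{q}_k$ is already a unit in $\OO(\YYs)|_{h=0}$. This is the hard part of the proof. My approach is a direct calculation exploiting the Peterson condition at $h=0$: given $(u\wpd\Bmd,0)\in\YYs$, the inclusion $u\wpd\in\Ud\Bmd$ furnishes a Gauss decomposition $u\wpd=u'u_{-}t$ with $u'\in\Ud$, $u_{-}\in\Ud_{-}$, $t\in\Td$. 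Expanding
\[
(u\wpd)^{-1}\cdot e = t^{-1}u_{-}^{-1}u'^{-1}\cdot e
\]
weight by weight, and using that $\Ud$ strictly raises weights starting from $e=\sum_{j}|\ad_j|^2 e_{\ad_j}$ while $\Ud_{-}$ only lowers them, the positive-weight components of $(u\wpd)^{-1}\cdot e$ are seen to come from two sources: the simple-weight part $t^{-1}\cdot e$, and the non-simple positive-root part of $t^{-1}(u'^{-1}\cdot e - e)$. The Peterson condition forces the latter to vanish, so that $u'^{-1}\cdot e=e$, i.e.\ $u'\in Z_{\Ud}(e)$ is constrained to lie in Kostant's centralizer of $e$. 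A short computation using $|\ad_{\sigma(i)}|=|\ad_i|$ (where $\ad_{\sigma(i)}:=-w_0\ad_i$) then yields
\[
\left.\ol{q}_i\right|_{\YYs,\,h=0}=\ad_i(w_0(t)).
\]
This is the pullback of the non-vanishing character $\ad_i\circ w_0:\Td\to\gm$ via the regular morphism $\YYs|_{h=0}\to\Td$ extracting the torus factor of the Gauss decomposition. Since every character of the torus is a unit in the coordinate ring, each $\ol{q}_i$ is a unit in $\OO(\YYs)|_{h=0}$, whence $\vpi|_{h=0}$ is an isomorphism and the non-equivariant bijectivity of $\Phi^{*}$ follows.
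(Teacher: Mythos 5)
Your proof is correct, and it takes a genuinely different route from the paper's. The paper proves injectivity of $\Phi^*$ directly from the injectivity of $\Phi$ (since $\OO(\YYs)$ is a localization of $\OO(\YY)$, and $\etiii\circ\Phi$ is injective), and it proves bijectivity at $h=0$ by computing the image of $\Phi^*$: it identifies the generator $g=s_{\ll,v_e}/s_{\ll,v_{\wp}}$ inverted in passing from $\OO(\YY)$ to $\OO(\YYs)$, sends $g^{-1}$ through $\vpii$ to $g_{P,\ll}\cdot\tt(\JJ_\ll,v_e,v_\ll^*)^{-1}$, and uses the facts that $\tt(\JJ_\ll,v_e,v_\ll^*)$ restricts to $1$ on $\Bd_{e^T(0)}\subseteq\Ud$ (adjoint type) and that $\vpiii(g_{P,\ll})=q^{[w_0(\ll)]}$ (Lemma~\ref{5bgp}). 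You instead factor $\Phi^*=\Phi[\ecc]\circ\vpi$, reduce everything to $\vpi$, get injectivity from flatness of $\OO(\YYs)$ over $S$, and for the $h=0$ step you carry out a direct computation entirely on the Peterson scheme: the Peterson condition at $h=0$ forces the $\Ud$-component $u'$ of the Gauss decomposition of $u\wpd$ to lie in Kostant's centralizer $Z_{\Ud}(e)$, from which $\ol{q}_i=\ad_i(w_0(t))$ is the pullback of a character of $\Td$ and hence a unit. The gain of your route is that the $h=0$ bijectivity is seen without re-entering the Yun--Zhu side or invoking Lemma~\ref{5bgp}; the paper's route instead recycles the formula $\vpiii(g_{P,\ll})=q^{[w_0(\ll)]}$. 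Two small remarks: the appeal to Quillen--Suslin is unnecessary (flatness of $\OO(\YYs)$ over $S$ already makes the non-zero-divisor $q_1\cdots q_k$ act injectively), and the statement that the non-simple positive part of $(u\wpd)^{-1}\cdot e$ equals the non-simple positive part of $t^{-1}(u'^{-1}\cdot e-e)$ is not literally true because $u_-^{-1}$ redistributes higher-height terms downward; what is true, and suffices, is the height-descending induction showing each positive component of $u'^{-1}\cdot e-e$ lies in the ideal cut out by the Peterson condition, with the component of $\ol{q}_i$ at a simple coroot then reducing to $\ad_i(w_0(t))$ modulo that ideal.
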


We prove some lemmas first.
\begin{lemma}\label{6asurj1}
The localization map
\[\Phi[\ecc]:\OO(\YY)[\ecc]\ra \QHPi\]
is surjective.
\end{lemma}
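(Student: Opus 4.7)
The plan is to show $\mathrm{Im}(\Phi[\ecc])=\QHPi$ by checking it contains a generating set. Using Lemma~\ref{5alocal}, the localization map $\etii=\vpi\circ\eti$ identifies $\OO(\YY)[\ecc]\xrightarrow{\sim}\OO(\YYs)$, and the defining identity $\etiii\circ\Phi=\vpiii\circ\vpii\circ\eti$ then identifies $\Phi[\ecc]$ with $\Phi^{*}=\vpiii\circ\vpii$. Since $\QHPi$ is generated as an $\R$-algebra by $\{q^{\pm[\ad_{i}]}\}_{i=1}^{k}$ together with $\{\s_{v}\}_{v\in W^{P}}$, and since the first set lies in $\mathrm{Im}(\Phi[\ecc])$ by Lemma~\ref{5bhomo} and the invertibility of $\ol{q}_{i}$ in $\OO(\YY)[\ecc]$, it suffices to place each $\s_{v}$ in $\mathrm{Im}(\Phi[\ecc])$.

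Fix $v\in W^{P}$. First I would choose $\wl\in(W^{P})_{af}\cap W_{af}^{-}$ with $\widetilde{w}=v$, so that $\Phi_{PLS}(\xi_{\wl})=q^{[\ll]}\s_{v}$ by Theorem~\ref{3aPLS}; such $\wl$ exists by the combinatorics of the parabolic $(W^{P})_{af}$. Let $\ll'\in\cop$ be the dominant element in the $W$-orbit of $\ll$, so that $w(\ll)\in W\cdot\ll'$. The closure $\ol{\BB\cdot t^{w(\ll)}}\cap\ag^{\leqslant\ll'}$ is an irreducible component of $\ol{S^{-}_{w(\ll)}}\cap\ag^{\leqslant\ll'}$ of dimension $\rho(\ll'-w(\ll))$, hence an MV cycle of type $\ll'$ and weight $w(\ll)$ by Lemma~\ref{4bequal}. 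Proposition~\ref{4bmv2} therefore supplies $v_{Z}\in S(\ll')_{w(\ll)}$ with
\[ \Phi_{YZ}\bigl(\tt(\JJ_{\ll'},v_{Z},v_{\ll'}^{*})\bigr)=\xi_{\wl}. \]

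The remaining step lifts this to $\OO(\YYs)$. Because $s_{\ll',v_{\wp}}$ is nowhere vanishing on $\Udd$ (Lemma~\ref{2dlemma3}), the ratio $f_{v}:=s_{\ll',v_{Z}}/s_{\ll',v_{\wp}}\in\OO(\Udd)$ restricts to a regular function $f_{v}|_{\YYs}\in\OO(\YYs)$ via $\YYs\subseteq\Udd\times\spec\R$. Writing $f_v$ as a polynomial in the generators of $\OO(\Udd)$ from Proposition~\ref{2dcor} shows that the construction of $\vpii$ as pullback along $\Bed\tGB\Ud_{\Pd_-}\wpd\Bmd/\Bmd\to\YYs$, together with Lemma~\ref{5bexist1}, yields
\[ \vpii(f_{v}|_{\YYs})=\tt(\JJ_{\ll'},v_{Z},v_{\ll'}^{*})/g_{P,\ll'} \]
as a well-defined element of $\OO(\Bed)[g_{P}^{-1}]/\langle J_{P}\rangle$. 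Since $\vpiii(g_{P,\ll'})=q^{[w_{0}(\ll')]}$ is invertible in $\QHPi$ by Lemma~\ref{5bgp}, applying $\vpiii$ gives
\[ \Phi^{*}(f_{v}|_{\YYs})=q^{[\ll-w_{0}(\ll')]}\s_{v}. \]
Multiplying by $q^{-[\ll-w_{0}(\ll')]}\in\mathrm{Im}(\Phi[\ecc])$ (expressing it as a monomial in the $q^{\pm[\ad_i]}$) then produces $\s_{v}\in\mathrm{Im}(\Phi[\ecc])$, as required.

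The main obstacle is the MV-cycle identification of the affine Schubert variety $\ol{\BB\cdot t^{w(\ll)}}\cap\ag^{\leqslant\ll'}$: one must verify its dimension equals $\rho(\ll'-w(\ll))$ and that it is a full irreducible component of $\ol{S^{-}_{w(\ll)}}\cap\ag^{\leqslant\ll'}$, so as to trigger Lemma~\ref{4bequal} and hence Proposition~\ref{4bmv2}. This geometric input is precisely what connects the Schubert basis of $\Hmag$ to concrete sections of line bundles on $\GB$ through the geometric Satake equivalence, and it is what permits the lifting of an abstract preimage under $\Phi_{YZ}$ to an explicit regular function on $\YYs$.
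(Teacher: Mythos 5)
Your reduction to showing $\s_v\in\im(\Phi[\ecc])$ is sound, and the final steps (pulling back $s_{\ll',v}/s_{\ll',v_{\wp}}$ to a regular function on $\YYs$, computing its image under $\vpii$, and clearing $q$-powers) parallel parts of the paper's argument. However, there are two problems, one minor and one fatal.

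The minor one: Lemma~\ref{5alocal} does \emph{not} identify $\OO(\YY)[\ecc]$ with $\OO(\YYs)$. It only produces a factorization $\etii=\vpi\circ\eti$, i.e. $\OO(\YY)[\ecc]$ is a \emph{further} localization of $\OO(\YYs)$. The paper's rank-one example in Section~\ref{1ab} is exactly a counterexample: there $\OO(\YYs)\simeq\ZZ[h,y_-^{\pm 1}]$ while $\OO(\YY)[\ecc]\simeq\ZZ[h,y_-^{\pm 1},(y_-+2h)^{-1}]$, and the two are not isomorphic. What you actually need is only the containment $\im(\Phi^*)=\im(\Phi[\ecc]\circ\vpi)\subseteq\im(\Phi[\ecc])$, which does hold, so this is a misstatement rather than a break in the logic; but it should be corrected, since the claim as written contradicts Theorem~\ref{1athm}, which says $\Phi^*$ is not surjective in general.

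The fatal gap is exactly the one you flag at the end: you need $\ol{\BB\cdot t^{w(\ll)}}\cap\ag^{\leqslant\ll'}$ to be an MV cycle of type $\ll'$ and weight $w(\ll)$ in order to invoke Proposition~\ref{4bmv2} and obtain an explicit $\Phi_{YZ}$-preimage lying in the image of the section-of-line-bundle construction. This is not established, and it cannot be true in the generality you need: the MV basis and the affine Schubert basis of $H_{-\bl}(\ag)$ do not coincide beyond small cases, so an affine Schubert variety $\ol{\BB\cdot t^{w(\ll)}}$ is not an MV cycle in general. Even the containment $\ol{\BB\cdot t^{w(\ll)}}\subseteq\ol{S^-_{w(\ll)}}$ is unclear a priori, since the Iwahori stratification and the semi-infinite stratification interact in a complicated way. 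Note that the paper only identifies MV cycles with Schubert varieties in the two special situations needed for Lemma~\ref{6asurj2}, namely $\agll$ itself (Corollary~\ref{4bcor}) and its Schubert divisors (Corollary~\ref{4bdivisor}, via Lemma~\ref{4badded}); these are proved by ad hoc arguments that do not extend. The paper's proof of the present lemma avoids the issue entirely: it factors $\Phi[\ecc]=\vpiii\circ\vpiv$, reduces to the surjectivity of $\vpiv$ using the explicit description of $\Phi_{PLS}$ (Theorem~\ref{3aPLS}) and the surjectivity of $\Phi_{YZ}$ (Theorem~\ref{4aYZ}), and then verifies $\vpiv$ is onto by exhibiting in its image the generators of $\OO(\Bed)[g_P^{-1}]/\langle J_P\rangle$ arising from the presentation $\Bed\hookrightarrow\Td\times\Ud\times\spec\R$, namely $g_P^{-1}$, $\OO(\Ud)$, and $\OO(\Td)$. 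This is a structurally different argument that never requires expressing a specific Schubert class $\s_v$ as the image of a specific MV cycle.
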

\begin{proof}
By the proof of Lemma \ref{5bhomo}, $\vpii\circ\eti(\ol{q}_i)$ is a unit for any $1\leqslant i\leqslant k$. Hence there exists
\[ \vpiv:\OO(\YY)[\ecc]\ra \OO(\Bed)[g_P^{-1}]/\langle J_P\rangle\]
such that $\vpiv\circ\etii=\vpii\circ\eti$. It follows that $\Phi[\ecc]=\vpiii\circ\vpiv$. By the explicit description of $\Phi_{PLS}$ (Theorem \ref{3aPLS}), the image of $\Phi_{PLS}$ generates $\QHPi$ as an $S[\ecc]$-module. Since $\Phi_{YZ}$ is surjective (Theorem \ref{4aYZ}), $\Phi[\ecc]$ is surjective if we can show that $\vpiv$ is.

By definition, $\Bed$ is a closed subscheme of $\Bd\times\spec\R\simeq \Td\times\Ud\times\spec\R$. Hence it suffices to show that $\im\vpiv$ contains (1) $g_P^{-1}$; (2) all regular functions from $\OO(\Ud)$; and (3) all regular functions from $\OO(\Td)$. 

\bigskip
\begin{enumerate}
\item In fact we will prove (1') $g_{P,\ll}^{\pm 1}\in \im\vpiv$ for any dominant $\ll\in\Q$. Let $\{\w_1,\ldots,\w_r\}$ be the dual basis of $\{\ad_1,\ldots,\ad_r\}$. Then $-\ll = \sum_{i=1}^r( -\w_i(w_0(\ll)))w_0(\ad_i)$. Since the dominant chamber is contained in the closed cone spanned by the simple roots (see \cite[\S 3 no.5 Lemme 6]{Bourbaki}), we have $-\w_i(w_0(\ll))\geqslant 0$ for each $i$. By the proof of Lemma \ref{5bhomo}, $\vpii\circ\eti$ sends 
\[ \ol{q}_1^{-\w_1(w_0(\ll))}\cdots  \ol{q}_r^{-\w_r(w_0(\ll))} = \ol{q}_1^{-\w_1(w_0(\ll))}\cdots  \ol{q}_k^{-\w_k(w_0(\ll))}\in\OO(\YY)\]
to $g_{P,\ll}^{-1}$. It follows that $g_{P,\ll}^{-1}\in \im\vpii$. By Lemma \ref{5alocal}. which implies $\vpii=\vpiv\circ\vpi$, we have $g_{P,\ll}^{-1}\in\im \vpiv$. But since we can invert each $\ol{q}_i$ in $\OO(\YY)[\ecc]$, we also have $g_{P,\ll}\in\im \vpiv$.

\item[]

\item Since the composition $\Bed\xrightarrow{\pr}\Ud\xrightarrow{u~\mapsto ~u^{-1}} \Ud\simeq \Ud\Bmd/\Bmd$ factors through $\YYY^*$, the image of the composite map
\[ \OO(\Ud) \xrightarrow{\OO(\pr)} \OO(\Bed) \ra \OO(\Bed)[g_P^{-1}]/\langle J_P\rangle\]
is contained in $\im\vpii$ and hence in $\im\vpiv$ by Lemma \ref{5alocal}.

\item[] 

\item Since $\OO(\Td)$ is generated by the characters of $\Td$ and the character associated to any dominant $\ll\in\Q$ corresponds to $\tt(\JJ_{\ll},v_e,v_{\ll}^*)$, it suffices to show that $\im\vpiv$ contains these regular functions and their inverses. By Lemma \ref{2dlemma3}, both $s_{\ll,v_e}/s_{\ll,v_{\wp}}$ and $s_{\ll,v_{\wp}}/s_{\ll,v_e}$ are regular functions on $\Udd\cap\Ud\Bmd/\Bmd$. They give rise to elements of $\OO(\YYs)$ which are sent by $\vpii$ to $g_{P,\ll}^{-1}\cdot\tt(\JJ_{\ll},v_e,v_{\ll}^*)$ and $g_{P,\ll}\cdot\tt(\JJ_{\ll},v_e,v_{\ll}^*)^{-1}$ respectively. It follows that, by Lemma \ref{5alocal}, we have $g_{P,\ll}^{\pm 1}\cdot\tt(\JJ_{\ll},v_e,v_{\ll}^*)^{\mp 1}\in\im\vpiv$, and hence, by (1'), $\tt(\JJ_{\ll},v_e,v_{\ll}^*)^{\pm 1} \in\im\vpiv$ as desired. 
\end{enumerate}
\end{proof}

\begin{lemma}\label{6asurj2}
There exists $\eta\in S$ which is coprime to $q_1\cdots q_k$ such that $\Phi[\eta^{-1}]$ is surjective.
\end{lemma}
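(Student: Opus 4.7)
I would prove Lemma~\ref{6asurj2} by prime avoidance in $S$ combined with a graded degree count, showing in fact that $\det\Phi$ is a unit so that any $\eta\in\R^{\times}$ works.

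First, I would check that $\Phi$ is injective. The module $\OO(\YY)$ is $S$-flat by Lemma~\ref{5aflat} and $S$ is an integral domain, so $\OO(\YY)$ is torsion-free; by Lemma~\ref{6asurj1} the kernel of $\Phi$ is $(q_1\cdots q_k)$-torsion, hence $\ker\Phi=0$. Thus $\OO(\YY)$ and $\QHP$ are both finitely generated projective $S$-modules of rank $n:=|W/W_P|$: the former by combining Lemma~\ref{5aflat} and Proposition~\ref{5arank}, the latter via the Schubert basis. The $\gm$-action of Lemma~\ref{5aeq} makes $\OO(\YY)$ a graded $S$-module, and $\Phi$ a graded map. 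Since $S$ is a positively graded polynomial ring with $S_0=\ZZ[\ell_G^{-1}]$ a PID, graded Nakayama upgrades ``graded projective'' to ``graded free''. So a homogeneous $S$-basis of $\OO(\YY)$ exists, and $d:=\det\Phi\in S$ is a well-defined nonzero homogeneous element.

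Next, Lemma~\ref{6asurj1} says that $d$ is a unit in $S[q_1^{-1},\ldots,q_k^{-1}]$; since $S$ is a graded UFD in which $q_1,\ldots,q_k$ are pairwise distinct homogeneous prime elements, this forces
\[
d \;=\; u\cdot q_1^{a_1}\cdots q_k^{a_k}, \qquad u\in\R^{\times},\; a_i\in\ZZ_{\geqslant 0}.
\]
By prime avoidance applied to the primes $(q_i)\subset S$, the annihilator of $\coker\Phi$ contains an element coprime to $q_1\cdots q_k$ if and only if $a_1=\cdots=a_k=0$. Since $\deg q_i>0$, it suffices to verify that $\deg d = 0$.

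For the degree count, write $\deg d = 2\sum_{v\in W^P}\ell(v) - \sum_{j=1}^{n} d_j$, where $d_1,\ldots,d_n$ are the degrees of a homogeneous $S$-basis of $\OO(\YY)$. By graded Nakayama, $\{d_j\}$ coincides with the multiset of degrees of a graded $\kkk$-basis of the special fiber $\OO((\YY)_{q=0,h=0})$. By Lemma~\ref{5avf} this fiber is the local coordinate ring at $\wpd\Pbd$ of the vanishing scheme of the principal vector field $s_{e^T(0)}$ on $\Gd/\Pbd$, and by Proposition~\ref{5arank} it has total length $n=|W/W_P|$.

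The main obstacle is therefore to identify the graded Hilbert series of this local ring with the Poincar\'e polynomial $\sum_{v\in W^P} t^{2\ell(v)}$ of $G/P$, which equals that of $\Gd/\Pd$ by Langlands duality of Weyl groups. I expect this to follow from a graded refinement of the Akyildiz--Carrell / Kostant description of the zero scheme of a principal nilpotent vector field on a partial flag variety, obtained by tracking $2\rho$-weights through their construction and intersecting with $\gm$-invariant Schubert cells of $\Gd/\Pbd$. Once this graded refinement is in place, $\deg d=0$ forces all $a_i=0$, so $d$ is a unit in $S$; then $\eta=1$ (or any $\eta\in\R^{\times}$) makes $\Phi[\eta^{-1}]$ surjective, proving the lemma.
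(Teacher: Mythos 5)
Your approach is genuinely different from the paper's, and unfortunately it is incomplete precisely at the step you yourself flag as ``the main obstacle.'' Let me explain why this matters. You reduce the lemma to the claim that $\det\Phi$ is a unit, which you would extract from the single numerical identity $\sum_j d_j = 2\sum_{v\in W^P}\ell(v)$, where $\{d_j\}$ are the degrees of a homogeneous $S$-basis of $\OO(\YY)$. By graded Nakayama these degrees are read off from a graded $\kkk$-basis of the special fiber $\OO((\YY)_{(0,0)})$, and Lemma~\ref{5avf} identifies this fiber with the (local) zero scheme of a principal vector field on $\Gd/\Pbd$. But Proposition~\ref{5arank} only records the \emph{total} length $|W/W_P|$; it says nothing about the $\gm$-weights. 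Identifying the graded Hilbert series of this Artinian local ring with the Poincar\'e polynomial of $G/P$ is exactly the content of the Akyildiz--Carrell/Kostant theorem on zero schemes of principal nilpotent vector fields, a nontrivial result that the paper nowhere assumes and that would moreover need to be verified over $\ZZ[\ell_G^{-1}]$ rather than $\CC$. Without that input, the degree count is unproved, the exponents $a_i$ in $\det\Phi = u\cdot q_1^{a_1}\cdots q_k^{a_k}$ cannot be pinned down, and the lemma does not follow.

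Two further remarks. First, your strategy proves something strictly stronger than Lemma~\ref{6asurj2}: showing $\det\Phi\in S^{\times}$ (together with injectivity) is already Proposition~\ref{6athma}, i.e.\ the main theorem. So the ``lemma'' in your hands is no longer an intermediate step but the whole result, hung on an external theorem the paper deliberately avoids. Second, the graded projective $\Rightarrow$ graded free reduction is fine over $S_0=\ZZ[\ell_G^{-1}]$ (lift a basis of $P/S_+P$, then graded Nakayama kills the kernel of the resulting split surjection), but it deserves a word of justification since $S_0$ is a PID rather than a field or local ring.

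For comparison, the paper's argument is considerably more elementary and local in the diagram: it picks a dominant $\ll$ with $\a_i(w_0(\ll))\ne 0$ exactly for $i\leqslant k$, feeds the MV cycles $[\agll]$ and $[\ol{\BB\cdot t^{s_{\a_i}(w_0(\ll))}}]$ through $\Phi_{YZ}$ and $\Phi_{PLS}$ via Proposition~\ref{4bmv2} and Corollaries~\ref{4bcor}, \ref{4bdivisor}, thereby exhibiting $1$ and the divisor classes $\s_{s_{\a_i}}$ in the image of $\Phi$. Invoking that $H_T^{\bl}(G/P)\otimes_{\R}\fof\R$ is generated in cohomological degree two then yields surjectivity of $\Phi$ modulo $\langle q_1,\ldots,q_k\rangle$ after the field extension, and ordinary Nakayama supplies the desired $\eta$. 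Your route would buy a slicker one-line conclusion (``$\det\Phi$ is a unit'') and bypass the divisor computations, but only at the cost of importing an independent graded zero-scheme theorem; as written, that import is missing, so the proof has a genuine gap.
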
 
\begin{proof}
Take a dominant $\ll\in\Q$ such that $\a_i(w_0(\ll))$ is non-zero precisely when $1\leqslant i\leqslant k$. By Theorem \ref{3aPLS}, we have
\[ q^{-[w_0(\ll)]}\Phi_{PLS}([\agll]) = q^{-[w_0(\ll)]} \Phi_{PLS}(\xi_{t_{w_0(\ll)}}) = 1\]
and
\[ q^{-[w_0(\ll)]}\Phi_{PLS}\left(\bk{\ol{\BB\cdot t^{s_{\a_i}(w_0(\ll))}}}\right) = q^{-[w_0(\ll)]} \Phi_{PLS}(\xi_{s_{\a_i}t_{w_0(\ll)}})=\s_{s_{\a_i}}\]
for any $1\leqslant i\leqslant k$. Hence, by Corollary \ref{4bcor} and Corollary \ref{4bdivisor}, the image of the linear map
\[ \begin{array}{ccccc}
\FF&:&S(\ll)\otimes\R & \ra & \QHPi\\ [1em]
&&v\otimes 1 &\mapsto & q^{-[w_0(\ll)]}\Phi_{PLS}\circ\Phi_{YZ}(\tt(\JJ_{\ll},v,v_{\ll}^*))
\end{array} \]
contains $H_T^2(G/P)$. For any $v\in S(\ll)$, the regular function $s_{\ll,v}/s_{\ll,v_{\wp}}$ on $\Udd$ induces a regular function $f_v$ on $\YY$. We have $\vpii\circ\eti(f_v)=g_{P,\ll}^{-1}\cdot\tt(\JJ_{\ll},v,v_{\ll}^*)$. It follows that, by Lemma \ref{5bgp}, $\Phi(f_v)=\FF(v\otimes 1)$, and hence the image of $\Phi$ contains $H^2_T(G/P)$.

By \cite[Lemma 4.1.3]{Invent}, the $\fof\R$-algebra $H_T^{\bl}(G/P)\otimes_{\R}\fof\R$ is generated by $H_T^2(G/P)$. (We take a Pl\"ucker embedding of $G/P$ into $\PP(V)$ for some representation $V$ of $G$. The condition for this lemma is satisfied because different points of $(G/P)^T$ lie in different components of $\PP(V)^T$.) It follows that $(\Phi\otimes_S S/\langle q_1,\ldots,q_k\rangle)\otimes_{\R} \fof\R $ is surjective. By Nakayama's lemma, there exists $\eta\in S$ which is coprime to $q_1\cdots q_k$ such that $\Phi[\eta^{-1}]$ is surjective. 
\end{proof}

\begin{lemma}\label{6ainj}
$\Phi$ is injective.
\end{lemma}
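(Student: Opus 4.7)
The plan is to compare $\OO(\YY)$ and $\QHP$ as $S$-modules and reduce injectivity of $\Phi$ to a rank count followed by a torsion-free argument. First, I note that by the proof of Proposition \ref{5arank}, $\OO(\YY)$ is a projective (in particular flat and torsion-free) $S$-module whose rank at the special fibre over $(0,0)\in\spec S$ equals $|W/W_P|$; since projective modules have locally constant rank, $\OO(\YY)$ has constant rank $|W/W_P|$ over $S$. On the other side, $\QHP=H_T^{\bl}(G/P)\otimes_{\ZZ}\ZZ[q_1,\ldots,q_k]$ is a \emph{free} $S$-module with basis $\{\s_v\}_{v\in W^P}$, and therefore has rank $|W^P|=|W/W_P|$ as well.

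Next I pass to the fraction field $\fof S$. By Lemma \ref{6asurj2}, there exists $\eta\in S$ coprime to $q_1\cdots q_k$ such that $\Phi[\eta^{-1}]$ is surjective, and since $\fof S$ further inverts $\eta$, the induced map
\[ \Phi\otimes_S\fof S\,:\,\OO(\YY)\otimes_S\fof S\longrightarrow \QHP\otimes_S\fof S \]
is surjective between $\fof S$-vector spaces of the same finite dimension $|W/W_P|$. Hence $\Phi\otimes_S\fof S$ is an isomorphism.

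Finally I combine the two ingredients: since $\OO(\YY)$ is torsion-free over $S$ (being projective), the canonical map $\OO(\YY)\hookrightarrow \OO(\YY)\otimes_S\fof S$ is injective. Composing with the isomorphism $\Phi\otimes_S\fof S$ gives an injection
\[ \OO(\YY)\hookrightarrow \OO(\YY)\otimes_S\fof S \xrightarrow{\;\sim\;} \QHP\otimes_S\fof S, \]
which factors as $\OO(\YY)\xrightarrow{\Phi}\QHP\hookrightarrow \QHP\otimes_S\fof S$. Therefore $\Phi$ itself is injective.

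There is no real obstacle here once one has Lemma \ref{6asurj2} and the rank count from Proposition \ref{5arank}; the only mild subtlety is making sure that the generic rank of $\OO(\YY)$ over $S$ really equals the special-fibre rank $|W/W_P|$, which is precisely what the projectivity established in the proof of Proposition \ref{5arank} buys us.
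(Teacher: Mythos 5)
Your proposal is correct and follows essentially the same route as the paper: both reduce to showing $\Phi\otimes_S\fof S$ is injective by exploiting that $\OO(\YY)$ is a torsion-free $S$-module (the paper invokes flatness from Lemma \ref{5aflat}, which over the domain $S$ gives the same thing), and then conclude by combining the rank count of Proposition \ref{5arank} with the generic surjectivity from Lemma \ref{6asurj1} or \ref{6asurj2}.
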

\begin{proof}
Since $\OO(\YY)$ is a flat $S$-module by Lemma \ref{5aflat}, it suffices to show that $\Phi\otimes_S\fof S$ is injective. We have proved that it is surjective (Lemma \ref{6asurj1} or Lemma \ref{6asurj2}), and the dimension of its domain is equal to $|W/W_P|$ (Proposition \ref{5arank}) which is equal to the rank of the free $S$-module $\QHP$. The result follows.
\end{proof}

\vspace{.5cm}
\begin{myproof}{Proposition}{\ref{6athma}}
We have
\begin{enumerate}
\item $\OO(\YY)$ is a flat $S$-module (Lemma \ref{5aflat});

\item $\QHP$ is a flat $S$-module (obvious);

\item $\Phi[\ecc]$ is surjective (Lemma \ref{6asurj1});

\item $\Phi[\eta^{-1}]$ is surjective for some $\eta\in S$ which is coprime to $q_1\cdots q_k$ (Lemma \ref{6asurj2}); and

\item $\Phi$ is injective (Lemma \ref{6ainj}).
\end{enumerate}
By Lemma \ref{prooffact1}, (1) to (5) imply the bijectivity of $\Phi$.
\end{myproof}

\vspace{.5cm}
\begin{myproof}{Proposition}{\ref{6athmb}}
Recall $\Phi^*=\vpiii\circ\vpii$. Observe that $\OO(\YY^*)$ is the localization of $\OO(\YY)$ by $s_{\ll,v_e}/s_{\ll,v_{\wp}}$ where $\ll\in\Q$ is any regular dominant element. See Lemma \ref{2dlemma3}. It follows that the injectivity of $\Phi^*$ follows from the injectivity of $\Phi$ (Proposition \ref{6athma}). Moreover, we have 
\[ \im\Phi^*=\QHP[\Phi(s_{\ll,v_e}/s_{\ll,v_{\wp}})^{-1}] \]
by the surjectivity of $\Phi$ (Proposition \ref{6athma}). Notice that 
\[ \vpii\circ\eti(s_{\ll,v_e}/s_{\ll,v_{\wp}})^{-1}=g_{P,\ll}\cdot \tt(\JJ_{\ll},v_e,v_{\ll}^*)^{-1}. \]
Since $\Gd$ is of adjoint type, we have $\Bd_{e^T(0)}\subseteq\Ud$, and hence $\tt(\JJ_{\ll},v_e,v_{\ll}^*)|_{B^{\vee}_{e^T(0)}}=1$. On the other hand, we have $\vpiii(g_{P,\ll})=q^{[w_0(\ll)]}$ by Lemma \ref{5bgp}. Therefore, 
\[ \im\Phi^*|_{h=0} = QH^{\bl}(G/P)[q^{[w_0(\ll)]}]=QH^{\bl}(G/P)[\ecc].\]
\end{myproof}
\subsection{Proof of Theorem \ref{1athmb}} \label{6b}
Recall we have defined $\Phi_{loc}$ in Definition \ref{5bphi}, namely $\Phi_{loc}:=\vpiii$. By Lemma \ref{5bhomoloc}, it is a homomorphism of $S$-algebras. It remains to prove that it is bijective. Notice that $\Phi[\ecc]=\vpiii\circ\vpiv$ where $\vpiv$ is defined in the proof of Lemma \ref{6asurj1}. We proved there that $\vpiv$ is surjective. Since $\Phi[\ecc]$ is bijective by Proposition \ref{6athma}, it follows that $\vpiii=\Phi_{loc}$ and $\vpiv$ are bijective as well. Theorem \ref{1athmb} is proved.
\appendix  
\section{Some lemmas} \label{appA}
We prove a number of lemmas which are used in the preceding sections.
\begin{lemma} (Used in the proof of Corollary \ref{4bdivisor}) \label{4badded} 
Let $\ll\in\cop$ and $1\leqslant i\leqslant r$. Suppose $\a_i(w_0(\ll))\ne 0$. Then $\ol{\BB\cdot t^{s_{\a_i}(w_0(\ll))}}$ is the unique MV cycle of type $\ll$ and weight $w_0(\ll)+\ad_i$.
\end{lemma}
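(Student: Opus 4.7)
The plan is to combine a dimension count, a containment in a semi-infinite orbit closure, and a weight-multiplicity argument. Set $\nu := s_{\a_i}(w_0(\ll)) = w_0(\ll) + k\ad_i$ with $k := -\a_i(w_0(\ll)) \geq 1$, and $\mu := w_0(\ll) + \ad_i$.

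First, I would show that $\dim \BB\cdot t^\nu = 2\rho(\ll) - 1$ using the affine root decomposition of $\BB = \ev_{z=0}^{-1}(B_-)$: writing the orbit dimension as a root-by-root sum and comparing with the open big cell $\BB\cdot t^{w_0(\ll)}$ of dimension $2\rho(\ll)$, the contribution of $\a_i$ drops from $k$ to $k-1$, while $s_{\a_i}$ bijectively permutes $R^+ \setminus \{\a_i\}$ and leaves the remaining contributions invariant. This matches $\rho(\ll - \mu)$. Next, I would verify the containment $Z := \ol{\BB\cdot t^\nu} \subseteq \ol{\MV}$. The $T$-fixed point $t^\nu$ satisfies $\nu - \mu = (k-1)\ad_i \geq 0$, so $t^\nu \in S^-_\nu \subseteq \ol{\MV}$ by Lemma \ref{4bclosure}. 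For a general point $b \cdot t^\nu$, decomposing $b \in \BB$ according to affine root subgroups and analyzing the $2\rhod$-flow as $s \to \infty$: the contracting directions preserve the limit $t^\nu$, while the expanding directions (positive affine roots at positive levels) produce orbit points whose $2\rhod$-limits are $T$-fixed points $t^{\nu'}$ with $\nu' \geq \mu$. Combined with Lemma \ref{4bequal} and the dimension count, $Z$ is an irreducible component of $\ol{\MV} \cap \agll$ of maximal dimension, hence an MV cycle of type $\ll$ and weight $\mu$.

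For uniqueness, Lemma \ref{2clemma2} reduces the claim to $\dim S(\ll)_\mu = 1$. Using the $W$-invariance of the character, $\dim S(\ll)_\mu = \dim S(\ll)_{w_0(\mu)} = \dim S(\ll)_{\ll - \ad_j}$ where $\ad_j := -w_0(\ad_i)$; the hypothesis $\a_i(w_0(\ll)) \neq 0$ combined with the anti-dominance of $w_0(\ll)$ gives $\a_j(\ll) = -\a_i(w_0(\ll)) > 0$. A PBW argument for the costandard module $S(\ll)$ (which has the same character as the irreducible of highest weight $\ll$) then shows $S(\ll)_{\ll - \ad_j}$ is spanned by the single vector $f_{\a_j} v_\ll$, since the only way to write $\ad_j$ as a sum of positive coroots is $\ad_j$ itself; hence $\dim S(\ll)_\mu = 1$, and $Z$ is the unique MV cycle of type $\ll$ and weight $\mu$.

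The hardest part will be the generic $2\rhod$-weight computation in the containment step: explicitly identifying the $2\rhod$-limit of a typical point in $\BB \cdot t^\nu$ as $t^\mu$. The crucial case is the action of the positive affine root subgroup $U_{\a_i, 1} := \exp(z\, \gg^\vee_{\a_i})$ on $t^\nu$; via an explicit $\mathbf{SL}_2(\KK)$ Bruhat decomposition of $\exp(c z^{1-k}\, e_{\a_i})$, the $2\rhod$-limit can be identified with $t^{w_0(\ll) + \ad_i} = t^\mu$, reflecting the $\a_i$-string geometry from $w_0(\ll)$ to $\nu$ on the edge of the weight polytope. Analogous rank-one calculations handle the other positive affine roots in $\BB$ and yield $2\rhod$-limits at $T$-fixed points $t^{\nu'}$ with $\nu' \geq \mu$.
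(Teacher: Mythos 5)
Your route is genuinely different from the paper's, and the two halves fare differently. The uniqueness argument via $\dim S(\ll)_\mu = 1$ (reducing to the weight $\ll - \ad_{i^*}$ with $\ad_{i^*} = -w_0\ad_i$, which has multiplicity one because $\a_{i^*}(\ll) = -\a_i(w_0(\ll)) > 0$) is clean, correct, and arguably more transparent than the paper's argument; combined with Lemma \ref{2clemma2} it immediately pins down the \emph{number} of MV cycles of this type and weight. The paper instead proves uniqueness geometrically by showing any irreducible component $Z$ of $S^-_\mu \cap \agl$ must coincide with $\ol{\BB\cdot t^{s_{\a_i}(w_0(\ll))}}$: using the loop rotation, $Z$ is $T\times\gm$-invariant, so the generic $T\times\gm$ attractor sends each $x\in Z$ to a fixed point $t^{\mu'}\in\ol{S^-_\mu\cap\agl}$, forcing $\mu'\geqslant\mu$ and hence $\mu'\neq w_0(\ll)$; thus $Z$ avoids the open cell and, being irreducible of codimension one, is one of the codimension-one Schubert varieties, and the weight constraint singles out $i$. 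Your approach and the paper's are both valid here, and each has its own appeal.

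The gap is in the existence step, where you assert the containment $\ol{\BB\cdot t^\nu}\subseteq\ol{S^-_\mu}$. Since $\ol{\BB\cdot t^\nu}$ is closed and $2\rhod$-invariant, this containment is equivalent to the combinatorial statement that every $T$-fixed point $t^{\nu'}\in\ol{\BB\cdot t^\nu}$ satisfies $\nu'\geqslant\mu$ in the dominance order on $\co$. This is a statement comparing the affine Bruhat order (governing which fixed points lie in a given Iwahori Schubert variety) with the coroot order, and it is not at all obvious. Your sketch only analyzes the $2\rhod$-flow on a single rank-one direction through $t^\nu$ (the conjugated factor $\exp(cz^{1-k}e_{\ad_i})$, where incidentally your notation $U_{\a_i,1}$ does not match the exponent $1-k$), and appeals to ``analogous rank-one calculations'' for the remaining affine roots; but the positive affine root groups in $\BB$ do not commute, and the $2\rhod$-limit of a point $b\cdot t^\nu$ is controlled by the full Bruhat factorization of $b t^\nu$ rather than by a root-by-root analysis. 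You would need to either carry out a genuine computation of the fixed-point set of $\ol{\BB\cdot t^\nu}$ and check the dominance bound, or cite a reference. The paper avoids the issue entirely by never proving this containment: it establishes existence by a separate genericity argument (an open dense part of $\BB\cdot t^{s_{\a_i}(w_0(\ll))}$ lies in some $S^-_{\mu_1}\cap\agl$, and the dimension count plus the uniqueness step force $\mu_1=\mu_0$), which dodges the Bruhat-order comparison.
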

\begin{proof}
Put $\mu_0:=w_0(\ll)+\ad_i$. Let $Z$ be an irreducible component of $S^-_{\mu_0}\cap\agl$. Let $x\in Z$. Then there is unique $\mu\in W\cdot\ll$ such that $x\in\BB\cdot t^{\mu}$. Consider the loop rotation which is a $\gm$-action on $\ag$. Clearly, it commutes with the $T$-action and preserves $S^-_{\mu_0}\cap\agl$. Since $\BB\cdot t^{\mu}$ is the attractor with respect to a generic one-dimensional subtorus of $T\times\gm$, we have $t^{\mu}\in\ol{S^-_{\mu_0}\cap\agl}$, and hence $\mu\geqslant\mu_0$ by Lemma \ref{4bclosure}. This implies $\mu\ne w_0(\ll)$. It follows that $Z$ is contained in the union of $\ol{\BB\cdot t^{s_{\a_j}(w_0(\ll))}}$ where $1\leqslant j\leqslant r$ satisfies $\a_j(w_0(\ll))\ne 0$. Since $Z$ is irreducible and has codimension one in $\agll$ by Lemma \ref{4bdim}, it follows that $\ol{Z}$ is equal to $\ol{\BB\cdot t^{s_{\a_{j_0}}(w_0(\ll))}}$ for some $j_0$. To determine $j_0$, observe that $t^{s_{\a_{j_0}}(w_0(\ll))}\in\ol{S^-_{\mu_0}\cap\agl}$, and hence $s_{\a_{j_0}}(w_0(\ll))\geqslant\mu_0=w_0(\ll)+\ad_i$. But then we must have $j_0=i$. This establishes the uniqueness.

It remains to show $S^-_{\mu_0}\cap\agl\ne \emptyset$. Observe that $\BB\cdot t^{s_{\a_i}(w_0(\ll))}$ is covered by $S^-_{\mu}\cap\agl$ for finitely many $\mu$. It follows that it contains an open dense subset which is contained in $S^-_{\mu_1}\cap\agl$ for some $\mu_1$. Notice that $\mu_1\ne w_0(\ll)$. By a dimension argument, we see that $\mu_1=w_0(\ll)+\ad_{i_0}$ for some $1\leqslant i_0\leqslant r$ with $\a_{i_0}(w_0(\ll))\ne 0$, and hence by the discussion in the previous paragraph, we have $i_0=i$, or equivalently $\mu_0=\mu_1$. Therefore, $S^-_{\mu_0}\cap\agl=S^-_{\mu_1}\cap\agl\ne \emptyset$ as desired.
\end{proof}
\begin{lemma}\label{5aproperfact1} (Lemma \ref{5aproper}) Let $X$ be a scheme and $U$ an open subscheme which is the complement of the zero locus of a section of an invertible $\OO_X$-module. Let $Z$ be a closed subscheme of $U$. Suppose $Z$ is a closed subset of $X$. Then $Z$ is a closed subscheme of $X$.
\end{lemma}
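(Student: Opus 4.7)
The plan is to take the scheme-theoretic closure of $Z$ in $X$ and show that, under the hypotheses, it has underlying topological space equal to $Z$. Concretely, let $j:U\hookrightarrow X$ be the open immersion and let $\mathcal{I}\subseteq\OO_U$ denote the ideal sheaf of the closed subscheme $Z\subseteq U$. Define
\[
\mathcal{J}\;:=\;\ker\bigl(\OO_X\longrightarrow j_*(\OO_U/\mathcal{I})\bigr)\subseteq\OO_X,
\]
and let $V(\mathcal{J})\subseteq X$ be the corresponding closed subscheme. I claim $V(\mathcal{J})$ has underlying set $Z$ and restricts on $U$ to the given scheme structure on $Z$, which is exactly what needs to be proved.

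First I would check that $\mathcal{J}$ is quasi-coherent and $\mathcal{J}|_U=\mathcal{I}$. Because $U$ is the complement of the zero locus of a section of an invertible sheaf, on any affine open $\spec A\subseteq X$ trivialising $\LL$ the section corresponds to some $f\in A$ and $U\cap\spec A=\spec A_f$. Hence $j$ is an affine morphism, so $j_*$ preserves quasi-coherence, and locally $\mathcal{J}$ corresponds to $J:=\iota^{-1}(I)\subseteq A$, where $\iota:A\to A_f$ is the localisation map and $I\subseteq A_f$ cuts out $Z\cap\spec A_f$. The equality $JA_f=I$ is a one-line check, giving $\mathcal{J}|_U=\mathcal{I}$ and hence $V(\mathcal{J})\cap U=Z$ as subschemes of $U$.

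Next I would verify that the underlying topological space of $V(\mathcal{J})$ is $Z$. For $x\in Z$ this is immediate. For $x\in U\setminus Z$, closedness of $Z$ in $U$ lets me shrink to a neighbourhood in $U$ on which $\mathcal{I}$ is the unit ideal, so $\mathcal{J}$ is the unit ideal near $x$. For $x\in X\setminus U$ with $x\notin Z$, I would invoke the hypothesis that $Z$ is a closed subset of $X$ to find an open $V\ni x$ disjoint from $Z$; then $\mathcal{I}|_{V\cap U}=\OO_{V\cap U}$, so $j_*(\OO_U/\mathcal{I})|_V=0$ and $\mathcal{J}|_V=\OO_V$. This yields $V(\mathcal{J})=Z$ set-theoretically and completes the argument.

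The main obstacle is precisely the last case. For a general open immersion $j$ the stalks of $j_*(\OO_U/\mathcal{I})$ at points of $X\setminus U$ can be uncontrolled (e.g.\ whole function fields), so without the assumption that $Z$ is already a closed subset of $X$ the scheme-theoretic closure $V(\mathcal{J})$ could acquire extra components or embedded structure sitting over the vanishing locus of $s$. The set-theoretic closedness hypothesis is exactly what is needed to kill these contributions and force $V(\mathcal{J})$ to have the desired support.
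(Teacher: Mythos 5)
Your argument is correct, and it is a global, sheaf-theoretic version of what the paper does purely affinely. The paper reduces immediately to $X=\spec A$, $U=\spec A_f$, $Z=\spec A_f/I$, and shows the composite $A\to A_f\to A_f/I$ is surjective by contradiction: if $f$ were not a unit in $A/I'$ (with $I'=\iota^{-1}(I)$), one would get a prime $\mathfrak p\supseteq I'$ with $f\in\mathfrak p$, so $\mathfrak p\notin U$ and hence $\mathfrak p\notin Z$, contradicting set-theoretic closedness of $Z$ via a basic open $D(g)$ separating $\mathfrak p$ from $Z$. Your construction of $\mathcal J=\ker(\OO_X\to j_*(\OO_U/\mathcal I))$ and the local computation $\mathcal J(\spec A)=\iota^{-1}(I)$ is exactly the ideal $I'$ the paper works with; both proofs use the invertible-sheaf hypothesis for the same reason (so that $j$ is affine, giving quasi-coherence of $j_*$ in your version, and the basic localization $A_f$ in the paper's), and both use set-theoretic closedness to kill the support of $V(\mathcal J)$ over $X\setminus U$. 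The difference is that you verify the support statement directly stalk by stalk and then read off that $V(\mathcal J)=Z$ as subschemes (since the support lies inside $U$, where you have $\mathcal J|_U=\mathcal I$), whereas the paper phrases the same content as surjectivity of a ring map and argues by contradiction via a prime; the latter is shorter for an appendix lemma, while yours makes the role of the scheme-theoretic closure explicit and would transfer more readily to situations where $U$ is not a priori basic affine-open.

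One small point worth flagging in your write-up (which does not affect correctness): once you know $V(\mathcal J)$ has underlying set contained in $U$, the conclusion is not just that $V(\mathcal J)=Z$ set-theoretically; you should also say that since the support of $V(\mathcal J)$ lies in $U$, the open immersion $V(\mathcal J)\cap U\hookrightarrow V(\mathcal J)$ is surjective hence an isomorphism, and $V(\mathcal J)\cap U=V(\mathcal I)=Z$ as subschemes of $U$ by the equality $\mathcal J|_U=\mathcal I$ you already established, so $Z=V(\mathcal J)$ is a closed subscheme of $X$. You clearly had this in mind, but it is the step that upgrades the set-theoretic identity to the scheme-theoretic one the lemma asserts.
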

\begin{proof}
Clearly it suffices to deal with the affine case so we assume $X=\spec A$, $U=\spec A[f^{-1}]$ for some $f\in A$ and $Z=\spec A[f^{-1}]/I$ for some ideal $I$ of $A[f^{-1}]$. We have to show that the composition
\[ A \ra A[f^{-1}]\ra A[f^{-1}]/I \]
of the canonical ring homomorphisms is surjective. Denote by $I'$ the preimage of $I$ under the first map. It suffices to show that $f$ becomes a unit in $A/I'$. Suppose to the contrary. Then there is a prime ideal $\mathfrak{p}$ of $A$ such that $I'\subseteq \mathfrak{p}$ and $f\in \mathfrak{p}$. The second condition implies that $\mathfrak{p}\not\in Z$. Since $Z$ is closed in $X$, there is $g\in A$ such that $Z$ is contained in the closed subset defined by $g$ and $\mathfrak{p}$ lies in the complement of this subset. It follows that $g\in I'$ and $g\not\in \mathfrak{p}$, in contradiction to $I'\subseteq \mathfrak{p}$.
\end{proof}
\begin{lemma}\label{5aproperfact2} (Lemma \ref{5aproper}) Let $\{\od_1,\ldots,\od_r\}$ be the dual basis of $\{\a_1,\ldots,\a_r\}$. Let $0\leqslant \ell < r$. There exist non-negative rational numbers $b_1,\ldots,b_{\ell}$ and $c_{\ell+1},\ldots,c_r$ such that 
\begin{equation}\label{5aproperfact2eq1}
\od_{\ell+1} = \sum_{i=1}^{\ell} b_i\od_i +\sum_{j=\ell+1}^rc_j\ad_j.
\end{equation}
\end{lemma}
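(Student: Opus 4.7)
The plan is to determine the $c_j$'s first using a system internal to the Levi subsystem, then solve for the $b_i$'s and verify non-negativity in both steps.

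First I would note that the simple roots $\{\alpha_{\ell+1},\ldots,\alpha_r\}$ span a root subsystem of finite type whose Cartan matrix $A:=(\langle\alpha_i,\alpha_j^\vee\rangle)_{\ell<i,j\leqslant r}$ is invertible over $\QQ$. Pairing the desired equation \eqref{5aproperfact2eq1} with each $\alpha_i$ for $i>\ell$ and using $\langle\alpha_i,\omega_m\rangle=0$ for $m\leqslant\ell<i$ forces
\[
\sum_{j>\ell} c_j\,\langle\alpha_i,\alpha_j^\vee\rangle = \langle\alpha_i,\omega_{\ell+1}\rangle = \delta_{i,\ell+1},\qquad \ell<i\leqslant r,
\]
which determines the $c_j$ uniquely as the $(j,\ell+1)$-entries of $A^{-1}$. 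Non-negativity of $c_j$ is then the classical fact that the inverse Cartan matrix of a finite-type semisimple root system has non-negative rational entries (equivalently, fundamental coweights of a simple Lie algebra are non-negative rational combinations of the simple coroots); I would cite this from Bourbaki \cite[\S 3 no.5 Lemme 6]{Bourbaki}, which is already invoked elsewhere in the paper.

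Next I would set $\eta := \omega_{\ell+1}-\sum_{j>\ell}c_j\alpha_j^\vee$. By the choice of the $c_j$'s, the pairing $\langle\alpha_i,\eta\rangle$ vanishes for every $i>\ell$. Since $\{\omega_1,\ldots,\omega_r\}$ is the basis dual to $\{\alpha_1,\ldots,\alpha_r\}$, the subspace of $\co_\QQ$ annihilating $\alpha_{\ell+1},\ldots,\alpha_r$ is precisely $\QQ\langle\omega_1,\ldots,\omega_\ell\rangle$, so there exist unique rational $b_i$ with $\eta=\sum_{i\leqslant\ell}b_i\omega_i$. Pairing this decomposition with $\alpha_i$ for $i\leqslant\ell$ gives
\[
b_i = \langle\alpha_i,\eta\rangle = -\sum_{j>\ell} c_j\,\langle\alpha_i,\alpha_j^\vee\rangle.
\]

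To finish, I would observe that for $i\leqslant\ell<j$ we have $i\ne j$, so the off-diagonal Cartan entry $\langle\alpha_i,\alpha_j^\vee\rangle$ is $\leqslant 0$; combined with $c_j\geqslant 0$ from the first step, this yields $b_i\geqslant 0$. The only non-routine ingredient in the whole argument is the non-negativity of $A^{-1}$, and after that everything is formal manipulation of the dual-basis relations.
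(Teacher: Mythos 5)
Your proposal is correct and is essentially the paper's own argument written out in coordinates rather than in the block-matrix form $\bigl(\begin{smallmatrix}\id & C_1\\ 0 & C_2\end{smallmatrix}\bigr)\bigl(\begin{smallmatrix}\mathbf{b}\\ \mathbf{c}\end{smallmatrix}\bigr)=\bigl(\begin{smallmatrix}0\\ e_1\end{smallmatrix}\bigr)$ the paper uses: your equations for the $c_j$'s are the block $C_2\mathbf{c}=e_1$, and your formula $b_i=-\sum_{j>\ell}c_j\langle\alpha_i,\alpha_j^\vee\rangle$ is $\mathbf{b}=-C_1C_2^{-1}e_1$. The two key facts you invoke (off-diagonal Cartan entries $\leqslant 0$; inverse Cartan matrix entries $\geqslant 0$, with the same Bourbaki citation) are precisely the ones the paper uses.
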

\begin{proof}
Put $\mathbf{b}:=(b_1,\ldots,b_{\ell})^T$ and $\mathbf{c}:=(c_{\ell+1},\ldots,c_r)^T$. By pairing \eqref{5aproperfact2eq1} with the simple roots, we obtain the following linear system
\[
\begin{pmatrix}
\id & C_1\\ 0& C_2
\end{pmatrix}
\begin{pmatrix}
\mathbf{b}\\ \mathbf{c}
\end{pmatrix}
= \begin{pmatrix}
0\\ e_1
\end{pmatrix}
\]
for some matrices $C_1$ and $C_2$, where $e_1:=(1,0,\ldots,0)^T$. It follows that 
\[
\begin{pmatrix}
\mathbf{b}\\ \mathbf{c}
\end{pmatrix}
=\begin{pmatrix}
\id & -C_1C_2^{-1}\\ 0& C_2^{-1}
\end{pmatrix}
\begin{pmatrix}
0\\ e_1
\end{pmatrix}
= \begin{pmatrix}
-C_1C_2^{-1}e_1\\ C_2^{-1}e_1
\end{pmatrix}.
\]
Observe that $C_1$ is a submatrix of the Cartan matrix for the root system spanned by $\{\a_1,\ldots,\a_r\}$ and $C_2$ is the Cartan matrix for the root system spanned by $\{\a_{\ell+1},\ldots,\a_r\}$. The result follows from 
\begin{enumerate}
\item all non-diagonal entries of a Cartan matrix are $\leqslant 0$; and 

\item all entries of the inverse of a Cartan matrix are $\geqslant 0$.
\end{enumerate}
(1) is well-known. (2) is equivalent to 
\begin{enumerate}
\item[(3)] the dominant chamber is contained in the closed cone spanned by the simple roots.
\end{enumerate}
See \cite[\S 3 no.5 Lemme 6]{Bourbaki} for a proof of (3).
\end{proof}
\begin{lemma}\label{5bexist1fact1} (Lemma \ref{5bexist1}) Let $\ll\in\Q$ be dominant and $\mu_1,\ldots,\mu_N\in\Q\cap \conv(W\cdot\ll)$. We have
\[\sum_{i=1}^N(\mu_i-\wp(\ll))\in \left(\sum_{\a\in R^+_P}\ZZ_{\leqslant 0}\cdot\ad\right)\sm\{0\}\Longleftrightarrow \mu_i< \wp(\ll)~\text{ for all }i. \]
\end{lemma}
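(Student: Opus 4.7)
The plan is to reduce both directions to one key inequality: for any $\mu\in\conv(W\cdot\ll)$ and any $j\in\{1,\ldots,k\}$,
\[
\langle \w_j,\mu\rangle\;\geqslant\;\langle \w_j,\wp(\ll)\rangle,
\]
where $\{\w_1,\ldots,\w_r\}$ is the basis dual to $\{\ad_1,\ldots,\ad_r\}$. I would verify this by noting that $W_P$ fixes $\w_j$ when $j\leqslant k$ (since $s_{\a_l}(\w_j)=\w_j$ for $l>k$), so $\wp^{-1}(\w_j)=w_0(w_P)^{-1}(\w_j)=w_0(\w_j)$ is antidominant and dominance of $\ll$ forces $\langle \w_j,\wp(\ll)\rangle=\langle w_0(\w_j),\ll\rangle=\min_{w\in W}\langle w(\w_j),\ll\rangle$, which realises the minimum of $\langle \w_j,-\rangle$ on $\conv(W\cdot\ll)$. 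Note that the same computation shows $\w_j$ vanishes on $\Q_P$.

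For $(\Leftarrow)$, if each $\mu_i<\wp(\ll)$, write $\wp(\ll)-\mu_i=\sum_{l=1}^r c_{il}\ad_l$ with $c_{il}\in\ZZ_{\geqslant 0}$, not all zero. Pairing with $\w_j$ for $j\leqslant k$ gives $c_{ij}=-\langle \w_j,\mu_i-\wp(\ll)\rangle\leqslant 0$ by the key inequality, so $c_{ij}=0$; hence $\wp(\ll)-\mu_i$ is a nonzero non-negative combination of $\{\ad : \a\in R^+_P\}$, and summing over $i$ yields the desired form.

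For $(\Rightarrow)$, write $\sum_i(\mu_i-\wp(\ll))=-\sum_{l>k}c_l\ad_l\neq 0$ with $c_l\geqslant 0$. Pairing with $\w_j$ for $j\leqslant k$ gives $0$, while each summand is $\geqslant 0$ by the key inequality, so each $\langle \w_j,\mu_i-\wp(\ll)\rangle=0$ and hence $\mu_i-\wp(\ll)\in\Q_P$. To control the remaining (Levi) coordinates I would pick a strictly dominant $\chi\in\sum_{j\leqslant k}\RR_{>0}\w_j$. Its $W$-stabiliser is $W_P$, so a standard antidominance argument identifies the set of minimisers of $\langle \chi,-\rangle$ on $W\cdot\ll$ with $W_P\cdot\wp(\ll)$; since $\chi$ vanishes on $\Q_P$, each $\mu_i$ lies in the face $\conv(W_P\cdot\wp(\ll))$. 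A direct check using $w_P(R^+_P)=-R^+_P$ and dominance of $\ll$ shows $\wp(\ll)$ is $L$-dominant, so the classical fact applied to the Levi $L$ gives $\conv(W_P\cdot\wp(\ll))\subseteq \wp(\ll)-\sum_{\a\in R^+_P}\RR_{\geqslant 0}\ad$. Intersecting with $\Q_P$ produces $\mu_i-\wp(\ll)\in\sum_{\a\in R^+_P}\ZZ_{\leqslant 0}\ad$ for every $i$, and non-vanishing of the total sum forces at least one strict inequality.

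The main obstacle is the $(\Rightarrow)$ direction: the key inequality alone pins down only the $\{\ad_j\}_{j\leqslant k}$-coordinates of $\mu_i-\wp(\ll)$, and a further convex-geometric argument (the minimising-face identification via a generic $\chi$, combined with $L$-dominance of $\wp(\ll)$) is needed to control the Levi coordinates $\{\ad_l\}_{l>k}$.
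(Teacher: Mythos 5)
Your proof is correct, and you were right to flag that the $(\Rightarrow)$ direction only yields $\mu_i\leqslant\wp(\ll)$ for all $i$ together with at least one strict inequality, rather than $\mu_i<\wp(\ll)$ for all $i$ as the lemma literally states. The stronger claim is in fact false: with $G=\mathbf{SL}_3$, $R_P=\{\pm\a_2\}$, $\ll=\rhod$ one has $\wp(\ll)=-\ad_1$, and taking $\mu_1=-\ad_1=\wp(\ll)$, $\mu_2=-\rhod$ gives $\sum_i(\mu_i-\wp(\ll))=-\ad_2$ while $\mu_1\not<\wp(\ll)$. This imprecision is harmless, since the application in Lemma \ref{5bexist1} only needs that every monomial of total weight in $\bigl(\sum_{\a\in R^+_P}\ZZ_{\leqslant 0}\ad\bigr)\setminus\{0\}$ contains at least one factor of weight $<\wp(\ll_0)$.

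Your route differs from the paper's in how it handles the two halves of the coordinates. The paper proves, in one stroke, the inclusion
\[
\mu_i-\wp(\ll)\in\Bigl(\sum_{j=1}^{k}\ZZ_{\geqslant 0}\, w_P\ad_j\Bigr)+\Bigl(\sum_{j=k+1}^{r}\ZZ_{\leqslant 0}\,\ad_j\Bigr),
\]
which is nothing but $\mu_i\geqslant w_0(\ll)$ (equivalently $w_P^{-1}\mu_i\geqslant w_0(\ll)$) pushed forward by $w_P$; both directions then fall out by pairing with $\w_j$ for $j\leqslant k$. You instead extract the non-Levi information from the key inequality $\langle\w_j,\mu\rangle\geqslant\langle\w_j,\wp(\ll)\rangle$ (which is the $w_P\ad_j$-part of the paper's inclusion) and then, for the $(\Rightarrow)$ direction, recover the Levi part separately through a face-minimization argument: a generic $\chi\in\sum_{j\leqslant k}\RR_{>0}\w_j$ picks out the face $\conv(W_P\cdot\wp(\ll))$, and $L$-dominance of $\wp(\ll)$ bounds that face by $\wp(\ll)-\sum_{\a\in R^+_P}\RR_{\geqslant 0}\ad$. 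This is sound, but the convex-geometric detour is unnecessary once one has the paper's stronger cone inclusion; in effect your face argument re-derives the Levi half of that inclusion. The two are close in spirit (the same fact about $\conv(W\cdot\ll)$ relative to $\wp(\ll)$ underlies both), and the paper's version is the more economical packaging.
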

\begin{proof}
The condition $\mu_i\in\Q\cap \conv(W\cdot\ll)$ implies 
\[ \mu_i-\wp(\ll)\in \left(\sum_{j=1}^k \ZZ_{\geqslant 0}\cdot w_P\ad_j\right) +\left(\sum_{j=k+1}^r \ZZ_{\leqslant 0}\cdot \ad_j\right)  .  \]
(Recall $R_P\cap \{\a_1,\ldots,\a_r\}=\{\a_{k+1},\ldots,\a_r\}$.) The rest is clear.
\end{proof}
\begin{lemma}\label{prooffact1} (Proposition \ref{6athma}) Let $A$ be a unique factorization domain and $a_1,a_2\in A$ two non-zero coprime elements. Let $M$ and $N$ be two flat $A$-modules and $\Phi:M\ra N$ an $A$-linear map. Suppose $\Phi$ is injective and $\Phi[a_i^{-1}]$ is surjective for $i=1,2$. Then $\Phi$ is bijective.
\end{lemma}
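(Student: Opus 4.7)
The plan is to establish surjectivity of $\Phi$ (since injectivity is given) by showing that every $n\in N$ lies in $\Phi(M)$. First, the surjectivity of $\Phi[a_i^{-1}]$ produces, for each $i\in\{1,2\}$, an element $m_i\in M$ and an integer $n_i\geqslant 0$ with $\Phi(m_i)=a_i^{n_i}n$ in $N[a_i^{-1}]$. Since $A$ is a domain and $N$ is flat, multiplication by $a_i$ is injective on $N$, so $N$ embeds in $N[a_i^{-1}]$ and the equality $\Phi(m_i)=a_i^{n_i}n$ already holds in $N$ itself.

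Next, I would apply $\Phi$ to $a_2^{n_2}m_1$ and $a_1^{n_1}m_2$ to get
\[ \Phi(a_2^{n_2}m_1)=a_1^{n_1}a_2^{n_2}n=\Phi(a_1^{n_1}m_2), \]
which by injectivity of $\Phi$ forces the identity $a_2^{n_2}m_1=a_1^{n_1}m_2$ in $M$. The decisive step is then to produce $m\in M$ with $m_2=a_2^{n_2}m$; once this is in hand, applying $\Phi$ yields $a_2^{n_2}\Phi(m)=a_2^{n_2}n$, and flatness of $N$ (hence injectivity of multiplication by $a_2^{n_2}$ on $N$) gives $\Phi(m)=n$, completing surjectivity.

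To produce such an $m$, it suffices to prove that multiplication by $a_1^{n_1}$ is injective on $M/a_2^{n_2}M$, since the relation $a_1^{n_1}m_2=a_2^{n_2}m_1$ will then force $m_2\in a_2^{n_2}M$. This is where the UFD hypothesis enters. First, I would check that $a_1^{n_1}$ is a nonzerodivisor on the ring $A/a_2^{n_2}A$: if $a_1^{n_1}\alpha\in a_2^{n_2}A$ in the UFD $A$, then $a_2^{n_2}\mid a_1^{n_1}\alpha$, and the coprimality of $a_1,a_2$ (hence of $a_1^{n_1}$ and $a_2^{n_2}$) forces $a_2^{n_2}\mid\alpha$. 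Then flatness of $M$ upgrades this to injectivity of multiplication by $a_1^{n_1}$ on $M/a_2^{n_2}M\simeq M\otimes_A (A/a_2^{n_2}A)$, since tensoring an injection of $A$-modules with the flat module $M$ preserves injectivity.

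The only non-formal step is the ring-theoretic lemma in the previous paragraph, which genuinely uses unique factorization to pass from coprimality of $a_1,a_2$ in $A$ to the nonzerodivisor property of $a_1^{n_1}$ on $A/a_2^{n_2}A$. Everything else is a routine combination of the given injectivity of $\Phi$, the compatibility of localization with the two surjectivity hypotheses, and the elementary consequences of flatness of $M$ and $N$.
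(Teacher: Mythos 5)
Your proof is correct, and it takes a genuinely different route from the paper. The paper's argument is homological: it records the Mayer--Vietoris-type exact sequence
\[
0 \ra A \ra A[a_1^{-1}]\oplus A[a_2^{-1}]\ra A[a_1^{-1},a_2^{-1}]
\]
(whose exactness at the middle term uses exactly the same UFD/coprimality arithmetic that you use), tensors it with $M$ and $N$ (exactness preserved by flatness), and reads off the surjectivity of $\Phi$ from the four lemma applied to the resulting map of short exact sequences, using that $\Phi[a_i^{-1}]$ are surjective and that the localized maps are injective. Your argument is instead a direct element chase: given $n$, you lift along each localization, use injectivity of $\Phi$ to obtain the relation $a_2^{n_2}m_1=a_1^{n_1}m_2$ in $M$, and then use that $a_1^{n_1}$ is a nonzerodivisor on $M/a_2^{n_2}M$ (the UFD gives this on $A/a_2^{n_2}A$, and flatness of $M$ transports it to $M/a_2^{n_2}M$) to conclude $m_2\in a_2^{n_2}M$ and hence $n\in\im\Phi$. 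The two proofs use the UFD hypothesis and the flatness hypotheses in parallel ways; yours is more elementary and avoids invoking the five lemma, at the cost of being a little longer, while the paper's is more compact once the exact sequence is accepted. Both are complete, and your handling of the edge cases (torsion-freeness giving $N\hookrightarrow N[a_i^{-1}]$, cancellation of $a_2^{n_2}$ at the end) is correct.
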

\begin{proof}
Since $a_1$ is coprime to $a_2$, we have an exact sequence
\[ 0 \ra A \ra A[a_1^{-1}]\oplus A[a_2^{-1}]\ra A[a_1^{-1},a_2^{-1}]\]
of $A$-modules. Apply the natural transformation $P\mapsto \Phi\otimes_A\id_P$ and five lemma.
\end{proof}

\section{Signs} \label{app}
In this appendix, we use the generators $e_{\ad_i}\in\dd{\gg}_{\ad_i}$ from Section \ref{4a} to construct explicit representatives $\dot{w}_0$ and $\dot{w}_P$, and settle the sign issues occurring in the proofs of Lemma \ref{5aq=1} and Lemma \ref{5bhomo}. Unless otherwise specified, the coefficient ring is $\ZZ$.

For any $\ll\in\cop$, $\mu\in\co$ and MV cycle $Z$ of type $\ll$ and weight $\mu$, denote by $v_Z\in S(\ll)_{\mu}$ the generator corresponding to $Z$ via the isomorphism in Lemma \ref{2clemma2}.
\begin{lemma}\label{applemma1}
Let $1\leqslant i\leqslant r$ and $Z$ be an MV cycle of type $\ll$ and weight $\mu$. Consider the $\dd{\gg}$-action on $S(\ll)$. Write
\[ e_{\ad_i}\cdot v_Z = \sum_{Z'} c_{Z'} v_{Z'}\]
where each $c_{Z'}\in\ZZ$ and the sum is taken over all MV cycles $Z'$ of type $\ll$. Then $c_{Z'}\geqslant 0$ for all $Z'$. 
\end{lemma}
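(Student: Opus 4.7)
The assertion is the non-negativity of the structure constants for the action of the Chevalley generator $e_{\ad_i}$ on the MV basis of $S(\ll)$, and the plan is to deduce this from known positivity results in crystal basis theory via the geometric Satake equivalence.

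The first step will be to invoke the crystal-theoretic description of MV cycles due to Braverman--Gaitsgory and completed by Kamnitzer: the set of MV cycles of type $\ll$ is canonically in bijection with the Kashiwara crystal $B(\ll)$ of the irreducible $\dd{\gg}$-representation of highest weight $\ll$, and under this identification the MV basis $\{v_Z\}$ realizes a global crystal basis of $S(\ll)$. In particular, it coincides up to sign with the specialization at $q=1$ of the dual canonical basis of the corresponding $U_q(\dd{\gg})$-module.

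The second step applies Lusztig's positivity theorem for canonical bases: the action of the quantum Chevalley generator $E_i$ on the dual canonical basis of an irreducible $U_q(\dd{\gg})$-module is given by a matrix whose entries are polynomials in $q$ with non-negative integer coefficients. Specializing at $q=1$ yields the non-negativity of the structure constants for $e_{\ad_i}$ acting on the MV basis, which is precisely the required statement after matching generators.

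The third step is to verify that our specific generator $e_{\ad_i}$ of the rank-one $\ZZ$-module $\dd{\gg}_{\ad_i}$, fixed in Section~\ref{4a}, agrees up to a positive integer scalar with the Chevalley generator used in the statement of Lusztig positivity. Since both are $\ZZ$-generators of $\dd{\gg}_{\ad_i}$, they differ by $\pm 1$, and one pins down the sign by tracing through the conventions for $e^T$ and for the geometric Satake fiber functor (compatible with the conventions already used implicitly in Lemma~\ref{2clemma2} and Proposition~\ref{4bmv2}).

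The main obstacle will be this normalization check in the third step: the proof needs to reconcile the sign conventions of the geometric Satake setup with those of the crystal-theoretic literature. An alternative, purely geometric route would be to realize $c_{Z'}$ directly as an intersection multiplicity via a Beilinson--Drinfeld-type correspondence computing the $e_{\ad_i}$-action, which would make non-negativity manifest; but this is more involved than the crystal-theoretic argument, and the sign-tracking above seems the more economical path given that positivity on the (dual) canonical basis is already part of the standard toolkit.
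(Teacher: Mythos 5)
There is a genuine gap at the heart of the plan. Your second step rests on the claim that the MV basis $\{v_Z\}$ of $S(\ll)$ ``coincides up to sign with the specialization at $q=1$ of the dual canonical basis,'' and you attribute this to Braverman--Gaitsgory and Kamnitzer. What those works (together with Baumann--Gaussent) actually establish is only that the \emph{set} of MV cycles of type $\ll$ carries a crystal structure isomorphic to the Kashiwara crystal $B(\ll)$, i.e.\ the MV basis is a \emph{perfect basis} in the sense of Berenstein--Kazhdan. Two perfect bases with the same underlying crystal need not coincide as bases of the representation, and in fact they do not here: Baumann--Kamnitzer--Knutson \cite{Acta} show, using Duistermaat--Heckman measures, that the MV basis differs from the dual semicanonical basis, and the comparison with the dual canonical basis is likewise not available as an identification one can simply cite. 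Consequently Lusztig's positivity theorem for (dual) canonical bases does not transfer to the MV basis, and your step 3 sign-matching cannot be carried out because there is nothing to match: the two bases are not equal up to sign.

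What you describe and set aside as the ``more involved'' alternative---realizing $c_{Z'}$ directly as an intersection multiplicity via a Beilinson--Drinfeld-type computation of the $e_{\ad_i}$-action---is precisely the paper's proof, and it is in fact short. The paper reduces to $\kkk=\CC$ and then invokes \cite[Theorem~5.4]{Acta}, which says exactly that each $Z'$ with $c_{Z'}\neq 0$ is an irreducible component of the support of a certain Cartier divisor $D$ on $Z$ and that $c_{Z'}$ equals $|\ad_i|^{-2}$ times the intersection multiplicity of $Z'$ along $D$. Non-negativity is then immediate. So the correct route is the geometric one you flagged; the crystal-theoretic shortcut does not exist because the identification it needs is false.
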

\begin{proof}
We may assume the coefficient ring is $\CC$. The result follows immediately from \cite[Theorem 5.4]{Acta} which says that each $Z'$ for which $c_{Z'}\ne 0$ must be an irreducible component of the support of a Cartier divisor $D$ on $Z$, and in this case $c_{Z'}$ is equal to $|\ad_i|^{-2}$ times the intersection multiplicity of $Z'$ along $D$. 
\end{proof}

We now construct specific $\dot{w}_0$ and $\dot{w}_P$. The following approach is standard. Let $\a\in R$. By \cite[Theorem 4.1.4]{BC}, there is a unique homomorphism
\[ \exp_{\ad}:\spec\sym^{\bl}(\dd{\gg}_{\ad})^*\ra \Gd\]
of group schemes such that $\lie(\exp_{\ad})$ is the inclusion $\gg^{\vee}_{\ad}\hookrightarrow\gg^{\vee}$. By \cite[Theorem 4.2.6 \& Corollary 5.1.12]{BC}, there is a perfect pairing 
\[ \b_{\ad}:\dd{\gg}_{\ad}\otimes \dd{\gg}_{-\ad} \ra \ZZ\]
satisfying
\[ [X,Y] = \b_{\ad}(X,Y) H_{\ad}\]
for any $X\in\dd{\gg}_{\ad}$ and $Y\in\dd{\gg}_{-\ad}$, where $H_{\ad}:=\lie(\a)(1)\in\lie(\Td)$. For each $1\leqslant i\leqslant r$, denote by $e^{-1}_{\ad_i}\in \dd{\gg}_{-\ad_i}$ the unique element such that $\b_{\ad_i}(e_{\ad_i},e^{-1}_{\ad_i})=1$.

\begin{definition}\label{appdef}$~$
\begin{enumerate}
\item Let $1\leqslant i\leqslant r$. Define
\[ \dot{s}_{\ad_i} := \exp_{-\ad_i}(-e^{-1}_{\ad_i})\exp_{\ad_i}(e_{\ad_i})\exp_{-\ad_i}(-e^{-1}_{\ad_i}).\]

\item Let $w\in W$. Define
\[ \dot{w}:=\dot{s}_{\ad_{i_1}} \cdots \dot{s}_{\ad_{i_{\ell(w)}}} \]
where $w=s_{\a_{i_1}}\cdots s_{\a_{i_{\ell(w)}}}$ is a reduced word decomposition of $w$. 
\end{enumerate}
\end{definition}

\begin{lemma}\label{applemma2}
Each $\dot{w}$ is a $\ZZ$-point of the normalizer $N_{\Gd}(\Td)$ and represents $w$ in the quotient $N_{\Gd}(\Td)/\Td\simeq W$. It is independent of the choice of the reduced word decomposition of $w$. In particular, if $w_1,w_2\in W$ satisfy $\ell(w_1w_2)=\ell(w_1)+\ell(w_2)$, then $\dot{(w_1w_2)}=\dot{w}_1\dot{w}_2$. 
\end{lemma}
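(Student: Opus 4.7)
The plan is to prove the lemma in three stages: (i) show each $\dot{s}_{\ad_i}$ is a $\ZZ$-point of $N_{\Gd}(\Td)$ representing $s_{\a_i}$; (ii) verify that the $\dot{s}_{\ad_i}$'s satisfy the braid relations of $W$ and invoke Matsumoto's theorem to conclude that $\dot{w}$ is independent of the reduced expression (this also gives the first assertion of the lemma for general $w$, since products of normalizer elements lift products of Weyl elements); (iii) observe that the final assertion is then immediate, because the concatenation of reduced expressions for $w_1$ and $w_2$ is a reduced expression for $w_1w_2$ precisely when $\ell(w_1w_2)=\ell(w_1)+\ell(w_2)$.

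For (i), I would use the rank-one subgroup attached to $\ad_i$. By standard structure theory of split reductive $\ZZ$-group schemes (\cite{BC}), there is a homomorphism $\phi_i:\mathbf{SL}_2\ra \Gd$ over $\ZZ$ whose restriction to the diagonal torus is $\ad_i$ and whose differential sends $\bigl(\begin{smallmatrix}0&1\\0&0\end{smallmatrix}\bigr)\mapsto e_{\ad_i}$ and $\bigl(\begin{smallmatrix}0&0\\1&0\end{smallmatrix}\bigr)\mapsto e^{-1}_{\ad_i}$; the normalization $\b_{\ad_i}(e_{\ad_i},e^{-1}_{\ad_i})=1$ is exactly the commutator relation in the standard $\mathfrak{sl}_2$-triple. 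The identity
\[
\left(\begin{matrix}1&0\\-1&1\end{matrix}\right)\left(\begin{matrix}1&1\\0&1\end{matrix}\right)\left(\begin{matrix}1&0\\-1&1\end{matrix}\right) = \left(\begin{matrix}0&1\\-1&0\end{matrix}\right)
\]
in $\mathbf{SL}_2(\ZZ)$ then exhibits $\dot{s}_{\ad_i}$ as the image under $\phi_i$ of the right-hand side, which visibly lies in $N_{\Gd}(\Td)(\ZZ)$ and represents $s_{\a_i}$.

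For (ii), the braid relation between $\dot{s}_{\ad_i}$ and $\dot{s}_{\ad_j}$ is an equality of $\ZZ$-points of the $\ZZ$-flat subgroup scheme of $\Gd$ generated by $\phi_i$ and $\phi_j$, so may be verified after base change to $\CC$, where it reduces to a classical rank-two identity (due to Tits) in a Chevalley group of type $A_1\times A_1$, $A_2$, $B_2$ or $G_2$. This is the main obstacle of the proof; rather than perform the computation by hand in each case, I expect to cite Tits' canonical construction of the extended Weyl group, which for any split reductive $\ZZ$-group scheme realizes the $\dot{s}_{\ad_i}$ as generators of a subgroup of $N_{\Gd}(\Td)$ satisfying precisely the braid relations together with $\dot{s}_{\ad_i}^2=\ad_i(-1)$, and whose canonical generators match the explicit formula for $\dot{s}_{\ad_i}$ given here.
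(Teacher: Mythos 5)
Your proof is correct, but it takes a genuinely different route from the paper's. You establish independence of the reduced word by verifying the braid relations among the $\dot{s}_{\ad_i}$ (citing Tits' canonical lifts to avoid the rank-two case check) and then invoking Matsumoto's theorem, which is the classical argument for well-definedness of Tits lifts. The paper sidesteps braid relations entirely: for the first assertion it cites \cite[Corollary 5.1.9]{BC}, and for independence it observes that two lifts built from different reduced words differ by an element $t\in\Td(\ZZ)\simeq\{\pm 1\}^r$, and then uses Lemma \ref{applemma4} (itself a consequence of the MV-basis positivity in Lemma \ref{applemma1}, i.e.\ \cite[Theorem 5.4]{Acta}) to show that any such lift sends $v_{w_0}$ to $v_{ww_0}$ in every $S(\ll)$, forcing $w_0(\ll)(t)=1$ for all $\ll\in\cop$ and hence $t=e$. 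The trade-off is essentially this: your argument is self-contained within structure theory and applies verbatim to any split reductive group scheme, at the cost of either an explicit rank-two computation or an external citation to Tits; the paper's argument leans on the MV basis, which is already deployed throughout the paper (Lemma \ref{applemma4} is needed again in Lemma \ref{5aproper} and Lemma \ref{5bhomo}), so it comes for free in context and gives the independence ``positivity-for-free.'' One small point worth being explicit about in your stage (ii): after establishing braid relations, the conclusion that $\dot{w}$ represents $w$ and lies in $N_{\Gd}(\Td)(\ZZ)$ already follows from stage (i) and multiplicativity of the normalizer, so Matsumoto is only needed for the independence claim; your phrasing suggests both come from (ii), which is harmless but slightly misleading.
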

\begin{proof}
The first assertion follows from \cite[Corollary 5.1.9]{BC}. For any two reduced word decompositions of $w$, the corresponding two $\dot{w}$ are related by a $\ZZ$-point of $\Td$ which must be the unit by Lemma \ref{applemma4} below. This proves the second assertion. The last assertion follows from the second assertion. 
\end{proof}

\begin{lemma}\label{applemma3}
We have
\[ \dot{w}_0^2 = \left(\sum_{\a\in R^+}\a\right)(-1)\quad \text{ and }\quad\dot{w}_P^2 = \left(\sum_{\a\in R^+_P}\a\right)(-1).\]
In particular, $\dot{w}^2_0=e$ if $\Gd$ is of adjoint type.
\end{lemma}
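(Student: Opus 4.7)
The plan is to prove the more general identity
\[ \dot{w}\cdot\dot{w^{-1}} = \prod_{\b \in N(w^{-1})}\b(-1) \qquad \text{in }\Td, \]
for every $w\in W$, where $N(v):=\{\b\in R^+ : v(\b)<0\}$ is the inversion set of $v$. Specializing to $w=w_0$ gives $N(w_0)=R^+$ and the first identity; applying the same identity inside the Levi subgroup of $\Gd$ of root system $\{\ad:\a\in R_P\}$ (Weyl group $W_P$, longest element $w_P$) yields the second. The adjoint-type corollary then follows from a short character-theoretic check.

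The base case is $\dot{s}_{\ad_i}^2 = \a_i(-1)$: inside the $SL_2$-subgroup of $\Gd$ generated by $\exp_{\pm\ad_i}$, the normalization $\b_{\ad_i}(e_{\ad_i},e_{\ad_i}^{-1})=1$ makes $\dot{s}_{\ad_i}$ into the standard matrix $\bigl(\begin{smallmatrix} 0 & 1 \\ -1 & 0\end{smallmatrix}\bigr)$, whose square is $-\mathrm{id}=\a_i(-1)$, since $\a_i$ is the coroot dual to the $SL_2$-root $\ad_i$.

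For the induction on $\ell(w)$, choose a simple root $\ad_i$ with $\ell(s_iw)=\ell(w)+1$, equivalently $w^{-1}(\a_i)>0$. Lemma \ref{applemma2} applied to both $s_iw$ and its inverse $w^{-1}s_i$ (the length condition is the same for both) gives $\dot{(s_iw)}=\dot{s}_i\dot{w}$ and $\dot{(s_iw)^{-1}}=\dot{w^{-1}}\dot{s}_i$; combining with the induction hypothesis, the conjugation rule $\dot{s}_i\,\chi(-1)\,\dot{s}_i^{-1}=(s_i\chi)(-1)$ on cocharacters, and the base case, one computes
\[ \dot{(s_iw)}\cdot\dot{(s_iw)^{-1}} = \dot{s}_i(\dot{w}\cdot\dot{w^{-1}})\dot{s}_i = \Bigl(\prod_{\b\in N(w^{-1})}(s_i\b)(-1)\Bigr)\cdot\a_i(-1). \]
The positivity of $w^{-1}(\a_i)$ forces $\a_i\notin N(w^{-1})$, so $s_i$ restricts to a bijection on $R^+\setminus\{\a_i\}$; case analysis on $\b=\a_i$ versus $\b\neq\a_i$ then produces the combinatorial identity $N(w^{-1}s_i)=\{\a_i\}\sqcup s_i(N(w^{-1}))$, which identifies the right-hand side with the target $\prod_{\gamma\in N((s_iw)^{-1})}\gamma(-1)$, closing the induction.

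For the adjoint case, the character group of $\Td$ is spanned by the simple roots $\ad_i$ of $\Gd$, and the well-known equality $\ad_i(2\rho)=2$ gives $\ad_i\bigl(\prod_{\a\in R^+}\a(-1)\bigr)=(-1)^2=1$ for every $i$, forcing $\prod_{\a\in R^+}\a(-1)=e$. The main obstacle is the combinatorial bookkeeping for $N(w^{-1}s_i)$: getting both the extra singleton $\{\a_i\}$ and the $s_i$-twist on $N(w^{-1})$ to appear with the correct positivity and length conditions, and matching them against the conjugation of the inductive product by $\dot{s}_i$; once this lines up, the rest is mechanical.
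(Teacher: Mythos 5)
Your proof is correct and follows essentially the same route as the paper's: the paper also takes a reduced word for $w_0$, uses $w_0=w_0^{-1}$ to produce the palindromic word, and collapses the product $\dot{s}_{\ad_{i_1}}\cdots\dot{s}_{\ad_{i_{\ell}}}\dot{s}_{\ad_{i_{\ell}}}\cdots\dot{s}_{\ad_{i_1}}$ using $\dot{s}_{\ad_i}^2=\a_i(-1)$ (cited from Conrad), leaving the resulting telescoping as "a straightforward computation." Your contribution is to make that computation explicit as a clean induction on $\ell(w)$ via the inversion-set identity $N(w^{-1}s_i)=\{\a_i\}\sqcup s_i\bigl(N(w^{-1})\bigr)$, which is exactly what the telescoping amounts to; you also spell out the adjoint-type corollary (pairing $(2\rho)(-1)$ against the characters $\ad_i$ of $\Td$, each giving $(-1)^{2\rho(\ad_i)}=(-1)^2=1$), which the paper states without proof. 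Both are valid; yours is the same argument written out in full.
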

\begin{proof}
Let $w_0=s_{\a_{i_1}}\cdots s_{\a_{i_{\ell(w_0)}}}$ be a reduced word decomposition of $w_0$. Then $w_0=w_0^{-1}=s_{\a_{i_{\ell(w_0)}}}\cdots s_{\a_{i_1}}$ is also a reduced word decomposition of the same element. By Lemma \ref{applemma2},
\[\dot{w}_0^2 =\dot{s}_{\ad_{i_1}} \cdots \dot{s}_{\ad_{i_{\ell(w_0)}}} \dot{s}_{\ad_{i_{\ell(w_0)}}}\cdots \dot{s}_{\ad_{i_1}} .\]
The first equality then follows from the identity $\dot{s}_{\ad_{i}}^2=\a_i(-1)$ (see \cite[Corollary 5.1.11]{BC}) and a straightforward computation. The second equality is proved similarly.
\end{proof}

Let $\ll\in\cop$ and $w\in W$. Denote by $v_w\in S(\ll)_{w(\ll)}\simeq \ZZ$ the generator corresponding to the unique MV cycle of type $\ll$ and weight $w(\ll)$ via the isomorphism in Lemma \ref{2clemma2}.
\begin{lemma}\label{applemma4}
We have $\dot{w}\cdot v_{w_0} = v_{ww_0}\in S(\ll)_{ww_0(\ll)}$.
\end{lemma}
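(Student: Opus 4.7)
The plan is to proceed by induction on $\ell(w)$, with the base case $w = e$ being trivial. For the inductive step, write $w = s_{\a_i} w'$ with $\ell(w) = \ell(w') + 1$, so that $\dot{w} = \dot{s}_{\ad_i}\dot{w}'$ by Lemma \ref{applemma2}, and the induction hypothesis supplies $\dot{w}'\cdot v_{w_0} = v_{w'w_0}$. Setting $u := w'w_0$ (so that $\ell(s_{\a_i}u) = \ell(u) - 1$, equivalently $u^{-1}\a_i \in -R^+$), the lemma reduces to the key claim: for any $u \in W$ with $\ell(s_{\a_i} u) < \ell(u)$, we have $\dot{s}_{\ad_i}\cdot v_u = v_{s_{\a_i} u}$.

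For the key claim, the first step will be to verify that $v_u$ is a lowest weight vector for the $\mathfrak{sl}_2$-triple associated to $\a_i$. Writing $u^{-1}\a_i = -\b$ with $\b \in R^+$ and setting $n := \b(\ll) = -\a_i(u(\ll)) \geq 0$, translation by $u^{-1}$ gives $u^{-1}(u(\ll) - \ad_i) = \ll + \b^{\vee}$, which is strictly larger than $\ll$ in the dominance order and therefore is not a weight of $S(\ll)$. Hence $e^{-1}_{\ad_i}\cdot v_u = 0$, so $v_u$ is the lowest weight vector of an $(n+1)$-dimensional irreducible submodule of $S(\ll)\otimes \QQ$ under the $\a_i$-root $\mathfrak{sl}_2$-action. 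A standard $\mathfrak{sl}_2$-computation (applying $\dot{s}_{\ad_i} = \exp(-e^{-1}_{\ad_i})\exp(e_{\ad_i})\exp(-e^{-1}_{\ad_i})$ to such a vector) then yields
\[ \dot{s}_{\ad_i}\cdot v_u = \frac{e_{\ad_i}^{n}}{n!}\cdot v_u. \]
Since $s_{\a_i} u(\ll)$ is extremal, $S(\ll)_{s_{\a_i}u(\ll)}$ is rank-one with generator $v_{s_{\a_i}u}$, and therefore $\dot{s}_{\ad_i}\cdot v_u = k\cdot v_{s_{\a_i}u}$ for some $k \in \ZZ$ (integrality from $\dot{s}_{\ad_i}$ being a $\ZZ$-point of $\Gd$). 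Iterating Lemma \ref{applemma1} gives $e_{\ad_i}^{n}\cdot v_u = c\cdot v_{s_{\a_i}u}$ with $c \in \ZZ_{\geq 0}$, so $k = c/n! \geq 0$.

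The main obstacle, and concluding step, is to pin down $k = 1$ rather than some larger non-negative integer. For this I invoke the identity $\dot{s}_{\ad_i}^{2} = \a_i(-1)$ (as in the proof of Lemma \ref{applemma3}), which acts on $v_u$ by $(-1)^{\a_i(u(\ll))} = (-1)^n$. Writing $\dot{s}_{\ad_i}\cdot v_{s_{\a_i} u} = m \cdot v_u$ for some $m \in \ZZ$, squaring gives $km = (-1)^n$; combined with $k \geq 0$ and $k \neq 0$ (from invertibility of $\dot{s}_{\ad_i}$), this forces $k = 1$, as desired. Note that the argument is uniform in $n \geq 0$: when $n = 0$ the irreducible $\mathfrak{sl}_2^{(i)}$-submodule generated by $v_u$ is trivial, so $e_{\ad_i}\cdot v_u = 0$ automatically and the formula collapses to $\dot{s}_{\ad_i}\cdot v_u = v_u = v_{s_{\a_i}u}$.
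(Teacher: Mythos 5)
Your proof is correct and follows essentially the same route as the paper's: reduce to the single-reflection claim $\dot{s}_{\ad_i}\cdot v_u = v_{s_{\a_i}u}$, use $e^{-1}_{\ad_i}\cdot v_u = 0$ together with the explicit $\mathfrak{sl}_2$-expansion of $\dot{s}_{\ad_i}$ to see that only the middle factor $\exp_{\ad_i}(e_{\ad_i})$ contributes, and conclude non-negativity from Lemma \ref{applemma1}. The only (cosmetic) difference is in pinning down that the coefficient has absolute value one: the paper notes directly that $\dot{s}_{\ad_i}$, being an invertible $\ZZ$-point, restricts to a bijection between the rank-one $\ZZ$-lattices $S(\ll)_{u(\ll)}$ and $S(\ll)_{s_{\a_i}u(\ll)}$, whereas you derive it from $\dot{s}_{\ad_i}^2 = \a_i(-1)$.
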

\begin{proof}
It suffices to show that $\dot{s}_{\ad_i}\cdot v_u=v_{s_{\a_i}u}$ for any $1\leqslant i\leqslant r$ and $u\in W$ such that $\ell(s_{\a_i}u)=\ell(u)-1$. Clearly, we have $\dot{s}_{\ad_i}\cdot v_u=\pm v_{s_{\a_i}u}$. To determine the sign, we may work over $\CC$ so that we can expand the exponential maps involved in $\dot{s}_{\ad_i}$ (see Definition \ref{appdef}). But by the assumption $\ell(s_{\a_i}u)=\ell(u)-1$, only the middle factor $\exp_{\ad_i}(e_{\ad_i})$ contributes to the sign, and hence the sign is positive by Lemma \ref{applemma1}.
\end{proof}

Finally, the following propositions will settle the sign issues occurring in the proofs of Lemma \ref{5aq=1} and Lemma \ref{5bhomo}.

\begin{proposition}\label{appprop1}
For any $k +1\leqslant i\leqslant r$, we have
\begin{equation}\label{appeq}
\dot{w}_P\dot{w}_0\cdot e_{-w_0\ad_i}=e_{-w_P\ad_i}.
\end{equation}
\end{proposition}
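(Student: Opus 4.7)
The plan is to reduce Proposition \ref{appprop1} to a single identity involving the lift $\dot{(w_Pw_0)}$ of $w_Pw_0$, then establish that identity via Lemma \ref{applemma4} in a strategically chosen representation.

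For the reduction: since $w_Pw_0$ is the longest element of $W^P$, the factorization $w_0 = w_P\cdot(w_Pw_0)$ is reduced, so Lemma \ref{applemma2} gives $\dot{w}_0 = \dot{w}_P \cdot \dot{(w_Pw_0)}$. Consequently,
\[\dot{w}_P\dot{w}_0 \;=\; \dot{w}_P^2 \cdot \dot{(w_Pw_0)} \;=\; \bigl(\textstyle\sum_{\a\in R^+_P}\a\bigr)(-1)\cdot\dot{(w_Pw_0)}\]
by Lemma \ref{applemma3}. Because $-w_P$ is a Dynkin automorphism of the Levi root system, $-w_P\ad_i = \ad_{\tau(i)}$ for some $\tau(i)\in\{k+1,\ldots,r\}$, and the torus element $(2\rho_P)(-1)\in\Td$ acts on $\dd{\gg}_{\ad_{\tau(i)}}$ by the scalar $(-1)^{\langle 2\rho_P,\ad_{\tau(i)}\rangle} = (-1)^2 = 1$, via the standard $\rho$-formula for the Levi subsystem. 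The proposition thus reduces to
\begin{equation*}
\dot{(w_Pw_0)}\cdot e_{-w_0\ad_i} \;=\; e_{-w_P\ad_i}. \tag{$\star$}
\end{equation*}

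To establish $(\star)$, set $\ad_{\sigma(i)} := -w_0\ad_i$; since $(w_Pw_0)\cdot\ad_{\sigma(i)} = \ad_{\tau(i)}$, the two simple coroots lie in a common $W$-orbit. Let $\ll\in\cop$ be its dominant representative (lying in $\Q$ because it is a coroot; in simply-laced type, $\ll=\theta^\vee$). In $S(\ll)$ the weight spaces at $\ad_{\sigma(i)}$ and $\ad_{\tau(i)}$ are one-dimensional (being extremal), and are canonically identified, after embedding $S(\ll)$ into the adjoint representation $\dd{\gg}$, with $\dd{\gg}_{\ad_{\sigma(i)}}$ and $\dd{\gg}_{\ad_{\tau(i)}}$. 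The plan is then to invoke an extension of Lemma \ref{applemma4}: if $w,u\in W$ satisfy $\ell(wu) = \ell(u)-\ell(w)$, then $\dot{w}\cdot v_u = v_{wu}$; this follows by writing $w = s_{j_1}\cdots s_{j_l}$ reduced and applying Lemma \ref{applemma4} iteratively, the global length-subtraction hypothesis guaranteeing that the single-reflection hypothesis holds at each step. Applying this with $w = w_Pw_0$ and $u$ chosen so that $u\ll = \ad_{\sigma(i)}$ and $\ell((w_Pw_0)u) = \ell(u)-\ell(w_Pw_0)$ (concretely, picking $v$ of minimal length in $\{v':v'\ll = \ad_{\tau(i)}\}$ and setting $u = (w_0w_P)v$) yields $\dot{(w_Pw_0)}\cdot v_u = v_{(w_Pw_0)u}$, which transfers to $(\star)$.

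The main obstacle is the sign normalization: verifying that, under the embedding $S(\ll)\hookrightarrow\dd{\gg}$, the MV basis vector $v_u$ corresponds to precisely $+e_{-w_0\ad_i}$, and likewise $v_{(w_Pw_0)u}$ to $+e_{-w_P\ad_i}$, with no stray $-1$. This consistency will be tracked by iterating Lemma \ref{applemma1} along a reduced-word path from the highest-weight vector $v_e = e_\ll$ down to $v_u$, confirming that the MV normalization agrees at every intermediate step with the Chevalley pinning fixed in Definition \ref{appdef}.
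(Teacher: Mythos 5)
Your reduction via $\dot{w}_P\dot{w}_0 = \dot{w}_P^2\,\dot{(w_Pw_0)} = (2\rho_P)(-1)\cdot\dot{(w_Pw_0)}$ and the observation that $(2\rho_P)(-1)$ acts trivially on $\dd{\gg}_{-w_P\ad_i}$ (because $\langle 2\rho_P,\ad_{\tau(i)}\rangle = 2$) is clean and correct, and it does simplify the sign bookkeeping. (One small slip: $w_Pw_0$ is not in general the longest element of $W^P$ — that is $w_0w_P$ — but the factorization $w_0 = w_P\cdot(w_Pw_0)$ is reduced anyway, since $\ell(uw_0)=\ell(w_0)-\ell(u)$ for $u\in W_P$, so the conclusion stands.) Up to this point you have genuinely compressed the argument.

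However, the remaining step — proving $(\star)$: $\dot{(w_Pw_0)}\cdot e_{-w_0\ad_i} = e_{-w_P\ad_i}$ — is where the actual content lies, and your proposal does not establish it. You yourself flag "the main obstacle" of matching the MV-basis normalization of $v_u\in S(\ll)$ with the Chevalley pinning $e_{\ad}\in\dd{\gg}$ under some embedding $S(\ll)\hookrightarrow\dd{\gg}$, and then say this consistency "will be tracked" along a reduced-word path — but this tracking is not carried out, and it is precisely the content of the proposition. The paper's framework provides no canonical embedding of $S(\ll)$ into $\dd{\gg}$ compatible with both normalizations; establishing one is morally equivalent to $(\star)$ itself. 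In addition, your asserted length condition $\ell((w_Pw_0)u)=\ell(u)-\ell(w_Pw_0)$ for the specific $u=(w_0w_P)v$ with $v$ of minimal length sending $\ll$ to $\ad_{\tau(i)}$ is stated without verification and is not obviously true. Finally, note that the naive statement "$\ell(ws_\alpha)=\ell(w)+1$ and $w\alpha$ simple implies $\dot{w}\cdot e_{\ad}=e_{\dd{(w\alpha)}}$" is \emph{false} (already for $G=\mathbf{SL}_3$ with $w=s_1$, $\alpha=\alpha_1+\alpha_2$ the sign is $-1$), so one must genuinely use that both $\alpha$ and $w\alpha$ are simple, which requires a real argument.

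By contrast, the paper's proof sidesteps the issue of relating the two normalizations of $\dd{\gg}$ entirely. It works in a single fundamental representation $S(\od_i)$: the rank-one identity $e_{\ad_j}\cdot v_{w_0}=\dot{s}_{\ad_j}\cdot v_{w_0}$ (with $\a_j:=-w_0\a_i$) converts the problem into a comparison of signs $m_1,m_2$ of extremal MV-basis vectors under $\dot{w}_P\dot{w}_0$ and $\dot{w}_P\dot{w}_0\dot{s}_{\ad_j}$, which Lemma \ref{applemma1} reduces to showing $m_1=m_2$. This ratio is then evaluated purely with Lemmas \ref{applemma2}, \ref{applemma3}, \ref{applemma4}, never needing to compare the MV pinning of $\dd{\gg}$ with the Chevalley one. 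You would do well to redo your proof of $(\star)$ along these lines — or, if you want to keep your reduction, apply both sides of $(\star)$ to $v_{w_0}\in S(\od_i)$, use the rank-one identity, and run the same $m_1/m_2$ comparison; the factor $(-1)^{-2}$ that appears in the paper's final line is exactly the $(2\rho_P)(-1)$ you have already isolated.
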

\begin{proof}
Clearly, both sides of \eqref{appeq} are equal up to sign so it suffices to show that the sign is $+1$. We may assume $\Gd$ is simply-connected so that we can consider the representation $S(\od_i)$ where $\od_i$ is the fundamental coweight corresponding to the simple root $\a_i$. Put $\a_j:=-w_0\a_i$. We have
\[ e_{\ad_j}\cdot v_{w_0} = \dot{s}_{\ad_j}\cdot v_{w_0}\in S(\w_i^{\vee})_{w_0(\w_i^{\vee})+\ad_j},\]
and hence 
\[ (\dot{w}_P\dot{w}_0\cdot e_{\ad_j})\cdot (\dot{w}_P\dot{w}_0\cdot v_{w_0}) = \dot{w}_P\dot{w}_0\dot{s}_{\ad_j}\cdot v_{w_0}.\]

Write 
\[\dot{w}_P\dot{w}_0\cdot v_{w_0} = m_1 v_{w_P}\quad\text{ and }\quad \dot{w}_P\dot{w}_0\dot{s}_{\ad_j}\cdot v_{w_0} =m_2 v_{w_Pw_0s_{\a_j}w_0}\]
where $m_1,m_2\in\{\pm 1\}$. By Lemma \ref{applemma1}, the result will follow if we can show $m_1=m_2$. Let $u$ denote $e$ or $s_{\a_j}$ and put $u':=w_Pw_0u$. By Lemma \ref{applemma2}, we have
\[ \dot{(u')}=\dot{(w_Pw_0)}\dot{u} = \dot{w}_P^{-1}\dot{w}_0\dot{u}=\dot{w}_P^{-2}(\dot{w}_P\dot{w}_0\dot{u}).\]
By Lemma \ref{applemma4}, we have $v_{u'w_0}=\dot{(u')}\cdot v_{w_0}$. It follows that
\[ v_{w_P} =\dot{w}_P^{-2}(\dot{w}_P\dot{w}_0)\cdot v_{w_0}\quad\text{ and }\quad v_{w_Pw_0s_{\a_j}w_0}= \dot{w}_P^{-2}(\dot{w}_P\dot{w}_0\dot{s}_{\ad_j})\cdot v_{w_0},  \]
and hence, by Lemma \ref{applemma3},
\[ m_1/m_2 = (-1)^{\left(\sum_{\a\in R_P^+}\a\right)[ w_P(\od_i)-w_Ps_{\a_i}(\od_i)]} = (-1)^{\left(\sum_{\a\in R_P^+}\a\right)(w_P(\ad_i))}=(-1)^{-2}=1.\]
\end{proof}

\begin{proposition}\label{appprop2}
Suppose $\Gd$ is of adjoint type. For any $\ll\in\cop$, we have
\[ v_{w_Pw_0} = \dot{w}_P\dot{w}_0\cdot v_e\in S(\ll)_{\wp(\ll)}.\]
\end{proposition}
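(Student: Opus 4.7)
The plan is to reduce this proposition to Lemma \ref{applemma4}, which describes the action of lifts $\dot{w}$ on the \emph{lowest} weight vector $v_{w_0}$, by exploiting the adjointness hypothesis to pass between the highest and lowest weight vectors.

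First I would apply Lemma \ref{applemma4} with the choice $w = w_0$. Since $w_0^2 = e$ in $W$, this yields $\dot{w}_0 \cdot v_{w_0} = v_e$ in $S(\ll)_{\ll}$. Next, because $\Gd$ is assumed to be of adjoint type, Lemma \ref{applemma3} gives $\dot{w}_0^2 = e$, so $\dot{w}_0^{-1} = \dot{w}_0$; inverting the previous equality then produces the key identity
\[ v_{w_0} = \dot{w}_0 \cdot v_e. \]

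With this in hand, I would rewrite
\[ \dot{w}_P\dot{w}_0\cdot v_e \;=\; \dot{w}_P\cdot (\dot{w}_0 \cdot v_e)\;=\;\dot{w}_P\cdot v_{w_0}, \]
and then invoke Lemma \ref{applemma4} once more, this time with $w = w_P$, to conclude that $\dot{w}_P\cdot v_{w_0} = v_{w_Pw_0}$. Combining the two equalities finishes the proof.

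The argument is essentially bookkeeping; the only conceptual input beyond Lemma \ref{applemma4} is the adjointness hypothesis, which is exactly what allows us to transport the statement from $v_{w_0}$ (where signs were pinned down in Lemma \ref{applemma4}) to $v_e$. If one tried to prove the proposition directly by induction on a reduced word of $w_Pw_0$ acting on $v_e$, one would have to track signs coming from the outer factors $\exp_{-\ad_i}(-e^{-1}_{\ad_i})$ in the definition of $\dot{s}_{\ad_i}$, which is more delicate than the case handled in Lemma \ref{applemma4}; using $\dot{w}_0^2 = e$ bypasses this issue entirely, so I do not expect a serious obstacle.
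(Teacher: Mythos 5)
Your proof is correct and follows essentially the same route as the paper: both use Lemma \ref{applemma4} twice (once with $w=w_0$ and once with $w=w_P$) together with the adjointness hypothesis via Lemma \ref{applemma3} to replace $\dot{w}_0^{-1}$ by $\dot{w}_0$. The only difference is presentational: you rearrange $\dot{w}_0\cdot v_{w_0}=v_e$ to $v_{w_0}=\dot{w}_0\cdot v_e$ first and then push forward by $\dot{w}_P$, whereas the paper starts from $v_{w_Pw_0}=\dot{w}_P\cdot v_{w_0}$ and inserts $\dot{w}_0^{-1}\dot{w}_0$; these are the same computation read in opposite directions.
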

\begin{proof}
By Lemma \ref{applemma4},
\[ v_{w_Pw_0} = \dot{w}_P\cdot v_{w_0} = \dot{w}_P\dot{w}_0^{-1}\dot{w}_0\cdot v_{w_0}=\dot{w}_P\dot{w}_0^{-1}\cdot v_e. \]
Since $\Gd$ is of adjoint type, we have, by Lemma \ref{applemma3}, $\dot{w}_0^{-1}=\dot{w}_0$. The result follows.
\end{proof}

\end{document}